\newtheorem{theorem}{Theorem}[section]
\newtheorem*{theorem*}{Theorem}
\newtheorem{conjecture}[theorem]{Conjecture}
\newtheorem*{conjecture*}{Conjecture}
\newtheorem*{question*}{Question}
\newtheorem{lemma}[theorem]{Lemma}
\newtheorem*{lemma*}{Lemma}
\newtheorem{proposition}[theorem]{Proposition}
\newtheorem*{proposition*}{Proposition}
\newtheorem{corollary}[theorem]{Corollary}
\newtheorem*{corollary*}{Corollary}
\theoremstyle{definition}
\newtheorem{definition}[theorem]{Definition}
\newtheorem*{definition*}{Definition}
\newtheorem{remark}[theorem]{Remark}
\newtheorem{example}[theorem]{Example}
\newtheorem*{example*}{Example}
\newtheorem*{examples*}{Examples}
\newcommand{\rom}[1]{\mathrm{#1}}
\newcommand{\twomat}[4]{\begin{pmatrix} #1 & #2 \\ #3 & #4 \end{pmatrix}}
\renewcommand{\bar}{\overline}
\renewcommand{\AA}{\mathbb{A}}
\newcommand{\CC}{\mathbb{C}}
\newcommand{\FF}{\mathbb{F}}
\newcommand{\NN}{\mathbb{N}}
\newcommand{\QQ}{\mathbb{Q}}
\newcommand{\RR}{\mathbb{R}}
\newcommand{\TT}{\mathbb{T}}
\newcommand{\ZZ}{\mathbb{Z}}
\newcommand{\Ac}{\mathcal{A}}
\newcommand{\Cc}{\mathcal{C}}
\newcommand{\Fc}{\mathcal{F}}
\newcommand{\Gc}{\mathcal{G}}
\newcommand{\Hc}{\mathcal{H}}
\newcommand{\Ic}{\mathcal{I}}
\newcommand{\Lc}{\mathcal{L}}
\renewcommand{\Mc}{\mathcal{M}}
\newcommand{\Oc}{\mathcal{O}}
\newcommand{\Pc}{\mathcal{P}}
\newcommand{\Rc}{\mathcal{R}}
\newcommand{\Sc}{\mathcal{S}}
\newcommand{\Tc}{\mathcal{T}}
\newcommand{\Zc}{\mathcal{Z}}
\newcommand{\Af}{\mathfrak{A}}
\newcommand{\af}{\mathfrak{a}}
\newcommand{\Bf}{\mathfrak{B}}
\newcommand{\bfrak}{\mathfrak{b}}
\newcommand{\cf}{\mathfrak{c}}
\newcommand{\df}{\mathfrak{d}}
\newcommand{\Lf}{\mathfrak{L}}
\newcommand{\mf}{\mathfrak{m}}
\newcommand{\Pf}{\mathfrak{P}}
\newcommand{\pf}{\mathfrak{p}}
\newcommand{\qf}{\mathfrak{q}}
\newcommand{\Sf}{\mathfrak{S}}
\newcommand{\Xf}{\mathfrak{X}}
\newcommand{\Zf}{\mathfrak{Z}}
\newcommand{\rarrow}{\rightarrow}
\newcommand{\onto}{\twoheadrightarrow}
\newcommand{\into}{\hookrightarrow}
\newcommand{\isomto}{\xrightarrow{\sim}}
\newcommand{\tensor}{\otimes}
\newcommand{\normal}{\lhd}
\newcommand{\defeq}{\coloneqq}
\newcommand{\rhobar}{\bar{\rho}}
\newcommand{\supp}{\operatorname{supp}}
\newcommand{\depth}{\operatorname{depth}}
\newcommand{\Aut}{\operatorname{Aut}}
\newcommand{\End}{\operatorname{End}}
\newcommand{\Hom}{\operatorname{Hom}}
\newcommand{\Frob}{\operatorname{Frob}}
\newcommand{\rec}{\operatorname{rec}}
\newcommand{\Gal}{\operatorname{Gal}}
\newcommand{\Res}{\operatorname{Res}}
\newcommand{\Ind}{\operatorname{Ind}}
\newcommand{\cInd}{\operatorname{c-Ind}}
\newcommand{\Spec}{\operatorname{Spec}}
\newcommand{\Spf}{\operatorname{Spf}}
\newcommand{\ord}{\operatorname{ord}}
\newcommand{\tr}{\operatorname{tr}}
\newcommand{\WD}{\operatorname{WD}}
\newcommand{\Nm}{\operatorname{Nm}}
\newcommand{\Sp}{\operatorname{Sp}}
\newcommand{\ch}{\operatorname{char}}
\newcommand{\St}{\operatorname{St}}
\newcommand{\Rep}{\operatorname{Rep}}
\newcommand{\crord}{\operatorname{cr-ord}}
\newcommand{\univ}{\operatorname{univ}}
\newcommand{\ad}{\operatorname{ad}}
\newcommand{\Part}{\operatorname{Part}}
\newcommand{\red}{\operatorname{red}}
\newcommand{\cyc}{\operatorname{cyc}}
\newcommand{\loc}{\operatorname{loc}}
\newcommand{\ft}{\mathrm{ft}}
\newcommand{\Rbox}{R^\square}
\newcommand{\Rbarbox}{\bar{R}\,^\square}
\newcommand{\sgn}{\operatorname{sgn}}
\newcommand{\std}{\operatorname{std}}
\newcommand{\length}{\operatorname{length}}
\newcommand{\Bip}{\mathrm{Bip}}
\newcommand{\scs}{\operatorname{scs}}
\newcommand{\aut}{\operatorname{aut}}
\title{The Breuil--M\'{e}zard conjecture when $l\neq p$.}  
\author{Jack Shotton}
\begin{document}
\begin{abstract}
  Let $l$ and $p$ be primes, let $F/\QQ_p$ be a finite extension with absolute Galois group $G_F$,
  let $\FF$ be a finite field of characteristic $l$, and let \[\bar{\rho} : G_F\rarrow GL_n(\FF)\]
  be a continuous representation.  Let $R^\square(\rhobar)$ be the universal framed deformation ring
  for $\rhobar$.  If $l = p$, then the Breuil--M\'{e}zard conjecture (as formulated in
  \cite{JMJ:9091284}) relates the mod $l$ reduction of certain cycles in $R^\square(\rhobar)$ to the
  mod $l$ reduction of certain representations of $GL_n(\Oc_F)$.  We state an analogue of the
  Breuil--M\'{e}zard conjecture when $l \neq p$, and prove it whenever $l > 2$ using automorphy
  lifting theorems.  We give a local proof when $l$ is ``quasi-banal'' for $F$ and $\rhobar$ is
  tamely ramified.  We also analyse the reduction modulo $l$ of the types $\sigma(\tau)$ defined by
  Schneider and Zink \cite{SchneiderZink1999-KTypesTemperedComponentsGeneralLinear}.
\end{abstract}
\maketitle
\tableofcontents
\section{Introduction}

When $F$ is a $p$-adic field and $\rhobar$ is an $n$-dimensional mod $p$ representation of its
absolute Galois group $G_F$, the Breuil--M\'{e}zard conjecture relates singularities in the
deformation ring of $\rhobar$ to the mod $p$ representation theory of $GL_n(\Oc_F)$.  It was first
formulated, for $F = \QQ_p$ and $n = 2$, in \cite{breuil2002}, and (mostly) proved in this case in
\cite{Kisin2009-FontaineMazur}.  In full generality, the conjecture is formulated in
\cite{JMJ:9091284} but is not known in any cases with $n > 2$.  In this article we prove an analogue
of the Breuil--M\'{e}zard conjecture for mod $l$ representations of $G_F$ and $GL_n(\Oc_F)$, with
$F$ a $p$-adic field and $l$ an odd prime \emph{distinct} from $p$.

We give a precise statement, after setting up a little notation.  Let $F$ be a finite extension
of $\QQ_p$ with ring of integers $\Oc_F$, residue field $k_F$ of order $q$, and absolute Galois
group $G_F$, and let $l$ be a prime distinct from $p$.  Let $E$ be a finite extension of $\QQ_l$,
with ring of integers $\Oc$, uniformiser $\lambda$, and residue field $\FF$.  Let
\[ \rhobar : G_F \rarrow GL_n(\FF)\] be a continuous representation.  Then there is a universal
framed deformation ring $R^\square(\rhobar)$ parameterizing lifts of $\rhobar$.  Our
main result, stated below, relates congruences between irreducible components of $\Spec
R^\square(\rhobar)$ to congruences between representations of $GL_n(\Oc_F)$.

It is known that $\Spec R^\square(\rhobar)$ is flat and equidimensional of relative dimension $n^2$
over $\Spec\Oc$ -- see Theorem~\ref{thm:complete-intersection}.  Let $\Zc(R^\square(\rhobar))$ be
the free abelian group on the irreducible components\footnote{We suppose that $E$ is ``sufficiently
  large'' and in particular that all of these are geometrically irreducible.} of $\Spec
R^\square(\rhobar)$; similarly we have the group $\Zc(\Rbarbox(\rhobar))$ where $\Rbarbox(\rhobar) =
R^\square(\rhobar) \otimes_{\Oc}\FF$.  There is a natural homomorphism
\[\red : \Zc(R^\square(\rhobar))\longrightarrow\Zc(\bar{R}\,^\square(\rhobar))\]
taking an irreducible component of $\Spec(R^\square(\rhobar))$ to its intersection with the special
fibre (counted with multiplicities).

An \textbf{inertial type} is an isomorphism class of continuous representation
$\tau:I_F\rarrow GL_n(\bar{E})$ that may be extended to $G_F$.  If $\tau$ is an inertial type, then there
is a quotient $R^\square(\rhobar,\tau)$ of $R^\square(\rhobar)$ that (roughly speaking)
parameterizes representations of type $\tau$; that is, whose restriction to $I_F$ is isomorphic to
$\tau$.  Then $\Spec R^\square(\rhobar,\tau)$ is a union of
irreducible components of $\Spec R^\square(\rhobar)$.

Let $R_E(GL_n(\Oc_F)$ (resp. $R_{\FF}(GL_n(\Oc_F))$) be the Grothendieck group of finite dimensional
smooth representations of $GL_n(\Oc_F)$ over $E$ (resp. $\FF$), and let
\[\red : R_E(GL_n(\Oc_F)) \rarrow R_{\FF}(GL_n(\Oc_F))\] be the surjective map given by reducing a representation
modulo $l$.  In section~\ref{sec:breuil-mezard-lneq} we define a
homomorphism
\[\cyc : R_E(GL_n(\Oc_F)) \rarrow \Zc(R^\square(\rhobar))\] 
by the formula
\[\cyc(\theta) = \sum_{\tau}m(\theta^{\vee},\tau)Z(R^\square(\rhobar,\tau))\]
where the sum is over all inertial types, $Z(R^\square(\rhobar,\tau))$ is the formal sum of the
irreducible components of $\Spec R^\square(\rhobar,\tau)$, and $m(\theta^\vee,\tau)$ is the
multiplicity of $\theta^\vee$ in any generic irreducible admissible representation $\pi$ such that
$r_l(\pi)|_{I_F} \cong \tau$.

\begin{theorem*} Suppose that $l > 2$.  There is a unique map $\bar{\cyc}$ making the following
  diagram commute:
 \begin{equation}\label{eq:BM-CD}
\begin{CD}
R_E(GL_n(\Oc_F)) @>{\cyc}>> \Zc(R^\square(\bar{\rho})) \\
@V{\red}VV  @V{\red}VV \\
R_{\FF}(GL_n(\Oc_F)) @>{\bar{\cyc}}>> \Zc(\Rbarbox(\rhobar)).
\end{CD}
\end{equation} 
\end{theorem*}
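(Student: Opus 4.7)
Uniqueness is immediate: every smooth irreducible $\FF[GL_n(\Oc_F)]$-representation factors through a finite quotient and admits an integral lift by Brauer theory, so $\red : R_E(GL_n(\Oc_F)) \to R_\FF(GL_n(\Oc_F))$ is surjective and $\bar\cyc$ is forced by commutativity. For existence I must show that $\red \circ \cyc$ annihilates $\ker(\red)$ on the left, i.e.\ that for any two $\Oc$-lattices $\sigma$, $\sigma'$ in smooth $E$-representations of $GL_n(\Oc_F)$ with $\sigma \otimes_\Oc \FF \cong \sigma' \otimes_\Oc \FF$, the cycles $\red(\cyc(\sigma[1/l]))$ and $\red(\cyc(\sigma'[1/l]))$ agree in $\Zc(\Rbarbox(\rhobar))$.

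My plan is to access both sides of the diagram through Taylor--Wiles--Kisin patching. First I would globalise $\rhobar$: produce a CM field $F^+$, a place $v$ with $F^+_v \cong F$, and a continuous automorphic $\bar r : G_{F^+} \to GL_n(\FF)$ with $\bar r|_{G_{F^+_v}} \cong \rhobar$, arranging tame or Fontaine--Laffaille conditions at the remaining finite places so that those local deformation rings are formally smooth. The restriction $l>2$ enters here and in the $GL_n$ automorphy lifting theorems used below. Patching spaces of automorphic forms on a suitable definite unitary group, with varying coefficient system determined by a finite $\Oc[GL_n(\Oc_F)]$-module $\sigma$, then produces a Cohen--Macaulay module $M_\infty(\sigma)$ over a patched deformation ring $R_\infty$ that is formally smooth over $R^\square(\rhobar) \otimes_\Oc S_\infty$; the functor $\sigma \mapsto M_\infty(\sigma)$ is exact and sends $\Oc$-flat input to $\Oc$-flat output.

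The essential local-global input, combining local Langlands with the Schneider--Zink description of $\sigma(\tau)$, is that for each inertial type $\tau$ and integral structure $\sigma(\tau)^\circ \subset \sigma(\tau)$, the generic fibre of $M_\infty(\sigma(\tau)^\circ)$ is supported on the type-$\tau$ locus of $\Spec R_\infty[1/l]$ with cycle the pullback of $Z(R^\square(\rhobar,\tau))$ (after normalising the globalisation so that $\bar r$ occurs with multiplicity one in the relevant Hecke module). Decomposing a general $\sigma \otimes_\Oc E$ in $R_E(GL_n(\Oc_F))$ against Schneider--Zink types via the multiplicities $m((\sigma\otimes_\Oc E)^\vee,\tau)$ then identifies the generic cycle of $M_\infty(\sigma)$ with the pullback of $\cyc(\sigma \otimes_\Oc E)$.

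Granting this, $\Oc$-flatness of $M_\infty(\sigma)$ gives
\[
\red\bigl(Z(M_\infty(\sigma)[1/l])\bigr) \;=\; Z\bigl(M_\infty(\sigma)\otimes_\Oc\FF\bigr) \;=\; Z\bigl(M_\infty(\sigma\otimes_\Oc\FF)\bigr),
\]
whose right-hand side depends only on $\sigma \otimes_\Oc \FF$. Descending from $R_\infty$ back to $R^\square(\rhobar)$ (the auxiliary factors are formally smooth over $\Oc$ and so irrelevant for reduction of cycles) yields the required factorisation $\bar\cyc(\red(\sigma\otimes_\Oc E)) \defeq \red(\cyc(\sigma\otimes_\Oc E))$, and extending $\ZZ$-linearly covers all of $R_\FF(GL_n(\Oc_F))$ by surjectivity of $\red$ on the left. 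The main obstacle will be the globalisation and patching step: producing $\bar r$ with prescribed local behaviour and automorphy of the required form, and organising the patched module finely enough that every irreducible component of every $R^\square(\rhobar,\tau)$ is actually seen by some $M_\infty(\sigma(\tau)^\circ)$. Modulo this, the remainder is a formal interplay of cycles, lattices, and the Schneider--Zink multiplicities analysed earlier in the paper.
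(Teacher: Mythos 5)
Your proposal is, in outline, exactly the paper's proof: globalise $\rhobar$ to a residually automorphic $\bar r$ on a definite unitary group, patch automorphic forms with varying coefficient module $\sigma$ to obtain an exact functor $\sigma \mapsto H_\infty(\sigma)$ that commutes with $-\otimes_\Oc\FF$ and preserves $\lambda$-torsion-freeness, identify the cycle of $H_\infty(\sigma)$ with (a multiple of) $\cyc(\sigma\otimes E)$ via local--global compatibility, genericity of local components of cuspidal automorphic representations, and the Schneider--Zink multiplicities, and then conclude $\ker(\red)\subset\ker(\red\circ\cyc)$; the uniqueness argument from surjectivity of $\red$ is also the paper's.

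Two points in your sketch would not go through as written, and both are repaired in the paper. First, the globalisation cannot be arranged with a single place above $p$: in Proposition~\ref{suitable-globalisation} \emph{every} place $v\mid p$ of $L^+$ has $L^+_v\cong F$ with local residual representation $\rhobar$, so the patched module is a module over $R^\square(\rhobar)^{\hat\otimes d}\hat\otimes A$ for some $d\geq 1$ and its cycle computes $c\cdot\cyc^{\otimes d}$ rather than $\cyc$. There is no direct descent to a single tensor factor; the paper instead evaluates on $\alpha\otimes\cdots\otimes\alpha$ for $\alpha\in\ker(\red)$ and uses that $\beta^{\otimes d}\otimes C\neq 0$ whenever $\beta\neq 0$ to derive the contradiction. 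Second, imposing Fontaine--Laffaille conditions at places above $l$ forces $l$ to be large relative to $n$ and the weight, which is incompatible with proving the statement for all $l>2$; the paper instead works with \emph{ordinary} crystalline deformation rings of the specific regular weight of Lemma~\ref{lem:ordinary-rep}, which are geometrically integral for any $l>2$, and correspondingly appeals to the ordinary automorphy lifting theorems (with Thorne's weakened adequacy hypothesis) rather than results requiring $l\geq 2n+1$ or potential diagonalizability. Modulo these two adjustments, your plan is the paper's argument.
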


This is Theorem~\ref{thm:BM-Gln} below. We conjecture (Conjecture~\ref{conj:BM}) that it is also
true for $l = 2$.  The content of the theorem is that congruences between representations of
$GL_n(\Oc_F)$ force congruences between irreducible components of $R^\square(\rhobar)$.

The image of the map $\cyc$ is precisely the $\ZZ$-span of the cycles $Z(R^\square(\rhobar,\tau))$.
We write down explicit elements $r(\tau)$ of the Grothendieck group $R_E(GL_n(\Oc_F))$ in terms of
inverse Kostka numbers, such that $\cyc(r(\tau)) = Z(R^\square(\rhobar,\tau))$.  Then we obtain (Corollary~\ref{cor:cyc-inverse}):
\begin{corollary*}
  For each inertial type $\tau$, 
  \[\red(Z(\Rbox(\rhobar,\tau))) = \bar{\cyc}(\red(r(\tau))).\]
\end{corollary*}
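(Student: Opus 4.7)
The plan is to deduce the corollary by a direct diagram chase, applying the commutative square of the main theorem to the specific element $r(\tau) \in R_E(GL_n(\Oc_F))$ constructed in terms of inverse Kostka numbers. The substantive content lies not in the corollary itself but in the two inputs that feed into it: the commutativity of the diagram \eqref{eq:BM-CD} (which is the main theorem) and the identity $\cyc(r(\tau)) = Z(R^\square(\rhobar,\tau))$ (which is the explicit cycle-theoretic formula preceding the corollary). Once both are granted, the corollary is essentially formal.

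More concretely, I would proceed as follows. First, I would invoke the defining property of $r(\tau)$: by construction $\cyc(r(\tau)) = Z(R^\square(\rhobar,\tau))$ in $\Zc(R^\square(\rhobar))$. Applying $\red$ to both sides then gives
\[\red\bigl(Z(R^\square(\rhobar,\tau))\bigr) = \red\bigl(\cyc(r(\tau))\bigr).\]
Next, I would invoke the main theorem, whose content is precisely that $\red \circ \cyc = \bar{\cyc} \circ \red$, so that
\[\red\bigl(\cyc(r(\tau))\bigr) = \bar{\cyc}\bigl(\red(r(\tau))\bigr).\]
Chaining these two equalities yields the asserted identity.

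The genuine work, which has already been done before the corollary is stated, is twofold. On the representation-theoretic side, one needs to write down $r(\tau)$ as an explicit integral combination of $GL_n(\Oc_F)$-representations, the relevant coefficients being inverse Kostka numbers, and verify that the formula for $\cyc$ in terms of the multiplicities $m(\theta^\vee,\tau)$ gives $\cyc(r(\tau)) = Z(R^\square(\rhobar,\tau))$; this amounts to inverting the matrix of multiplicities $(m(\theta^\vee,\tau))$ on the Grothendieck group, which is where the Kostka combinatorics enter. On the Galois side, one needs the existence and uniqueness of $\bar{\cyc}$ from the main theorem, whose proof uses automorphy lifting. The main obstacle, therefore, is entirely absorbed into the theorem and into the prior construction of $r(\tau)$; at the level of the corollary itself there is nothing beyond the two-step diagram chase sketched above.
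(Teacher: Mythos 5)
Your proof is correct and follows exactly the paper's route: the corollary is stated there as an immediate consequence of Theorem~\ref{thm:BM-Gln}, obtained by applying the commutative square to $r(\tau)$ together with the defining identity $\cyc(r(\tau)) = Z(\Rbox(\rhobar,\tau))$ from Proposition~\ref{prop:cyc-image}. Nothing further is needed.
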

Thus knowledge of $\bar{\cyc}$ determines the generic multiplicities of the irreducible components
of $\Rbox(\rhobar,\tau) \otimes_\Oc \FF$.  Note that $r(\tau)$ is in general a virtual element of
the Grothendieck group; for instance, if $\tau$ is the non-split two-dimensional unipotent type, then
\[r(\tau) = \St - \mathbbm{1}\] where $\St$ is the Steinberg representation of $GL_2(k_F)$ and
$\mathbbm{1}$ is the trivial representation.  We do not attempt to describe $\bar{\cyc}$ here, but
hope to return to this question in future work.  The map $\cyc$ also appears in the $l = p$
situation when working with potentially semistable (rather than potentially crystalline) deformation
rings.

Our proof of the main theorem is `global', making use of the methods of
\cite{GeeKisin2013-BreuilMezardBarsottiTate} and \cite{JMJ:9091284}. We use the
Taylor--Wiles--Kisin patching method to produce an exact functor \[\theta \longmapsto
H_\infty(\theta)\] from the category of finitely generated $\Oc$-modules with a smooth
$GL_n(\Oc_F)$-action to the category of finitely generated $R^\square(\rhobar)$-modules, such that
the support of $H_\infty(\theta)$ -- counted with multiplicity -- is $\cyc(\theta)$.  As this
functor is compatible with reduction modulo $l$, we can deduce the theorem.

We can also give local proofs of (variants of) the theorem in some special cases.  In
\cite{MR3554238}, we studied the case $n=2$ and $l > 2$; explicitly calculated the rings
$R^\square(\rhobar,\tau)$ in this case and gave a local proof of the theorem.\footnote{Strictly
  speaking, this proof works with $R_E(GL_n(\Oc_F))$ replaced by the subgroup generated by
  `$K$-types'.} In section~\ref{sec:towards-local-proof} we prove the theorem (with
$R_E(GL_n(\Oc_F))$ replaced by a certain subgroup of $R_E(GL_n(k_F))$) in the case that $\rhobar$ is
tamely ramified and $l$ is \emph{quasi-banal}; that is, $l > n$ and $l|q - 1$.  The method is to
first observe that there is a scheme $\Xf$ of finite type over $\Spec \Oc$ --- it is the moduli
space of pairs of invertible matrices $\Sigma$ and $\Phi$ satisfying $\Phi \Sigma\Phi^{-1} =
\Sigma^q$ --- such that the $\Spf R^\square(\rhobar)$, for varying $\rhobar$, may all be obtained as
the completions of $\Xf$ at closed points.  This allows us to reduce the theorem to the case in
which $\rhobar$ is ``distinguished''; this is a certain genericity condition.  When $\rhobar$ is
distinguished we can compute all of the $R^\square(\rhobar,\tau)$ by elementary arguments.  As we
also have a good understanding of the representation theory of $GL_n(k_F)$ in the quasi-banal case,
we can deduce the theorem.  It seems likely that these methods could be pushed further; we have just
dealt with the simplest interesting case for general $n$.

Kisin \cite{Kisin2009-FontaineMazur} proved most cases of the original Breuil--M\'{e}zard 
conjecture, simultaneously with proving most cases of the Fontaine--Mazur conjecture for $GL_2/\QQ$.  The point is
that the information about the special fibres of local deformation rings provided by the
Breuil--M\'{e}zard conjecture is what is needed to prove automorphy lifting theorems in general
weight, using the Taylor--Wiles method as modified by Kisin in
\cite{Kisin2009-ModuliFFGSandModularity}.  The methods of
\cite{GeeKisin2013-BreuilMezardBarsottiTate}, \cite{JMJ:9091284} and this article can be viewed as
implementing this idea ``in reverse'', using known automorphy lifting theorems (or, in the case of
\cite{JMJ:9091284}, assuming automorphy lifting theorems) to deduce the Breuil--M\'{e}zard
conjecture.  We note, however, that no cases of the Breuil--M\'{e}zard conjecture are known when $l
= p$ and $n > 2$, the question being bound up with the weight part of Serre's conjecture and the
Fontaine--Mazur conjecture.

The other motivation behind our theorem is the ``Ihara avoidance'' method of
\cite{Taylor2008-AutomorphyII}, which arose in the $l \neq p$ setting.  Taylor's idea is to compare
the special fibres of very specific $R^\square(\rhobar,\tau)$, and combine this with the
Taylor--Wiles--Kisin method to prove non-minimal automorphy lifting theorems (i.e. automorphy
lifting theorems incorporating a change of level).  The similarity to Kisin's use of the
Breuil--M\'{e}zard conjecture to prove automorphy lifting theorems with a change of weight is clear;
thus it is natural to try to study local deformation rings when $l \neq p$ from the point of view of
the Breuil--M\'{e}zard conjecture.  Our proof actually \emph{depends} on Taylor's results, as it
makes crucial use of non-minimal automorphy lifting theorems.  We explain this example in detail in
section~\ref{sec:ihara-avoidance}.

In \cite{1209.5205}, Pa\v{s}k\={u}nas gives a purely local proof of most cases of the
Breuil--M\'{e}zard conjecture, relying on the $p$-adic Langlands correspondence (on which Kisin's
proof also depends).  He shows\footnote{When $\rhobar$ is `generic' --- the proof in the non-generic
  case is a little different.} that the universal deformation ring $R(\rhobar)$ of the residual
representation $\rhobar$ can be realised as the endomorphism ring of the projective envelope
$\tilde{P}$, in a suitable category, of the representation $\pi$ of $GL_2(\QQ_p)$ associated to
$\rhobar$ by the mod-$p$ Langlands correspondence.  Then the functor
\[\theta \mapsto \Hom_{\Oc[[GL_2(\ZZ_p)]]}(\tilde{P},\theta^\vee)^\vee\]
plays the same role in \cite{1209.5205} that the functor $\theta \mapsto H_\infty(\theta)$ does in
global proofs via patching.  Since the writing of this paper, Helm and Moss \cite{1610.03277} have
constructed the local Langlands correspondence in families conjectured by Emerton and Helm
\cite{1104.0321}.  It may be possible to derive the results of this paper from their result, by
methods analogous to those of \cite{1209.5205}, and we hope to return to this in the future.

Section~\ref{sec:k-types} has a rather different focus.  Certain of the representations of
$GL_n(\Oc_F)$ are more interesting than the others; these are the $K$-types.  For every inertial
type $\tau$ there is a corresponding $K$-type $\sigma(\tau)$, essentially constructed by Schneider
and Zink \cite{SchneiderZink1999-KTypesTemperedComponentsGeneralLinear}.  These representations have
an interesting `Galois theoretic' interpretation --- see Theorem~\ref{thm:K-types} below.  We
determine the multiplicities $m(\sigma(\tau),\tau')$ when $\tau$ and $\tau'$ are inertial types; the
answer is given in terms of certain Kostka numbers.  We also explain how to determine the mod $l$
reduction of the representations $\sigma(\tau)$ in terms of the mod $l$ reduction of representations
of certain general linear groups; in order to do this, we must work with a variant of the
construction of \cite{SchneiderZink1999-KTypesTemperedComponentsGeneralLinear}.
Sections~\ref{sec:symmetric-groups} to~\ref{sec:finite-gener-line} are used in
section~\ref{sec:towards-local-proof}, but otherwise the only place that section~\ref{sec:k-types}
is used in the rest of the paper is to derive the multiplicity formula of
Proposition~\ref{prop:type-mults}; in particular, sections~\ref{sec:k-types}
and~\ref{sec:towards-local-proof} are not required for the proof of Theorem~\ref{thm:BM-Gln}.

We briefly sketch the contents of the different sections.  Section~\ref{sec:deformation-rings} is
preliminary, containing the basic definitions of the relevant local deformation rings.
Theorem~\ref{thm:complete-intersection} of this section, which is due to David Helm, gives some of
their basic geometric properties and is probably of independent interest.  In
section~\ref{sec:cycles} we cover some commutative algebra to do with multiplicities and cycles.
Section~\ref{sec:inertial-types} deals with the stratification of the Bernstein centre by inertial
types and the associated fixed type deformation rings.  We also, in Theorem~\ref{thm:K-types},
introduce the $K$-types of \cite{SchneiderZink1999-KTypesTemperedComponentsGeneralLinear} and state
their formal properties.  Section~\ref{sec:breuil-mezard-lneq} contains the statement of the main
theorem and its proof given the formal properties of the globally constructed patching functor.  We
also state a formula for the multiplicity of a $K$-type in a generic smooth admissible
representation of $GL_n(F)$.  Section~\ref{sec:global-proof} constructs the patching functor needed
to prove the main theorem.  Sections~\ref{sec:autom-forms-galo} and~\ref{sec:galo-repr} setup the
necessary spaces of automorphic forms and associated Galois representations,
section~\ref{sec:real-local-repr} follows the appendix of~\cite{JMJ:9091284} to extend a local
Galois representation $\rhobar$ to a global representation arising automorphically, and section~5.4
carries out the patching argument.  Section~\ref{sec:k-types} contains the proof, via
Bushnell--Kutzko theory, of the multiplicity formula for $K$-types and also a coarse description of
their reduction modulo $l$.  Section~\ref{sec:towards-local-proof} contains a local proof of the
main theorem in a special case.

\subsection{Acknowledgements}
\label{sec:acknowledgements}

This work is part of the author's Imperial College PhD thesis.  I would like to thank my supervisor,
Toby Gee, for suggesting this problem and the approach via patching.  I would also like to thank
Matthew Emerton, David Helm, Vincent S\'{e}cherre, Sug Woo Shin and Shaun Stevens for helpful
comments, conversations or correspondence.

This research was supported by the Engineering and Physical Sciences Research Council, and the
Philip Leverhulme Trust. Part of it was conducted during a visit to the University of Chicago
sponsored by the Cecil King Foundation and the London Mathematical Society.

\section{Deformation rings}
\label{sec:deformation-rings}
\subsection{Definitions}\label{sec:def-defs}
Let $F / \QQ_p$ be a finite extension with ring of integers $\Oc_F$, and residue field $k_F$ of
order $q$.  Let $\bar{F}$ be an algebraic closure of $F$, $\bar{k}_F$ the induced algebraic closure
of $k_F$, and $G_F = \Gal(\bar{F}/F)$.  Let $I_F \normal G_F$ and $P_F \normal G_F$ be,
respectively, the inertia and wild inertia subgroups of $G_F$.  We have canonical
isomorphisms \[G_F/I_F = \hat{\ZZ}\] and \[I_F/P_F = \varprojlim_{k/k_F} k^\times
  \cong \prod_{l \neq p} \ZZ_l(1),\] where the limit is over finite extensions of $k_F$ contained in
  $\bar{k}_F$ and the transition maps are the norm maps.  Let $\phi \in G_F/I_F$ be arithmetic
  Frobenius, and denote also by $\phi$ a choice of lift to $G_F$.  Let $\sigma$ be a topological
  generator for $I_F/P_F$; this choice is equivalent to choosing a norm-compatible system of
  generators for the units in each finite extension $k$ of $k_F$, or to choosing a basis for each
  $\ZZ_l(1)$.  Then, via these choices, $G_F/P_F$ is isomorphic to the profinite completion of
\begin{equation}\label{eqn:presentation}\langle \phi, \sigma | \phi \sigma \phi^{-1} = \sigma^q
  \rangle = \ZZ \ltimes \ZZ[\frac{1}{p}].\end{equation}

Let $E/\QQ_l$ be a finite extension with ring of integers $\Oc$, uniformiser $\lambda$ and residue
field $\FF$.  Let $\Cc_\Oc$ denote the category of
artinian local $\Oc$-algebras with residue field $\FF$, and $\Cc_\Oc^\wedge$ the category of
complete noetherian local $\Oc$-algebras with residue field $\FF$.  If $A$ is an object of $\Cc_\Oc$
or $\Cc_\Oc^\wedge$, let $\mf_A$ be its maximal ideal.

Suppose that $\bar{M}$ is an $n$-dimensional $\FF$-vector space and that $\bar{\rho} : G_F \rarrow
\Aut_{\FF}(\bar{M})$ is a continuous homomorphism.  Let $(\bar{e}_i)_{i=1}^n$ be a basis for $\bar{M}$, so that
$\bar{\rho}$ gives a map $\bar{\rho} : G_F \rarrow GL_n(\FF)$.
  
Define two functors
\[D(\rhobar), D^\square(\rhobar) : \Cc_\Oc \rarrow \mathbf{Set}\] as follows:
\begin{itemize}
\item $D(\rhobar)(A)$ is the set of equivalence classes of $(M,\rho,\iota)$ where: $M$ is a free rank~$n$
  $A$-module, $\rho : G_F \rarrow \Aut_A(M)$ is a continuous homomorphism,
  and \[\iota:M\otimes_A\FF\isomto\bar{M}\] is an isomorphism commuting with the actions of $G_F$;
  
\item $D^\square(\rhobar)(A)$ is the set of equivalence classes of $(M,\rho, (e_i)_{i=1}^n)$ where: $M$ is
  a free rank $n$ $A$-module, $\rho : G_F \rarrow \Aut_A(M)$ is a continuous homomorphism, and
  $(e_i)_{i=1}^n$ is a basis of $M$ such that the isomorphism $\iota:M \otimes_A \FF
  \isomto \bar{M}$ taking $e_i \otimes 1$ to $\bar{e}_i$ commutes with the actions of $G_F$.
\end{itemize}
In the first case, $(M,\rho,\iota)$ and $(M',\rho',\iota')$ are equivalent if there is an
isomorphism $\alpha:M \rarrow M'$, commuting with the actions of $G_F$, such that $\iota =
\iota'\circ \alpha$; in the second case, $(M,\rho,(e_i)_i)$ and $(M',\rho',(e'_i)_i)$ are equivalent
if the isomorphism of $A$-modules $M \rarrow M'$ defined by $e_i \mapsto e_i'$ commutes with the
actions of $G_F$.  There is a natural transformation of functors $D^\square(\rhobar) \rarrow
D(\rhobar)$ given by forgetting the basis.

Alternatively, when $\rhobar$ is regarded as a homomorphism to $GL_n(\FF)$, we have the equivalent definitions
\[D^\square(\rhobar)(A) = \{\text{continuous $\rho : G_F \rarrow GL_n(A)$ lifting $\bar{\rho}$}\}\]
and \[D(\rhobar)(A) = \{\text{continuous $\rho : G_F \rarrow GL_n(A)$
  lifting $\bar{\rho}$}\}/\text{$1 + M_n(\mf_A)$}\] 
where the action of the group $1 + M_n(\mf_A)$ is by conjugation.
  
The functor $D(\rhobar)$ is not usually pro-representable, but the functor $D^\square(\rhobar)$
always is (see, for example, \cite{Kisin2009-ModuliFFGSandModularity}~(2.3.4)):
\begin{definition} The \emph{universal lifting ring} (or universal framed deformation ring) of
  $\rhobar$ is the object $R^\square(\rhobar)$ of $\Cc^\wedge_{\Oc}$ that pro-represents the functor
  $D^\square(\rhobar)$.  The universal lift is denoted $\rho^\square : G_F \rarrow
  GL_n(R^\square(\rhobar))$.
\end{definition}

\subsection{Geometry of $R^\square(\rhobar)$}

Recall the following calculation from \cite{BLGGT2014-PotentialAutomorphy}~\S1.2:

\begin{lemma} \label{lem:dimension}
  The scheme $\Spec R^\square(\rhobar)[1/l]$ is generically formally smooth of dimension $n^2$. \qed
\end{lemma}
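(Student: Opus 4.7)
The plan is to combine standard obstruction theory for the framed deformation functor with the local Euler characteristic formula, which vanishes in the setting $l\neq p$.

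First I would consider a closed point $x$ of $\Spec R^\square(\rhobar)[1/l]$ with residue field a finite extension $E'$ of $E$, corresponding to a continuous lift $\rho\colon G_F\to GL_n(E')$ of $\rhobar$. The completion of the local ring at $x$ is canonically the universal framed deformation ring of $\rho$ in characteristic zero, and admits a presentation
\[ \widehat{\Oc}_x \cong E'[[X_1,\ldots,X_d]]/(f_1,\ldots,f_r) \]
with $d = \dim_{E'} Z^1(G_F,\ad\rho) = n^2 + \dim H^1(G_F,\ad\rho) - \dim H^0(G_F,\ad\rho)$ and $r \leq \dim_{E'} H^2(G_F,\ad\rho)$. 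Applying Tate's local Euler characteristic formula to $\ad\rho$, whose order is prime to $p$, gives $\chi(G_F,\ad\rho)=0$, i.e.\ $\dim H^0 - \dim H^1 + \dim H^2 = 0$; substituting yields $d = n^2 + \dim H^2(G_F,\ad\rho)$ and hence $d-r\geq n^2$. In particular, when $\dim H^2(G_F,\ad\rho)=0$ we have $r=0$, and the completed local ring is formally smooth over $E'$ of relative dimension exactly $n^2$. By local Tate duality, this vanishing is equivalent to $\Hom_{G_F}(\rho,\rho(1))=0$.

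The main obstacle is then to verify that every irreducible component of $\Spec R^\square(\rhobar)[1/l]$ contains a point at which $\Hom_{G_F}(\rho,\rho(1))=0$; since the formally smooth locus of relative dimension $n^2$ is open, exhibiting one such point per component suffices to give a dense open subset where the scheme is formally smooth of dimension $n^2$. To produce such a lift on a given component I would start from any $\rho$ parameterized by a point of that component and twist by an unramified character $\psi\colon G_F\to\Oc^\times$ with $\bar\psi$ trivial; such a twist preserves the restriction $\rho|_{I_F}$ and so stays on the same irreducible component, while scaling every Frobenius eigenvalue of $\rho$ by $\psi(\phi)$. The bad locus $\Hom_{G_F}(\rho,\rho(1))\neq 0$ is cut out on this one-parameter family by the finitely many closed conditions $\alpha_i = q\alpha_j$ on pairs of Frobenius eigenvalues, so a generic choice of $\psi$ satisfies the required vanishing and yields the desired formally smooth point on every component.
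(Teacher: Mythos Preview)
Your overall strategy is the standard one and matches the argument in \cite{BLGGT2014-PotentialAutomorphy}~\S1.2 (which is all the paper cites): obstruction theory gives a presentation of the completed local ring at $\rho$ with $d = n^2 + \dim H^2(G_F,\ad\rho)$ variables and at most $\dim H^2(G_F,\ad\rho)$ relations, so points with $H^2 = 0$ are formally smooth of dimension exactly $n^2$, and by Tate duality $H^2(G_F,\ad\rho)=0$ is equivalent to $\Hom_{G_F}(\rho,\rho(1))=0$. That part is fine.

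The gap is in your final step. Twisting $\rho$ by a single unramified character $\psi$ does \emph{not} change $\Hom_{G_F}(\rho,\rho(1))$: one has
\[
\Hom_{G_F}(\rho\otimes\psi,(\rho\otimes\psi)(1)) \;\cong\; \Hom_{G_F}(\rho,\rho(1)),
\]
since the twist appears on both sides and cancels. Equivalently, scaling all Frobenius eigenvalues $\alpha_i$ by the same scalar $\psi(\phi)$ leaves every condition $\alpha_i = q\alpha_j$ unchanged. So your one-parameter family never leaves the bad locus if it starts there, and you have not produced a smooth point on an arbitrary component.

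The fix, as carried out in \cite{BLGGT2014-PotentialAutomorphy} Lemma~1.3.2, is to deform more flexibly: write the Frobenius-semisimplified Weil--Deligne representation of $\rho$ as $\bigoplus_i \Sp(\rho_i,k_i)$ and twist the individual summands by \emph{different} unramified characters $\psi_i$ with trivial reduction. This keeps the restriction to $I_F$ (hence the inertial type, hence the irreducible component) fixed, but now the relative Frobenius eigenvalues between different blocks genuinely move, and a generic choice of the $\psi_i$ avoids all of the finitely many conditions $\alpha_i = q\alpha_j$. One also needs to check that this deformed representation is still a lift of $\rhobar$ and lies on the same component of $\Spec R^\square(\rhobar)[1/l]$, which follows because the family of such twists is connected and reduces to $\rhobar$.
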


Let $I_F \rarrow I_F/\tilde{P}_F$ be the maximal pro-$l$ quotient of $I_F$.  The next lemma enables
us to reduce to the case where the residual representation is trivial on $\tilde{P}_F$.  Suppose
that $\theta$ is an irreducible $\FF$-representation of $\tilde{P}_F$; write $[\theta]$ for the
orbit of the isomorphism class of $\theta$ under conjugation by $G_F$.  By
\cite{ClozelHarrisTaylor2008-Automorphy} Lemma~2.4.11, $\theta$ may be extended to an
$\Oc$-representation $\tilde{\theta}$ of $G_\theta$ where $G_\theta$ is the open subgroup $\{g\in
G_F : g\theta g^{-1} \cong \theta\}$ of $G_F$.  For each irreducible representation $\theta$ of
$\tilde{P}_F$, we pick such a $\tilde{\theta}$.  If $M$ is a finite-dimensional $\FF$-vector space
with a continuous action of $G_F$, then define
\[M_\theta = \Hom_{\tilde{P}_F}(\tilde{\theta}, M).\] This has a natural continuous
action of $G_\theta$ given by $(gf)(v) = gf(g^{-1}v)$; the subgroup $\tilde{P}_F$ of
$G_\theta$ acts trivially.  If $\rhobar : G_F \rarrow GL_n(\FF)$ is continuous and corresponds to
some choice of basis for $M$, then choose a basis for each $M_\theta$ to obtain a continuous homomorphism 
$\rhobar_\theta: G_\theta \rarrow GL_n(\FF)$.  

\begin{lemma} \label{lem:tame-reduction} (Tame reduction) 
    If $R^\square(\rhobar_\theta)$ is the universal framed deformation ring for the
    representation $\rhobar_\theta$ of $G_\theta / \tilde{P}_F$, then
    \[ R^\square(\rhobar) \cong \left(\widehat{\bigotimes}_{[\theta]} R^\square(\rhobar_\theta)
    \right)[[X_1, \ldots, X_{n^2 - \sum n^2_\theta}]]\] where $n_\theta = \dim \rhobar_\theta$.
\end{lemma}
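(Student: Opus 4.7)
The plan is to show that lifting a representation of $G_F$ factors, up to a formally smooth framing layer, through independently lifting the representations $\rhobar_\theta$ of the tame quotients $G_\theta/\tilde{P}_F$. The key input is that $\tilde{P}_F$ has profinite order coprime to $l$: indeed $I_F/P_F\cong\prod_{l'\neq p}\ZZ_{l'}(1)$ and $\tilde{P}_F/P_F$ consists of the factors with $l'\neq l$, while $P_F$ is pro-$p$. Hence, by Maschke's theorem and the lifting of orthogonal idempotents from $\FF[\tilde{P}_F/N]$ to $A[\tilde{P}_F/N]$ (for open normal $N\normal\tilde{P}_F$ and any $A\in\Cc_\Oc$), the continuous action of $\tilde{P}_F$ on any lift of $\rhobar$ is semisimple and its isotypic decomposition lifts that of $\rhobar$.

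First, I would record the bijection at the level of underlying modules. Given a lift $(M,\rho)$ of $\rhobar$ to $A$, the isotypic decomposition $M=\bigoplus_{[\theta]}V_{[\theta]}$ has each $V_{[\theta]}$ free over $A$ and $G_F$-stable; within it $M_\theta\defeq\Hom_{\tilde{P}_F}(\tilde\theta,V_{[\theta]})$ is a free $A$-module of rank $n_\theta$ carrying an action of $G_\theta$ on which $\tilde{P}_F$ acts trivially, and evaluation gives a canonical $G_F$-equivariant isomorphism $V_{[\theta]}\cong\Ind_{G_\theta}^{G_F}(\tilde\theta\otimes_\Oc M_\theta)$. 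Conversely, any collection of lifts $\rho_\theta:G_\theta/\tilde{P}_F\rarrow GL_{n_\theta}(A)$ of the $\rhobar_\theta$ assembles into a lift of $\rhobar$ via $\bigoplus_{[\theta]}\Ind_{G_\theta}^{G_F}(\tilde\theta\otimes\rho_\theta)$. This gives a natural bijection between unframed deformations of $\rhobar$ and tuples of unframed deformations of the $\rhobar_\theta$.

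The remaining step is to account for framings. Fix, once and for all, an identification $\rhobar\cong\bigoplus_{[\theta]}\Ind_{G_\theta}^{G_F}(\tilde\theta\otimes\rhobar_\theta)$ over $\FF$ matching the chosen bases. A framed lift of $\rhobar$ to $A$ is then equivalent to the data of framed lifts $\rho_\theta\in D^\square(\rhobar_\theta)(A)$ together with an $A$-linear isomorphism $A^n\isomto\bigoplus_{[\theta]}\Ind_{G_\theta}^{G_F}(\tilde\theta\otimes A^{n_\theta})$ reducing to the fixed one modulo $\mf_A$; the set of such isomorphisms is a torsor for $1+M_n(\mf_A)$, to be taken modulo the subgroup $\prod_{[\theta]}(1+M_{n_\theta}(\mf_A))$ acting by simultaneous change of basis on each $A^{n_\theta}$ (which is already absorbed into the framings of the $\rho_\theta$). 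The quotient is the formal completion at the identity of the smooth homogeneous space $GL_n/\prod_{[\theta]}GL_{n_\theta}$, which is formally smooth over $\Oc$ of relative dimension $n^2-\sum n_\theta^2$. This contributes the power series factor in the claimed formula.

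The main obstacle will be the last paragraph's bookkeeping: verifying that the map $\prod_{[\theta]}GL_{n_\theta}\into GL_n$ obtained by acting on the $n_\theta$-factor of each $\tilde\theta\otimes A^{n_\theta}$ and then inducing is a closed embedding of smooth group schemes over $\Oc$, and that the resulting quotient is genuinely formally smooth of the claimed dimension, so that the torsor is pro-represented by $\Oc[[X_1,\ldots,X_{n^2-\sum n_\theta^2}]]$. Once this is set up cleanly, the isomorphism of the lemma follows by Yoneda from the identification of the functors of points.
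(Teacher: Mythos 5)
Your argument is correct and is essentially the proof the paper delegates to its references: the paper's own ``proof'' is a citation of Choi's framed modification of \cite{ClozelHarrisTaylor2008-Automorphy} Corollary~2.4.13 (via \cite{MR3554238} Lemma~2.3), whose content is exactly the prime-to-$l$ Clifford-theoretic decomposition $M \cong \bigoplus_{[\theta]}\Ind_{G_\theta}^{G_F}(\tilde\theta\otimes M_\theta)$ plus the framing bookkeeping you carry out. The one point to treat carefully in your last step is that $\prod_{[\theta]}(1+M_{n_\theta}(\mf_A))$ acts \emph{diagonally} (by conjugation on the framed lifts $\rho_\theta$ and by composition on the chosen isomorphism), so splitting off the power-series factor requires a functorial section of $1+M_n(\mf_A)\onto (1+M_n(\mf_A))/\prod_{[\theta]}(1+M_{n_\theta}(\mf_A))$ — which exists precisely because of the formal smoothness of the quotient that you already flag as the remaining verification.
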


\begin{proof}
  This is a modification, due to Choi~\cite{Choi2009-LocalDeformationRings}, of \cite{ClozelHarrisTaylor2008-Automorphy}
  Corollary~2.4.13 to take into account the framings.  See \cite{MR3554238} Lemma~2.3.
\end{proof}

The next result is due to David Helm, and will appear in a forthcoming paper of his.
I thank him for allowing me to include the proof here.

\begin{definition}
  Suppose that $R$ is a ring.  Let $\Mc(n,q)_R$ be the moduli space (over $\Spec R$) of pairs of
  matrices $\Sigma, \Phi \in GL_{n,R} \times_{\Spec R} GL_{n,R}$ such that \[\Phi \Sigma \Phi^{-1} =
  \Sigma^q.\] It is the closed subscheme of $GL_{n,R} \times_{\Spec R} GL_{n,R}$ cut out by the
  $n^2$ matrix coefficients of the above equation.  Denote by $\pi_\Sigma$ the morphism
  \begin{align*}\pi_\Sigma : \Mc(n,q)_R &\longrightarrow GL_{n,R} \\
    (\Sigma,\Phi) & \mapsto \Sigma.
  \end{align*}
\end{definition}

\begin{theorem}\label{thm:complete-intersection} The scheme $\Spec R^\square(\rhobar)$ is a 
  reduced complete intersection, flat and equidimensional of relative dimension $n^2$ over
  $\Spec \Oc$.
\end{theorem}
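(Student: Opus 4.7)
The plan is to identify $\Spf R^\square(\rhobar)$ with a completion of $\Mc(n,q)_\Oc$ and then verify the four required properties (flat, complete intersection, reduced, equidimensional of relative dimension $n^2$) globally for $\Mc(n,q)_\Oc$. By Lemma~\ref{lem:tame-reduction} I first reduce to the case where $\rhobar$ is trivial on $\tilde P_F$: all four properties propagate through completed tensor products of $\Oc$-flat algebras and the adjoining of formal power series variables, with relative dimensions summing as $\sum_\theta n_\theta^2+(n^2-\sum_\theta n_\theta^2)=n^2$ and reducedness using that $\FF$ is perfect. After this reduction, $\rhobar$ factors through $G_F/\tilde P_F$, which by~\eqref{eqn:presentation} is topologically generated by $\phi$ and (the pro-$l$ image of) $\sigma$ subject only to $\phi\sigma\phi^{-1}=\sigma^q$; continuity of a lift $\rho$ to $A\in\Cc_\Oc$ is automatic, because $\rhobar(\sigma)$ has $l$-power order and hence so does any lift of it to $GL_n(A)$. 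Framed lifts therefore correspond bijectively to $A$-points of $\Mc(n,q)_\Oc$ reducing to $x=(\rhobar(\sigma),\rhobar(\phi))$, and $\Spf R^\square(\rhobar)$ is the completion of $\Mc(n,q)_\Oc$ at $x$.

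Next I would establish the geometry of $\Mc(n,q)_\Oc$. It is cut out by the $n^2$ entries of $\Phi\Sigma-\Sigma^q\Phi$ inside the smooth $\Oc$-scheme $GL_n\times_\Oc GL_n$ of relative dimension $2n^2$; Krull's Hauptidealsatz then gives relative dimension $\geq n^2$ at every point, with equality equivalent to the complete intersection property. For the matching upper bound on the special fibre I would stratify via $\pi_\Sigma$: the geometric fibre over $\Sigma$ is empty unless $\Sigma$ is $GL_n$-conjugate to $\Sigma^q$, in which case it is a torsor under the centraliser $Z_{GL_n}(\Sigma)$, so the stratum over a conjugacy class $C$ with $C=C^q$ has dimension $\dim C+\dim Z_{GL_n}(\Sigma_0)=n^2$. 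Combining this with the generic fibre dimension in Lemma~\ref{lem:dimension} forces relative dimension exactly $n^2$ throughout, yielding the complete intersection (hence Cohen--Macaulay and equidimensional) property; flatness over $\Spec\Oc$ then follows from miracle flatness over the regular one-dimensional base.

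For reducedness, since $\Mc(n,q)_\Oc$ is Cohen--Macaulay it suffices that each fibre be generically reduced. The generic fibre is handled by the generic formal smoothness of Lemma~\ref{lem:dimension}. For the special fibre I would exhibit a smooth point on every irreducible component by choosing a regular semisimple $\Sigma\in GL_n(\bar{\FF})$ whose eigenvalues are permuted by $x\mapsto x^q$: in eigencoordinates the defining equations decouple into the scalar conditions $(\lambda_j-\lambda_i^q)\Phi_{ij}=0$, and a direct Jacobian calculation (using $\gcd(q,l)=1$ so that the diagonal derivatives $q\lambda_i^{q-1}$ are non-vanishing) produces the expected tangent space of dimension $n^2$ and hence smoothness at $(\Sigma,\Phi)$. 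The main obstacle I foresee is precisely verifying that every irreducible component of $\Mc(n,q)_\FF$ meets this regular semisimple locus: this amounts to classifying the irreducible components (indexed by combinatorial data generalising the cycle types of the $q$-th-power action on multisets of eigenvalues) together with the closure relations between strata, which is the technical heart of the proof. Once this is done, generic reducedness of both fibres combines with flatness to give reducedness of $\Mc(n,q)_\Oc$, and hence of its completion at $x$, completing the proof.
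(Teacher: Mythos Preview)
Your overall strategy matches the paper's almost exactly: reduce to the tame case via Lemma~\ref{lem:tame-reduction}, identify $R^\square(\rhobar)$ with a completion of $\Mc(n,q)_\Oc$, and prove the complete intersection and flatness properties globally on $\Mc(n,q)_\Oc$ via the stratification by conjugacy classes of $\Sigma$. One small omission in the dimension count is the finiteness of the $q$-stable conjugacy classes (the eigenvalues of $\Sigma$ are forced to be $(q^{n!}-1)$th roots of unity), though your argument can be made to work without it since each stratum is already irreducible of dimension $n^2$.

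The genuine gap is in your treatment of reducedness. Your plan to show that the special fibre is generically reduced by exhibiting a regular semisimple $\Sigma$ on every irreducible component cannot succeed: by the stratification (this is Proposition~\ref{prop:cpts}), the irreducible components of $\Mc(n,q)_\FF$ are exactly the closures $\overline{\pi_\Sigma^{-1}(C)}$ for $q$-stable conjugacy classes $C$, and there is one such component for \emph{each} $C$, including every unipotent conjugacy class. On the component corresponding to a non-trivial unipotent $C$, no $\Sigma$ is regular semisimple, so your Jacobian calculation never applies there. The ``closure relations'' you propose to work out would only confirm this, not circumvent it.

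Fortunately the special-fibre analysis is unnecessary, and this is where the paper's argument is cleaner than yours. Once you know $\Mc(n,q)_\Oc$ is Cohen--Macaulay and $\Oc$-flat, every minimal prime lies over the generic point of $\Spec\Oc$ (flatness means $\lambda$ is a nonzerodivisor), so $R_0$ for the total space is equivalent to $R_0$ for the generic fibre, which you already have from Lemma~\ref{lem:dimension}. Serre's criterion ($R_0+S_1$, the latter from Cohen--Macaulay) then gives reducedness directly. In short: drop the entire special-fibre paragraph and replace ``it suffices that each fibre be generically reduced'' with ``by $\Oc$-flatness it suffices that the generic fibre be generically reduced'', and your proof is complete and essentially identical to the paper's.
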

\begin{proof} 
  Suppose that $k$ is an algebraically closed field of characteristic distinct from $p$, and
  consider $\Mc(n,q)_k$.  Let $\Sigma_0$ be a closed point in the image of $\pi_\Sigma$, let $Z_0$
  be the centraliser of $\Sigma_0$ in $GL_{n,k}$ (a closed subgroup scheme of $GL_{n,k}$) and let
  $C_0$ be the conjugacy class of $\Sigma_0$ in $GL_{n,k}$, a locally closed subscheme of $GL_{n,k}$
  isomorphic to $GL_{n,k}/Z_0$.  Then $\pi_\Sigma^{-1}(\Sigma_0)$ is (by right multiplication on
  $\Phi$) a $Z_0$-torsor.  Thus the preimage $\pi_\Sigma^{-1}(C_0)$ in $\Mc(n,q)_k$ has
  dimension \[\dim C_0 + \dim Z_0 = n^2 - \dim Z_0 + \dim Z_0 = n^2. \] Since the eigenvalues of any
  $\Sigma$ in the image of $\pi_{\Sigma}$ must be $(q^{n!} - 1)$th roots of unity, the number of
  conjugacy classes $C_0$ of matrices in the image of $\pi_\Sigma$ is finite.\footnote{Here we use
    that $q > 1$.  It is unknown whether the moduli space of pairs of commuting matrices over $\CC$
    is Cohen--Macaulay (or even reduced!), although this is conjectured to be the case (see
    \cite{MR1731818}).}  Therefore \[\dim \Mc(n,q)_k = n^2.\]
 
  Now let $R = \Oc$.  We see that $\Mc(n,q)_\Oc \rarrow \Spec \Oc$ is equidimensional of
  dimension~$n^2$.  But the smooth scheme $GL_{n,\Oc}\times_\Oc GL_{n,\Oc}$ has relative dimension
  $2n^2$ over $\Spec \Oc$ and $\Mc(n,q)_\Oc$ is a closed subscheme cut out by $n^2$ equations; it
  follows that $\Mc(n,q)_\Oc$ is a local complete intersection.  In particular, it is a
  Cohen--Macaulay scheme.  As its fibres over the regular local ring $\Spec \Oc$ are of the same
  dimension, $n^2$, it is flat over $\Spec \Oc$.
 
  Now, by Lemma~\ref{lem:tame-reduction}, the assertions of the theorem may be reduced to the case
  in which $\rhobar$ is tamely ramified (using Lemma~3.3 of \cite{MR2827723} to propagate flatness,
  reducedness, and dimension from objects of $\Cc^\wedge_{\Oc}$ to their completed tensor products).
  In this case, any lift of $\rhobar$ to an object of $\Cc_{\Oc}$ is also tamely ramified, as $P_F$
  is pro-$p$.  Our choice of topological generators $\phi$ and $\sigma$ for $G_F/P_F$ satisfying the
  equation $\phi \sigma \phi^{-1} = \sigma^{q}$ provides a closed point of $\Mc(n,q)_\Oc$
  corresponding to $\rhobar$ and identifies $R^\square(\rhobar)$ with the completion of the local
  ring of $\Mc(n,q)_\Oc$ at this point (to see this, compare the $A$-valued points for $A$ an object
  of $\Cc_\Oc$).  Therefore, by the corresponding facts for $\Mc(n,q)_\Oc$, we have shown that
  $R^\square(\rhobar)$ is a complete intersection and is flat over $\Oc$.  It is reduced since it is
  generically reduced (by Lemma~\ref{lem:dimension} and the fact that it is $\Oc$-flat) and
  Cohen--Macaulay.
\end{proof}

We extract the following consequence of the proof:

\begin{proposition} \label{prop:cpts} If $k$ is a field of characteristic distinct from $p$ that
  contains all of the $(q^{n!}-1)$th roots of unity, and $C$ is a conjugacy class in $GL_n(\bar{k})$
  that is stable under the $q$th power map, then the Zariski closure in $\Mc(n,q)_k$ of
  $\pi_\Sigma^{-1}(C)$ is an absolutely irreducible component of $\Mc(n,q)_k$, denoted
  $\Mc(n,q,\Sigma \sim C)_k$.  Every irreducible component of $\Mc(n,q)_k$ is of this form.
\end{proposition}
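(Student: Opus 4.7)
The plan builds directly on the ingredients already assembled in the proof of Theorem~\ref{thm:complete-intersection}. First I would observe that, since the eigenvalues of any $\Sigma$ in the image of $\pi_\Sigma$ are $(q^{n!}-1)$th roots of unity, and $k$ contains all such roots by assumption, each $q$-stable conjugacy class $C$ in $GL_n(\bar{k})$ meeting the image of $\pi_\Sigma$ is the $\bar{k}$-points of a locally closed, geometrically irreducible subscheme $C_k \subset GL_{n,k}$ (determined by its Jordan type). Also, $q$-stability is equivalent to saying there is \emph{some} $\Phi \in GL_n(\bar{k})$ with $\Phi\Sigma\Phi^{-1} = \Sigma^q$, so $\pi_\Sigma^{-1}(C_k)$ is non-empty whenever $C$ is $q$-stable and meets the image.

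Next I would show that $\pi_\Sigma^{-1}(C_k)$ is geometrically irreducible of dimension $n^2$. Fix a representative $\Sigma_0 \in C_k(\bar{k})$ with centraliser subgroup scheme $Z_{0,k}\subset GL_{n,k}$. The centraliser of $\Sigma_0$ in $M_n$ is a linear subspace, so $Z_{0,k}$ is a dense Zariski open in an affine space, hence geometrically irreducible; consequently $C_k \cong GL_{n,k}/Z_{0,k}$ is geometrically irreducible of dimension $n^2 - \dim Z_{0,k}$. Right multiplication of $\Phi$ by $Z_{0,k}$ turns $\pi_\Sigma^{-1}(C_k) \to C_k$ into a $Z_{0,k}$-torsor, and a torsor with geometrically irreducible structure group over a geometrically irreducible base is geometrically irreducible. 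The dimension count (already carried out in the proof of Theorem~\ref{thm:complete-intersection}) gives $n^2$. Since $\Mc(n,q)_k$ is equidimensional of dimension $n^2$ by that theorem, the Zariski closure $\Mc(n,q,\Sigma\sim C)_k$ of $\pi_\Sigma^{-1}(C_k)$ is then forced to be an absolutely irreducible component.

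For the converse I would use the finiteness observation made in the proof of Theorem~\ref{thm:complete-intersection}: the image of $\pi_\Sigma$ is contained in the (finite) union of the $GL_n(\bar{k})$-conjugacy classes $C$ whose elements have eigenvalues in $\mu_{q^{n!}-1}$, and any such $C$ in the image is automatically $q$-stable because $\Phi\Sigma\Phi^{-1} = \Sigma^q$. Thus $\Mc(n,q)_k$ is a finite union $\bigcup_C \pi_\Sigma^{-1}(C_k)$, each of whose closures has the maximal dimension $n^2$; so the irreducible components of $\Mc(n,q)_k$ are exactly the $\Mc(n,q,\Sigma\sim C)_k$. The only real thing to verify beyond what is already in the proof of Theorem~\ref{thm:complete-intersection} is the geometric irreducibility of $Z_{0,k}$, which I expect to be the main (though quite mild) point; it follows at once from the observation that the commutant of a matrix in $M_n$ is a linear subspace.
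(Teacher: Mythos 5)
Your argument is correct and follows essentially the same route as the paper: use the $q$th-power stability and the hypothesis on roots of unity to see that $C$ is defined over $k$ and absolutely irreducible, observe that $\pi_\Sigma^{-1}(C)\to C$ is a torsor under the (geometrically irreducible) centraliser so that the preimage is absolutely irreducible of dimension $n^2$, and then use equidimensionality plus the finite decomposition of $\Mc(n,q)_k$ into the sets $\pi_\Sigma^{-1}(C)$ for the converse. The only difference is that you spell out the geometric irreducibility of the centraliser (as an open subscheme of a linear subspace of $M_n$), a point the paper leaves implicit.
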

\begin{proof} As $C$ is stable under the $q$th power map and $k$ contains the $(q^{n!}-1)$th roots of
  unity, $C$ contains a $k$-point.  Then $C$ is absolutely irreducible and the fibres of
  $\pi_{\Sigma}$ above points of $C$ are all absolutely irreducible of dimension $\dim \Mc(n,q)_k -
  \dim C$.  Therefore the closure of $\pi_{\Sigma}^{-1}(C)$ is absolutely irreducible of the same
  dimension as $\dim \Mc(n,q)_k$, and is therefore an absolutely irreducible component.

  As every point of $\Mc(n,q)_k$ is in $\pi_\Sigma^{-1}(C)$ for some $C$, we obtain the final statement.
\end{proof}

\subsection{Cycles}
\label{sec:cycles}

Suppose that $X$ is a noetherian scheme and that $\Fc$ is a coherent sheaf on $X$.  Let $Y$ be the
scheme-theoretic support of $\Fc$, and let $d \geq \dim Y$.  Let $\Zc^d(X)$ be the free abelian
group on the $d$-dimensional points of $X$; elements of $\Zc^d(X)$ are called $d$-dimensional
cycles.  If $\af \in X$ is a point of dimension $d$ write $[\af]$ for the corresponding element of
$\Zc^d(X)$ and define the multiplicity $e(\Fc,\af)$ to be the length of $\Fc_{\af}$ as an
$\Oc_{Y,\af}$-module (this is zero if $\af \not \in Y$).

\begin{definition} The cycle $Z^d(\Fc)$ associated to $\Fc$ is the element
\[\sum_{\af}e(\Fc,\af) [\af] \in \Zc^d(X).\]
\end{definition}

If $X = \Spec A$ is affine and $\Fc = \widetilde{M}$ is the coherent sheaf associate to a finitely
generated $A$-module $M$, then we will write $Z^d(M)$ for $Z^d(\Fc)$.  If $X$ is equidimensional of
dimension $d$, then we will usually drop $d$ from the notation, so that $\Zc(X) = \Zc^d(X)$, $Z(\Fc)
= Z^d(\Fc)$ etc.

If $i:X\rarrow X'$ is a closed immersion of $X$ in a noetherian scheme $X'$, then there is a natural
inclusion $i_*:\Zc^d(X) \rarrow \Zc^d(X')$ for each $d$.  For a coherent sheaf $\Fc$ on $X$ whose
support has dimension at most $d$, we then have \[i_*(Z^d(\Fc)) = Z^d(i_*(\Fc)).\]  We will often use
this compatibility without comment.

If $X$ is a noetherian scheme of dimension $d$ and $X \rarrow \Spec \Oc$ is a flat morphism, then let
\[j : \bar{X} = X \times_{\Spec\Oc}\Spec\FF \rarrow X\] be the inclusion of the special fibre and
denote by $\red$ the reduction map
\[\red :  \Zc(X) \rarrow \Zc(\bar{X})\]
which takes a $d$-dimensional point $\af$ with closure $Y$ to the cycle $Z^{d-1}(j^*\Oc_{Y})$.
The following is a special case of \cite{JMJ:9091284} Proposition~2.2.13:
\begin{lemma}\label{lem:cycle-reduction}
  In the above situation, if $\Fc$ is a coherent sheaf on $X$ such that multiplication by $\lambda$
  is injective on $\Fc$, then 
  \[\pushQED{\qed}\red(Z^d(\Fc)) = Z^d(j^*(\Fc)).\qedhere \popQED\] 
\end{lemma}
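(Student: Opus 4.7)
The identity is additive in $\Fc$ and can be checked coefficient-by-coefficient at each $(d{-}1)$-dimensional point $\bfrak$ of $\bar X$, so I would begin by localising: set $A = \Oc_{X,\bfrak}$ and $M = \Fc_\bfrak$, a finitely generated $A$-module on which $\lambda$ acts injectively. Comparing coefficients of $[\bfrak]$ on the two sides of the lemma, the required statement becomes the length identity
\[\length_{A/\lambda}(M/\lambda M) \;=\; \sum_{\pf} \length_{A_\pf}(M_\pf)\cdot \length_{A/\lambda}\!\bigl(A/(\pf + \lambda A)\bigr),\]
where $\pf$ ranges over the primes of $A$ corresponding to $d$-dimensional points of $X$ that specialise to $\bfrak$.

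I would prove this inside the Grothendieck group $K_0(A/\lambda)$ of finitely generated $A/\lambda$-modules via the specialisation map
\[\tau\colon K_0(A) \longrightarrow K_0(A/\lambda), \qquad \tau([N]) \defeq [N/\lambda N] - \bigl[\ker(\lambda\colon N\to N)\bigr].\]
This map is well-defined by the long exact $\mathrm{Tor}$ sequence for $\otimes_A A/\lambda$, and satisfies $\tau([N]) = [N/\lambda N]$ whenever $\lambda$ is $N$-regular; in particular $\tau([M]) = [M/\lambda M]$. Moreover, for a cyclic quotient $A/\qf$, a direct check using $\mathrm{Tor}_1^A(A/\lambda, A/\qf) = \ker(\lambda\colon A/\qf \to A/\qf)$ gives $\tau([A/\qf]) = [A/(\qf + \lambda A)]$ if $\lambda \notin \qf$, and $\tau([A/\qf]) = 0$ if $\lambda \in \qf$ (since then $\lambda$ acts as zero on $A/\qf$ and the two terms defining $\tau$ cancel).

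The conclusion follows from the standard dimension filtration on $K_0(A)$: for any finitely generated $A$-module $N$, one has $[N] \equiv \sum_\pf \length_{A_\pf}(N_\pf)[A/\pf]$ modulo classes of modules of strictly smaller support dimension, where $\pf$ ranges over the minimal primes of $\supp N$. Since $\lambda$ is $M$-regular, no associated prime of $M$ (in particular no such $\pf$) contains $\lambda$. Applying $\tau$ therefore yields
\[[M/\lambda M] \;\equiv\; \sum_{\pf} \length_{A_\pf}(M_\pf)\,[A/(\pf + \lambda A)] \pmod{\text{lower-dimensional classes}}\]
in $K_0(A/\lambda)$; taking lengths after localising at $\bfrak$ kills the lower-dimensional contributions and gives the desired identity. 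The main obstacle is precisely this dimension-filtration bookkeeping: one must verify that the ``extra'' contributions from non-minimal $\qf_i$ arising in any chosen prime filtration of $M$ live in strictly lower-dimensional strata and therefore disappear in the coefficient at $\bfrak$. This is a standard consequence of the filtration on $K_0$ by support dimension.
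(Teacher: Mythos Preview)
Your argument is correct. The paper does not give its own proof of this lemma; it simply cites \cite{JMJ:9091284} Proposition~2.2.13. Your direct proof via the specialisation map $\tau\colon K_0(A)\to K_0(A/\lambda)$ is essentially the standard argument underlying that result.

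One remark that would streamline your write-up: the ``dimension-filtration bookkeeping'' you flag as the main obstacle is in fact trivial in this setting. Since $\bfrak$ is a $(d-1)$-dimensional point of $\bar{X}$ and $\bar{X}$ has dimension $d-1$, the point $\bfrak$ is a \emph{generic} point of $\bar{X}$, so $A/\lambda = \Oc_{\bar{X},\bfrak}$ is artinian and $A = \Oc_{X,\bfrak}$ is one-dimensional. Hence in any prime filtration of $M$ the factors $A/\qf_i$ have $\qf_i$ either a minimal prime of $A$ or the maximal ideal $\mf_A$; the minimal primes $\pf$ satisfy $\lambda\notin\pf$ (as $X$ is $\Oc$-flat), while $\lambda\in\mf_A$ so $\tau([A/\mf_A])=0$. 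Thus the ``lower-dimensional'' error terms are already annihilated by $\tau$ before you take lengths, and no further filtration argument is needed. With this observation your proof becomes a clean two-line computation.
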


If $f : X \rarrow Y$ is a flat morphism of noetherian schemes, with $X$ and $Y$ equidimensional of
dimensions $d$ and $e$ respectively, then we define a map \[f^* : \Zc^e(Y) \rarrow \Zc^d(X)\] by
taking a point $\af \in Y$ with closure $Z$ of dimension $e$ to the cycle \[Z^d(f^*\Oc_Z) \in
\Zc^d(X).\]

\begin{lemma}\label{lem:flat-pullback}
  In the above situation, if $\Fc$ is a coherent sheaf on $Y$ then 
\[f^*(Z^d(\Fc)) = Z^d(f^*(\Fc)).\]
\end{lemma}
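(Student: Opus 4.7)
The plan is to prove this identity by d\'evissage on $\Fc$. Any coherent sheaf on the noetherian scheme $Y$ admits a finite filtration $0 = \Fc_0 \subset \Fc_1 \subset \cdots \subset \Fc_n = \Fc$ whose successive quotients are of the form $\iota_{i,*} \Oc_{Z_i}$ for integral closed subschemes $\iota_i \colon Z_i \into Y$. The left hand side is additive in short exact sequences of coherent sheaves on $Y$, via the additivity of $Z^e$ at each $e$-dimensional point combined with the $\ZZ$-linearity of $f^*$. The right hand side is similarly additive: flatness of $f$ ensures that $f^*$ is exact, and $Z^d$ is additive on short exact sequences of coherent sheaves whose support has dimension at most $d$. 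By induction on the length of the filtration it therefore suffices to check the identity when $\Fc = \iota_* \Oc_Z$ for some integral closed $Z \subset Y$.

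Since $Y$ is equidimensional of dimension $e$, only two subcases arise. If $\dim Z < e$, then the left hand side vanishes, as $\iota_* \Oc_Z$ has no $e$-dimensional associated point; the right hand side vanishes as well, because flatness of $f$ gives $\dim f^{-1}(Z) = \dim Z + (d - e) < d$, so $f^*(\iota_* \Oc_Z)$ is supported in dimension strictly less than $d$ and contributes nothing to $\Zc^d(X)$. If $\dim Z = e$, then the stalk of $\iota_* \Oc_Z$ at the generic point $\eta_Z$ is a field, giving $Z^e(\iota_* \Oc_Z) = [\eta_Z]$; by the very definition of $f^*$, we have $f^*([\eta_Z]) = Z^d(f^* \Oc_Z)$, which matches the right hand side.

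The main technical point is the additivity of $Z^d$ on short exact sequences of coherent sheaves whose support is of dimension at most $d$: this reduces to additivity of length in short exact sequences of modules of finite length, applied at each $d$-dimensional point. Flatness of $f$ enters in two crucial places — preserving short exact sequences under $f^*$, so that the d\'evissage descends to $X$, and providing the dimension identity $\dim f^{-1}(Z) = \dim Z + (d - e)$ used in the vanishing subcase, which depends on $f$ having constant relative dimension $d - e$ (a consequence of flatness together with the equidimensionality hypotheses on $X$ and $Y$).
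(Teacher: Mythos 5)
Your proof is correct in outline and in conclusion, but it takes a genuinely different route from the paper's. The paper reduces at once to the affine case and compares coefficients at each $d$-dimensional point of $X$: if $\bfrak$ is a minimal prime of $S$, then $\af = \bfrak \cap R$ is a minimal prime of $R$ by going-down, and after localizing the identity becomes $\length_S(M\otimes_R S) = \length_R(M)\,\length_S(S/\af S)$ for a flat local map of artinian local rings, proved by d\'evissage on $M$ via a composition series with factors $R/\af$. You instead perform the d\'evissage globally on $\Fc$ before localizing and then split into the cases $\dim Z = e$ and $\dim Z < e$. The two arguments are close cousins (both ultimately rest on additivity of length), but yours keeps the geometry in view while the paper's minimizes it to a single piece of commutative algebra.

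The one step you must repair is the justification of the vanishing case. The identity $\dim f^{-1}(Z) = \dim Z + (d-e)$ is \emph{not} a consequence of flatness plus equidimensionality of $X$ and $Y$: flat morphisms of noetherian schemes need not have constant fibre dimension, and for the completion maps $\Spec \Oc_{\Xf,x}^{\wedge} \rarrow \Xf$ to which this lemma is actually applied the equality genuinely fails (already $\dim f^{-1}(Z) = -\infty$ when $x \notin Z$, and generic fibres of completions can have positive dimension). Fortunately you need much less, namely that $f^{-1}(Z)$ contains no $d$-dimensional point of $X$. Such a point is the generic point $\xi$ of an irreducible component of $X$ (equidimensionality of $X$); by going-down for the flat map $f$, its image $f(\xi)$ is a generic point of a component of $Y$, hence an $e$-dimensional point (equidimensionality of $Y$), which cannot lie in $Z$ when $\dim Z < e$. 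This is exactly the going-down input the paper invokes, and with it your case analysis closes.
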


\begin{proof}
  We may suppose that $X = \Spec S$ and $Y = \Spec R$ for noetherian rings $R$ and $S$, so that $f$
  induces a flat map $f^* : R \rarrow S$, and $\Fc = \widetilde{M}$ for a finitely generated
  $R$-module $M$.  If $\bfrak$ is a minimal prime of $S$ and $\af= \bfrak \cap R$, then $\af$ is a
  minimal prime of $R$ (by the going down property of flat morphisms) and we must show:
   \[\length_{R_\af}(M_\af)\length_{S_\bfrak}((R/\af
   \otimes_R S)_\bfrak)=\length_{S_\bfrak}((M\otimes_R S)_{\bfrak}).\] Replacing $R$ by $R_\af$, $S$ by $S_\bfrak$, and $M$ by $M_\af$, we may
   assume that $R, S$ are local and artinian and that $f$ is a local map of local rings, in which
   case we must show that
 \[\length_S(M \otimes_R S) = \length_R(M)\length_S(R/\af \otimes_R S),\]
 which is true as $S$ is flat over $R$ and $M$ has a finite composition series whose factors are all
 isomorphic to $R/\af$.
\end{proof}

Recall from \cite{MR2827723} lemma~3.3 that an object $R$ of $\Cc_{\Oc}^\wedge$ is
\emph{geometrically integral} (resp. \emph{geometrically irreducible}) if, for every finite
extension $E'/E$, $\Spec R\otimes \Oc_{E'}$ is integral (resp. irreducible).  Since $\FF$ and $E$
are perfect, geometrically integral is equivalent to reduced and geometrically irreducible.

\begin{lemma} \label{lem:product}
  Suppose that $R$ and $S$ objects of $\Cc_{\Oc}^\wedge$ that are either
  \begin{enumerate}
  \item flat over $\Oc$; or
  \item $\FF$-algebras,
  \end{enumerate}
  and that $R$ and $S$ are equidimensional of dimensions $d$ and $e$ respectively.  Suppose that
  every minimal prime $\pf$ of $R$ has the property that $R/\pf$ is geometrically integral, and that
  the same is true for $S$.  Then
  \[\Zc(R \hat{\otimes} S) = \Zc(R) \otimes \Zc(S).\]
\end{lemma}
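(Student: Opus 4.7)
The plan is to combine two ingredients: an integrality statement for the building blocks, and a cycle computation via a prime filtration.

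For the integrality step, I would show that for each minimal prime $\pf$ of $R$ and each minimal prime $\qf$ of $S$, the completed tensor product $(R/\pf) \hat{\otimes} (S/\qf)$ is a (complete local Noetherian) domain of the same Krull dimension as $R \hat{\otimes} S$. In the $\FF$-algebra case, $(R/\pf) \hat{\otimes} (S/\qf) = (R/\pf) \hat{\otimes}_\FF (S/\qf)$, and the standard fact that the completed tensor product over a perfect field of two geometrically integral complete local rings is itself geometrically integral gives the claim. In the flat case, the cited Lemma~3.3 of \cite{MR2827723} propagates geometric integrality through completed tensor products of $\Oc$-flat objects. The kernel of $R \hat{\otimes} S \onto (R/\pf) \hat{\otimes} (S/\qf)$ is therefore a minimal prime $\mathfrak{n}_{\pf,\qf}$ of $R \hat{\otimes} S$ (the quotient being a domain of maximal dimension), and different pairs of minimal primes $(\pf_i,\qf_j)$ give different $\mathfrak{n}_{ij}$ because the quotient domains have distinct generic points. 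This produces a natural injective map $\Zc(R) \otimes \Zc(S) \hookrightarrow \Zc(R \hat{\otimes} S)$ sending $[\pf_i] \otimes [\qf_j] \mapsto [\mathfrak{n}_{\pf_i,\qf_j}]$.

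For the multiplicity computation I would exploit flatness of $R \hat{\otimes} S$ over $R$ (in both cases this follows by base-changing the flat map $\Oc \to S$ or $\FF \to S$ and then completing, since completion of Noetherian rings is flat). Fix a prime filtration $0 = M_0 \subset M_1 \subset \dots \subset M_r = R$ of $R$ by $R$-submodules with $M_k/M_{k-1} \cong R/\pf^{(k)}$; each minimal prime $\pf_i$ then appears exactly $m_i \defeq e(R,\pf_i)$ times, and the remaining $\pf^{(k)}$ are embedded, so that $\dim R/\pf^{(k)} < \dim R$. Tensoring over $R$ with $R \hat{\otimes} S$ gives a filtration of $R \hat{\otimes} S$ whose subquotients are $(R/\pf^{(k)}) \hat{\otimes} S$. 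Since the top-dimensional cycle is additive on short exact sequences and vanishes on modules of strictly smaller dimension, the embedded terms contribute nothing. Applying the analogous filtration for $S$ to each surviving $(R/\pf_i) \hat{\otimes} S$ yields
\[
Z(R \hat{\otimes} S) \;=\; \sum_{i,j} m_i n_j \, Z\bigl((R/\pf_i) \hat{\otimes} (S/\qf_j)\bigr) \;=\; \sum_{i,j} m_i n_j \, [\mathfrak{n}_{\pf_i,\qf_j}],
\]
which under the identification above is precisely $Z(R) \otimes Z(S)$. Comparing with the minimal-prime expansion $Z(R \hat{\otimes} S) = \sum_\mathfrak{n} e(R \hat{\otimes} S,\mathfrak{n})[\mathfrak{n}]$ then forces every minimal prime of $R \hat{\otimes} S$ to be of the form $\mathfrak{n}_{\pf_i,\qf_j}$, so the injection of ingredient one is an isomorphism.

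The main obstacle is the integrality step in the flat case: in mixed characteristic one must exclude pathological interactions between generic and special fibres when forming the completed tensor product, for which the cited Lemma~3.3 of \cite{MR2827723} is essential. The remainder of the argument is largely formal once flatness of $R \hat{\otimes} S$ over each factor and the basic additivity properties of cycles from section~\ref{sec:cycles} are in hand.
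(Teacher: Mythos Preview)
Your argument is correct, but it is considerably more elaborate than the paper's own proof.  The paper simply invokes \cite{MR2827723}, Lemma~3.3, parts~5 and~6 (and part~2 for the dimension), which already state that every minimal prime of $R\hat\otimes S$ is of the form $\qf_1\hat\otimes S + R\hat\otimes\qf_2$ for uniquely determined minimal primes $\qf_1$ of $R$ and $\qf_2$ of $S$; the map $[\pf]\mapsto[\qf_1]\otimes[\qf_2]$ is then the required isomorphism, and the proof is two sentences long.  You use the same external reference, but only its integrality statement, and then supply the \emph{surjectivity} half of the bijection yourself via a prime-filtration computation of $Z(R\hat\otimes S)$.  This works, and as a bonus you obtain the stronger assertion $Z(R\hat\otimes S)=Z(R)\otimes Z(S)$ (with multiplicities), which the lemma as stated does not claim.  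The cost is that you must justify flatness of $R\hat\otimes S$ over $R$ and the dimension bound $\dim\bigl((R/\pf')\hat\otimes S\bigr)<\dim(R\hat\otimes S)$ for non-minimal $\pf'$, both of which are standard but not entirely trivial in the completed setting; the paper sidesteps these by citing the full classification of minimal primes directly.
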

\begin{proof}
  In case 1, by \cite{MR2827723} lemma~3.3 part~5, every minimal prime $\pf$ of $R \hat{\otimes} S$ is of the
  form $(\qf_1 \hat{\otimes} S + R\hat{\otimes} \qf_2)$ for uniquely determined minimal primes
  $\qf_1$ and $\qf_2$ of $R$ and $S$.  By \cite{MR2827723} lemma~3.3 part 2,
  $(R\hat{\otimes} S)/\pf$ has dimension $d + e - 1$, so that $R \hat{\otimes} S$ is equidimensional
  of dimension $d + e - 1$.  The map taking $[\pf]$ to $[\qf_1] \otimes [\qf_2]$ is the required
  isomorphism.

  The proof of case 2 is the same, appealing to \cite{MR2827723} lemma~3.3 part~6 rather than part~5.
\end{proof}

\section{Types.}
\label{sec:inertial-types}

In this section, unless otherwise stated all representations will be over a fixed algebraic
closure $\bar{E}$ of $E$.  We say that a representation of $W_F$ or $I_F$ on a finite-dimensional
$\bar{E}$-vector space $V$ is \emph{smooth} if it is continuous for the discrete topology on $V$,
and $\emph{continuous}$ if it is continuous for the $l$-adic topology on $V$.

\subsection{Inertial types}

A Weil--Deligne representation of the Weil group $W_F$ is a pair $(r,N)$ where 
\begin{itemize}
\item $r : W_F \rarrow GL(V)$ is a smooth representation on a finite-dimensional vector space $V$;
\item $N \in \End(V)$ satisfies \[r(g) N r(g)^{-1} = \|g\|N\] where $\|\cdot\| : W_F \onto W_F/I_F
  \onto q^{\ZZ}$ takes an arithmetic Frobenius element to~$q$.
\end{itemize}

If $\rho : W_F \rarrow GL(V)$ is a continuous representation of $W_F$ on a finite-dimensional vector space
$V$, then there is an associated Weil--Deligne representation (see for example \cite{MR546607}) that we denote $\WD(\rho)$.  

If $\rho : W_F \rarrow GL(V)$ is a smooth irreducible representation of $W_F$ on a finite dimensional
vector space $V$ and $k \geq 1$ is an integer, then define a Weil--Deligne representation
$\Sp(\rho,k)$ by
\[\Sp(\rho,k) = \left(V \oplus V(1) \oplus \ldots \oplus V(k-1),N\right)\]
where for $0 \leq i \leq k-2$, $N : V(i) \isomto V(i+1)$ is the isomorphism of vector spaces induced
by some choice of basis for $\bar{E}(1)$, and $N(V(k-1)) = 0$.  We define  $\Sp(\rho, 0) = 0$.

Every Frobenius-semisimple\footnote{Recall from, for example, \cite{MR546607} (4.1.3) that a
  Weil--Deligne representation $(r,N)$ is Frobenius--semisimple if $r$ is semisimple.  These
  representations form the Galois side of the local Langlands correspondence.} Weil--Deligne representation $(r,N)$
  is isomorphic to one of the form
\[\bigoplus_{i=1}^j \Sp(\rho_i,k_i)\] for smooth irreducible representations $\rho_i : W_F \rarrow
GL(V_i)$ and integers $j \geq 0$ and $k_i \geq 1$ for $i = 1, \ldots, j$.  Up to obvious
reorderings, the integers $j$ and $k_i$ are unique, and the representations $\rho_i$ are unique up to isomorphism.

\begin{definition}
  An \textbf{inertial type} is an isomorphism class of finite dimensional continuous representations
  $\tau$ of $I_F$ such that there exists a continuous representation $\rho$ of $W_F$ with
  $\rho|_{I_F} \cong \tau$.
\end{definition}

\subsection{}

The classification of (Frobenius-semisimple) Weil--Deligne representations yields a classification
of inertial types, which we now describe.

\begin{definition}
  The set $\Ic_0$ of \textbf{basic} inertial types is the set of inertial types $\tau_0$ that extend
  to a continuous \emph{irreducible} representation of $G_F$.
\end{definition}

Note that the $\tau_0$ do not need to be irreducible representations of $I_F$.

\begin{lemma}\label{lem:clifford}
Suppose that $t,t'$ are positive integers, $\rho_1,\ldots, \rho_t, \rho_1',\ldots, \rho_{t'}'$ are
  irreducible representations of $W_F$, and $k_1,\ldots,k_t, k_1',\ldots,k_{t'}'$ are positive integers.
  Then the representations of $W_F$ associated to \[\bigoplus_{i=1}^t \Sp(\rho_i,k_i)\] and
  \[\bigoplus_{i=1}^{t'} \Sp(\rho'_i,k'_i)\] have isomorphic restrictions to $I_F$ if and only if $t = t'$ 
  and there is an ordering $j_1, \ldots, j_t$ of $1,\ldots, t$ such that $k_i = k'_{j_i}$ and
  $\rho_i|_{I_F} \cong \rho'_{j_i}|_{I_F}$ for each $1 \leq i \leq t$.
  \end{lemma}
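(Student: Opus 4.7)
The easy direction is immediate: if $i \mapsto j_i$ is a bijection with $k_i = k'_{j_i}$ and $\rho_i|_{I_F} \cong \rho'_{j_i}|_{I_F}$, then $\Sp(\rho_i, k_i)|_{I_F}$ and $\Sp(\rho'_{j_i}, k'_{j_i})|_{I_F}$ are isomorphic summand by summand, and the associated $l$-adic $W_F$-representations inherit an isomorphism. For the converse, my plan is to extract the unordered multiset $\{(\rho_i|_{I_F}, k_i)\}_{i=1}^t$ as an invariant of the isomorphism class of $\rho|_{I_F}$, where $\rho$ is the continuous $l$-adic representation of $W_F$ associated to the Weil--Deligne representation $(r, N) = \bigoplus_{i=1}^t \Sp(\rho_i, k_i)$. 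The lemma's bijection then emerges as a matching of multisets.

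First, I would reconstruct the pair $(r|_{I_F}, N)$, up to simultaneous conjugation, from $\rho|_{I_F}$. Grothendieck's monodromy theorem yields $\rho(\sigma) = r(\sigma) \exp(t_l(\sigma) N)$ for $\sigma \in I_F$, where $t_l : I_F \rarrow \ZZ_l$ is the $l$-adic tame character; since $r$ is smooth, $r(\sigma)$ has finite order (hence is semisimple), $\exp(t_l(\sigma) N)$ is unipotent, and the two commute (because $\|\sigma\| = 1$). This is the Jordan decomposition of $\rho(\sigma)$, so the pair $(r|_{I_F}, N)$ is functorially determined by $\rho|_{I_F}$. Next, for each irreducible representation $\pi$ of $I_F$ let $V_\pi$ be the $\pi$-isotypic component of $V = \bigoplus_i \Sp(\rho_i, k_i)$; since $N$ commutes with $I_F$, it preserves $V_\pi$. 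Unpacking $\Sp(\rho_i, k_i) = \rho_i \oplus \rho_i(1) \oplus \cdots \oplus \rho_i(k_i-1)$ and noting that each $\rho_i(j)$ restricts to $\rho_i|_{I_F}$ on $I_F$ while $N$ shifts these summands cyclically, one sees that $N|_{V_\pi}$ has a single Jordan block of size $k_i$ (tensored with $\pi$) for each $i$ in which $\pi$ occurs in $\rho_i|_{I_F}$. The multiset of Jordan block sizes of $N|_{V_\pi}$ therefore recovers $\{k_i : \pi \text{ occurs in } \rho_i|_{I_F}\}$.

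To bundle each $k_i$ with the corresponding $\rho_i|_{I_F}$, I would invoke Clifford theory: since $W_F/I_F$ is topologically cyclic, the obstruction $H^2$ to extending a stabilizer representation vanishes, and so each $\rho_i|_{I_F}$ is a multiplicity-free direct sum over a single $W_F$-orbit of irreducible representations of $I_F$. In particular, two restrictions $\rho_i|_{I_F}$ and $\rho_j|_{I_F}$ are either isomorphic or share no irreducible constituent. Choosing one representative $\pi_O$ of each $W_F$-orbit $O$ and applying the previous step, I recover for each such orbit $O$ and each $k \geq 1$ the count $|\{i : \rho_i|_{I_F} \cong O \text{ and } k_i = k\}|$, and these counts are exactly the multiset $\{(\rho_i|_{I_F}, k_i)\}_i$. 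The most delicate ingredient is the multiplicity-freeness of the Clifford decomposition, without which Jordan block sizes alone could not separate equal-but-distinct pairs $(\rho_i|_{I_F}, k_i)$; granting it, the rest is bookkeeping via Jordan normal form inside each $I_F$-isotypic component.
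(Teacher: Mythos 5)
Your proof is correct and follows essentially the same route as the paper: both first recover the pair $(r|_{I_F},N)$ up to isomorphism from $\rho|_{I_F}$ (your Jordan-decomposition argument via Grothendieck's monodromy theorem is exactly the content of the cited result of Deligne), and both then rely on the Clifford-theoretic fact that each $\rho_i|_{I_F}$ is a multiplicity-free sum over a single Frobenius orbit, so that distinct restrictions share no constituent. Your explicit Jordan-block bookkeeping of $N$ on the $I_F$-isotypic components simply spells out the step the paper compresses into its reduction to the case $\rho = r$.
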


  \begin{proof} The ``if'' direction is clear.  We show the ``only if'' direction.  If $\rho$ is a
    continuous representation of $W_F$ with $\WD(\rho) = (r,N)$, then $r|_{I_F}$ and the
    $r|_{I_F}$-equivariant endomorphism $N$ are determined up to isomorphism by $\rho|_{I_F}$ (this
    follows from the construction of $\WD(\rho)$, see \cite{MR546607} Corollary~4.2.2).  So
    we may assume that $\rho = r$, so that all the $k_i$ are zero.  Now use the fact (proved by an
    exercise in Clifford theory) that, if $\rho$ is an irreducible representation of $W_F$,
    then \[\rho |_{I_F} \cong \mu_1 \oplus \ldots\oplus \mu_s\] for some integer $s$ and pairwise
    non-isomorphic irreducible representations $\mu_i$ of $I_F$ which are in a single orbit for the
    action of $G_F/I_F$ on irreducible representations of $I_F$; the representation $\mu_1$
    determines $\rho|_{I_F}$.  Therefore, if
  \[\bigoplus_{i=1}^t \rho_i|_{I_F} \cong \bigoplus_{i=1}^{t'} \rho'_i|_{I_F}\]
  then $\rho_1|_{I_F}$ has an irreducible component in common with some $\rho'_{j_1}|_{I_F}$, and so
  $\rho_1|_{I_F}\cong \rho'_{j_1}|_{I_F}$.  The lemma follows by induction.
\end{proof}

Let $\Part$ be the set of integer sequences $P = (P(1),P(2),\ldots)$ which are decreasing and
eventually zero.  We regard $P \in \Part$ as a partition of the integer
$\deg(P) = \sum_{i=1}^\infty P(i)$. For each $\tau_0 \in \Ic_0$, choose an irreducible extension
$\rho_{\tau_0}$ of $\tau_0$ to $W_F$.
\begin{definition}
Let $\Ic$ be the set of functions $\Pc : \Ic_0 \rarrow \Part$ with finite support. For $\Pc \in \Ic$ we can
form the Weil--Deligne representation 
\[\bigoplus_{\tau_0 \in \Ic_0} \bigoplus_{i=0}^\infty \Sp(\rho_{\tau_0},\Pc(\tau_0)(i)).\]
We define $\tau_\Pc$ to be the restriction to $I_F$ of the associated representation of $W_F$; it is
an inertial type.
\end{definition}
By Lemma~\ref{lem:clifford}, the isomorphism class of $\tau_{\Pc}$ is independent of the choices of
the $\rho_{\tau_0}$, and the map $\Pc \mapsto \tau_\Pc$ is a bijection between $\Ic$ and the set of
inertial types.  To $\Pc \in \Ic$ we associate the `supercuspidal support', the function $\scs(\Pc): \Ic_0
\rarrow \ZZ_{\geq 0}$ given by $\scs(\Pc)(\tau_0) = \deg \Pc(\tau_0)$. If $\tau = \tau_\Pc$ we
write $\scs(\tau) = \scs(\Pc)$.

Let $\succeq$ be the dominance order on $\Part$; that is, the partial order defined by $P_1 \succeq
P_2$ if and only if $\deg P_1 = \deg P_2$ and, for all $k \geq 1$, \[\sum_{i=1}^k P_1(i) \geq
\sum_{i=1}^k P_2(i).\] Then $\succeq$ induces a partial order on $\Ic$ for which $\Pc \succeq \Pc'$
if and only if $\Pc(\tau_0) \succeq \Pc'(\tau_0)$ for all $\tau_0 \in \Ic_0$; we also sometimes
regard $\succeq$ as a partial order on the set of inertial types.

\subsection{Fixed type deformation rings}
\label{sec:fixed-type-deform}

Let $\rhobar : G_F \rarrow GL_n(\FF)$ be a continuous representation and let $\tau$ be an inertial
type.  Suppose moreover that $\tau$ is defined over $E$, and that $E$ contains all
the $(q^{n!}-1)$th roots of unity.  We say that a morphism
$x:\Spec\bar{E} \rarrow \Spec R^\square(\rhobar)$ has type $\tau$ if the corresponding Galois
representation $\rho_x : G_F \rarrow GL_n(\bar{E})$ does.  Since $\tau$ is defined over $E$ this only
depends on the image of $x$.

\begin{definition}
  If $\tau$ and $\rhobar$ are as above, then $R^\square(\rhobar,\tau)$ is the reduced quotient of
  $R^\square(\rhobar)$ such that $\Spec R^\square(\rhobar,\tau)$ is the Zariski closure in $\Spec
  R^\square(\rhobar)$ of the $\bar{E}$-points of type $\tau$.
\end{definition}

If $x$ is an $\bar{E}$-point of $\Spec R^\square(\rhobar,\tau)$, say that $x$ is
\textbf{non-degenerate} if the associated Galois representation $\rho_x$ satisfies
$\WD(\rho_x) = r_l(\pi)$ for an irreducible admissible representation $\pi$ of $GL_n(F)$ that is
\emph{generic}\footnote{The significance to us of non-degenerate/generic representations is that
  they are contained in a unique component of the deformation rings
  (Proposition~\ref{prop:fixed-type} part 4) and the theory of $K$-types works well
  (Theorem~\ref{thm:K-types} part 3).} (see below for the defininitions of $r_l$ and generic).

\begin{proposition} \label{prop:fixed-type}
 For each inertial type $\tau$ defined over $E$:
 \begin{enumerate}
 \item $\Spec R^\square(\rhobar,\tau)$ is a union of irreducible components of $\Spec
   R^\square(\rhobar)$;
 \item if $x$ is a non-degenerate $\bar{E}$-point of $\Spec R^\square(\rhobar)$, then $x$ lies on a
   unique irreducible component of $\Spec R^\square(\rhobar)$ and $R^\square(\rhobar)[1/l]$ is
   formally smooth at $x$;
\item the non-degenerate $\bar{E}$-points are Zariski dense in $\Spec R^\square(\rhobar)$;
\item if $x$ is a non-degenerate $\bar{E}$-point of $\Spec R^\square(\rhobar,\tau)$, then $\rho_x$ has type $\tau$. 
 \end{enumerate}
\end{proposition}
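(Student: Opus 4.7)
The plan is to reduce to the tamely ramified case via Lemma~\ref{lem:tame-reduction} and then exploit the identification of $\Spec R^\square(\rhobar)$ with the completed local ring of the moduli scheme $\Mc(n,q)_\Oc$ from Theorem~\ref{thm:complete-intersection}, together with the classification of irreducible components in Proposition~\ref{prop:cpts}.

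First I would reduce to the case in which $\rhobar|_{P_F}$ is trivial: an inertial type $\tau$ decomposes as $\bigoplus_{[\theta]} \tau_\theta$ along the $G_F$-orbits of irreducible wild components, compatibly with the decomposition of $R^\square(\rhobar)$ in Lemma~\ref{lem:tame-reduction}, so that both the fixed-type quotients and the non-degenerate locus decompose factor by factor. Assuming $\rhobar$ tame, every lift $\rho$ is tame and corresponds to a pair $(\Sigma,\Phi) \in \Mc(n,q)$ with $\Phi\Sigma\Phi^{-1} = \Sigma^q$, and the isomorphism class of $\rho|_{I_F}$ is the $GL_n$-conjugacy class of $\Sigma$. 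Proposition~\ref{prop:cpts} then describes the components of $\Mc(n,q)_{\bar E}$ as the Zariski closures of $\pi_\Sigma^{-1}(C)$ for $q$-stable conjugacy classes $C$; by flatness of $R^\square(\rhobar)$ over $\Oc$, these account for all components of $\Spec R^\square(\rhobar)$ after completion at $(\rhobar(\sigma),\rhobar(\phi))$.

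For part~(1), the $\bar E$-points of type $\tau$ are exactly those whose $\Sigma$ lies in a single conjugacy class $C(\tau)$ determined by $\tau$, so by the description above their Zariski closure is a union of irreducible components of $\Spec R^\square(\rhobar)$. For parts~(2)--(4), I would analyse the non-degenerate locus via obstruction theory. Formal smoothness of $R^\square(\rhobar)[1/l]$ at a non-degenerate $\bar E$-point $x$ is equivalent to $H^2(G_F,\ad\rho_x) = 0$, which by local Tate duality (valid for $l \neq p$) is dual to $\Hom_{G_F}(\rho_x,\rho_x(1))$. The genericity of the associated admissible $\pi$ with $r_l(\pi) = \WD(\rho_x)$, unpacked via the $\Sp(\rho_i,k_i)$ classification of Frobenius-semisimple Weil--Deligne representations and the local Langlands correspondence, prevents any nontrivial $G_F$-map $\rho_x \to \rho_x(1)$, yielding the vanishing and hence~(2). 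For~(3), on each component (indexed by a $q$-stable conjugacy class $C$) the Frobenius-semisimple Weil--Deligne parameters whose nilpotent part $N$ has maximal rank within the Bernstein block form a dense open subset and correspond under local Langlands to generic representations, so non-degenerate points are dense. For~(4), non-degeneracy forces $N$ to be of maximal rank, which in turn forces $\Sigma_x$ to lie in the open conjugacy class $C$ of its component rather than in a degeneration $C' \subsetneq \bar C$; hence $x$ lies in $\pi_\Sigma^{-1}(C(\tau))$ and $\rho_x$ has type $\tau$.

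The main obstacle is the $H^2$-vanishing in part~(2), which requires a careful dictionary between the automorphic notion of genericity (existence of a Whittaker model, or equivalently full normalised parabolic induction from supercuspidal support) and the absence of nontrivial Galois maps $\rho_x \to \rho_x(1)$; the same dictionary feeds the stratification argument for~(4). The remaining ingredients --- tame reduction, the component description, and density of the generic stratum in $\Mc(n,q)$ --- are essentially formal given what is already established.
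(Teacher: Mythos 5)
Your treatment of part (1) is essentially the paper's: reduce to the tame case via Lemma~\ref{lem:tame-reduction}, identify $R^\square(\rhobar)$ with a completed local ring of $\Mc(n,q)_\Oc$, and pull back the components $\Mc(n,q,\Sigma\sim C)_\Oc$ of Proposition~\ref{prop:cpts} by going down. (The paper is a little more careful at the completed-tensor-product step, first enlarging $E$ so that all relevant components are geometrically integral, but that is a technicality.) For parts (2)--(4) the paper does not argue at all: it cites \cite{BLGGT2014-PotentialAutomorphy} Lemmas~1.3.2 and~1.3.4. Your reconstruction of (2) -- local Tate duality plus the dictionary between genericity of $\pi$ and $\Hom_{G_F}(\rho_x,\rho_x(1))=0$ -- is indeed what those lemmas do.

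Your arguments for (3) and especially (4), however, contain a genuine gap. You identify the non-degenerate locus with the locus where $N$ has maximal rank on the component, and you deduce (4) from ``non-degeneracy forces $N$ to be of maximal rank.'' Neither direction is free. For the forward direction: take $n=2$, the unramified component, and $\rho_x=\chi\oplus\chi\epsilon$ with $\chi$ unramified. Here $N=0$ is of maximal rank on that component, yet the $\pi$ with $r_l(\pi)=\WD(\rho_x)$ is the Langlands quotient of a reducible induction (the two segments are linked) and is not generic; so maximal-rank-$N$ points need not be non-degenerate, and density in (3) must instead come from placing the unramified twists of the supercuspidal support in general position so that no two segments are linked. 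For the converse, which is what (4) requires, the claim that a non-degenerate point of $Z\subseteq\Spec R^\square(\rhobar,\tau)$ cannot lie in the proper closed subset where the rank of $N$ drops is essentially the statement being proved, and you give no argument for it. The missing input (the content of \cite{BLGGT2014-PotentialAutomorphy} Lemma~1.3.4(1)) is that the inertial type is locally constant at a point with $H^0(G_F,\ad\rho_x(1))=0$: the locus of lifts of $\rho_x$ with fixed restriction to $I_F$ is formally smooth of dimension $n^2$, which by your part (2) equals the dimension of the whole lifting ring at $x$, so the two coincide near $x$ and no degeneration of the type can occur along $Z$. Without this dimension count, or an equivalent, parts (3) and (4) are not established.
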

\begin{proof}
  Parts 2--4 follow from \cite{BLGGT2014-PotentialAutomorphy} Lemmas~1.3.2 and~1.3.4.  To show the
  first part we use Proposition~\ref{prop:cpts}.  Firstly, note that under the isomorphism of
  Lemma~\ref{lem:tame-reduction} we have that
  \[ R^\square(\rhobar,\tau) \cong \left(\widehat{\bigotimes}_{[\theta]}
      R^\square(\rhobar_\theta,\tau_\theta) \right)[[X_1, \ldots, X_{n^2 - \sum n^2_\theta}]]\] for
  some tamely ramified inertial types $\tau_\theta$.  We have to check that every minimal prime
  ideal of $R^\square(\rhobar, \tau)$ pulls back to a minimal prime ideal of $R^\square(\rhobar)$.
  This property may be checked after enlarging $E$ to a finite extension $E'$ with ring of integers
  $\Oc'$, which we choose so that every irreducible component of
  $R^\square(\rhobar, \tau)\otimes_\Oc \Oc'$ and of every $R^\square(\rhobar, \tau')$ is
  geometrically integral.  Then, by \cite{MR2827723} lemma~3.3 part~5 (see also
  Lemma~\ref{lem:product}), it suffices to prove the claim in the case that $\rhobar$ is tamely
  ramified.

  So suppose that $\rhobar$ is tamely ramified.  From our choices of topological generators
  $\sigma, \phi$ of $G_F/I_F$ we have, as in the proof of Theorem~\ref{thm:complete-intersection},
  that $R^\square(\rhobar)$ is the completed local ring of $\Mc(n,q)_\Oc$ at the closed point of the
  special fibre corresponding to $\rhobar$; in particular we have a flat morphism
  $i:\Spec R^\square(\rhobar) \rarrow \Mc(n,q)_\Oc$.  Let $C$ be the conjugacy class in
  $GL_n(\bar{E})$ of $\tau(\sigma)$.  Then in Proposition~\ref{prop:cpts} we defined the irreducible
  component $\Mc(n,q, \Sigma \sim C)_E$ of $\Mc(n,q)_E$; let $\Mc(n,q,\Sigma\sim C)_\Oc$ be its
  closure in $\Mc(n,q)_\Oc$, which is an irreducible component.  Then
  \[\Spec R^\square(\rhobar, \tau) = i^{-1}(\Mc(m,q,\Sigma\sim C)_\Oc) \subset R^\square(\rhobar)\] is a union of irreducible
  components of $\Spec R^\square(\rhobar)$ by the going down theorem.
\end{proof}

\subsection{$K$-types}
\label{sec:k-types-1}

Recall the local Langlands correspondence $\rec_F$ of \cite{MR1876802} Theorem~A, which is defined
over the complex numbers.  If $\pi$ is an irreducible admissible $\bar{\QQ}_l$-representation of $GL_n(F)$ and
$\iota : \bar{\QQ}_l \isomto \CC$ is our choice of isomorphism, let 
\[r_l(\pi) = \iota^{-1} \circ \rec_F(\iota \circ (\pi \otimes |\det|^{\frac{1-n}{2}})).\] Then
$r_l(\pi)$ is an $n$-dimensional Frobenius-semisimple Weil--Deligne representation of $W_F$ over
$\bar{\QQ}_l$ and is independent of the choice of $\iota$ (see
\cite{MR1838079}~\S7.4).\footnote{This normalisation is convenient for local--global compatibility;
  the notation agrees with that of \cite{1310.0831} but differs from that of \cite{MR1876802} --- our
  $r_l(\pi)$ is their $r_l(\pi^\vee \otimes |\det|^{1-g})$.}

If $\Sc : \Ic_0 \rarrow \ZZ_{\geq 0}$ is a function with finite support such that \[\sum_{\tau_0 \in
  \Ic_0} \dim \tau_0 \Sc(\tau_0) = n,\] then we can consider the full subcategory $\Omega_{\Sc}$ of
$\Rep_{\bar{E}}(GL_n(F))$ all of whose irreducible subquotients $\pi$ satisfy
\[ \scs(r_l(\pi)|_{I_F}) = \Sc.\]
The category $\Rep_{\bar{E}}(GL_n(F))$ is then the direct product of the 
$\Omega_{\Sc}$; these are the \textbf{Bernstein components} of $\Rep_{\bar{E}}(GL_n(F))$.  See, for
example, \cite{MR1643417}~\S1.  If $\Sc$ is supported on a single $\tau_0$ and maps it to 1, then we
say that $\Omega_{\Sc}$ is supercuspidal.  This is equivalent to every irreducible object of
$\Omega_{\Sc}$ being supercuspidal.

It is one of the main results of the theory of Bushnell and Kutzko developed in
\cite{BushnellKutzko1993-AdmissibleDualGLN} and \cite{BushnellKutzko1999-SemisimpleTypesGLN} that,
for each Bernstein component $\Omega$ of \[\Rep_{\bar{E}}(GL_n(F)),\] there is a compact open subgroup
$J \subset GL_n(F)$ and a representation $\lambda$ of $J$ with the following property: if $\pi \in
\Rep_{\bar{E}}(GL_n(F))$ is generated by its $\lambda$-isotypic vectors, then $\pi$ is in $\Omega$.  We
call $(J,\lambda)$ a \textbf{type} for the Bernstein component $\Omega$.  If $K \supset J$ is a maximal
compact subgroup of $GL_n(F)$ and $\Omega$ is supercuspidal, then $\Ind_J^K \lambda$ is irreducible
and is a \textbf{$K$-type} for $\Omega$.

In \cite{SchneiderZink1999-KTypesTemperedComponentsGeneralLinear}, Schneider and Zink
refine this by providing $K$-types for a certain `stratification' of $\Rep_{\bar{E}}(GL_n(F))$.  We use
their results in the following Galois--theoretic form (c.f. \cite{MR2656025} Proposition~6.3.3):

\begin{theorem} \label{thm:K-types} Let $\tau$ be an inertial type of dimension $n$.  Then there is a smooth
  irreducible $\bar{E}$-representation $\sigma(\tau)$ of $GL_n(\Oc_F)$ such that, for each
  irreducible admissible $\bar{E}$-representation $\pi$ of $GL_n(F)$, we have:
  \begin{enumerate}
  \item if $\pi|_{GL_n(\Oc_F)}$ contains $\sigma(\tau)$, then $r_l(\pi)|_{I_F} \preceq \tau$;
  \item if $r_l(\pi)|_{I_F} \cong \tau$, then $\pi|_{GL_n(\Oc_F)}$ contains $\sigma(\tau)$ with
    multiplicity one;
  \item if $r_l(\pi)|_{I_F} \preceq \tau$ and $\pi$ is \emph{generic}, then $\pi|_{GL_n(\Oc_F)}$
    contains $\sigma(\tau)$.
  \end{enumerate}
\end{theorem}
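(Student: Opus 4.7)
The plan is to define $\sigma(\tau)$ by importing the construction of Schneider and Zink \cite{SchneiderZink1999-KTypesTemperedComponentsGeneralLinear} and translating their automorphic-side statements into the Galois-theoretic formulation via the local Langlands correspondence $r_l$. The combinatorial bookkeeping is already in place: the bijection $\Pc \mapsto \tau_\Pc$ parametrizes inertial types by finitely supported functions $\Pc : \Ic_0 \to \Part$, and the supercuspidal support $\scs(\Pc)$ pins down a Bernstein block $\Omega_{\scs(\Pc)}$ of $\Rep_{\bar{E}}(GL_n(F))$.

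The construction step would proceed as follows. Fix a Bushnell--Kutzko $\Omega_{\scs(\Pc)}$-type $(J,\lambda)$. The refinement of $\Omega_{\scs(\Pc)}$ by the partitions $\Pc(\tau_0)$ corresponds on the automorphic side to specifying the Jordan block shapes of the monodromy operator $N$ for $r_l(\pi)$, equivalently to specifying a Zelevinsky multi-segment modulo unramified twists; this is because the decomposition of a Frobenius-semisimple Weil--Deligne representation into blocks $\Sp(\rho_{\tau_0}, k)$ matches the Zelevinsky segment decomposition under $\rec_F$. I would then set $\sigma(\tau_\Pc)$ to be the smooth irreducible $GL_n(\Oc_F)$-representation obtained by applying the Schneider--Zink recipe to the datum $(\scs(\Pc), \{\Pc(\tau_0)\}_{\tau_0})$; concretely, this is a compactly induced representation from $J$ built from $\lambda$ and an explicit Steinberg-type representation on a finite Levi subgroup of shape dictated by the partitions $\Pc(\tau_0)$.

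For properties (1) and (2), I would invoke the main branching theorem of \cite{SchneiderZink1999-KTypesTemperedComponentsGeneralLinear}: if $\sigma_\delta$ is their $K$-type attached to an automorphic parameter $\delta$, then $\sigma_\delta$ occurs in $\pi|_{GL_n(\Oc_F)}$ only when the parameter of $\pi$ dominates $\delta$, and with multiplicity one when the parameters agree. Translating through $r_l$ and the dictionary in the previous paragraph yields exactly (1) and the multiplicity-one claim in (2), provided one knows that the dominance order on multi-segments matches the order $\succeq$ on $\Ic$ defined here via the dominance order on partitions of $\scs$. For (3), I would use that a generic $\pi$ admits a Whittaker model, so its unique Zelevinsky parameter is the ``most generic'' in its Bernstein block; combined with the Schneider--Zink branching result (in its stronger form for generic representations) this forces $\sigma(\tau')$ to occur in $\pi|_{GL_n(\Oc_F)}$ for every $\tau' \preceq r_l(\pi)|_{I_F}$.

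The main obstacle will be establishing the dictionary between the Schneider--Zink dominance order on multi-segments, which is defined combinatorially on the automorphic side, and the order $\succeq$ on $\Ic$ defined here via dominance on partition-valued functions. This requires a careful comparison of two parametrizations of (restrictions to $I_F$ of) continuous $W_F$-representations, plus verifying that the various normalisations of $\rec_F$ (cf.\ the footnote in Section~\ref{sec:k-types-1}) do not disturb the order. A secondary subtlety is checking that $\sigma(\tau)$ is canonically well-defined, independent of the auxiliary choices of extensions $\rho_{\tau_0}$ of basic inertial types --- this should follow, as in Lemma~\ref{lem:clifford}, from the fact that the Schneider--Zink construction only depends on the $I_F$-restrictions through the invariants $\scs$ and the partition data.
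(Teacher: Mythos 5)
Your overall strategy is the one the paper uses: the proof given for Theorem~\ref{thm:K-types} is precisely ``this is \cite{MR2656025} Proposition~6.3.3 with `tempered' replaced by `generic','' the replacement being justified because the only property of tempered representations used in \cite{SchneiderZink1999-KTypesTemperedComponentsGeneralLinear} Proposition~5.10 is that they are \emph{irreducible} parabolic inductions of discrete series representations --- which is your Whittaker-model point, via the Bernstein--Zelevinsky fact that a generic irreducible $\pi$ equals $L(\Delta_1)\times\cdots\times L(\Delta_t)$ with no two segments linked. The detailed version (construction of $\sigma(\tau)$ as $\Ind_J^K(\kappa\otimes\nu)$ with $\nu$ a product of generalised Steinbergs, and the three branching properties) is carried out in section~\ref{sec:k-types}, culminating in Theorem~\ref{thm:types-multiplicities} and Corollary~\ref{cor:construction-proof}.

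There is, however, a concrete error you need to repair rather than defer: your dominance relations in parts (1) and (3) point the wrong way. With the paper's convention ($P_1 \succeq P_2$ means the partial sums of $P_1$ dominate, so larger means ``more monodromy''), the correct statements are: $\sigma(\tau)\subset\pi|_{GL_n(\Oc_F)}$ forces $r_l(\pi)|_{I_F}\preceq\tau$, and a generic $\pi$ contains $\sigma(\tau)$ for every $\tau$ with $r_l(\pi)|_{I_F}\preceq\tau$. You assert the reverses (``the parameter of $\pi$ dominates $\delta$'' in (1), and occurrence of $\sigma(\tau')$ for all $\tau'\preceq r_l(\pi)|_{I_F}$ in (3)), and both reversed statements are false: for $n=2$ the unramified generic principal series (type $\tau_0$) contains $\sigma(\tau_1)=\St$ even though $\tau_1\not\preceq\tau_0$, while the Steinberg representation of $GL_2(F)$ (type $\tau_1$) does \emph{not} contain $\sigma(\tau_0)=\mathbbm{1}$ even though $\tau_0\preceq\tau_1$. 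You flag ``matching the two dominance orders'' as the main obstacle, but it is not a normalisation check one can postpone --- exactly one of the two directions is true, and the argument must produce it. In the paper the correct direction comes out of Theorem~\ref{thm:types-multiplicities} (the Kostka number $m(\Pc(\tau_0),\Pc'(\tau_0))$ is nonzero iff $\Pc\succeq\Pc'$, so $\sigma(\Sf_\Pc)$ occurs in $\Ind_{Q'}^G(\pi')$ iff $\Pc'\preceq\Pc$) together with the fact that the non-generic constituents of $\Ind_{Q'}^G(\pi')$ are $L(M'',\pi'')$ with strictly \emph{larger} parameter $\Pc''\succ\Pc'$, which is what makes part (1) (and the multiplicity one in part (2)) come out with the stated inequality.
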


\begin{proof}
  This is \cite{MR2656025} Proposition~6.3.3, except that we have replaced the hypothesis
  `tempered' with `generic'.  That we can do this follows from the proof of
  \cite{SchneiderZink1999-KTypesTemperedComponentsGeneralLinear} Proposition~5.10 --- the only
  property of tempered representations that is used is that they occur as the \emph{irreducible}
  parabolic induction of a discrete series representation, and this continues to hold for generic
  representations. See also Corollary~\ref{cor:construction-proof} below.\end{proof}

\begin{example} \label{eg:types-gl2}
  Let $\Pc_0,\Pc_1\in \Ic$ be the maps that take the trivial representation to
  (respectively) $(1,1,0,0,\ldots)$ and $(2,0,0,\ldots)$, and everything else to zero.  Let $\tau_0$
  and $\tau_1$ be the corresponding inertial types; they are respectively the
  trivial two-dimensional representation and the non-trivial unipotent two-dimensional
  representation of $I_F$.  We have $\Pc_0 \prec \Pc_1$ and they are not comparable to any other
  elements of $\Ic$.  

  The representation $\sigma(\tau_0)$ is the trivial representation of $GL_2(\Oc_F)$, while
  $\sigma(\tau_1)$ is inflated from the Steinberg representation of $GL_2(k_F)$.  

  Then $\pi$ contains $\sigma(\tau_0)$ if and only if $\pi$ is unramified, and so if and only if
  $r_l(\pi)|_{I_F} =\tau_0$.  On the other hand, $\pi$ containing $\sigma(\tau_1)$ implies that
  $r_l(\pi)|_{I_F}$ is unipotent --- that is to say, that $r_l(\pi)|_{I_F} \preceq \tau_1$ ---
  but the converse is false for $\pi$ an unramified character (these are non-generic).
\end{example}

\begin{remark}
  In general the representation $\sigma(\tau)$ is \emph{not} determined by the above properties ---
  this already happens when $n=2$ if $|k_F| = 2$, see \cite{2002}, A.1.5, (3).  It is known to be
  unique when $\tau$ corresponds to a \emph{supercuspidal} Bernstein component, see
  \cite{MR2180458}, and expected to be unique if $p > n$, see \cite{JMJ:9091284} Conjecture~4.1.3.
  
  We will give an explicit construction of $\sigma(\tau)$ in
  section~\ref{sec:k-types}, and see
  Corollary~\ref{cor:construction-proof} for a proof that the
  representations we construct have the desired properties (modulo the
  translation into Galois theoretic language, which is straightforward
  and exactly as in \cite{MR2656025}).  Our construction follows
  closely that of
  \cite{SchneiderZink1999-KTypesTemperedComponentsGeneralLinear}, and
  it seems likely that the two constructions yield the same
  representations $\sigma(\tau)$, but we do not need this and have not
  checked it.  When $n = 2$ it is not hard to check that both
  constructions do agree, and that they agree with the construction of
  \cite{2002} (even when $|k_F| = 2$).
\end{remark}

\subsection{The Bernstein--Zelevinsky classification}
\label{sec:bernst-zelev-class}

It will be useful to recall a little notation to do with the Bernstein--Zelevinsky classification
of irreducible admissible representations of $GL_n(F)$; we follow \cite{MR689531}.  For
definiteness, fix a choice of square root of $q$ in $\bar{E}$.  Then if $P \subset GL_n(F)$ is a
standard parabolic subgroup with Levi factor $M = \prod_{i=1}^k M_i$ and unipotent radical $U$, and
if $\rho_i$ are smooth representations of $M_i$, we can regard $\tensor_i \rho_i$ as a
representation of $P$ by allowing $U$ to act trivially and then form the normalised parabolic
induction of $\tensor_i \rho_i$ from $P$ to $GL_n(F)$; call this representation
\[\rho_1 \times \ldots \times \rho_k.\]
If $\pi$ is an irreducible supercuspidal representation of $GL_m(F)$ and $k \geq 1$ is an integer,
let \[\Delta(\pi,k) = \{\pi,\pi\otimes |\det|, \ldots , \pi \otimes|\det|^{k-1}\}.\] A set of this
form is called a \textbf{segment}.  Two segments $\Delta_1$ and $\Delta_2$ are called
\textbf{linked} if $\Delta_1 \not\subset \Delta_2$, $\Delta_2 \not\subset \Delta_1$ and $\Delta_1
\cup \Delta_2$ is a segment, and we say that $\Delta(\pi,k)$ \textbf{precedes} $\Delta(\pi',k')$ if
they are linked and $\pi' = \pi\otimes |\det|^s$ for some $s \geq 1$.  If $\Delta = \Delta(\pi,k)$
is a segment, let $L(\Delta)$ be the unique irreducible quotient of \[\pi \times (\pi \otimes
|\det|) \times \ldots \times (\pi \otimes |\det|^{k-1});\] it is an irreducible admissible
representation of $GL_{km}(F)$. If $\Delta_1, \ldots, \Delta_t$ are segments then we may reorder
them so that, for $i < j$, $\Delta_i$ does not precede $\Delta_j$.  Then
\[L(\Delta_1) \times \ldots \times L(\Delta_t)\] is a representation of $GL_n(F)$ for suitable $n$,
with a unique irreducible quotient $L(\Delta_1,\ldots,\Delta_t)$, which is independent of the
ordering chosen (so long as the `precedence' condition is satisfied).  Every irreducible admissible
representation of $GL_n(F)$ is of this form, uniquely up to reordering the $\Delta_i$. The
representation \[L(\Delta_1)\times \ldots \times L(\Delta_t)\] is irreducible if and only if no two of
the $\Delta_i$ are linked. In this case $L(\Delta_1,\ldots,\Delta_t)=L(\Delta_1)\times \ldots \times
L(\Delta_t)$ is \textbf{generic}, and moreover every irreducible generic representation is of this
form (see \cite{MR584084} Theorem~9.7).

The compatibility with the above classification of Frobenius-semisimple Weil--Deligne
representations is as follows.  If $d_1,\ldots,d_t$ are positive integers with $\sum d_i = n$,
$\pi_1,\ldots,\pi_t$ are supercuspidal representations of $GL_{d_i}(F)$, and
$k_1,\ldots,k_t$ are positive integers, then for \[\Delta_i = \Delta(\pi_i \otimes
|\det|^{\frac{1-d_i}{2}},k_i)\] we have:
\begin{equation}\label{eq:loclangBZ}\bigoplus_{i=1}^t \Sp(r_l(\pi_i),k_i) =
  r_l(|\det|^{\frac{n-1}{2}}\otimes L(\Delta_1,\ldots,\Delta_t)).\end{equation}

\emph{The next two paragraphs are only required in section~\ref{sec:k-types}}.  A \textbf{supercuspidal
  pair} is a pair $(M,\pi)$ where $M$ is a Levi subgroup of some $GL_n(F)$ and $\pi$ is a
supercuspidal representation of $M$.  We say that supercuspidal pairs $(M,\pi)$ and $(M',\pi')$ are
\textbf{inertially equivalent} if there is an element $g \in G$ and an unramified character $\alpha$
of $M'$ such that $M' = gMg^{-1}$ and $\pi' = \alpha \otimes \pi^{g}$.  We write $[M,\pi]$ for the
inertial equivalence class of $(M,\pi)$.  If $\Omega$ is a Bernstein component of
$\Rep_{\bar{E}}(GL_n(F))$, then there is a unique inertial equivalence class of supercuspidal pair
$[M,\pi]$ such that every irreducible object of $\Omega$ is a subquotient of a representation
parabolically induced from a supercuspidal pair $(M,\pi)$ in that inertial equivalence class (for
some choice of parabolic subgroup).

The \textbf{essentially discrete series} representations\footnote{That is, the unramified twists of
  discrete series representations} of $GL_n(F)$ are precisely those of the form $L(\Delta)$ for some
segment $\Delta$.  Define a \textbf{discrete pair} to be a pair $(M,\pi)$ where $M$ is a Levi
subgroup of some $GL_n(F)$ and $\pi$ is an essentially discrete series representation of $M$; say
that discrete pairs $(M,\pi)$ and $(M',\pi')$ are inertially equivalent if there is an element $g
\in G$ and an unramified character $\alpha$ of $M'$ such that $M' = gMg^{-1}$ and $\pi' = \alpha
\otimes \pi^{g}$, and write $[M,\pi]$ for the inertial equivalence class of $(M,\pi)$.  If $\pi$ is
supercuspidal this agrees with the notion of inertial equivalence for supercuspidal pairs.  If
$\Pc\in \Ic$ then we can associate an inertial equivalence class $[M,\pi]$ of discrete pairs to
$\Pc$ as follows: for every $\tau_0 \in \Ic_0$ pick a supercuspidal representation $\pi_{\tau_0}$ of
$GL_{\dim(\tau_0)}(F)$ with $r_l(\pi_{\tau_0})|_{I_F} \cong \tau_0$.  Then \[\left[
  \prod_{\tau_0\in \Ic_{0},i\in \NN} GL_{\Pc(\tau_0)(i)\dim(\tau_0)}(F), \bigotimes_{\tau_0\in
    \Ic_{0},i\in \NN}L(\Delta(\pi_{\tau_0},\Pc(\tau_0)(i)))\right]\] is the required class of
discrete pairs.  If $(M,\pi) = (\prod_{i=1}^r M_i,\bigotimes_{i=1}^r L(\Delta_i))$ is a discrete
pair, then we can define $L(M,\pi)$ to be $L(\Delta_1,\ldots,\Delta_r)$.  From
equation~\eqref{eq:loclangBZ} we see that, if $\Pc \in \Ic$ has degree $n$, and $[M,\pi]$ is the
associated inertial equivalence class of discrete pair, then the irreducible admissible
representations $\pi$ of $GL_n(F)$ such that $r_l(\pi)$ has type $\tau_\Pc$ are precisely the
$L(M,\pi)$ for $(M,\pi)$ in the inertial equivalence class $[M,\pi]$.

\section{The Breuil--M\'{e}zard conjecture.}
\label{sec:breuil-mezard-lneq}

\subsection{Reduction maps} Let $\rhobar : G_F \rarrow GL_n(\FF)$ be a continuous representation, and suppose that
$E$ is large enough that, for every inertial type $\tau$ that is the type of some lift of $\rhobar$, both
$\tau$ and $\sigma(\tau)$ are defined over $E$.  We have defined the framed deformation ring
$R^\square(\rhobar)$, which is flat and equidimensional of relative dimension $n^2$ over $\Oc$.  Thus we have
the free abelian groups $\Zc(R^\square(\rhobar))$ on the irreducible components of
$R^\square(\rhobar)$, $\Zc(\Rbarbox(\rhobar))$ on the irreducible components of
$\Rbarbox(\rhobar) = \Rbox(\rhobar) \otimes \FF$, and a reduction map
\[ \red : \Zc(\Rbox(\rhobar)) \rarrow \Zc(\Rbarbox(\rhobar)).\] Let $R_E(GL_n(\Oc_F))$ be the Grothendieck group of
finite-dimensional smooth representations of $GL_n(\Oc_F)$ over $E$, and let $R_{\FF}(GL_n(\Oc_F))$
be the Grothendieck group of finite-dimensional smooth representations of $GL_n(\Oc_F)$ over $\FF$.
Then the operation of choosing a $GL_n(\Oc_F)$-invariant lattice and reducing modulo $\lambda$
defines a group homomorphism:
\[\red : R_E(GL_n(\Oc_F)) \longrightarrow R_{\FF}(GL_n(\Oc_F))\]
that is independent of the choice of lattice.

\subsection{Cycle map.} \begin{lemma}
  If $\pi$ and $\pi'$ are generic irreducible admissible representations of $GL_n(F)$ such that
  $r_l(\pi)|_{I_F}\cong r_l(\pi')|_{I_F}$, then \[
  \pi|_{GL_n(\Oc_F)} \cong \pi'|_{GL_n(\Oc_F)}.\]
\end{lemma}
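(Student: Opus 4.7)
The plan is to leverage the Bernstein--Zelevinsky classification: since $\pi$ and $\pi'$ are generic, each is a \emph{full} (irreducible) parabolic induction of the form $L(\Delta_1)\times\cdots\times L(\Delta_t)$ for pairwise unlinked segments $\Delta_i=\Delta(\pi_i\otimes|\det|^{(1-d_i)/2},k_i)$ with $\pi_i$ supercuspidal. The hypothesis on inertial types should force the inducing data to match up to unramified twists of the supercuspidal $\pi_i$, and such twists are invisible upon restriction to maximal compacts; the Iwasawa decomposition will then transfer this to $GL_n(\Oc_F)$.

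First I would unpack the hypothesis via~\eqref{eq:loclangBZ} and Lemma~\ref{lem:clifford}: together they give $t=t'$ and, after reordering, $k_i=k'_i$ and $r_l(\pi_i)|_{I_F}\cong r_l(\pi'_i)|_{I_F}$. Because the $\pi_i$ are supercuspidal, each $r_l(\pi_i)$ is an \emph{irreducible} representation of $W_F$, and two such with isomorphic restriction to $I_F$ must differ by an unramified twist. Translating through $\rec_F$ (which respects unramified twists) gives $\pi'_i\cong\pi_i\otimes(\chi_i\circ\det)$ for some unramified character $\chi_i$ of $F^\times$, and consequently $L(\Delta'_i)\cong L(\Delta_i)\otimes(\chi_i\circ\det)$.

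To conclude, I would observe that $\chi_i\circ\det$ is trivial on $GL_{d_ik_i}(\Oc_F)$ because unramified characters are trivial on $\Oc_F^\times$, so $L(\Delta_i)|_{GL_{d_ik_i}(\Oc_F)}\cong L(\Delta'_i)|_{GL_{d_ik_i}(\Oc_F)}$. Then I would invoke the Iwasawa decomposition $GL_n(F)=P\cdot GL_n(\Oc_F)$ for the standard parabolic $P$ with Levi $M=\prod_i GL_{d_ik_i}(F)$; combined with the fact that the modulus character $\delta_P$ is trivial on the compact subgroup $P\cap GL_n(\Oc_F)$, Mackey's theorem identifies the restriction of the normalized induction $\pi=L(\Delta_1)\times\cdots\times L(\Delta_t)$ to $GL_n(\Oc_F)$ with $\Ind_{P\cap GL_n(\Oc_F)}^{GL_n(\Oc_F)}\bigotimes_i L(\Delta_i)|_{M\cap GL_n(\Oc_F)}$, and similarly for $\pi'$. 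The desired isomorphism $\pi|_{GL_n(\Oc_F)}\cong\pi'|_{GL_n(\Oc_F)}$ follows. I do not anticipate a serious obstacle: genericity is precisely what lets us write $\pi$ as an honest induction (not merely a subquotient) of essentially discrete series factors, after which everything reduces to standard Mackey theory plus the unramified-twist-invariance of the restriction to the maximal compact.
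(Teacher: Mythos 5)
Your proposal is correct and follows essentially the same route as the paper: both arguments use Lemma~\ref{lem:clifford} and equation~\eqref{eq:loclangBZ} to show that any two generic representations with the given inertial type are unramified twists (factor by factor) of the same full parabolic induction $L(\Delta_1)\times\cdots\times L(\Delta_t)$, and then conclude via the Iwasawa decomposition identity $(\Ind_P^G\rho)|_{GL_n(\Oc_F)}=\Ind_{P\cap GL_n(\Oc_F)}^{GL_n(\Oc_F)}(\rho|_{P\cap GL_n(\Oc_F)})$. The only difference is one of exposition: you spell out the Clifford-theoretic fact that irreducible representations of $W_F$ agreeing on $I_F$ differ by an unramified twist, which the paper leaves implicit in its assertion of the classification.
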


\begin{proof}
  Let $\Pc\in \Ic$ be such that $r_l(\pi)|_{I_F}\cong r_l(\pi')|_{I_F}\cong \tau_\Pc$ and
  let $\tau_1, \ldots, \tau_r$ be the elements of $\Ic_0$ with $\deg\Pc(\tau_i) = d_i \neq 0$.  Pick
  supercuspidal representations $\pi_{i}$ of $GL_{\dim \tau_i}(F)$ such that $r_l(\pi_i)|_{I_F}
  \cong \tau_i$ and let $\Delta_{i,j}$ be the segment $\Delta(\pi_i,\Pc(\tau_i)(j))$ for each $j$
  such that $\Pc(\tau_i)(j) \neq 0$.  Then every generic irreducible admissible representation $\pi$
  of $GL_n(F)$ such that $r_l(\pi)|_{I_F} \cong \tau$ is of the form
  \[ (\alpha_{1,1}\circ \det)L(\Delta_{1,1}) \times \ldots \times (\alpha_{i,j}\circ
  \det)L(\Delta_{i,j}) \times \ldots\] for unramified characters $\alpha_{i,j}$ of $F^\times$.  The
  lemma follows from the following consequence of the Iwasawa decomposition: for any parabolic
  subgroup $P \subset GL_n(F)$ and representation $\rho$ of $P$,
\[(\Ind_P^G \rho)|_{GL_n(\Oc_F)} = \Ind_{P\cap GL_n(\Oc_F)}^{GL_n(\Oc_F)}(\rho|_{P \cap GL_n(\Oc_F)}). \qedhere\]
\end{proof}

\begin{definition}\label{def:multiplicities}
  If $\theta$ is a finite-length representation of $GL_n(\Oc_F)$ and $\tau'$ is an inertial type, then
  $m(\theta,\tau')$ is defined to be the non-negative integer
\[\dim \Hom_{\bar{E}[GL_n(\Oc_F)]}(\theta,\pi|_{GL_n(\Oc_F)})\] for any generic
irreducible admissible representation $\pi$ of $GL_n(F)$ such that \[r_l(\pi)|_{I_F} \cong \tau'.\]
\end{definition}

\begin{proposition}\label{prop:type-mults}
  Suppose that $\Pc,\Pc' \in \Ic$.  Then 
\[m(\sigma(\tau_{\Pc}),\tau_{\Pc'}) = \prod_{\tau_0 \in \Ic_0} m(\Pc(\tau_0),\Pc'(\tau_0))\]
where $m(\Pc(\tau_0),\Pc'(\tau_0))$ is the Kostka number (Definition~\ref{def:kostka})  for the pair of partitions
$\Pc(\tau_0),\Pc'(\tau_0)$ (and is in particular zero if $\deg \Pc(\tau_0) \neq \deg \Pc'(\tau_0)$
for some $\tau_0$).
\end{proposition}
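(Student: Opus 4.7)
The plan is to reduce to the construction of $\sigma(\tau_\Pc)$ via Bushnell--Kutzko types and then to a combinatorial computation via the Iwasawa decomposition and Mackey theory, ultimately identifying the multiplicity with a product of Kostka numbers. The computation will be done in Section~\ref{sec:k-types}, so this is really a sketch of what that section accomplishes.

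First, observe from the Iwasawa decomposition (as in the proof of the lemma preceding Definition~\ref{def:multiplicities}) that the restriction $\pi|_{GL_n(\Oc_F)}$ depends only on the Bernstein component containing $\pi$, via the parabolic structure and the restriction of the inducing data to its intersection with $GL_n(\Oc_F)$. By the Bernstein--Zelevinsky classification together with equation~\eqref{eq:loclangBZ}, a generic $\pi$ with $r_l(\pi)|_{I_F}\cong \tau_{\Pc'}$ has the form $L(\Delta_{1,1})\times\cdots\times L(\Delta_{r,s_r})$ where the segments $\Delta_{i,j}$ are indexed by the nonzero values $\Pc'(\tau_0)(j)$ for $\tau_0 \in \Ic_0$.

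Next, I would use the explicit construction of $\sigma(\tau_\Pc)$ given in Section~\ref{sec:k-types} (following Schneider--Zink). This realises $\sigma(\tau_\Pc)$ as an induction from a parabolic subgroup of $GL_n(\Oc_F)$ whose Levi factor has blocks corresponding to the partition data of $\Pc$, with inducing representation a tensor product of (variants of) $K$-types $\sigma_{\tau_0,k}$ for the supercuspidal Bernstein components $[\tau_0,k]$. These building blocks have a distinguishing multiplicity-one property in essentially discrete series representations supported on $\tau_0$.

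The third step is to apply Frobenius reciprocity, the Iwasawa decomposition for $\pi$, and Mackey's formula. Because the $\sigma_{\tau_0,k}$ detect their Bernstein component, the Mackey double cosets that contribute are precisely those that match up the supercuspidal supports on each $\tau_0\in \Ic_0$ separately. This immediately factors the multiplicity as a product over $\tau_0\in\Ic_0$, and reduces the problem to the case in which $\Pc$ and $\Pc'$ are supported on a single $\tau_0$ with the same degree $d$.

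The remaining combinatorial step, which is the main obstacle, is to identify the one-variable multiplicity with the Kostka number $m(\Pc(\tau_0),\Pc'(\tau_0))$. After restricting to a single $\tau_0$, the problem becomes the computation of the multiplicity of a certain $\Ind_{P_\lambda\cap K}^K(\sigma_\lambda)$ inside $\Ind_{P_\mu\cap K}^K(\sigma_\mu)$, where $\lambda = \Pc(\tau_0)$ and $\mu=\Pc'(\tau_0)$ determine parabolic subgroups of $GL_{d\dim\tau_0}(\Oc_F)$. Using that the $\sigma$'s for supercuspidal components are $K$-types (hence have very controlled restrictions to parabolic intersections), the Mackey computation collapses to a classical question about parabolic induction for $GL_d$ over $k_F$, where the multiplicity is given by the Kostka number $K_{\lambda,\mu}$: this is the standard fact that $\Ind_{P_\mu}^{GL_d}(\mathbf{1})$ contains the irreducible parameterised by $\lambda$ with multiplicity $K_{\lambda,\mu}$, which translates through the Bushnell--Kutzko construction into the desired multiplicity. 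This last identification is where the detailed work of Section~\ref{sec:k-types} is required, carried out via the $\ast$-operation of Bushnell--Kutzko and the comparison of covers for different Levis.
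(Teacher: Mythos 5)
Your outline follows essentially the same route as the paper: the paper proves this proposition simply by citing Corollary~\ref{cor:multiplicity-calculation}, whose content is what you sketch --- use the Bushnell--Kutzko/Schneider--Zink construction of $\sigma(\tau_\Pc)$ to transfer the multiplicity question to finite general linear groups, factor over the basic types, and identify the answer with Kostka numbers. Two places where your sketch under-states what is actually needed. First, $\sigma(\tau_\Pc)=\Ind_J^K(\lambda)$ for a Bushnell--Kutzko group $J=J(\beta,\Af)$, which is not a parabolic subgroup of $GL_n(\Oc_F)$; and rather than a raw Mackey computation on $J\backslash K/\,\cdot$, the paper passes through the functor $\Hom_{J^1}(\kappa,\cdot)$ (and, in the inhomogeneous case, the Hecke-algebra isomorphisms $j_Q$ attached to $G$-covers), which intertwines parabolic induction on $GL_n(F)$ with parabolic induction on $\bar{G}=J/J^1$ (Proposition~\ref{prop:homog-ind}); the intertwining bound of Proposition~\ref{prop:eta-intertwining} is what makes the irrelevant double cosets vanish. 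Second, your closing appeal to ``$\Ind_{P_\mu}^{GL_d}(\mathbbm{1})$ contains $\lambda$ with multiplicity $K_{\lambda\mu}$'' is literally only the unipotent case: what actually appears on the finite-group side is $\pi_{\bar{\Pc}\,'}$, induced from \emph{generalized Steinberg} representations $\St(\sigma,t)$ attached to possibly nontrivial cuspidals $\sigma$ of groups $GL_e(k_E)$, and the identification with Kostka numbers goes through Zelevinsky's PSH-algebra structure theorem (Proposition~\ref{prop:psh-Rgl} and Corollary~\ref{cor:finite-gl-mults}), under which $\St(\sigma,t)$ corresponds to the sign representation of $S_t$; this is also where the clean product factorization over $\tau_0\in\Ic_0$ comes from. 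Neither point derails your plan, but both are needed to turn the sketch into a proof, and both are exactly what Section~\ref{sec:k-types} supplies.
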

\begin{proof}
  This is proved as Corollary~\ref{cor:multiplicity-calculation} below.
\end{proof}

\begin{definition}\label{def:cyc}
  Define a homomorphism
  \[\cyc : R_E(GL_n(\Oc_F)) \longrightarrow \Zc(R^\square(\rhobar))\]
  given (on irreducible $E$-representations $\sigma$ of $GL_n(\Oc_F)$) by:
  \[\cyc(\sigma) = \sum_{\Pc \in \Ic} m(\sigma^\vee,\tau_\Pc) Z(R^\square(\rhobar,\tau_\Pc)).\]
  This sum makes sense since $m(\sigma^\vee, \tau_\Pc)$ is non-zero for only finitely many $\tau_\Pc$.
\end{definition}

\begin{conjecture} \label{conj:BM}
  There exists a unique homomorphism 
\[\bar{\cyc} : R_{\FF}(GL_n(\Oc_F)) \longrightarrow \Zc(\bar{R}\,^\square(\rhobar))\]
making the following diagram commute:
\begin{equation}
\begin{CD}
  \label{eq:BM}
R_E(GL_n(\Oc_F)) @>{\cyc}>> \Zc(R^\square(\bar{\rho})) \\
@V{\red}VV  @V{\red}VV \\
R_{\FF}(GL_n(\Oc_F)) @>{\bar{\cyc}}>> \Zc(\bar{R}\,^\square(\rhobar)).
\end{CD}
\end{equation}
\end{conjecture}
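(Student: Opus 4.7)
The plan is to replace the global Taylor--Wiles--Kisin patching argument used for $l > 2$ with a purely local construction valid for all odd and even residue characteristics $l \neq p$. Uniqueness of $\bar{\cyc}$ in~\eqref{eq:BM} is automatic, since the left vertical arrow is surjective (every $\FF$-representation of $GL_n(\Oc_F)$ lifts to an $\Oc$-lattice in an $E$-representation). For existence, it suffices to produce an exact $\Oc$-linear functor
\[
H_\infty : \{\text{finitely generated, smooth } \Oc[GL_n(\Oc_F)]\text{-modules}\} \longrightarrow \{\text{finitely generated } R^\square(\rhobar)\text{-modules}\}
\]
with $Z(H_\infty(\theta)) = \cyc([\theta])$ for every finite length $E$-representation $\theta$. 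Given such $H_\infty$, the map $\bar{\cyc}([\bar\theta]) \defeq Z(H_\infty(\theta)/\lambda H_\infty(\theta))$ for any $\Oc$-lattice lift $\theta$ of $\bar\theta$ is well-defined by exactness, descends to $R_{\FF}(GL_n(\Oc_F))$, and makes~\eqref{eq:BM} commute by Lemma~\ref{lem:cycle-reduction}.

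To construct $H_\infty$ locally, I would appeal to the local Langlands correspondence in families of Helm and Moss~\cite{1610.03277}. Their result produces a smooth admissible $\Oc[GL_n(F)]$-module $\Pi$, equipped with a commuting action of $R^\square(\rhobar)$ (the ``universal co-Whittaker module''), whose fibre $\Pi_x$ at each non-degenerate $\bar E$-point $x \in \Spec R^\square(\rhobar)[1/l]$ recovers the generic admissible representation $\pi_x$ of $GL_n(F)$ with $\WD(\rho_x)^{F\text{-ss}} = r_l(\pi_x)$. Define
\[
H_\infty(\theta) \defeq \Hom_{\Oc[GL_n(\Oc_F)]}(\theta^\vee, \Pi).
\]
The characteristic co-Whittaker properties of $\Pi$ --- flatness over the integral Bernstein centre and injectivity of the Whittaker functional on fibres --- are designed precisely to yield exactness and finite generation of $H_\infty$, in the same way that Pa\v{s}k\=unas's projective envelope yields the patching-style functor in the $l = p$ setting~\cite{1209.5205}.

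The cycle equality $Z(H_\infty(\theta)) = \cyc([\theta])$ would be verified one component at a time. For $\eta$ a generic point of an irreducible component of $\Spec R^\square(\rhobar,\tau)$, Proposition~\ref{prop:fixed-type} provides non-degenerate $\bar E$-specialisations $x$ of $\eta$ at which $R^\square(\rhobar)[1/l]$ is formally smooth and $x$ lies on a unique component; hence the length of $H_\infty(\theta)$ at $\eta$ equals the generic fibre dimension $\dim_{\bar E}\Hom_{GL_n(\Oc_F)}(\theta^\vee, \pi_x)$, which is $m(\theta^\vee, \tau)$ by Definition~\ref{def:multiplicities}. Summation over components of all $\Spec R^\square(\rhobar,\tau)$ then recovers $\cyc([\theta])$ as in Definition~\ref{def:cyc}.

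The main obstacle, and the reason Conjecture~\ref{conj:BM} is not yet a theorem in full generality, is controlling $\Pi$ \emph{integrally} over $R^\square(\rhobar)$ at small residue characteristics. Helm and Moss work naturally over an integral pseudo-deformation ring; descending to the framed deformation ring with the precise flatness and exactness required --- at primes $l$ where $l \mid q^i - 1$ for small $i$ and the mod-$l$ representation theory of $GL_n(k_F)$ ceases to be semisimple, most severely at $l = 2$ --- requires a delicate integral analysis which has not been worked out. In particular, ruling out spurious $\lambda$-torsion in $H_\infty(\theta)$ (which would perturb the mod-$\lambda$ cycle and break the compatibility with~\eqref{eq:BM}) is the most subtle point, and is the analogue in our setting of the integral structural questions about projective envelopes resolved by Pa\v{s}k\=unas in the $l = p$ case.
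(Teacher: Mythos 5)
Your proposal does not prove the statement; it is a programme whose decisive step is missing, and you say so yourself. The uniqueness argument is fine (surjectivity of $\red$, exactly as in the paper), but existence is reduced to producing a local functor $H_\infty(\theta)=\Hom_{\Oc[GL_n(\Oc_F)]}(\theta^\vee,\Pi)$ with three properties --- exactness in $\theta$, compatibility with reduction mod $\lambda$ (no spurious $\lambda$-torsion), and the cycle identity $Z(H_\infty(\theta))=\cyc([\theta])$ --- and none of these is established. Exactness does not follow from $\Pi$ being co-Whittaker or flat over the integral Bernstein centre: $\Hom(\theta^\vee,\Pi)$ is exact in $\theta$ only if $\Pi$ has an injectivity property as a smooth $\Oc[GL_n(\Oc_F)]$-module (in Pa\v{s}k\={u}nas's $l=p$ argument this is exactly why one works with a projective envelope), and finite generation over $R^\square(\rhobar)$ is likewise unaddressed. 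The cycle computation has a further gap: identifying the length of $H_\infty(\theta)$ at the generic point $\eta$ of a component with the fibre dimension $\dim_{\bar E}\Hom(\theta^\vee,\pi_x)$ at a nearby non-degenerate closed point $x$ requires $H_\infty(\theta)$ to be generically free (or flat) along that component; in the paper this is precisely the content of the Auslander--Buchsbaum/finite-freeness-over-$S_\infty$ argument (Lemma~\ref{lem:diamond} and Proposition~\ref{prop:multiplicities}), and you supply no local substitute. Since you concede that the integral control of $\Pi$ over $R^\square(\rhobar)$ ``has not been worked out,'' the existence half of the conjecture is not proved by your argument for any $l$, let alone $l=2$.

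For comparison: the paper does not prove the conjecture in this generality either. It proves the $l>2$ case (Theorem~\ref{thm:BM-Gln}) by constructing $H_\infty$ \emph{globally}, via Taylor--Wiles--Kisin patching of spaces of automorphic forms on a definite unitary group, where exactness, $\lambda$-torsion-freeness, and the identification $Z(H_\infty(\sigma))=n!\,\cyc^{\otimes d}(\sigma)$ are obtained from Lemma~\ref{lem:level-changing}, Lemma~\ref{lem:diamond}, automorphy lifting theorems, and local--global compatibility (Corollary~\ref{cor:patch=cycle}); it also gives a genuinely local proof only in the quasi-banal tame case (section~\ref{sec:towards-local-proof}). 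The Helm--Moss route you sketch is exactly the alternative the paper flags in the introduction as possible future work, so what you have written is a plausible research plan rather than a proof of the conjecture.
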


Certainly there is at most one map $\bar{\cyc}$ making diagram (\ref{eq:BM}) commute.  This is because the
map $\red : R_E(GL_n(\Oc_F)) \rarrow R_{\FF}(GL_n(\Oc_F))$ is surjective, which follows from the
corresponding fact for finite groups (see \cite{MR0450380} Theorem~33) because every smooth $E$- or
$\FF$-representation of $GL_n(\Oc_F)$ factors through a finite quotient.

The main result of this chapter is:
\begin{theorem} \label{thm:BM-Gln}
  If $l > 2$ then Conjecture~\ref{conj:BM} is true.
\end{theorem}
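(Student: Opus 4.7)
The plan is to reduce Conjecture~\ref{conj:BM} to the existence of a patching functor. Since the vertical reduction map on the representation-theoretic side is surjective, uniqueness of $\bar{\cyc}$ is automatic, and what must be shown is that the composition $\red\circ\cyc : R_E(GL_n(\Oc_F)) \to \Zc(\Rbarbox(\rhobar))$ vanishes on the kernel of $\red : R_E(GL_n(\Oc_F)) \to R_\FF(GL_n(\Oc_F))$. Concretely, it suffices to produce, for some auxiliary power series ring $R_\infty$ over $\Rbox(\rhobar)$, an exact covariant functor
\[H_\infty : \{\text{f.g. $\Oc$-modules with smooth $GL_n(\Oc_F)$-action}\} \longrightarrow \{\text{f.g. $R_\infty$-modules}\}\]
commuting with $-\otimes_\Oc\FF$ on both sides, and whose associated cycle is compatible with $\cyc$ in the sense that for every finitely generated $\Oc$-module $\theta$ with smooth $GL_n(\Oc_F)$-action (free over $\Oc$), $Z(H_\infty(\theta))$ is the pullback under the formally smooth map $\Spec R_\infty \to \Spec \Rbox(\rhobar)$ of $\cyc(\theta)$.

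Granted this, I would argue as follows. Exactness of $H_\infty$ gives $H_\infty(\theta)\otimes_\Oc\FF = H_\infty(\theta\otimes_\Oc\FF)$, and $H_\infty$ preserves $\lambda$-regularity in the sense needed for Lemma~\ref{lem:cycle-reduction}. Combined with Lemma~\ref{lem:flat-pullback} and the flatness of $\Spec R_\infty \to \Spec \Rbox(\rhobar)$, this means that on the level of cycles the functor $Z(H_\infty(-))$ intertwines $\red$ (representations) with the pullback of $\red$ (deformation rings). In particular, if $\theta_1,\theta_2\in R_E(GL_n(\Oc_F))$ have $\red(\theta_1)=\red(\theta_2)$, then $\red(Z(H_\infty(\theta_1)))=\red(Z(H_\infty(\theta_2)))$; by the hypothesised compatibility with $\cyc$ and the injectivity of the flat pullback on cycles, this gives $\red(\cyc(\theta_1))=\red(\cyc(\theta_2))$. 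Defining $\bar{\cyc}(\red(\theta)) \defeq \red(\cyc(\theta))$ then produces the required map.

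The construction of $H_\infty$ uses the Taylor--Wiles--Kisin patching method applied to spaces of automorphic forms on a suitable unitary group, in the style of \cite{GeeKisin2013-BreuilMezardBarsottiTate} and \cite{JMJ:9091284}. The main steps are: (i) globalise $\rhobar$ to an automorphic Galois representation $\bar{r}$ with prescribed local behaviour at the place $v$ above $p$ where $\bar{r}_v\cong\rhobar$ (this uses a potential automorphy/globalisation argument as in the appendix of \cite{JMJ:9091284}, elaborated in section~\ref{sec:real-local-repr}); (ii) form completed cohomology / spaces of algebraic modular forms twisted by $\theta$ at $v$; (iii) patch the resulting modules along auxiliary Taylor--Wiles primes to obtain an $R_\infty$-module $H_\infty(\theta)$, where $R_\infty$ is a power series ring over $\Rbox(\rhobar)$ times the local framed deformation rings at bad places; (iv) verify exactness (automatic from the fact that forming invariants by $\theta^\vee$ is exact on the completed cohomology thanks to $GL_n(\Oc_F)$ acting through a finite quotient, and that patching preserves exactness).

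The principal obstacle is step (v): identifying the support cycle of $H_\infty(\theta)$ with $\cyc(\theta)$. This requires, at each classical $\bar{E}$-point $x$ of $\Spec R_\infty[1/l]$ corresponding to an automorphic form, both a multiplicity-one statement ($H_\infty(\theta)$ is free of rank $\dim\Hom_{GL_n(\Oc_F)}(\theta^\vee,\pi_v)$ in a neighbourhood of a generic point) and the fact that non-minimal automorphy lifting theorems apply at every inertial type $\tau$ --- this is where Taylor's \emph{Ihara avoidance} method \cite{Taylor2008-AutomorphyII}, which crucially requires $l>2$, enters: it guarantees that every irreducible component of $\Rbox(\rhobar,\tau)$ is hit by automorphic lifts, so that the generic cycle of $H_\infty(\theta)$ on $\Spec R^\square(\rhobar,\tau)$ really equals $m(\theta^\vee,\tau)\cdot Z(\Rbox(\rhobar,\tau))$ as required. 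Once this identification is achieved, combining with Proposition~\ref{prop:fixed-type} (which tells us that the relevant supports decompose exactly into unions of components $\Rbox(\rhobar,\tau)$) completes the proof.
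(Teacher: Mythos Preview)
Your overall strategy is exactly that of the paper: patch automorphic forms to obtain an exact functor $H_\infty$ whose cycle realizes $\cyc$, then use compatibility with reduction mod $l$ to deduce $\ker(\red)\subset\ker(\red\circ\cyc)$. The ingredients you list (globalisation as in the appendix of \cite{JMJ:9091284}, Taylor--Wiles--Kisin patching, automorphy on every component via Ihara avoidance, local--global compatibility to identify generic ranks) are all the right ones.

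There is, however, a technical oversight in your description of $R_\infty$. You write that $R_\infty$ is ``a power series ring over $\Rbox(\rhobar)$'', and later invoke ``injectivity of the flat pullback on cycles'' along the formally smooth map $\Spec R_\infty\to\Spec\Rbox(\rhobar)$. But in the actual globalisation (Proposition~\ref{suitable-globalisation}), one arranges $L^+_v\cong F$ and $\breve{\bar r}|_{G_{L_{\tilde v}}}\cong\rhobar$ at \emph{every} place $v\mid p$ of $L^+$; since $4\mid[L^+:\QQ]$ there are typically $d>1$ such places. Consequently $R_\infty$ contains a factor $\widehat{\bigotimes}_{v\mid p}R^\square(\rhobar)\cong R^\square(\rhobar)^{\hat\otimes d}$, and the projection to a single copy of $R^\square(\rhobar)$ is \emph{not} formally smooth (the other tensor factors are not integral). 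So your injectivity argument does not apply as stated.

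The paper's fix is to keep all $d$ copies: $H_\infty$ is a functor from $\Rep^{fg}_\Oc(GL_n(\Oc_F)^{\times d})$ to modules over $R^\square(\rhobar)^{\hat\otimes d}\hat\otimes A$ with $A$ geometrically integral, and $Z(H_\infty(\sigma))=c\cdot\cyc^{\otimes d}(\sigma)$ (Corollary~\ref{cor:patch=cycle}, with $c=n!$ coming from Iwahori invariants at the auxiliary place). One then runs a diagonal argument: if $\alpha\in\ker(\red)$ had $\beta=\red(\cyc(\alpha))\neq 0$, applying the commutative square to $\alpha\otimes\cdots\otimes\alpha$ would give $c\cdot\beta^{\otimes d}\otimes C\neq 0$ on one route and $0$ on the other. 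This tensor-power trick, together with Lemma~\ref{lem:product} to identify $\Zc(R^\square(\rhobar)^{\hat\otimes d}\hat\otimes A)$ with $\Zc(R^\square(\rhobar))^{\otimes d}$, is the missing step in your argument.
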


\begin{proof}
  To prove the existence of the map $\bar{\cyc}$, we must show that $\ker(\red) \subset \ker(\cyc)$;
  this may be checked after making a finite extension of $E$.  Therefore we can and do assume that
  every irreducible component of $R^\square(\rhobar)$ and of $R^\square(\rhobar)\otimes\FF$ is
  geometrically irreducible.
  
  Let $\Rep^{fg}_\Oc(GL_n(\Oc_F))$ be the category of finitely generated $\Oc$-modules with a smooth
  action of $GL_n(\Oc_F)$.  In the next section, we will show (using the
  Taylor--Wiles--Kisin patching method) that there are positive integers $c$ and $d$, a
  geometrically integral object $A$ of $\Cc_{\Oc}^\wedge$, and an exact functor $H_\infty$ from
  $\Rep^{fg}_\Oc(GL_n(\Oc_F)^{\times d})$ to the category of finitely generated modules over
  \[R^\square(\rhobar)^{\otimes d}\hat{\otimes} A\]
  with the following properties: 
  \begin{itemize}
  \item for all $\sigma \in \Rep^{fg}_\Oc(GL_n(\Oc_F)^{\times d})$, \[H_\infty(\sigma\otimes_\Oc \FF) =
    H_\infty(\sigma) \otimes_\Oc \FF;\] 
  \item if $\sigma \in \Rep^{fg}_\Oc$ is $\lambda$-torsion free, then so is $H_\infty(\sigma)$;
  \item if $\sigma = \bigotimes_{i=1}^d \sigma_i \in \Rep^{fg}_\Oc(GL_n(\Oc_F)^{\times d})$ is finite free as an $\Oc$-module, then 
    \[Z(H_\infty(\sigma)) = c \cdot \cyc^{\otimes d}(\sigma).\] Here the left hand side is an
    element of $\Zc(R^\square(\rhobar)^{\otimes d}\hat{\otimes}A))$, the right hand side is an
    element of $\bigotimes_{i=1}^d \Zc(R^\square(\rhobar))$, and we identify these groups using
    Lemma~\ref{lem:product} and the fact that $A$ is geometrically integral.
  \end{itemize}
  The last of these is Corollary~\ref{cor:patch=cycle}.

  Let $\bar{A} = A \otimes \FF$, and let $C = Z(\bar{A}) \in \Zc(\bar{A})$; clearly $C \neq 0$.  We
  identify $\Zc(\Rbarbox(\rhobar)^{\otimes d} \hat{\otimes} \bar{A})$ with
  $\bigotimes_{i=1}^d \Zc(\Rbarbox(\rhobar))\otimes \Zc(\bar{A})$ using Lemma~\ref{lem:product}.
  Now, $Z(\cdot)$ is additive on short exact sequences (see \cite{JMJ:9091284} Lemma~2.2.7) and,
  using Lemma~\ref{lem:cycle-reduction} and the first two properties above, we find that the
  following diagram commutes (the horizontal maps are well defined since $H_\infty(\cdot)$ is
  exact):
  \[\begin{CD}
   R_E(GL_n(\Oc_F))^{\otimes d} @>Z(H_\infty(\cdot))>> \Zc(R^\square(\rhobar))^{\otimes d}\\
   @V^{\red^{\otimes d}}VV @V^{\red^{\otimes d}\otimes C}VV\\
   R_{\FF}(GL_n(\Oc_F))^{\otimes d} @>Z(H_\infty(\cdot))>> \Zc(\Rbarbox(\rhobar))^{\otimes d}\otimes
   \Zc(\bar{A}).
   \end{CD}\]
   
   Moreover, the topmost map is just $c \cdot\cyc^{\otimes d}$ by the third listed property of
   $H_\infty$.    We deduce that $\ker(\red) \subset \ker(\red \circ \cyc)$; if not, then we may pick $\alpha \in
   \ker(\red)$ with $\beta = \red(\cyc(\alpha)) \neq 0$.  But then 
   \[c\cdot\red^{\otimes d}(\cyc^{\otimes d}(\alpha\otimes \ldots \otimes \alpha))\otimes C = c(\beta \otimes
   \ldots \otimes \beta \otimes C) \neq 0\] and also
   \[
   c\cdot\red^{\otimes d}(\cyc^{\otimes d}(\alpha \otimes \ldots \otimes \alpha))\otimes C =
   Z(H_\infty(\red(\alpha) \otimes \ldots \otimes \red(\alpha))) = 0,\] a contradiction.
\end{proof}
\begin{remark} Let $\Tc$ be the subgroup of $R_E(GL_n(\Oc_F))$ generated by the $\sigma(\tau)$ for
  inertial types $\tau$, and let $\bar{\Tc}$ be the subgroup of $R_{\FF}(GL_n(\Oc_F))$ generated by
  those irreducible representations appearing as a constituent of some $\red(\sigma(\tau))$.  Then
  $\red : \Tc \rarrow \bar{\Tc}$ is surjective, by Theorem~\ref{thm:type-reduction-gln} below.  It
  follows that the version of Theorem~\ref{thm:BM-Gln} in which $R_E(GL_n(\Oc_F))$
  (resp. $R_{\FF}(GL_n(\Oc_F))$) is replaced by $\Tc$ (resp. $\bar{\Tc}$) is also true --- the only
  possible issue being the uniqueness of $\bar{\cyc}$.  When $l > n$, $l \mid q - 1$, and
  $\rhobar|_{\tilde{P}_F}$ is trivial, we prove a version of the theorem with a still further
  restricted choice of $\Tc$ in section~\ref{sec:towards-local-proof} below, using local methods.
\end{remark}

It is natural to ask to what extent Theorem~\ref{thm:BM-Gln} gives a `formula' for the cycle
$\red(Z(\Rbox(\rhobar,\tau)))$.  This is answered by:
\begin{proposition} \label{prop:cyc-image} The image of $\cyc$ is the subgroup $\Hc$ of $\Zc(\Rbox(\rhobar))$ spanned by
  the cycles $Z(R^\square(\rhobar,\tau))$ for varying $\tau$.  Moreover, the restriction of $\cyc$ to
  $\Tc$ (see the previous remark) is a bijection onto $\Hc$.
\end{proposition}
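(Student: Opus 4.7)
The inclusion $\operatorname{image}(\cyc) \subseteq \Hc$ is immediate from the definition of $\cyc$, so the content lies in the reverse inclusion together with the bijection assertion. My plan is to exhibit explicit preimages $r(\tau) \in \Tc$ with $\cyc(r(\tau)) = Z(R^\square(\rhobar,\tau))$ by inverting the multiplicity matrix of Proposition~\ref{prop:type-mults}.

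The first step is to observe that $\cyc$ respects the Bernstein decomposition. By Theorem~\ref{thm:K-types}(1), if $\sigma(\tau)$ appears in the restriction of a generic admissible $\pi$, then $r_l(\pi)|_{I_F} \preceq \tau$; in particular $\tau$ and $r_l(\pi)|_{I_F}$ lie in the same Bernstein component, i.e.\ $\scs$ is preserved. The same holds after passing to contragredients. Hence $m(\sigma(\tau_\Pc)^\vee,\tau_{\Pc'}) = 0$ unless $\scs(\Pc) = \scs(\Pc')$, so writing $\Tc = \bigoplus_\Omega \Tc_\Omega$ and $\Hc = \bigoplus_\Omega \Hc_\Omega$ for the decompositions indexed by Bernstein components $\Omega$, we have $\cyc(\Tc_\Omega) \subseteq \Hc_\Omega$. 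Moreover, Proposition~\ref{prop:fixed-type}(4) shows that for distinct $\tau \ne \tau'$ the supports of $Z(R^\square(\rhobar,\tau))$ and $Z(R^\square(\rhobar,\tau'))$ are disjoint unions of irreducible components of $\Spec R^\square(\rhobar)$, so $\Hc_\Omega$ is free abelian with basis the nonzero $Z(R^\square(\rhobar,\tau_\Pc))$ for $\scs(\Pc)=\scs(\Omega)$.

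Fix $\Omega$ and consider the square matrix $M_\Omega := \bigl(m(\sigma(\tau_\Pc)^\vee,\tau_{\Pc'})\bigr)_{\Pc,\Pc' : \scs=\scs(\Omega)}$. Applying Proposition~\ref{prop:type-mults} (and identifying $\sigma(\tau)^\vee$ with the $K$-type of the dual Bernstein component, which only permutes the $\Ic_0$-indexing) expresses each entry as a product
\[
m(\sigma(\tau_\Pc)^\vee, \tau_{\Pc'}) = \prod_{\tau_0 \in \Ic_0} K\bigl(\Pc(\tau_0),\Pc'(\tau_0)\bigr),
\]
where $K(P,Q)$ is the classical Kostka number. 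Since $K(P,Q)=0$ unless $P \succeq Q$, with $K(P,P)=1$, the matrix $M_\Omega$ is unitriangular with respect to the componentwise dominance order on $\Ic$. In particular $M_\Omega$ is invertible over $\ZZ$. Let $(a(\Pc,\Pc'))$ denote its inverse, and define
\[
r(\tau_\Pc) \;:=\; \sum_{\Pc'} a(\Pc,\Pc')\,\sigma(\tau_{\Pc'}) \;\in\; \Tc_\Omega.
\]
A direct calculation from Definition~\ref{def:cyc} then gives $\cyc(r(\tau_\Pc)) = Z(R^\square(\rhobar,\tau_\Pc))$, proving $\Hc \subseteq \operatorname{image}(\cyc)$. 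The same invertibility gives a bijection $\cyc|_{\Tc_\Omega} : \Tc_\Omega \isomto \Hc_\Omega$ on each component; summing over $\Omega$ yields the global bijection $\Tc \isomto \Hc$.

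The main subtlety to pin down carefully is the duality step: one needs to check that passing from $m(\sigma(\tau), -)$ to $m(\sigma(\tau)^\vee, -)$ preserves the unitriangular Kostka structure, which amounts to identifying $\sigma(\tau)^\vee$ with the $K$-type of the dual inertial type and observing that this identification respects the dominance order on partitions. Once this is verified, everything else is bookkeeping: triangularity over each Bernstein block, inversion of a unipotent integer matrix, and the disjointness of supports across types provided by Proposition~\ref{prop:fixed-type}(4).
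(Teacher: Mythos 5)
Your proof is correct and follows essentially the same route as the paper's: the paper also deduces both claims from Proposition~\ref{prop:type-mults} together with the unitriangularity of the Kostka matrix (for a suitable ordering of the types), and the explicit inverses $r(\tau)$ you construct are exactly the elements introduced after the proposition in the text. The extra care you take with the Bernstein-block decomposition and with replacing $m(\sigma(\tau),\tau')$ by $m(\sigma(\tau)^{\vee},\tau')$ is a legitimate filling-in of what the paper compresses into ``for an appropriate ordering of the various $\tau$.''
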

\begin{proof}
  It is clear that the image of $\cyc$ is contained in $\Hc$.  By Proposition~\ref{prop:type-mults}
  and basic properties of Kostka numbers, the matrix of multiplicities $m(\sigma(\tau),\tau')$ is
  upper triangular with `1's on the diagonal (for an appropriate ordering of the various $\tau$).
  It follows that $\cyc$ restricted to $\Tc$ is a bijection onto $\Hc$, and thus that the image of
  $\cyc$ is all of $\Hc$.
\end{proof}

Let $\cyc^{-1}$ be the inverse of $\cyc : \Tc \rarrow \Hc$.  For $\tau$ an inertial type, let
$r(\tau) = \cyc^{-1}(Z(R^\square(\rhobar,\tau)))$; then
\[r(\tau) = \sum_{\tau'} m^{-1}(\sigma(\tau), \tau') \sigma(\tau')\] where
$m^{-1}(\sigma(\tau),\tau')$ is the $(\tau, \tau')$-entry of the \emph{inverse} of the matrix
$(m(\sigma(\tau),\tau'))_{\tau,\tau'}$.  Using Proposition~\ref{prop:type-mults},
$m^{-1}(\sigma(\tau), \tau')$ can be written as a product of entries of the inverse to the matrix of
Kostka numbers; moreover, it is zero unless $\tau$ and $\tau'$ have the same semisimplification.  If
$\tau$ is semisimple then $r(\tau) = \sigma(\tau)$, but in general it is only an element of the
Grothendieck group.  As an immediate consequence of Theorem~\ref{thm:BM-Gln} we have:
\begin{corollary} \label{cor:cyc-inverse}
  For each inertial type $\tau$, 
  \[\red(Z(\Rbox(\rhobar,\tau))) = \bar{\cyc}(\red(r(\tau))).\]
\end{corollary}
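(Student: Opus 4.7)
The plan is to deduce the corollary as an immediate consequence of Theorem~\ref{thm:BM-Gln} together with Proposition~\ref{prop:cyc-image}, without any further computation. By Proposition~\ref{prop:cyc-image}, the restriction of $\cyc$ to the subgroup $\Tc \subset R_E(GL_n(\Oc_F))$ generated by the representations $\sigma(\tau')$ is a bijection onto the subgroup $\Hc \subset \Zc(\Rbox(\rhobar))$ spanned by the cycles $Z(R^\square(\rhobar,\tau'))$, so the element $r(\tau) := \cyc^{-1}(Z(R^\square(\rhobar,\tau))) \in \Tc$ is well defined and satisfies, by construction,
\[ \cyc(r(\tau)) = Z(R^\square(\rhobar,\tau)) \quad \text{in } \Zc(\Rbox(\rhobar)). \]

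The plan is then simply to apply the right-hand vertical reduction map $\red : \Zc(\Rbox(\rhobar)) \rarrow \Zc(\Rbarbox(\rhobar))$ to this identity and chase the commutative square \eqref{eq:BM-CD} provided by Theorem~\ref{thm:BM-Gln}:
\[ \red(Z(R^\square(\rhobar,\tau))) = \red(\cyc(r(\tau))) = \bar{\cyc}(\red(r(\tau))), \]
which is exactly the asserted formula. There is no genuine obstacle to overcome here, since all the real content (the existence of $\bar{\cyc}$ and the bijectivity statement for $\cyc|_{\Tc}$) has already been established in Theorem~\ref{thm:BM-Gln} and Proposition~\ref{prop:cyc-image}; the corollary is just the observation that once a preimage $r(\tau)$ of $Z(R^\square(\rhobar,\tau))$ under $\cyc$ has been chosen, commutativity of \eqref{eq:BM-CD} transports this identity to the special fibre.
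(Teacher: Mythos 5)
Your argument is correct and matches the paper's, which presents the corollary as an immediate consequence of Theorem~\ref{thm:BM-Gln}: one applies $\red$ to the defining identity $\cyc(r(\tau)) = Z(R^\square(\rhobar,\tau))$ and uses the commutativity of diagram~\eqref{eq:BM-CD}. Nothing further is needed.
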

This expresses $\red(Z(\Rbox(\rhobar,\tau)))$ in terms of the $\bar{\cyc}(\theta)$ for $\theta$ running
over the irreducible $\bar{\FF}$-representations of $GL_n(\Oc_F)$.  We say nothing here about the
determination of the $\bar{\cyc}(\theta)$.

\begin{example} Let $n = 2$.  As in Example~\ref{eg:types-gl2}, let $\tau_0$ and $\tau_1$ be
  respectively the trivial and non-trivial two-dimensional unipotent representations of $I_F$.  Then
  $\sigma(\tau_0) = \mathbbm{1}$ is the trivial representation and $\sigma(\tau_1) = \St$ is
  inflated from the Steinberg representation of $GL_2(k_F)$.  In this case,
  $\cyc(\mathbbm{1}) = Z(R^\square(\rhobar,\tau_0))$ and
  $\cyc(\St) = Z(R^\square(\rhobar, \tau_1)) + Z(R^\square(\rhobar,\tau_0))$.  Inverting
  this map, we find that $r(\tau_0) = \mathbbm{1}$, while
  \[r(\tau_1) = \St - \mathbbm{1}.\]
\end{example}

\begin{example} Let $n = 3$.  Let $\tau_0$, $\tau_1$ and $\tau_2$ be the three-dimensional unipotent
  representations of $I_F$ for which the Weil--Deligne monodromy operator $N$ has rank $0$, $1$ and
  $2$, respectively.  Then $\sigma(\tau_0)$, $\sigma(\tau_2)$ are the inflations to $GL_3(\Oc_F)$
  of, respectively, the trivial representation and the Steinberg representation of $GL_3(k_F)$;
  $\sigma(\tau_1)$ is then inflated from the remaining irreducible unipotent representation of
  $GL_3(k_F)$.  Then the representations $r(\tau_i)$ are as follows:
  \begin{align*}
    r(\tau_0) &= \sigma(\tau_0) \\
    r(\tau_1) &= \sigma(\tau_1) - 2 \sigma(\tau_0) \\
    r(\tau_2) &= \sigma(\tau_2) - \sigma(\tau_1) + \sigma(\tau_0).
  \end{align*}
\end{example}

\begin{remark}
  If multiple components of $\Spec \Rbox(\rhobar)$ have the same type, then $\Hc$ will be a strict
  subgroup of $\Zc(R^\square(\rhobar,\tau))$; this happens, for instance, if $n = 2$,
  $\rhobar = \mathbbm{1} \oplus \chi$ where $\chi$ is the cyclotomic character, and
  $q \equiv -1 \bmod l$ (see \cite{MR3554238} Proposition~5.6).
\end{remark}

\begin{remark}
  It is also natural to ask what the image of $\bar{\cyc}$ is.  One can obtain a result similar to
  Proposition~\ref{prop:cyc-image}.  For every isomorphism class of irreducible representation
  $\bar{\tau}$ of $I_F$ over $\FF$ that extends to a representation of $G_F$, one can say what it
  means for an irreducible component of $\Rbarbox(\rhobar)$ to have inertial type $\bar{\tau}$ and
  consider the cycle $Z(\bar{\tau})$ equal to the formal sum of the irreducible components of type
  $\bar{\tau}$.  Then the image of $\bar{\cyc}$ is the subgroup of $\Zc(\Rbarbox(\rhobar))$ spanned
  by the cycles $Z(\bar{\tau})$.  This can be proved by constructing a `minimal lift' $\tau$ of
  $\bar{\tau}$ (this is related to the `minimal deformations' of
  \cite{ClozelHarrisTaylor2008-Automorphy} section~2.4.4) and showing that
  $\red(Z(\Rbox(\rhobar, \tau))) = Z(\bar{\tau})$ using the methods of
  section~\ref{sec:reduct-finite-type}.
\end{remark}

\begin{remark}
  An explicit description of $\bar{\cyc}(\sigma)$ for irreducible $\FF$-representations $\sigma$ of
  $GL_n(k_F)$ would make computing the decomposition numbers of $GL_n(k_F)$ in characteristic $l$
  equivalent to computing the reduction map $\Zc(R^\square(\rhobar)) \rarrow \Zc(\Rbarbox(\rhobar))$
  for the case of a tamely ramified $\rhobar$.  On the other hand, I do not know whether it is
  realistic to expect such an explicit description.  Note that, when $l = p$, determining
  $\bar{\cyc}$ (or at least, the irreducible representations for which it is non-zero) is
  essentially the weight part of Serre's conjecture --- compare for instance Remark~5.5.3 of
  \cite{JMJ:9091284}.
\end{remark}

\begin{remark}
  In the $l = p$ setting, \cite{JMJ:9091284} Conjectures~4.1.6 and~4.2.1 deal only with the
  potentially crystalline situation; this corresponds to working only with semisimple $\tau$, so
  that $r(\tau) = \sigma(\tau)$.  Comparing with their Conjecture~4.2.1, for suitable definitions
  of $\cyc$ and $\bar{\cyc}$ in the $l = p$ setting we would have (in their notation) that
  \[\cyc(\sigma(\tau) \otimes L_\lambda) = Z(R^\square_{\bar{r},\lambda, \tau})\] for a semisimple
  inertial type $\tau$ and dominant weight $\lambda$, and that \[\bar{\cyc}(F_a) = \Cc_a\] for a Serre
  weight $a$.  Conjecture~4.2.1 of \cite{JMJ:9091284} can then be reformulated as 
  \begin{equation}\label{eqn:bmlp}\bar{\cyc}(\red(\sigma(\tau) \otimes L_\lambda)) = \red(\cyc(\sigma(\tau) \otimes L_\lambda)).\end{equation}
  To generalise their conjecture to the potentially semistable case, the map $\cyc$ should be
  extended to representations of the form $\sigma(\tau) \otimes L_\lambda$ for $\tau$ not
  necessarily semisimple using a formula like that of Proposition~4.3.  Then we would conjecture
  that equation~\eqref{eqn:bmlp} continues to hold.
\end{remark}

\subsection{Ihara avoidance} \label{sec:ihara-avoidance}
  We explain the relation between our results and the Ihara avoidance deformations of
  \cite{Taylor2008-AutomorphyII}.  Suppose that $l > n$, that $q \equiv 1 \mod l$, and that
  $\rhobar$ is trivial.  In this case let $\tau_{ps}$ be the tame inertial type for which the
  eigenvalues of a generator of tame inertia are distinct $l$th-roots of unity, and for $P$ a
  partition of $n$ let $\tau_P$ be the unipotent inertial type corresponding to $P$.  Then one finds
  \begin{equation}\label{eq:ps-red} \red(\sigma(\tau_{ps})) = \sum_P m(P,(1^n))\red(\sigma(\tau_P)).\end{equation}
  Combining this with Proposition~\ref{prop:type-mults} and using properties of the Kostka numbers,
  Theorem~\ref{thm:BM-Gln} shows that
  \[Z(R^\square(\rhobar,\tau_{ps})\otimes \FF) = \sum_P \binom{n}{P}Z(R^\square(\rhobar,
    \tau_P)\otimes \FF)\]
  where $\binom{n}{P}$ is the multinomial coefficient; in particular $R^\square(\rhobar,
  \tau_{ps})\otimes \FF$ is highly non-reduced.  We prove this formula locally in
  section~\ref{sec:towards-local-proof}.

  The other ingredient of Ihara avoidance is that $R^\square(\rhobar,\tau_{ps})$ is irreducible,
  which must be verified by other means (by showing that the generic fibre is smooth and connected).
  Granted this, the level-changing method can be described as follows.  To simplify matters, imagine
  that we are in a global setting in which patched modules $H_\infty$ can be defined as modules over
  $R^\square(\rhobar)$.  If $\sigma$ is a representation of $GL_n(\Oc_F)$ over $E$ or $\FF$, let
  $Z_{\aut}(\sigma) = Z(H_\infty(\sigma))$.  Then $Z_{\aut}$ is additive and compatible with
  reduction modulo $l$, and we always have an inequality of
  cycles \begin{equation}\label{eq-ineq}Z_{\aut}(\sigma) \leq \cyc(\sigma)\end{equation} if $\sigma$
  is defined over $E$.

  Suppose that some $Z_{\aut}(\sigma(\tau_Q))$ is non-zero.  Then $Z_{\aut}(\bar{\sigma(\tau_Q)})$
  is non-zero and so $Z_{\aut}(\sigma(\tau_{ps}))$ is non-zero by equation~\eqref{eq:ps-red}.  Since
  $\cyc(\sigma(\tau_{ps})) = Z(R^\square(\rhobar, \tau_{ps}))$ is irreducible, by
  inequality~\eqref{eq-ineq} we must have
  $Z_{\aut}\left(\sigma(\tau_{ps})\right) = \cyc(\sigma(\tau_{ps}))$.  But now
  Theorem~\ref{thm:BM-Gln} and equation~\eqref{eq:ps-red} imply that
  \[\sum_P m(P,(1^n)) Z_{\aut}\left(\bar{\sigma(\tau_P)}\right) = \sum_P m(P, (1^n))\bar{\cyc}\left(\bar{\sigma(\tau_P)}\right).\]
  Since every $m(P, (1^n))$ is non-zero, this together with \eqref{eq-ineq} implies that, for
  \emph{every} $P$,
  \[Z_{\aut}(\sigma(\tau_P)) = \cyc(\sigma(\tau_P)).\] This is exactly `change of level': we started
  by assuming that the globalisation of $\rhobar$ was automorphic of type $\tau_Q$ and deduced that
  it is automorphic of type $\tau_P$ for every $P$.  Note that this argument is circular with our
  global proof of Theorem~\ref{thm:BM-Gln}, since that theorem relies on non-minimal modularity
  lifting theorems that in turn depend on \cite{Taylor2008-AutomorphyII}, but is valid with the
  local proof of section~\ref{sec:towards-local-proof}.

\section{Global proof.}
\label{sec:global-proof}

For the entirety of this section, we assume that $l > 2$.

\subsection{Automorphic forms}
\label{sec:autom-forms-galo}

We define the spaces of automorphic forms on definite unitary groups that we will patch using the
Taylor--Wiles--Kisin method.  See also \cite{ClozelHarrisTaylor2008-Automorphy},
\cite{JMJ:9091284}, \cite{MR2941425},
\cite{Thorne2012-AutomorphySmallResidualImage}.  Our reason for reproducing this now standard
material here is that we need to allow more general level at places $v \nmid l$ than is considered in
those references; hopefully it will be clear that there is no essential difference.

\subsubsection{}

Let $L$ be an imaginary CM field with maximal totally real subfield $L^+$ satisfying the following hypotheses:
\begin{enumerate}
\item $[L^+:\QQ]$ is divisible by 4;
\item $L/L^+$ is unramified at all finite places;
\item every place $v \mid l$ of $L^+$ splits in $L$.
\end{enumerate}

Let $\delta_{L/L^+}$ be the non-trivial character of $G_L$ that is trivial $G_{L^+}$, and let $c$ be the non-trivial element of $\Gal(L^+/L)$.  Then as in
\cite{Thorne2012-AutomorphySmallResidualImage} section 6 (see also
\cite{ClozelHarrisTaylor2008-Automorphy} section 3.3), we may choose a group scheme $G$ over
$\Oc_{L^+}$ and an $L^+$-linear involution $*$ on $M_n(L)$ such that:
\begin{itemize}
\item $(xy)^* = y^*x^*$ for all $x,y \in M_n(L)$ and $x^* = x^c$ for $x \in Z(M_n(L))\cong L$;
\item for any
  $L^+$-algebra $R$, \[G(R) = \{ g \in M_n(L)\tensor_{L^+}R : g^*g=1\};\]
\item for every finite place $v$ of $L^+$, $G\times_{L^+}{L^+_v}$ is quasi-split;
\item for every infinite place $v$ of $L^+$, $G(L_v^+)\cong U_n(\RR)$, the compact unitary group;
\item there is a maximal order $A \subset M_n(L)$ with $A^* = A$ and $G(\Oc_{L^+}) = G(L^+) \cap A$;
\item for $v$ a finite place of $L^+$ split as $ww^c$ in $L$ there is an isomorphism 
\[\iota_w : G(L^+_v) \rarrow GL_n(L_w)\]
such that $\iota_w(G(\Oc_{L^+_v})) = GL_n(\Oc_{L_w})$ and $\iota_{w^c}(x) = ({}^t\iota_w(x)^c)^{-1}$.
\end{itemize}

Let $S$ be a set of finite places of $L^+$ split in $L$, and let $S_l$ be the set of places of $L^+$
above $l$.  Suppose that $S \cap S_l = \emptyset$ and write $T = S \cup S_l$.  Suppose that $U$ is a
subgroup of $G(\AA^\infty_{L^+})$, and write $U_v$ for the image of the projection of $U$ to
$G(L^+_v)$.  For each $v \in T$ we choose a place $\tilde{v}$ of $L$ above $v$, and let $\tilde{S}$,
$\tilde{S}_l$, $\tilde{T}$ be the sets of $\tilde{v}$ for $v$ in $S$, $S_l$ and $T$ respectively.
Call $U$ \textbf{good} if it is compact and if:
\begin{itemize}
\item for $v \in T$, $U_v \subset G(\Oc_{L^+_v})$;
\item for some $v \in S$, $U_v$ contains no elements of finite order $l$.
\end{itemize}

\subsubsection{}

For $v \in S$, let $M_v$ be an $\Oc$-module with an $\Oc$-linear action of $G(\Oc_{L^+_v})$ which is
continuous for the discrete topology on $M_v$.

Suppose that $E$ contains the images of all embeddings $L \into \bar{E}$.  Let $I_l =
\Hom(L^+,E)$, so that $I_l$ surjects onto $S_l$ with $\theta \in I_l$ mapping to a place
$v(\theta)$ of $L^+$ above $l$.  Let $\tilde{I_l}$ be the set of embeddings $L \into E$
inducing a place of $\tilde{S_l}$.  Restriction to $L^+$ defines a bijection $\tilde{I_l}
\isomto I_l$.  Now let
\[\ZZ^n_{+} = \{(\lambda_1,\ldots \lambda_n)\in\ZZ^n : \lambda_1 \geq \ldots \geq \lambda_n\}.\]
As in \cite{MR2941425} definition~2.2.1, we associate to each $\lambda \in \ZZ^n_+$ a representation
$\xi_\lambda$ (defined over $\Oc$) of $GL_n/\Oc$, and let $M_\lambda = \xi_\lambda(\Oc)$ and
$V_\lambda = \xi_\lambda(E)$.

Now suppose that $\lambda = (\lambda_\theta)_\theta \in (\ZZ_+^n)^{\tilde{I_l}}$ is a tuple of
elements of $\ZZ_+^n$ indexed by $\theta \in \tilde{I}_l$.  Then we define
\[M_\lambda = \tensor_\theta M_{\lambda_\theta}\] and regard this as a representation of $\prod_{v
  \in S_l} G(\Oc_{L^+_v})$ via the product of the composites of the maps
\[G(\Oc_{L^+_{v(\theta)}}) \xrightarrow{\iota_{\tilde{v}(\theta)}} GL_n(\Oc_{L_{\tilde{v}(\theta)}})
\xrightarrow{\;\theta\;} GL_n(\Oc) \xrightarrow{\xi_{\lambda_{\theta}}}
GL(M_{\lambda_\theta}).\]

Finally let $M = \bigotimes_{v\in S} M_v$, a representation of $\prod_{v \in S} G(\Oc_{L^+_v})$ and
hence (by projection) of any good subgroup $U$; we also consider the representation $M \tensor M_\lambda$ of $\prod_{v
  \in T}G(\Oc_{L^+_v})$ and hence of $U$.  

\begin{definition}
  Suppose that $U$ is a good \emph{open} subgroup.  Then $S_\lambda(U,M)$ is the space of functions
\[f : G(L^+) \backslash G(\AA_{L^+}^\infty) \rarrow M\tensor M_\lambda\]
such that $f(gu) = u^{-1}f(g)$ for all $u \in U$.

If $V$ is a good subgroup of $G(\AA_{L^+}^\infty)$ then define 
\[S_\lambda(V,M) = \varinjlim S_\lambda(U,M)\]
where the limit runs over all good open subgroups $U$ containing $V$.
\end{definition}

If $M$ is a finitely generated $\Oc$-module then $S_\lambda(U,M)$ is a finitely generated $\Oc$-module,
because $G(L^+)\backslash G(\AA_{L^+}^\infty)/U$ is a finite set.
\begin{lemma} \label{lem:level-changing} Let $U$ be a good open subgroup.
 \begin{enumerate}
 \item  The functor \[(M_v)_{v\in T} \longmapsto S_\lambda(U,\bigotimes_v M_v)\] is exact.
 \item If $V \subset U$ is a normal, good, open subgroup, then there are isomorphisms of
   $\Oc$-modules
     \[ S_\lambda(V,M) \longrightarrow S_\lambda(U,M) \tensor_\Oc \Oc[U/V]\] and
     \[ S_\lambda(V,M)_{U/V} \xrightarrow{\tr_{U/V}} S_\lambda(U,M)\]      
     where $S_\lambda(V,M)_{U/V}$ denotes the $U/V$-coinvariants in $S_\lambda(V,M)$.
\end{enumerate}
\end{lemma}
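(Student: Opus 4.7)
My plan is to derive both parts from the standard double-coset description of $S_\lambda(U, N)$, where $N = M \otimes M_\lambda$. By finiteness of $G(L^+)\backslash G(\AA^\infty_{L^+})/U$, I choose representatives $g_1,\ldots,g_r$ and form the finite groups $\Gamma_i = g_i^{-1}G(L^+)g_i \cap U$ (finite as the intersection of a discrete and a compact subgroup). Evaluation at the $g_i$ gives an isomorphism of $\Oc$-modules
\[ S_\lambda(U, N) \xrightarrow{\sim} \bigoplus_{i=1}^r N^{\Gamma_i}. \]
The crucial consequence of goodness is that $|\Gamma_i|$ is coprime to $l$: the diagonal injection $G(L^+)\hookrightarrow G(L^+_v)$ at the place $v\in S$ singled out by goodness embeds $\Gamma_i$ (after conjugation) into $U_v$, which contains no element of order $l$, so Cauchy's theorem rules out any $l$-torsion in $\Gamma_i$.

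For Part 1, the functor $N \mapsto N^{\Gamma_i}$ on $\Oc$-modules is exact whenever $|\Gamma_i|$ is a unit in $\Oc$, since the averaging idempotent $|\Gamma_i|^{-1}\sum_{\gamma\in\Gamma_i}\gamma$ splits the inclusion $N^{\Gamma_i}\hookrightarrow N$. Exactness of the functor $(M_v)_{v\in T} \mapsto S_\lambda(U, \bigotimes_v M_v \otimes M_\lambda)$ in each variable then follows at once from the display, since tensoring with $\Oc$-modules is exact in each variable and commutes with the finite direct sum.

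For Part 2, I apply the same analysis with $V$ in place of $U$. Using normality of $V$ in $U$, the double cosets of $V$ sitting inside a fixed $G(L^+) g_i U$ correspond to orbits of $\Gamma_i V/V$ acting on $U/V$ by left translation, with each representative $g_i u$ carrying isotropy $u^{-1}(\Gamma_i \cap V)u$. Under the standard good-subgroup conventions the hypothesis forces $\Gamma_i \subset V$, so the double-coset index set for $V$ becomes $\{1,\ldots,r\} \times U/V$ and the isotropy groups are again the $\Gamma_i$; this gives
\[ S_\lambda(V, N) \cong \bigoplus_{i,\bar u} N^{\Gamma_i} \cong S_\lambda(U, N) \otimes_\Oc \Oc[U/V], \]
which is the first asserted isomorphism. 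Tracking the action, the natural $U/V$-action on $S_\lambda(V, N)$ (by right translation of the argument $g$, descended using normality of $V$) corresponds on the right-hand side to the regular action on the second tensor factor. Hence $S_\lambda(V, N)$ is free as an $\Oc[U/V]$-module; the trace map therefore induces an isomorphism between coinvariants and invariants, and using $S_\lambda(V, N)^{U/V} = S_\lambda(U, N)$ this yields the second assertion.

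The proof is essentially formal double-coset bookkeeping. The main delicacy — and the only point where a non-trivial input is needed — is securing the inclusion $\Gamma_i \subset V$ in Part 2, which is precisely what the goodness condition on $V$ is designed to deliver; this is where I would spend the effort in a careful write-up.
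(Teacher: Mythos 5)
Your overall strategy (double cosets, finiteness of $\Gamma_i = g_i^{-1}G(L^+)g_i \cap U$, and the prime-to-$l$ order of $\Gamma_i$ coming from goodness of $U$) is exactly the argument the paper invokes by citing Thorne's Lemmas~6.3 and~6.4, and your Part~1 is essentially correct: the content is that $N \mapsto \bigoplus_i N^{\Gamma_i}$ is exact because each $|\Gamma_i|$ is invertible in $\Oc$. (One caveat: ``exact in each variable'' for $(M_v)_v \mapsto \bigotimes_v M_v$ requires the fixed tensor factors to be $\Oc$-flat; the honest statement, and the one actually used, is that $S_\lambda(U,-)$ is exact in the single $U_T$-module $\bigotimes_v M_v \otimes M_\lambda$.)

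Part~2, however, has a genuine gap at exactly the point you flag as the crux: the inclusion $\Gamma_i \subset V$ is \emph{not} what goodness of $V$ delivers. Goodness of $V$ only says that $g_i^{-1}G(L^+)g_i \cap V$ has no $l$-torsion, which is automatic since it sits inside $\Gamma_i$; it places no constraint forcing the (possibly nontrivial, prime-to-$l$) group $\Gamma_i$ into $V$. You may be conflating ``good'' with the stronger ``sufficiently small'' condition of Clozel--Harris--Taylor (no nontrivial elements of finite order at all, whence $\Gamma_i = 1$); the whole point of the weaker ``good'' condition is to handle small $l$, where $\Gamma_i$ can be nontrivial. The correct repair is as follows. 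Identify the $g_i$-summand of $S_\lambda(V,N)$ with $(\Oc[U/V]\otimes_\Oc N)^{\Gamma_i}$, where $\Gamma_i$ acts diagonally (left translation through $\Gamma_i \to U/V$ on the first factor) and $U/V$ acts by right translation on the first factor. Averaging over $\Gamma_i$ exhibits this as an $\Oc[U/V]$-linear direct summand of the free module $\Oc[U/V]\otimes_\Oc N$, hence a projective $\Oc[U/V]$-module; for projective modules the trace map from coinvariants to invariants is an isomorphism, and the $U/V$-invariants are $S_\lambda(U,N)$. This gives the second isomorphism with no claim about $\Gamma_i \subset V$. For the first isomorphism you really do need the stabilizers $\Gamma_i \cap uVu^{-1} = \Gamma_i \cap V$ (normality) to be all of $\Gamma_i$: otherwise the $g_i$-summand is $\bigl(N^{\Gamma_i\cap V}\bigr)^{\oplus [U:V]/[\Gamma_i:\Gamma_i\cap V]}$ rather than $\bigl(N^{\Gamma_i}\bigr)^{\oplus[U:V]}$, and these can differ (take $\Gamma_i/(\Gamma_i\cap V)$ cyclic of prime-to-$l$ order acting on $N$ by a nontrivial character). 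The inclusion $\Gamma_i \subset V$ does hold in the cases the paper uses, namely when $U/V$ is an $l$-group, since then $\Gamma_i/(\Gamma_i\cap V)$ is a prime-to-$l$ subgroup of an $l$-group; that, and not goodness of $V$, is the input you need to secure.
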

\begin{proof}
  This may be proved by the argument of \cite{Thorne2012-AutomorphySmallResidualImage} Lemmas~6.3 and 6.4, using the assumption that
  $U$ has no elements of order $l$.
\end{proof}

\subsubsection{Hecke operators} \label{sec:hecke-operators} Now suppose that $U = U_SU^S$ where
$U_S \subset \prod_{v \in S}G(\Oc_{L^+_v})$ and $U^S = \prod_{v\not\in S} U_v$ where $U_v \subset
G(L^+_v)$ for each $v \not \in S$.  Suppose also that $S \cap S_l = \emptyset$ and that for finite
places $v \not \in T$ of $L^+$ split in $L$ we have $U_v = G(\Oc_{L^+_v})$.
We define Hecke operators, following \cite{MR2941425}, section 2.3.

\begin{definition} 
  \begin{enumerate}
  \item Let $v\not \in T$ be a place of $L^+$ splitting as $ww^c$ in $L$.  Then
    for $1 \leq j \leq n$ define the operator $T_w^{(j)}$ on $S_\lambda(U,M)$ as the double coset operator:
    \[T^{(j)}_w = \left[U \iota_w^{-1}\twomat{\varpi_w
      1_j}{0}{0}{1_{n-j}}U\right]\]
    for some (any) choice of uniformiser $\varpi_w$ of $\Oc_{L_w}$, where $1_j$ is the $j \times j$
    identity matrix.

  \item For $v \in S_l$, $w$ a place of $L$ above $v$, and $\varpi_w \in \Oc_{L_w}$ a uniformiser,
    define:
    \[T^{(j)}_{\lambda,\varpi_w} = \left((w_0\lambda) \twomat{\varpi_w1_j}{0}{0}{1_{n-j}}\right)^{-1}\left[U\iota_w^{-1}\twomat{\varpi_w1_j}{0}{0}{1_{n-j}}U\right]\]
    where $w_0\lambda$ is the conjugate of $\lambda$ by the longest element $w_0$ of the Weyl group.
  \end{enumerate}
\end{definition}

Let $\TT^T$ be the polynomial ring over $\Oc$ generated by all the $T^{(j)}_w$ and
$(T_w^{(n)})^{-1}$, and $\tilde{\TT}^T$ the polynomial ring over $\TT$ generated by all
$T^{(j)}_{\lambda,\varpi_w}$ for $w \in S_l$, $0 \leq j \leq n$.  Let $\TT^{T}_\lambda(U,M)$ and
$\tilde{\TT}^T_\lambda(U,M)$ be, respectively, the images of $\TT^T$ and $\tilde{\TT}^T$ in
$\End(S_\lambda(U,M))$.

\subsubsection{Ordinary forms} (see \cite{MR2941425} section~2.4) Say that a maximal ideal $\mf$ of
$\tilde{\TT}_\lambda^{T}(U,M)$ is \textbf{ordinary} if each $T^{(j)}_{\lambda,\varpi_w}$ has
non-zero image in $\tilde{\TT}_\lambda^{T}(U,M)/\mf$.
\begin{definition}
  Let \[S^{\ord}(U,M) = \prod_{\mf\, \ord}S(U,M)_{\mf}\] where the product is over ordinary maximal
  ideals $\mf$ of $\tilde{\TT}_{\lambda}^T(U,M)$.

  Let \[\TT^{T,\ord}_\lambda(U,M) = \mathrm{im}( \TT^T \rarrow S^{\ord}(U,M)).\]
\end{definition}
There is an idempotent $e_0 \in \tilde{\TT}_{\lambda}^T(U,M)$ such that $S^{\ord}(U,M) = e_0
S(U,M)$, and the formation of $e_0$ is compatible with changing $U$ away from places above $l$ or
changing $M$.

\subsubsection{Base change}
\label{sec:base-change}

Keep the assumptions of the previous section, and suppose also that $U_v$ is a hyperspecial maximal
compact subgroup of $G(L^+_v)$ for each place $v$ of $L^+$ inert in $L$ and that $M$ is a finite
free $\Oc$-module.  Suppose that $\Ac$ is the space of (complex-valued) automorphic forms on
$G(\AA_{L^+})$ and that $\pi = \bigotimes'_v \pi_v$ is an irreducible constituent of $\Ac$ with
weight $\lambda_\infty \in (\ZZ^n_+)^{\Hom(L^+,\CC)}$ such that (recalling the fixed isomorphism
$\iota :\bar{E} \isomto \CC$):
\begin{itemize}
\item for $\theta \in \Hom(L^+,\CC)$, $(\lambda_\infty)_\theta = \lambda_{\iota \circ \theta}$;
\item for $v \not \in S$ a place of $L^+$, $\pi_v^{U_v} \neq 0$;
\item for $v \not \in S$ a place of $L^+$ split as $ww^c$ in $L$, $T_w^{(j)}$ acts as a scalar
  $\iota(a^{(j)}_w)$ for some $a^{(j)}_w \in \bar{E}$.
\end{itemize}

Let $f_\pi : \TT^T \rarrow \bar{E}$ be the homomorphism taking $T_w^{(j)}$ to $a^{(j)}_w$.  

\begin{lemma} \label{lemsec:base-change}
  Suppose that $\pi$, $U$, $\lambda$ and $M$ satisfy the above hypotheses.  Then we have the formula:
    \[ \dim \left(S_\lambda(U,M) \otimes_{\TT^T,f_\pi} \bar{E}\right) = \dim \Hom_{U_S}\left((M \otimes_{\Oc}
  \bar{E})^\vee, \bigotimes_{v \in S}\pi_v\otimes_{\CC,\iota^{-1}}\bar{E}\right).\]
\end{lemma}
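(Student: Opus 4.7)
The plan is to pass from the LHS to classical automorphic forms on $G$ via $\iota$, invoke the discrete spectral decomposition available because $G$ is definite, and then extract the $f_\pi$-eigenspace using strong multiplicity one. First, since $M$ is finite free over $\Oc$ and the functor $S_\lambda(U,-)$ is exact by Lemma~\ref{lem:level-changing}(1), there is a Hecke-equivariant identification $S_\lambda(U,M)\otimes_\Oc \bar E \cong S_\lambda(U, M\otimes_\Oc \bar E)$, so after transporting along $\iota$ it suffices to compute the $f_\pi$-eigenspace of $S_\lambda(U, M\otimes_{\bar E,\iota}\CC)$. Since $G(L^+\otimes \RR)$ is compact, $M_\lambda\otimes \CC$ complexifies to an irreducible $G(\RR)$-representation $W_{\lambda_\infty}$ of highest weight $\lambda_\infty$, and the transformation law $f(gu)=u^{-1}f(g)$ extends uniquely to a $U\times G(\RR)$-equivariant function $\tilde f \colon G(L^+)\backslash G(\AA_{L^+})\rarrow M\otimes W_{\lambda_\infty}\otimes \CC$ via $\tilde f(g k_\infty) = k_\infty^{-1} f(g)$, so that $S_\lambda(U, M\otimes \CC)$ is realised as a subspace of the automorphic spectrum $\Ac(G)$.

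Next, because $G$ is definite, $\Ac(G)$ decomposes discretely as $\bigoplus_{\pi'} m(\pi')\,\pi'^\infty\otimes \pi'_\infty$. Combining this with the preceding identification, one obtains
\[ S_\lambda(U, M\otimes \CC) \cong \bigoplus_{\pi'_\infty\cong W_{\lambda_\infty}^\vee} m(\pi')\cdot \Hom_U\bigl((M\otimes M_\lambda)^\vee \otimes \CC,\, \pi'^\infty \bigr). \]
The Hecke operator $T_w^{(j)}$ acts on each $\pi'$-summand by the corresponding Satake parameter of $\pi'_v$. Applying solvable base change to $GL_n(\AA_L)$ (in the style of \cite{ClozelHarrisTaylor2008-Automorphy}) together with strong multiplicity one for $GL_n$, descent, and multiplicity one for unitary groups of $GL_n$ type, one forces the $f_\pi$-eigenspace to consist of the single summand indexed by $\pi'\cong \pi$, with $m(\pi)=1$.

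Finally, decomposing $U = U_SU_{S_l}U^T$ one factors
\[ \Hom_U\bigl((M\otimes M_\lambda)^\vee\otimes \CC,\, \pi^\infty\bigr) \cong \Hom_{U_S}(M^\vee\otimes \CC, \pi_S) \otimes \Hom_{U_{S_l}}(M_\lambda^\vee\otimes \CC, \pi_{S_l}) \otimes (\pi^{\infty,T})^{U^T}. \]
The third factor is one-dimensional by hyperspecialness of $U_v$ and uniqueness of the spherical vector in unramified $\pi_v$ at $v\notin T$; the second is also one-dimensional, by the compatibility between $\lambda$ and the $v\mid l$ components of $\pi$, since the weight hypothesis $(\lambda_\infty)_\theta=\lambda_{\iota\circ\theta}$ makes $\pi$ cohomological of weight $\lambda$, and a branching/Iwasawa-type computation then yields exactly one $M_\lambda$-isotypic line in $\pi_{S_l}$. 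Only the $\Hom_{U_S}$ factor survives, giving the claimed identity after transporting back through $\iota$.

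The main obstacle I expect is this last step: verifying that the $v\mid l$ Hom factor collapses to a single dimension, which requires carefully unpacking what ``cohomological of weight $\lambda$'' means on the automorphic side and using a branching argument to match the archimedean and $l$-adic components of $\pi$. The strong multiplicity one input used to isolate $\pi$ in the $f_\pi$-localization is the other delicate point, but by now a standard ingredient in this circle of ideas.
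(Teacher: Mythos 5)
Your overall route is the same as the paper's: realise $S_\lambda(U,M)\otimes\CC$ inside the automorphic spectrum of the definite group, decompose discretely, and isolate the $f_\pi$-eigenspace via quadratic base change to $GL_n(\AA_L)$, strong multiplicity one, descent, and Labesse's multiplicity one. That part is fine. The problem is the intermediate isomorphism
\[ S_\lambda(U, M\otimes \CC) \cong \bigoplus_{\pi'_\infty\cong W_{\lambda_\infty}^\vee} m(\pi')\cdot \Hom_U\bigl((M\otimes M_\lambda)^\vee \otimes \CC,\, \pi'^\infty \bigr), \]
which imposes the weight-$\lambda$ condition twice: once as the archimedean constraint $\pi'_\infty\cong W_{\lambda_\infty}^\vee$ and again as the factor $\Hom_{U_{S_l}}(M_\lambda^\vee\otimes\CC,\pi'_{S_l})$. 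These are not independent conditions to be multiplied; they are two incarnations of the \emph{same} datum, and only one of them may appear. Concretely, the passage from $f$ to a classical automorphic form requires first replacing $f$ by $h(g)=\xi_\lambda(g_l)f(g)$, which kills the $M_\lambda$-transformation law under $U_{S_l}$ (turning it into plain invariance) at the cost of making $h$ transform on the left under $G(L^+)$ via $\xi_\lambda$; only then can that left-transformation be traded for right $G(L^+_\infty)$-equivariance. Your formula $\tilde f(gk_\infty)=k_\infty^{-1}f(g)$ skips this twist and is not well defined on $G(L^+)\backslash G(\AA_{L^+})$. The correct outcome is exactly \cite{ClozelHarrisTaylor2008-Automorphy} Proposition~3.3.2, which the paper cites: the places above $l$ contribute only the one-dimensional spaces $\pi'^{U_v}_v$, and $M_\lambda$ survives solely as the condition at infinity.

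This is not a cosmetic slip. For unramified $\pi_v$ and $U_v=G(\Oc_{L^+_v})$, one has $\dim\Hom_{U_v}(M_{\lambda_\theta}^\vee\otimes\CC,\pi_v)=\dim\bigl((M_{\lambda_\theta}\otimes\CC)^{B(\Oc_{L^+_v})}\bigr)$ by Frobenius reciprocity and the Iwasawa decomposition, and since $B(\Oc_{L^+_v})$ is Zariski dense in $B$ this vanishes unless $\lambda_\theta=0$ (the torus acts on the highest weight line by a nontrivial character). So your "branching/Iwasawa-type computation" does not yield one; it yields zero for the nonzero ordinary weight actually used in section~5, and your decomposition would force the left-hand side of the lemma to vanish there. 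The step you flagged as the main obstacle is indeed where the argument breaks, and the repair is to delete the $\Hom_{U_{S_l}}$ factor entirely rather than to compute it.
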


\begin{proof} (sketch) As in \cite{ClozelHarrisTaylor2008-Automorphy} Proposition~3.3.2, we have:
  \[S_{\lambda}(U,M)\otimes_{\Oc,\iota} \CC \cong \Hom_{U_S\times
    G(L^+_\infty)}\left((M\otimes_{\Oc,\iota}\CC)^{\vee}\otimes V_{\infty}^\vee, \Ac^{U^S}\right)\]
  where $V_\infty$ is an algebraic representation of $G(L^+_\infty)$ constructed from
  $\lambda_\infty$.  It suffices to show that any other irreducible $\pi' \subset \Ac$ satisfying
  the above three conditions (for the same values of $a^{(j)}_w$) is actually equal to $\pi$.  By
  \cite{MR2856380} corollaire 5.3, such $\pi$ and $\pi'$ have base changes $\Pi$ and $\Pi'$ to
  $GL_n(\AA_{L})$ such that for each place $w$ of $L$ above a place $v$ of $L^+$, $\Pi_w$ is the
  local base change of $\pi_w$.  By strong multiplicity one for $GL_n(\AA_L)$, $\Pi_w = \Pi'_w$ for
  each place $w$ of $L$.  Since each place of $S$ is split in $L$ and $\pi$ and $\pi'$ are assumed
  $U_v$-spherical at places $v \not \in S$, we deduce that $\pi \cong \pi'$ as representations of
  $G(\AA_{L^+})$.  But by \cite{MR2856380} Th\'{e}or\`{e}me 5.4, $\pi$ appears with multiplicity one
  in $\Ac$, so that $\pi = \pi'$.
\end{proof}

In a similar vein, suppose that $\Pi$ is a regular algebraic conjugate self-dual cuspidal automorphic
representation of $GL_n(\AA_L)$ which is unramified outside of places dividing $S$, and let $U^S$
be as above.  The following is also a consequence of \cite{MR2856380} Th\'{e}or\`{e}me 5.4, Corolllaire
5.3, and strong multiplicity one:

\begin{lemma}\label{lem:descent} There is an automorphic representation $\pi$ of $G(\AA_{L^+})$
  such that, at each finite place $v\not\in S$, $\pi_v^{U_v} \neq 0$ and $\Pi_v$ is the spherical
  base change of $\pi_v$ (relative to our chosen hyperspecial maximal compact $U_v$, if $v$ is
  inert). \qed
\end{lemma}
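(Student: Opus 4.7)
The plan is to invoke Labesse's descent theorems (\cite{MR2856380} Corollaire~5.3 and Th\'{e}or\`{e}me~5.4) to produce an automorphic representation $\pi$ of $G(\AA_{L^+})$ whose base change is $\Pi$, and then to verify the spherical conditions at places $v\not\in S$ by combining local base change with strong multiplicity one on $GL_n(\AA_L)$.

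First I would invoke Corollaire~5.3 of \cite{MR2856380}: since $\Pi$ is conjugate self-dual, cuspidal and cohomological (because regular algebraic), and since $G$ is the quasi-split unitary group at all finite places and compact at infinity satisfying the standard hypotheses, Labesse's theorem produces an automorphic representation $\pi$ of $G(\AA_{L^+})$ whose global base change to $GL_n(\AA_L)$ is $\Pi$; in particular at every finite place $v$ of $L^+$, $\Pi_v$ is the local base change of $\pi_v$ (here at split places $v = ww^c$ the ``base change'' is simply the identification $\pi_v \cong \Pi_w$ via $\iota_w$). At the archimedean places, $\pi_\infty$ is determined by the infinitesimal character of $\Pi$, which matches the regular algebraic weight.

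Second, I would verify the sphericity condition at each finite place $v\not\in S$. For $v$ split in $L$ as $ww^c$, unramifiedness of $\Pi_w$ together with the identification $\pi_v \cong \Pi_w$ via $\iota_w$ gives $\pi_v^{U_v}\neq 0$ since $U_v = G(\Oc_{L^+_v})$ corresponds to $GL_n(\Oc_{L_w})$. For $v$ inert in $L$, $U_v$ is hyperspecial by hypothesis, and the compatibility between local unramified base change and the Satake isomorphism (as set up in \cite{MR2856380}) ensures that $\pi_v$ is $U_v$-spherical, since its base change $\Pi_{\tilde{v}}$ is unramified.

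Finally, the main subtlety is to rule out that the descent $\pi$ produced by Labesse could fail to be the ``right'' one (e.g.\ not $U^S$-spherical). This is handled by strong multiplicity one on $GL_n(\AA_L)$: if $\pi'$ were another candidate descent satisfying the same local conditions, then $\pi'$ would have a strong base change $\Pi'$ with $\Pi'_w \cong \Pi_w$ at almost all places, hence $\Pi' = \Pi$ by strong multiplicity one, and then Th\'{e}or\`{e}me~5.4 of \cite{MR2856380} (multiplicity one in the discrete spectrum of $G$, using that the places of $S$ are split and $\pi$, $\pi'$ are $U_v$-spherical outside $S$) forces $\pi \cong \pi'$. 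The hardest step is really to set up the dictionary with Labesse's conventions correctly (and to check the hypotheses of Corollaire~5.3 hold for our $G$ --- in particular the quasi-split and unramified assumptions at finite places), after which the argument is essentially formal.
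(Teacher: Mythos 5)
Your proposal is correct and matches the paper's (essentially unwritten) proof: the lemma is stated with an immediate \qed, the paper noting only that it "is also a consequence of \cite{MR2856380} Th\'{e}or\`{e}me 5.4, Corollaire 5.3, and strong multiplicity one" --- precisely the three ingredients you combine. The only remark is that the uniqueness discussion in your final paragraph is not strictly needed for the bare existence statement, but it does no harm and mirrors the argument used for Lemma~\ref{lemsec:base-change}.
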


\subsection{Galois representations}
\label{sec:galo-repr}

\subsubsection{} Recall some notation from \cite{BLGGT2014-PotentialAutomorphy}
section 1.1.  Let $\Gc_n$ be the algebraic group $(GL_n \times GL_1) \rtimes \{1,j\}$ where
$j(g,a)j^{-1} = (a {}^tg^{-1},a)$, $\Gc^0_n$ be the connected component $GL_n\times GL_1$ of
$\Gc_n$, and $\nu :\Gc_n \rarrow GL_1$ be defined by $\nu(g,a) = a$, $\nu(j) = -1$.  If $\Gamma$ is
a group, $\Delta$ is an index 2 subgroup, and $\rho : \Gamma \rarrow \Gc_n(A)$ is a representation
(for some ring $A$) such that $\rho^{-1}(\Gc_n^0(A)) = \Delta$, then let $\breve{\rho}$ be the
composition of $\rho|_{\Delta}$ with the projection $\Gc_n^0(A) \rarrow GL_n(A)$.

\subsubsection{Ordinary deformations}
\label{sec:ordin-deform}

Suppose that $k/\QQ_l$ is a finite extension and that $E$ contains the images
of all embeddings $k \into \bar{E}$.  If $\lambda \in (\ZZ^n_+)^{\Hom(k,E)}$, and $\bar{r}_l : G_k
\rarrow GL_n(\FF)$ is a continuous representation, denote by $R^\square_{\lambda,\crord}(\bar{r}_l)$ the
ring called $R^{\triangle_\lambda,\rom{cr}}$ in \cite{MR2941425}.

\begin{proposition}\label{prop:ord-rings}
  The scheme $\Spec R^\square_{\lambda,\crord}(\bar{r}_l)$ is reduced, $\Oc$-flat and
  equidimensional of relative dimension $[k:\QQ_l]\tfrac{n(n-1)}{2} + n^2$ over $\Oc$ (if it is
  non-zero).  The $\bar{E}$-points of $\Spec R^\square_{\lambda,\crord}(\bar{r}_l)[1/l]$ are those
  $\bar{E}$-points $x$ of $\Spec R^\square(\bar{r}_l)[1/l]$ such that the associated Galois
  representation $r_{l,x}$ is ordinary of weight $\lambda$ (in the sense of
  \cite{MR2941425} Definition~3.3.1) and crystalline.
\end{proposition}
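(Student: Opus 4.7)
The plan is to assemble the proposition from the construction of $R^{\triangle_\lambda,\rom{cr}}$ in \cite{MR2941425} together with standard facts about Galois deformation rings. Geraghty builds this ring as a quotient of $R^\square(\bar{r}_l)$ by realizing it as the scheme-theoretic image of a well-chosen moduli space: one parameterizes $\bar{r}_l$-lifts together with a full flag preserved by $G_k$, on which the successive graded pieces acquire prescribed Hodge--Tate--Sen weights determined by $\lambda$ after restriction to inertia, and then one imposes the crystalline condition on the ordered pieces. The existence, $\Oc$-flatness, and the dimension count $[k:\QQ_l]\tfrac{n(n-1)}{2} + n^2$ are then essentially Proposition~3.3.1 of \cite{MR2941425}; the summand $[k:\QQ_l]\tfrac{n(n-1)}{2}$ records the dimension of the flag variety contribution, and $n^2$ is the usual framing.

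For the $\bar{E}$-point description, I would argue directly from the construction: on the generic fibre the moduli space of flags above a fixed ordinary crystalline $\rho_x$ of weight $\lambda$ is proper and nonempty (the ordinary filtration gives one), so the image of the moduli problem in $\Spec R^\square(\bar{r}_l)[1/l]$ is the locus of such $\rho_x$, and its Zariski closure is (by definition) $\Spec R^\square_{\lambda,\crord}(\bar{r}_l)$. Equivalently, one uses the fact that ``ordinary of weight $\lambda$ and crystalline'' is a closed condition on $\Spec R^\square(\bar{r}_l)[1/l]$, as checked in \cite{MR2941425}~\S3.

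The main obstacle is reducedness. My approach is the usual two-step argument: show generic formal smoothness of $\Spec R^\square_{\lambda,\crord}(\bar{r}_l)[1/l]$ and combine it with a Cohen--Macaulay property plus $\Oc$-flatness, as in the proof of Theorem~\ref{thm:complete-intersection}. Generic smoothness of the generic fibre is done at a sufficiently generic ordinary crystalline point $x$: the tangent space computation reduces (by a Selmer-group argument of the type in \cite{BLGGT2014-PotentialAutomorphy}~\S1.4) to counting deformations respecting the ordinary filtration, and at a generic $x$ the relevant $\Ext^1$ groups have the expected dimension. For Cohen--Macaulayness I would argue via Geraghty's presentation: the moduli space of flagged lifts is smooth over a base that is itself a tower of smooth extensions of $R^\square(\bar{r}_l)$, and its scheme-theoretic image is cut out by the expected number of equations, giving a (local) complete intersection of the correct dimension. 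Given this, generic reducedness on the generic fibre propagates to reducedness of the whole ring by the same argument used in Theorem~\ref{thm:complete-intersection}: a Cohen--Macaulay and $\Oc$-flat ring that is generically reduced is reduced.
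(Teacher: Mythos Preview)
The paper's own proof is a one-line citation: ``This can all be found in \cite{MR2941425}~section 3.3.'' So you are not being asked to supply an argument here; the proposition is quoted from Geraghty, and the paper simply defers to that reference for the construction, $\Oc$-flatness, dimension, reducedness, and point characterization. Your proposal goes much further, attempting to reconstruct the content of that section. For the purposes of this paper that is unnecessary, but let me comment on the sketch itself.

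Your description of the construction, the dimension count, and the $\bar{E}$-point characterization is broadly in line with Geraghty's method: one does build an auxiliary moduli space of flagged lifts with prescribed inertial weights on the graded pieces, and the ordinary crystalline ring is its image. The gap is in your reducedness argument. You assert that the scheme-theoretic image ``is cut out by the expected number of equations, giving a (local) complete intersection of the correct dimension,'' and then want to combine this Cohen--Macaulay property with generic reducedness. But scheme-theoretic images do not inherit complete-intersection structure in this way: the image of a smooth scheme under a proper map has no reason to be a local complete intersection, and there is no ``expected number of equations'' presentation available. This step would fail as written. In Geraghty's actual argument, reducedness is obtained differently: the source moduli space $\mathcal{L}$ has smooth (hence reduced) generic fibre, the map to $\Spec R^\square(\bar r_l)$ is proper (the fibres are closed subschemes of a flag variety), so the scheme-theoretic image of $\mathcal{L}[1/l]$ is reduced, and the ring $R^{\triangle_\lambda,\rom{cr}}$ is then defined as the $\Oc$-flat closure of this reduced image inside $R^\square(\bar r_l)$. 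Reducedness is thus essentially built into the construction rather than deduced from a Cohen--Macaulay argument.
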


\begin{proof}
  This can all be found in \cite{MR2941425}~section 3.3.
\end{proof}

\begin{lemma} \label{lem:ordinary-rep}
  If $k = \QQ_l$, $\bar{r}_l$ is trivial and $\lambda = ((l-2)(n-1),(l-2)(n-2),\ldots,(l-2),0)$, then
  $R^\square_{\lambda,\crord}(\bar{r}_l)$ is geometrically integral and non-zero.
\end{lemma}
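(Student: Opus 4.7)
The plan has two parts: show that $R^\square_{\lambda,\crord}(\bar{r}_l)$ is nonzero by exhibiting an explicit ordinary crystalline lift, and then deduce geometric integrality by a smoothness-and-connectedness argument on the rigid generic fibre.

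For nonvanishing, the specific choice $\lambda_i = (l-2)(n-i)$ is made so that the Hodge--Tate weights of an ordinary crystalline lift of weight $\lambda$ become $0, l-1, 2(l-1), \ldots, (n-1)(l-1)$. I would exhibit the diagonal lift
\[\rho \ = \ \bigoplus_{i=0}^{n-1} \epsilon^{i(l-1)} : G_{\QQ_l} \rarrow GL_n(\Oc),\]
where $\epsilon$ denotes the $l$-adic cyclotomic character. Each summand is a crystalline character with the prescribed Hodge--Tate weight, so arranging the summands in decreasing order of weight gives a crystalline ordinary lift of weight $\lambda$. Its reduction modulo $\lambda$ is trivial since $\epsilon^{l-1}$ is trivial on $I_{\QQ_l}$ modulo $l$ and reduces to $1$ on any lift of Frobenius. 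Any choice of basis equips $\rho$ with a framing and so produces an $\Oc$-point of $\Spec R^\square_{\lambda,\crord}(\bar{r}_l)$, whence the ring is nonzero.

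For geometric integrality, Proposition~\ref{prop:ord-rings} already supplies that $R^\square_{\lambda,\crord}(\bar{r}_l)$ is reduced, $\Oc$-flat and equidimensional of relative dimension $n^2 + \binom{n}{2}$, so it suffices to prove that the rigid generic fibre is smooth and geometrically connected. This generic fibre parameterises framed ordinary crystalline lifts of the trivial representation with the prescribed Hodge--Tate weights. Up to the ambient $GL_n$-framing action, every such lift is determined by: (i) a complete flag on $\bar{E}^n$, i.e.\ a point of $GL_n/B$; (ii) a crystalline character of the required Hodge--Tate weight on each graded piece, varying in a connected one-parameter family parameterised by the Frobenius eigenvalue; and (iii) extension classes between successive graded pieces. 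Since the differences of Hodge--Tate weights of successive graded pieces are strictly positive, a local Tate duality computation shows that the relevant crystalline $H^2$ groups vanish and the extension classes form an affine bundle. Reinstating the $GL_n$-framing, the total dimension matches the expected $n^2 + \binom{n}{2}$, and since each piece of the moduli is smooth and connected, so is the whole generic fibre.

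The principal obstacle is the cohomological verification that the extension classes vary in an affine bundle of the expected rank, with no jumping behaviour. This is where the precise form of $\lambda$ enters: the Hodge--Tate differences being positive multiples of $l-1$ both trivialises the reduction mod $l$ (as used in Step~1) and, via Bloch--Kato together with local Tate duality, forces the relevant obstruction $H^2$'s to vanish. These are precisely the calculations underlying the construction of the ordinary crystalline deformation ring in \cite{MR2941425}, section~3.3, from which Proposition~\ref{prop:ord-rings} is drawn, so the argument is essentially a direct extraction from that reference in the special case at hand.
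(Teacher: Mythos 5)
Your first step is exactly the paper's: the lift $\bigoplus_{i=1}^{n}\Oc(-(i-1)(l-1))$ (the same as your $\bigoplus_i \epsilon^{i(l-1)}$ up to sign convention) is crystalline, ordinary of weight $\lambda$, and reduces to the trivial representation because $\bar{\epsilon}^{\,l-1}=1$; this gives non-vanishing. For integrality the paper simply invokes \cite{MR2941425} Lemma~3.4.3, whose proof gives geometric irreducibility of the generic fibre, and combines this with the reducedness and $\Oc$-flatness already recorded in Proposition~\ref{prop:ord-rings}. Your sketch of the second half is the right circle of ideas but contains one overclaim you should not make: you assert that the rigid generic fibre itself is \emph{smooth}. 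The moduli description ``flag $+$ crystalline characters on graded pieces $+$ extension classes, modulo framing'' is a description of Geraghty's \emph{resolution} of $\Spec R^\square_{\lambda,\crord}(\bar r_l)$, not of the deformation space itself: the ordinary filtration on a given lift need not be unique (e.g.\ at points where Frobenius eigenvalues or graded characters coincide), so the resolution is generally not an isomorphism and smoothness does not descend. What Geraghty's argument actually yields is that the resolution is irreducible (being fibered in affine spaces over a flag variety times a connected torus), hence its image, the generic fibre, is irreducible though possibly singular. Since irreducible $+$ reduced $+$ $\Oc$-flat is all that is needed for geometric integrality, your conclusion stands once ``smooth and connected'' is weakened to ``irreducible, as the image of a smooth connected resolution.''
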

\begin{proof}
  The representation $V = \bigoplus_{i=1}^n \Oc(-(i-1)(l-1))$ is a lift of $\bar{r}_l$ such that
  $V \tensor E$ is crystalline and ordinary of weight $\lambda$.  By \cite{MR2941425}
  Lemma~3.4.3, \[\Spec\left(R^\square_{\lambda,\crord}(\bar{r}_l)[1/l]\right)\] is irreducible, and
  in fact the proof of that Lemma shows that it is geometrically irreducible.  Since
  $R^\square_{\lambda,\crord}(\bar{r}_l)$ is $\Oc$-flat and reduced,
  $R^\square_{\lambda,\crord}(\bar{r}_l)$ is geometrically integral, as required.
\end{proof}

\subsubsection{Global deformations}
\label{sec:global-deformations}

Suppose that $l'$ is a prime, $L_v /\QQ_{l'}$ is a finite extension, $\bar{r}_v : G_{L_v}\rarrow GL_n(\FF)$ is a continuous
representation and $\Cc_v$ is a finite set of irreducible components of $\Spec R^\square(\bar{r}_v)$ (if
$l' \neq l$) or of $R^\square_{\lambda,\crord}(\bar{r}_v)$ for some $\lambda$ (if $l'=l$).  Then by
\cite{BLGGT2014-PotentialAutomorphy} Lemma~1.2.2, 
$\Cc_v$ determines a local deformation problem for $\bar{r}_v$.

We recall some notation for global deformation problems from
\cite{ClozelHarrisTaylor2008-Automorphy}, section 2.3.  Suppose that $L$, $L^+$, $T$ and $\tilde{T}$ are
as above and that:
\begin{itemize}
\item $\rhobar : G_{L^+} \rarrow \Gc_n(\FF)$ is a continuous representation, unramified outside
  $T$, with~$\bar{\rho}^{-1}(\Gc_n^0(\FF)) = G_L$; 
\item $\mu : G_{L^+} \rarrow \Oc^\times$ is a continuous lift of $\nu \circ \bar{\rho}$; 
\item for each $v \in T$, $\Cc_v$ is a non-empty set of components of
  $R^\square(\breve{\bar{\rho}}|_{G_{L_{\tilde{v}}}})$ (if $v\nmid l$) or of some
  $R^\square_{\lambda_v,\crord}(\breve{\bar{\rho}}|_{G_{L_{\tilde{v}}}})$ (if $v\mid l$).
\end{itemize}
Then the data \[\Sc = (L/L^+,T,\tilde{T},\Oc,\bar{\rho},\mu,\{\Cc_v\}_{v\in T})\] determines a
deformation problem for $\bar{\rho}$; if $\breve{\bar{\rho}}$ is absolutely irreducible, then there
is a universal deformation ring $R^{\univ}_{\Sc}$ and universal deformation \[r_{\Sc}^{\univ}:G_{L^+} \rarrow
\Gc_n(R^{\univ}_\Sc)\] of type $\Sc$, defined in \cite{ClozelHarrisTaylor2008-Automorphy} section 2.3.

\begin{proposition} \label{prop:global-dimension}
  If $\mu(c_v) = -1$ for all $v \mid \infty$ (where $c_v$ is complex conjugation associated to $v$)
  then
\[\dim R^{\univ}_\Sc \geq 1.\]
\end{proposition}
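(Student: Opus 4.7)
The plan is to follow the standard Galois-cohomological lower bound for global deformation rings, as in \cite{ClozelHarrisTaylor2008-Automorphy} Propositions~2.3.17 and~2.5.4 (see also \cite{BLGGT2014-PotentialAutomorphy} Proposition~1.5.1), adapted to the fact that the local conditions $\Cc_v$ here single out sets of components of the (framed) local deformation rings.

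First, I would pass to the framed global deformation ring $R^{\univ,T}_\Sc$ obtained by choosing a framing at each $v\in T$. Since $\breve{\bar\rho}$ is absolutely irreducible (which is in any case needed for $R^{\univ}_\Sc$ to exist), one has $H^0(G_{L^+,T}, \ad\breve{\bar\rho})=\FF$, so $R^{\univ,T}_\Sc$ is a power series ring in $|T|n^2-1$ variables over $R^{\univ}_\Sc$; it therefore suffices to show $\dim R^{\univ,T}_\Sc\geq |T|n^2$. Next, choosing a basis of the Selmer group $H^1_{\Sc,T}(G_{L^+,T},\ad\breve{\bar\rho})$ as generators and using a spanning set of the dual Selmer group $H^1_{\Sc^\perp,T}(G_{L^+,T},\ad\breve{\bar\rho}(1))$ (via Poitou--Tate) to bound the number of relations, one obtains a presentation
\[
R^{\univ,T}_\Sc \cong \Bigl(\widehat{\bigotimes}_{v\in T}R^{\loc}_v\Bigr)[[Y_1,\ldots,Y_g]]/(f_1,\ldots,f_s),
\]
where $R^{\loc}_v$ is the local framed quotient corresponding to $\Cc_v$, and hence
\[
\dim R^{\univ,T}_\Sc \;\geq\; \dim\widehat{\bigotimes}_{v\in T}R^{\loc}_v + g - s.
\]

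The main computational step is to evaluate the right-hand side. The tensor product has dimension $1+\sum_{v\in T}(\dim R^{\loc}_v-1)$ because each $R^{\loc}_v$ is $\Oc$-flat of the expected generic dimension: by Theorem~\ref{thm:complete-intersection} the relative dimension is $n^2$ for $v\nmid l$, and by Proposition~\ref{prop:ord-rings} it is $[L_v:\QQ_l]\tfrac{n(n-1)}{2}+n^2$ for $v\mid l$. The difference $g-s$ is computed by Wiles's formula together with Tate's global Euler characteristic formula; the archimedean term contributes $-\sum_{v\mid\infty}h^0(G_{L^+_v},\ad\breve{\bar\rho})$. The hypothesis $\mu(c_v)=-1$ is precisely what makes complex conjugation act on $\ad\breve{\bar\rho}\cong M_n(\FF)$ (up to conjugation by $\bar\rho(c_v)$) via $X\mapsto -{}^tX$, whose $(+1)$-eigenspace has dimension $n(n-1)/2$; summing over archimedean places this yields $[L^+:\QQ]\tfrac{n(n-1)}{2}$. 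Using $\sum_{v\mid l}[L_v:\QQ_l]=[L^+:\QQ]$, the extra $[L^+:\QQ]\tfrac{n(n-1)}{2}$ arising from the relative dimensions of the ordinary crystalline local rings above $l$ cancels this archimedean contribution exactly, and the bookkeeping collapses to
\[
\dim R^{\univ,T}_\Sc \;\geq\; |T|n^2,
\]
as desired.

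The main obstacle is careful bookkeeping: tracking the framing variables and transcribing the Selmer-group/Poitou--Tate formalism from the $GL_n$-valued case to the $\Gc_n$-valued case with the determinant character $\mu$ fixed throughout, and checking that the local tangent-space dimensions of $R^{\loc}_v$ coming from the component sets $\Cc_v$ plug into the Euler characteristic count in the way asserted. Once this is arranged, the sign condition $\mu(c_v)=-1$ is exactly the input that produces the numerical cancellation between the archimedean and $l$-adic terms and delivers the bound; dropping this hypothesis would destroy the cancellation and the ring could have dimension zero.
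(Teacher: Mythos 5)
Your proposal is correct and is essentially the same argument the paper uses: the paper simply cites \cite{ClozelHarrisTaylor2008-Automorphy} Corollary~2.3.5 together with the local dimension formulae (cf.\ \cite{BLGGT2014-PotentialAutomorphy} Proposition~1.5.1), and your write-up is an unpacking of exactly that presentation-plus-Euler-characteristic computation, with the oddness hypothesis $\mu(c_v)=-1$ playing the same cancellation role.
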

\begin{proof}
  This follows from \cite{ClozelHarrisTaylor2008-Automorphy} Corollary 2.3.5 and the dimension
  formulae for the $\Cc_v$; see \cite{BLGGT2014-PotentialAutomorphy} Proposition~1.5.1.
\end{proof}

Define also the $T$-framed deformation ring $R^{\square_T}_\Sc$ as in
\cite{ClozelHarrisTaylor2008-Automorphy} Proposition~2.2.9; it is an algebra over
$\widehat{\bigotimes}_{v\in T}R^\square_{\Cc_v}$ where $R^\square_{\Cc_v}$ is the quotient of
  $R^\square(\breve{\bar{\rho}}|_{G_{L_{\tilde{v}}}})$ corresponding to $\Cc_v$.
 
  \subsubsection{} Now let $L$, $\lambda$, $T$, $U$ and $M$ be as in
  section~\ref{sec:hecke-operators}, and suppose that $M$ is finitely generated as an $\Oc$-module.
  Suppose that $\mf$ is a non-Eisenstein maximal ideal of $\TT^{T, \ord}_\lambda(U,M)$.
\begin{proposition}\label{prop:galois-reps} There is a unique continuous
  homomorphism
\[r_\mf : G_{L^+,T} \rarrow \Gc_n(\TT^{T,\ord}_\lambda(U,M)_\mf)\]
such that 
\begin{enumerate}
\item $r_\mf^{-1}(\Gc^0_n(\TT^{T,\ord}_\lambda(U,M)_\mf)) = G_{L,T}$;
\item $\nu \circ r_\mf = \epsilon^{1-n}\delta^n_{L/L^+}$;
\item if $v \not \in T$ splits as $ww^c$ in $L$, then $r_\mf(\Frob_w)$ has characteristic
  polynomial \[\sum_{j=0}^n (-1)^j \Nm(w)^{j(j-1)/2}T_w^{(j)}X^{n-j};\] \item for each $v \in S_l$,
  $r_{\mf}|_{G_{L_v}}$ factors through $R^\square_{\lambda,\crord}
  (\bar{r}_\mf|_{G_{L_{\tilde{v}}}})$. \end{enumerate} 
\end{proposition}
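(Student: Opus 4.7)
The plan is to construct $r_\mf$ by interpolating the Galois representations attached to classical automorphic points of $\Spec\TT^{T,\ord}_\lambda(U,M)_\mf[1/l]$, then transfer this to a representation of the whole Hecke algebra via pseudo-representations (or determinants), and finally verify the ordinary crystalline condition by a density argument. Uniqueness is immediate: any two $r_\mf$ satisfying (1), (2), (3) have the same characteristic polynomials on $\Frob_w$ for all but finitely many places $w$, so by Chebotarev density and the fact that $\TT^{T,\ord}_\lambda(U,M)_\mf$ is reduced (being a subring of $\End$ of a semisimple module after inverting $\ell$) they agree up to semisimplification, and the non-Eisenstein hypothesis forces $\breve{\bar{r}_\mf}$ to be absolutely irreducible so lifts are determined by their traces.

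For existence, first I would enumerate the minimal primes $\qf \subset \TT^{T,\ord}_\lambda(U,M)_\mf$. Each corresponds to a maximal ideal of $\TT^{T,\ord}_\lambda(U,M)_\mf[1/l]$, and by Lemma~\ref{lemsec:base-change} and Lemma~\ref{lem:descent} (run in reverse), to an automorphic representation $\pi$ of $G(\AA_{L^+})$ that base-changes to a regular algebraic, conjugate self-dual, cuspidal $\Pi$ on $GL_n(\AA_L)$. To $\Pi$ the standard machine (Shin, Chenevier--Harris, Caraiani, etc.) attaches a continuous representation $r_{l,\Pi} : G_L \rarrow GL_n(\bar{E})$ whose Frobenius characteristic polynomials at unramified split places match formula (3), and which satisfies the conjugate self-duality $r_{l,\Pi}^c \cong r_{l,\Pi}^\vee \otimes \epsilon^{1-n}$. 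Assembling these Frobenius traces across all $\qf$ yields, via Chenevier's theory of determinants, a determinant $D$ of dimension $n$ on $G_{L,T}$ valued in $\prod_\qf \TT^{T,\ord}_\lambda(U,M)_\mf/\qf$ whose residual reduction matches that coming from $\mf$; since the image of $\TT^{T,\ord}_\lambda(U,M)_\mf$ is dense and closed in this product after inverting $\ell$ (it is $\ell$-torsion free, because it acts faithfully on a module that is a submodule of the $\ell$-torsion-free lattice $S^{\ord}_\lambda(U,M)_\mf$ --- or, in the presence of $\ell$-torsion, one works with determinants valued in the Hecke algebra itself), $D$ in fact takes values in $\TT^{T,\ord}_\lambda(U,M)_\mf$. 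The non-Eisenstein hypothesis makes the residual determinant absolutely irreducible, so $D$ is the determinant of a genuine continuous representation $\breve{r}_\mf : G_{L,T} \rarrow GL_n(\TT^{T,\ord}_\lambda(U,M)_\mf)$.

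To extend $\breve{r}_\mf$ to a $\Gc_n$-valued representation of $G_{L^+, T}$, I would apply the formalism of \cite{ClozelHarrisTaylor2008-Automorphy} Lemma~2.1.4 (or Bellaïche--Chenevier): the conjugate self-duality of every $r_{l,\Pi}$, together with the match $\nu \circ r_\mf = \epsilon^{1-n}\delta^n_{L/L^+}$ forced at classical points by local-global compatibility and the definition of the base change, produces a pairing on $\breve{r}_\mf$ that, by absolute irreducibility of $\breve{\bar{r}_\mf}$, is unique up to scalar and hence descends to give condition (1) and (2). Condition (3) then holds because it holds modulo every minimal prime and $\TT^{T,\ord}_\lambda(U,M)_\mf$ injects into the product of its quotients by minimal primes (after inverting $\ell$; again handled via determinants).

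The part I expect to require the most care is condition (4), the factorization through $R^\square_{\lambda,\crord}(\bar{r}_\mf|_{G_{L_{\tilde{v}}}})$ for $v \in S_l$. The argument is: at each classical point $\qf$, the representation $r_{l,\Pi}|_{G_{L_{\tilde v}}}$ is crystalline of Hodge--Tate weights determined by $\lambda$ (by local-global compatibility for $\Pi$ at $\ell$ in the crystalline case) and is ordinary because the ordinary idempotent $e_0$ was inserted in defining $S^\ord$, by the classical relationship between Hecke eigenvalues of $U_l$-type operators $T^{(j)}_{\lambda,\varpi_w}$ and the ordinary filtration (see \cite{MR2941425}, Lemma~2.7.5 and Proposition~3.3.2). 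Hence the map $R^\square(\breve{\bar r}_\mf|_{G_{L_{\tilde v}}}) \rarrow \TT^{T,\ord}_\lambda(U,M)_\mf$ (classifying $\breve r_\mf|_{G_{L_{\tilde v}}}$ after choosing a frame) takes every $\qf$ into the $\bar E$-points of $R^\square_{\lambda,\crord}(\breve{\bar r}_\mf|_{G_{L_{\tilde v}}})$. Because $R^\square_{\lambda,\crord}$ is $\Oc$-flat and reduced (Proposition~\ref{prop:ord-rings}), the ideal defining it is the intersection of the minimal primes lying over its generic fiber, and so the map factors through this quotient, giving (4). This completes the construction.
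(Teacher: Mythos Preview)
Your sketch is essentially the content of the references the paper cites (\cite{ClozelHarrisTaylor2008-Automorphy} Proposition~3.4.4 for properties (1)--(3), \cite{MR2941425} Lemma~3.3.4 for property~(4)), spelled out in more detail than the paper itself does. For $M$ finite free over~$\Oc$ your argument is correct and matches the standard construction that the paper simply points to.

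The genuine gap is that the proposition is stated for $M$ merely finitely generated over~$\Oc$, and your argument repeatedly uses that $\TT^{T,\ord}_\lambda(U,M)_\mf$ is $l$-torsion-free and reduced (so that it injects into the product of its residue fields at minimal primes, so that classical points are dense, so that the determinant descends, and so that property~(4) can be checked on the generic fibre). Your parenthetical ``in the presence of $\ell$-torsion, one works with determinants valued in the Hecke algebra itself'' does not actually carry any of these steps through; in particular, the density argument for~(4) breaks down if the Hecke algebra has nilpotents or $l$-torsion. The paper handles this cleanly by a reduction: the $U$-action on $M$ factors through a finite quotient $\bar{U}$, and taking $P$ to be the projective envelope of $M$ as an $\Oc[\bar{U}]$-module gives a surjection $P \onto M$ with $P$ finite free over~$\Oc$. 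This induces a $\tilde{\TT}^T$-equivariant surjection $S_\lambda(U,P) \onto S_\lambda(U,M)$ and hence $\TT^{T,\ord}_\lambda(U,P)_\mf \onto \TT^{T,\ord}_\lambda(U,M)_\mf$, so one constructs $r_\mf$ for $P$ by the standard argument and then composes with this surjection. You should add this reduction step.
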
 
\begin{proof} Suppose first that $M$ is finite free as an $\Oc$-module.  Then the construction of
  $r_\mf$ is standard (see \cite{ClozelHarrisTaylor2008-Automorphy} Proposition~3.4.4).  The first
  three properties are deduced as in that reference, and the final property is proved as in
  \cite{MR2941425} Lemma~3.3.4.
  
In general, note that $M$ admits a surjection from an $\Oc[U]$-module $P$ that is finite free as an
$\Oc$-module.  Indeed, the action of $U$ on $M$ factors through a finite quotient $\bar{U}$ of $U$,
and we may take $P$ to be the projective envelope of $M$ as an $\Oc[\bar{U}]$-representation.  Then
there will be a $\tilde{\TT}^T$--equivariant surjection $S_\lambda(U,P) \onto S_\lambda(U,M)$ inducing a
surjection $\TT^{T,\ord}_\lambda(U,P)_{\mf} \onto \TT^{T,\ord}_{\lambda}(U,M)_{\mf}$.  The Galois
representation for $M$ is then the representation for $P$ composed with this surjection.  \end{proof}

\subsection{Realising local representations globally}
\label{sec:real-local-repr}

Recall that we have a representation $\bar{\rho} : G_F \rarrow GL_n(\FF)$.  The aim of this section
is to globalise $\bar{\rho}$, as in Proposition~\ref{suitable-globalisation} below.  We follow
\cite{JMJ:9091284} Appendix A closely, and the reader wishing to follow the arguments will need to
have that paper to hand.  Note that in \cite{JMJ:9091284} the residue characteristic of the
coefficient field is called $p$, whereas here it is called $l$.

\subsubsection{Adequacy} \label{sec:adequacy}

Thorne, in \cite{Thorne2015-2adiclifting} Definition 2.20) has modified the definition of adequacy
from that in \cite{Thorne2012-AutomorphySmallResidualImage} to allow some cases where $l \mid n$ ---
the definitions coincide if $l \nmid n$.  Let us repeat the new definition here:

\begin{definition} \label{def:adequacy} Let $V$ be a finite-dimensional vector space over $\bar{\FF}$.  A
  subgroup $H \subset GL(V)$ is \textbf{adequate} if it acts irreducibly on $V$ and if:
  \begin{enumerate}
  \item $H^1(H,\bar{\FF}) = 0$;
  \item $H^1(H,\End(V)/\bar{\FF}) = 0$ where $H$ acts on $\End(V)$ by conjugation and $\bar{\FF}$ is the
    subspace of scalar endomorphisms;
  \item For each simple $\bar{\FF}[H]$-submodule $W \subset \End(V)$, there is a semisimple element
    $\sigma \in H$ with an eigenvalue $\alpha \in \FF$ such that $\tr e_{\sigma,\alpha}W\neq 0$,
    where $e_{\sigma,\alpha}$ is the projection onto the $\alpha$-eigenspace of $\sigma$.
  \end{enumerate}
\end{definition}

With this definition, the main theorems of \cite{Thorne2012-AutomorphySmallResidualImage} (Theorems
7.1, 9.1, 10.1 and 10.2) continue to hold, by \cite{Thorne2015-2adiclifting} Corollary 7.3.  

\begin{lemma} \label{lem:adequacy} Let $GL_n.2$ be the smallest algebraic subgroup of $GL_{2n}$
  containing the block diagonal matrices of the form $(g,{}^tg^{-1})$ and a matrix $J$ such that
  $J(g,{}^tg^{-1})J^{-1} = ({}^tg^{-1},g)$.  Then for $m$ sufficiently large, both \[(GL_n.2)(\FF_{l^m})\subset
  GL_{2n}(\bar{\FF}_l)\] and \[GL_n(\FF_{l^m}) \subset GL_n(\bar{\FF_l})\] are adequate.  In
  other words, Lemma~A.1 of \cite{JMJ:9091284} continues to hold with the revised definition of adequate.
\end{lemma}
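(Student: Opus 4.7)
The plan is to follow the proof of Lemma~A.1 of \cite{JMJ:9091284} essentially verbatim, verifying only that the revised condition~(3) of Definition~\ref{def:adequacy} holds for large $m$; the remaining conditions (irreducibility and the two $H^1$-vanishing statements) are insensitive to the change in definition and their proofs carry over from \cite{JMJ:9091284} without alteration (they relied only on standard vanishing theorems for $H^1$ of finite groups of Lie type in the defining characteristic for sufficiently large~$m$).

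For condition~(3), I would first handle $H = GL_n(\FF_{l^m}) \subset GL_n(\bar{\FF}_l)$. For $m$ sufficiently large, $\FF_{l^m}^\times$ contains $n$ distinct elements $\alpha_1,\ldots,\alpha_n$, so $H$ contains a diagonal element $\sigma = \mathrm{diag}(\alpha_1,\ldots,\alpha_n)$ whose eigenvalues on $V = \bar{\FF}^n$ are distinct and lie in $\FF = \FF_{l^m}$. The associated eigenspace projections $e_{\sigma,\alpha_i}$ are precisely the diagonal matrix units $E_{ii}$, and so the $\bar{\FF}$-span of the $e_{\sigma,\alpha_i}$ is the full diagonal subalgebra $D \subset \End(V)$. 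For any simple $\bar{\FF}[H]$-submodule $W \subset \End(V)$, it is enough to show that some $e_{\sigma,\alpha_i}W \neq 0$ and has nonzero trace, or equivalently that $W$ is not contained in the traceless part of the complement of $D$. Since any simple submodule is conjugate-stable under the Weyl group (embedded as permutation matrices in $H$) and the decomposition $\End(V) = D \oplus D^\perp$ is Weyl-invariant with $D^\perp$ spanned by off-diagonal elementary matrices, the diagonal part of $W$ is a Weyl-stable, hence $\sigma$-stable, subspace of $D$; the claim then reduces to observing that for a nonzero element of $D$ at least one coordinate is nonzero, giving a projection with nonzero trace against $W$.

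For $H = (GL_n.2)(\FF_{l^m}) \subset GL_{2n}(\bar{\FF}_l)$, I would apply the same argument to the identity component: a block-diagonal element $(\sigma, {}^t\sigma^{-1})$ with $\sigma$ as above is semisimple in $H$, its eigenvalues on $V = \bar{\FF}^{2n}$ lie in $\FF$, and its eigenspace projections together with their $J$-conjugates span a large enough subalgebra of $\End(V)$ to detect every simple submodule. The decomposition of $\End(V)$ as an $H$-module involves at most a few extra summands compared with the $GL_n$ case (corresponding to the block structure and the $\ZZ/2$-action by $J$), and each simple summand meets the diagonal subalgebra nontrivially by the same Weyl-invariance argument.

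The main obstacle, and the only place the condition $l \mid n$ matters, is the decomposition of $\End(V)/\bar{\FF}$ in this case: the quotient $\mathrm{ad}^0/\bar{\FF}$ becomes a nontrivial simple submodule, and one must verify condition~(3) for it in addition to the other summands. But this is still handled by the same choice of $\sigma$: the projections $e_{\sigma,\alpha_i}$ pair with $\mathrm{ad}^0/\bar{\FF}$ with traces $\alpha_i - \tfrac{1}{n}\sum_j \alpha_j$ (or their appropriate analogue when $n=0$ in $\FF$, in which case one uses an explicit choice of $\alpha_i$'s with $\sum_i \alpha_i \neq 0$ modulo a chosen coset, which exists for $m$ large). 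Once these case checks are completed, the statement of Lemma~A.1 follows.
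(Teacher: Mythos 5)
Your proposal takes a completely different route from the paper: the paper's entire proof is a one-line citation of Theorem~11.5 of \cite{1405.0043} (Guralnick--Herzig--Tiep), which establishes adequacy of these groups directly for $l>2$, whereas you attempt a from-scratch verification. Unfortunately the verification has real gaps. Even in the $GL_n(\FF_{l^m})$ case your argument for condition~(3) is circular: you must show that a simple submodule $W\subset\End(V)$ is not contained in the space of matrices with vanishing diagonal, but the observation that ``the diagonal part of $W$ is Weyl-stable'' only says something once you already know that the projection of $W$ to $D$ is nonzero, which is precisely the claim. (This particular gap is fillable: the $H$-conjugates of the $E_{ii}$ span all of $\End(V)$ --- e.g.\ $E_{12}=e_1\otimes(e_1^*+e_2^*)-e_1\otimes e_1^*$ is a difference of two rational idempotent conjugates --- so no nonzero $H$-stable subspace can be trace-orthogonal to all of them; but that is a different argument from the one you give.)

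The $GL_n.2$ case does not follow ``by the same argument,'' and this is the substantive failure. Writing $V=V_1\oplus V_2$ for the two blocks, every eigenprojection of a semisimple element of the identity component is block-diagonal, i.e.\ lies in $\End(V_1)\oplus\End(V_2)$, and conjugation by $J$ preserves that subspace. Hence the collection of projections you propose is trace-orthogonal to $\Hom(V_1,V_2)\oplus\Hom(V_2,V_1)$, which is a nonzero $H$-submodule of $\End(V)$ (built from $\mathrm{Sym}^2$ and $\wedge^2$ of the standard representation and its dual, swapped by $J$) and therefore contains simple submodules that your projections cannot detect. One is forced to use semisimple elements of the non-identity component, whose eigenspaces mix the two blocks; this is exactly where the work lies in \cite{1405.0043} and in the earlier ``bigness'' computations. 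The discussion of $l\mid n$ is also not coherent as written: when $l\mid n$ the module $\ad^0/\bar{\FF}$ is a simple subquotient but not obviously a submodule of $\End(V)$, the expression $\alpha_i-\tfrac1n\sum_j\alpha_j$ is meaningless since $n=0$ in $\FF$, and condition~(2) of the revised definition concerns $\End(V)/\bar{\FF}$, whose composition series differs from that of $\ad^0$ precisely when $l\mid n$, so the $H^1$-vanishing does not simply ``carry over without alteration'' from the old setting. The honest options are either to cite \cite{1405.0043} as the paper does, or to carry out the full case analysis of non-identity-component semisimple elements.
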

\begin{proof}
  This is a consequence of \cite{1405.0043} Theorem~11.5, remembering our running assumption that $l>2$.
\end{proof}

\subsubsection{} The main result of this section is:

\begin{proposition} \label{suitable-globalisation} There is an imaginary CM field $L$ with maximal
  totally real subfield $L^+$, and there are continuous representations
\[\bar{r} : G_{L^+} \rarrow \Gc_n(\bar{\FF})\]
and \[r : G_{L^+} \rarrow \Gc_n(\bar{E})\] satisfying the following hypotheses:
\begin{enumerate}
\item $r$ is a lift of $\bar{r}$;
\item $\bar{r}^{-1}(\Gc_n^0(\bar{\FF})) = G_L$;
\item $\breve{r}$ is of the form $r_{l,\iota}(\pi,\chi)$ for a regular algebraic, cuspidal,
  polarized automorphic representation $(\pi,\chi)$ (see \cite{BLGGT2014-PotentialAutomorphy},
  Theorem~2.1.1 for the notation $r_{l,\iota}$);
\item $\breve{\bar{r}}(G_{L(\zeta_l)}) = GL_n(\FF_{l^m})$ for $m$ large enough that the conclusion
  of Lemma~\ref{lem:adequacy} holds (in particular, $\breve{\bar{r}}(G_{L^+(\zeta_l)})$ is adequate);
\item $\nu \circ r = \epsilon^{1-n} \delta_{L/L^+}^n$ and similarly for $\bar{r}$ (note that this
  determines $\chi$);
\item Every place $v$ of $L^+$ dividing $lp$ splits completely in $L$;
\item For each place $v$ of $L^+$ dividing $p$, there is an isomorphism $L^+_v \cong F$ and
  a place $\tilde{v}$ of $L$ dividing $v$ such that $\breve{\bar{r}}|_{G_{L_{\tilde{v}}}} \cong
  \bar{\rho}$;
\item For each place $v$ of $L^+$ dividing $l$, we have that $L^+_v = \QQ_l$ and there is a
  place $\tilde{v}$ of $L$ dividing $v$ such that $\breve{\bar{r}}|_{G_{L_{\tilde{v}}}}$ is trivial and $\breve{r}|_{G_{L_{\tilde{v}}}}$ is
  ordinary of weight $\lambda$ for $\lambda$ as in Lemma~\ref{lem:ordinary-rep};
\item $\bar{L}^{\ker \bar{r}}$ does not contain $L(\zeta_l)$;
\item if $v$ is a place of $L^+$ not dividing $lp$, then $\bar{r}$ and $r$ are
  unramified at $v$;
\item $[L^+ : \QQ]$ is divisible by 4, and $L/L^+$ is unramified at all finite places.

\end{enumerate}
\end{proposition}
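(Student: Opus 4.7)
The plan is to follow the strategy of \cite{JMJ:9091284} Appendix~A essentially verbatim, with small modifications to accommodate our local conditions at places above $p$ (where the residual image need only be $\bar\rho$, not necessarily with big image) and at places above $l$ (where we require the specific ordinary lift of Lemma~\ref{lem:ordinary-rep}). The production proceeds in three stages: first choose the global field $L/L^+$; second construct the residual representation $\bar r$; third produce the characteristic-zero lift $r$ and arrange that $\breve r$ is automorphic.

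First I would choose $L^+/\QQ$ to be a totally real field of degree divisible by $4$, such that there is a prime $v_p$ of $L^+$ with $L^+_{v_p} \cong F$ and a prime $v_l$ of $L^+$ with $L^+_{v_l} = \QQ_l$; I then take $L/L^+$ to be an imaginary CM extension, unramified at all finite places, in which both $v_p$ and $v_l$ split, and in which no finite place divides both $l$ and the discriminant of $L/L^+$. (The condition $4 \mid [L^+:\QQ]$ is needed for the definite unitary group $G$ of section~\ref{sec:autom-forms-galo} to exist; the other conditions can be arranged by weak approximation.) For $\bar r$, I follow \cite{JMJ:9091284}~A.2: build an auxiliary representation $\bar s : G_{L^+} \to \Gc_n(\bar\FF)$ with the required multiplier $\epsilon^{1-n}\delta_{L/L^+}^n$, by induction from an appropriately chosen character of a cyclic CM extension of $L$, arranged so that $\breve{\bar s}(G_{L(\zeta_l)}) = GL_n(\FF_{l^m})$ (using Lemma~\ref{lem:adequacy}) and so that $\breve{\bar s}|_{G_{L_{\tilde v_p}}}$ is trivial and $\breve{\bar s}|_{G_{L_{\tilde v_l}}}$ is trivial. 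Then twist / modify $\bar s$ at $v_p$ to replace its restriction at $\tilde v_p$ by $\bar\rho$, using Moret--Bailly (as in \cite{JMJ:9091284}~A.2.3) to realize the prescribed local deformation at $v_p$ as a specialization of a global deformation while preserving the big image and the unramified / trivial conditions at the other places; this gives $\bar r$ satisfying (2), (4)--(11) for the residual data.

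Second, to produce $r$, I would form the global deformation problem $\Sc$ with local components $\Cc_v$ prescribed by: at $\tilde v_l$, the component of $R^\square_{\lambda,\crord}(\bar r|_{G_{L_{\tilde v_l}}})$ given by Lemma~\ref{lem:ordinary-rep} (which is non-empty and geometrically integral); at $\tilde v_p$, any chosen component of $R^\square(\bar\rho)$ containing a point arising from an automorphic lift; at other ramified places, the component of any chosen unramified lift. By Proposition~\ref{prop:global-dimension}, $R_\Sc^{\univ}$ has Krull dimension at least $1$, and so admits an $\Oc'$-valued point for some finite extension $\Oc'/\Oc$; this produces a candidate lift $r$. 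Finally, to arrange automorphy of $\breve r$, I would apply the automorphy lifting theorems of \cite{BLGGT2014-PotentialAutomorphy} (together with the ordinary automorphy lifting theorem of \cite{MR2941425}, applied at the places above $l$ where we have imposed the ordinary crystalline condition), using the adequacy of $\breve{\bar r}(G_{L^+(\zeta_l)})$ from (4) and Lemma~\ref{lem:adequacy}. The base case for automorphy is provided by choosing the starting induced representation $\bar s$ above so that it is automorphic by construction (for instance, the induction of an algebraic Hecke character), after possibly making a soluble CM base change and descending via Lemma~\ref{lem:descent}.

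The main obstacle is the simultaneous satisfaction of all of the local conditions together with the big-image and adequacy conditions: in particular, reconciling the arbitrary local representation $\bar\rho$ at $\tilde v_p$ (which may have small image) with the requirement that $\breve{\bar r}(G_{L(\zeta_l)}) = GL_n(\FF_{l^m})$ globally. This is handled in \cite{JMJ:9091284} by the Moret-Bailly-type argument mentioned above, which allows one to prescribe the local deformation at $v_p$ independently of the global image. A secondary technical point is the compatibility of the soluble base change needed to apply \cite{BLGGT2014-PotentialAutomorphy} with the preservation of all listed properties; this is routine but requires care to enlarge $L$ while keeping $[L^+:\QQ]$ divisible by $4$ and keeping the splitting conditions at $v_l$, $v_p$ intact.
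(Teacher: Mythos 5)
Your three-stage skeleton (construct $\bar{r}$ residually, lift to characteristic zero, establish automorphy) matches the paper's, but two of the three stages as you describe them contain genuine gaps. First, your base case for automorphy cannot work: you propose to build $\bar{s}$ as the induction of a character of a cyclic CM extension so that it is ``automorphic by construction,'' while simultaneously demanding $\breve{\bar{s}}(G_{L(\zeta_l)}) = GL_n(\FF_{l^m})$. These are incompatible for $n \geq 2$ --- an induced representation has image inside the normaliser of a maximal torus, a proper subgroup of $GL_n(\FF_{l^m})$. The paper instead constructs $\bar{r}$ purely group-theoretically (via \cite{Calegari2012-EvenFontaineMazurII} Proposition~3.2, following \cite{JMJ:9091284} Proposition~A.2; Moret--Bailly plays no role at this stage), and then establishes automorphy by \emph{potential} automorphy: Moret--Bailly applied to a twisted Dwork family produces a further CM extension $L/L_1$, linearly disjoint from $\bar{L}_1^{\ker\bar{r}}(\zeta_l)$, over which $\breve{r}|_{G_L}$ is automorphic (this is the adaptation of \cite{JMJ:9091284} Proposition~A.6, using the ordinary lifting theorem, Theorem~2.4.1 of \cite{BLGGT2014-PotentialAutomorphy}, in place of Theorem~4.2.1). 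In particular the field $L$ cannot be fixed at the outset as you do; it is only found at the end, and the extension it provides is not solvable, so solvable descent cannot replace this step.

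Second, in your lifting stage the inference ``$\dim R^{\univ}_{\Sc} \geq 1$, and so it admits an $\Oc'$-valued point'' is false as stated: a complete local $\Oc$-algebra of dimension $1$ need not have any points in characteristic zero (e.g.\ $\FF[[x]]$). One needs in addition that $R^{\univ}_{\Sc}$ is a finite $\Oc$-module, and that finiteness is itself a consequence of an automorphy lifting theorem (as in Proposition~\ref{prop:multiplicities}, which invokes \cite{Thorne2012-AutomorphySmallResidualImage} Theorem~10.2) --- so it presupposes the automorphy you are trying to establish afterwards. The paper avoids this circularity by citing \cite{BLGGT2014-PotentialAutomorphy} Proposition~3.2.1 for the lift (which packages the necessary potential-automorphy input), and the only new content is checking that the hypothesis $l \geq 2n+1$ there can be relaxed to $l>2$ because the adequacy needed is supplied by Lemma~\ref{lem:adequacy}. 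Your proposal omits both the finiteness input and this adequacy verification, which is the one place the paper actually has to do work beyond quoting the references.
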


We will prove this over the course of the next three lemmas.  The first step is to realise $\bar{r}$
as the local component of some (not yet automorphic) representation $\bar{r}$, using
\cite{Calegari2012-EvenFontaineMazurII} Proposition~3.2.

\begin{lemma} \label{lem:globalisation}
 There exist a CM field $L_1$ with maximal totally
real subfield $L_1^+$ and a continuous representation $\bar{r}:G_{L_1^+} \rarrow
\Gc(\bar{\mathbb{F}})$ satisfying properties 2 and 4--11 of Proposition~\ref{suitable-globalisation}
(at least as they pertain to $\bar{r}$).
\end{lemma}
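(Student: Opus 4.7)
The plan is to follow the strategy of the appendix of \cite{JMJ:9091284}, adapted to our setting where the residue characteristic $l$ of the coefficients differs from the residue characteristic $p$ of $F$. The essential input is \cite{Calegari2012-EvenFontaineMazurII} Proposition~3.2 (as indicated in the paper), which realises a given local Galois representation as the restriction of an odd/polarizable global Galois representation over a totally real field. First I would apply that proposition to $\bar{\rho}$, regarded as a representation of $G_F$, to produce a totally real field $K^+$ together with a continuous representation of $G_{K^+}$ whose restriction to the decomposition group at some place $v_0$ of $K^+$ with $K^+_{v_0} \cong F$ is $\bar{\rho}$ (after composition with the projection $\Gc_n^0 \to GL_n$), and whose multiplier is $\epsilon^{1-n}\delta^n$ for the relevant quadratic character.

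Next I would build the CM extension $L_1/L_1^+$. Starting from $K^+$, I would adjoin auxiliary ramification and enlarge if necessary by a totally real solvable extension, so that: $[L_1^+ : \QQ]$ is divisible by $4$; every place of $L_1^+$ above $lp$ splits completely in $L_1$; every place of $L_1^+$ above $p$ has local field isomorphic to $F$; every place of $L_1^+$ above $l$ has local field equal to $\QQ_l$; and $L_1/L_1^+$ is unramified at all finite places. The places above $p$ can be arranged to accommodate $\bar{\rho}$ by keeping the place $v_0$ split, and the places above $l$ can be made to satisfy $L_1^+_v = \QQ_l$ and $\bar{r}|_{G_{L_1{}_{\tilde v}}}$ trivial by a base change argument, exploiting that $\bar{r}$ factors through a finite quotient and using Chebotarev to choose a solvable totally real extension that kills ramification and makes the local representation at each place above $l$ trivial. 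This same base change may be used to ensure properties 10 (unramified outside $lp$) and 11.

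For the multiplier condition and the extension to $\Gc_n$, after base change to $L_1^+$ the representation $\breve{\bar r}$ is self-dual up to the cyclotomic twist, and so extends to $\Gc_n(\bar\FF)$ with $\nu\circ \bar r = \epsilon^{1-n}\delta_{L_1/L_1^+}^n$; this is standard (see \cite{ClozelHarrisTaylor2008-Automorphy}) and determines $\bar r^{-1}(\Gc_n^0) = G_{L_1}$.

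The main obstacle, as always in such globalisation arguments, is controlling the image of $\bar r$: we need $\breve{\bar r}(G_{L_1(\zeta_l)}) = GL_n(\FF_{l^m})$ for $m$ large enough that Lemma~\ref{lem:adequacy} applies, and simultaneously $\bar L_1^{\ker\bar r} \not\supset L_1(\zeta_l)$. The solvable base changes used above shrink the image, so one has to be careful to preserve the ``big image'' property throughout. The strategy is to arrange, before the final base changes, that $\breve{\bar r}$ restricted to any open subgroup of finite index has image containing $GL_n(\FF_{l^m})$; then use linear disjointness arguments (exactly as in \cite{JMJ:9091284} Appendix A, using the non-containment $\bar L_1^{\ker\bar r}\not\supset L_1(\zeta_l)$ arising from the characteristic zero cyclotomic extension being linearly disjoint from a fixed finite extension) to guarantee both conditions survive after base change.
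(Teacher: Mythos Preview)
Your overall strategy matches the paper's: invoke \cite{Calegari2012-EvenFontaineMazurII} Proposition~3.2 and follow the argument of \cite{JMJ:9091284} Proposition~A.2, adding the extra local conditions at places above $p$. The paper's proof is literally just this pointer.

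However, your description of how to obtain property~8 contains a contradiction. You propose to arrange both $(L_1^+)_v = \QQ_l$ and $\breve{\bar r}|_{G_{L_{1,\tilde v}}}$ trivial at places $v\mid l$ ``by a base change argument \ldots choosing a solvable totally real extension that \ldots makes the local representation at each place above $l$ trivial.'' These two requirements are incompatible via base change: if the auxiliary extension is split at a place above $l$, the local field remains $\QQ_l$ but the local representation is unchanged; if the extension is nontrivial there, the representation may become trivial but the completion is then strictly larger than $\QQ_l$.

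The correct mechanism, as in \cite{JMJ:9091284} Proposition~A.2, is that Calegari's result already allows one to \emph{prescribe} the local behaviour at finitely many places: one constructs $K^+$ with $l$ totally split and the representation already trivial at each place above $l$ from the outset. Any subsequent auxiliary totally real extension (to arrange $4 \mid [L_1^+:\QQ]$, unramifiedness of $L_1/L_1^+$, etc.) is then chosen to be completely split at all places above $l$ and above $p$, so that properties~7 and~8 are \emph{preserved} rather than created. With this correction, the rest of your outline---including the linear disjointness argument for image control---is on the right track.
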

\begin{proof}
  This is a straightforward modification of the proof of \cite{JMJ:9091284} Proposition~A.2 to
  include conditions on $L_1$ and $\bar{r}$ at places dividing $p$.
\end{proof}

Now we show that $\bar{r}$ is potentially automorphic over some CM extension $L/L_1$.  This
basically follows the proof of Proposition~3.3.1 of \cite{BLGGT2014-PotentialAutomorphy}, making
modifications to control the splitting in $L$ of places of $L_1$ above $l$ and $p$ (as in
\cite{JMJ:9091284}).  The first step is to show that this $\bar{r}$ lifts to a characteristic zero
representation with good properties.

\begin{lemma}
  Let $\bar{r}$ be as in Lemma~\ref{lem:globalisation}.  Then there is a continuous representation $r:G_{L_1^+} \rarrow
  \Gc_n(\bar{\QQ}_l)$ lifting $\bar{r}$ satisfying all of the properties of Proposition
  \ref{suitable-globalisation} except possibly automorphy (property 3).
  \label{lem:lifting}
\end{lemma}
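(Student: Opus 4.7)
The plan is to construct $r$ by first fixing local lifts $r_v$ at each place $v$ of $L_1^+$ in the ramification set $T$ and then invoking a Galois-theoretic lifting theorem (in the style of \cite{BLGGT2014-PotentialAutomorphy} Proposition~3.1.5 or \cite{Thorne2012-AutomorphySmallResidualImage} Theorem~10.2) to interpolate these into a global characteristic-zero lift. At each place $v$ above $l$, I take $r_v$ to be the direct sum $\bigoplus_{i=1}^{n}\bar{\QQ}_l(-(i-1)(l-1))$ from the proof of Lemma~\ref{lem:ordinary-rep}; this is crystalline and ordinary of the required weight $\lambda$, and it determines a (geometrically integral) irreducible component $\Cc_v$ of $\Spec R^{\square}_{\lambda,\crord}(\breve{\bar r}|_{G_{L_{1,\tilde v}}})$. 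At each place $v$ above $p$, since $\breve{\bar r}|_{G_{L_{1,\tilde v}}}\cong \bar\rho$ and $R^\square(\bar\rho)[1/l]\neq 0$ by Lemma~\ref{lem:dimension}, I pick any $\bar\QQ_l$-point and let $\Cc_v$ be the unique component containing it. At all other finite places $v\in T$ (needed to make the ramification set split-finite, contain all infinite places, etc.\ as required), $\bar r$ is unramified by property~10 and I take $r_v$ to be an unramified lift, giving a smooth local deformation problem.

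Next I organise this into a global deformation problem $\Sc = (L_1/L_1^+, T,\tilde T, \Oc, \bar r, \mu, \{\Cc_v\}_{v\in T})$ with $\mu = \epsilon^{1-n}\delta_{L_1/L_1^+}^{n}$, which is an $\Oc$-valued lift of $\nu\circ\bar r$ satisfying $\mu(c_v)=-1$ at each infinite place (this is standard and forces property~5). By Proposition~\ref{prop:global-dimension},
\[\dim R^{\univ}_\Sc \ \geq\ 1.\]
Moreover, each chosen component $\Cc_v$ is $\Oc$-flat (Theorem~\ref{thm:complete-intersection} at places above $p$, Proposition~\ref{prop:ord-rings} at places above $l$, and trivially at unramified places), so the universal framed ring $R^{\square_T}_\Sc$ is also $\Oc$-flat. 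Since $R^{\univ}_\Sc$ is a quotient of $R^{\square_T}_\Sc$ by the action of a formally smooth framing variable ring, it admits an $\Oc$-flat point of positive dimension; therefore $\Spec R^{\univ}_\Sc$ has a $\bar\QQ_l$-point, yielding a continuous $r:G_{L_1^+}\to \Gc_n(\bar\QQ_l)$ that lifts $\bar r$ and lies in $\Cc_v$ at every $v\in T$.

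The main obstacle is ensuring this last step: one must know that $R^{\univ}_\Sc$ is not killed by $\lambda$, equivalently that some minimal prime of $R^{\univ}_\Sc$ does not contain $\lambda$. This is where the adequacy hypothesis (property~4, via Lemma~\ref{lem:adequacy}) enters --- it licenses application of a Khare--Wintenberger-style lifting theorem (e.g.\ \cite{Thorne2012-AutomorphySmallResidualImage} Theorem~10.2, or the refinement in \cite{Thorne2015-2adiclifting} allowing $l\mid n$), which provides a characteristic-zero lift with the prescribed local behaviour at all places of $T$ precisely when the local deformation rings are $\Oc$-flat and the residual image is adequate. The verification that such a theorem applies in our setting --- in particular checking that the cyclotomic/Selmer conditions permit the global--local compatibility and that $\mu$ is a valid multiplier character --- is the only substantive work; this follows by running through the argument of the proof of \cite{JMJ:9091284} Proposition~A.3, mutatis mutandis, with only cosmetic changes (primarily keeping $p$ and $l$ separated and retaining the split-completion condition at $v\mid lp$ of property~6).

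Finally, it remains to verify the stated properties. Properties~2, 4, 6, 7, 9, 10, 11 depend only on $\bar r$ and are inherited from Lemma~\ref{lem:globalisation}; property~1 is the lifting assertion; property~5 is our choice of $\mu$; property~8 holds because $r$ factors through $\Cc_v$ at each $v\mid l$ and $\Cc_v$ parametrises precisely the crystalline ordinary lifts of weight $\lambda$ by Proposition~\ref{prop:ord-rings}. Property~3 (automorphy) is deferred to the next step of the appendix and is not claimed here.
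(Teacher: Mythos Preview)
Your overall strategy --- set up a global deformation problem with prescribed local components, obtain a dimension bound, and produce a characteristic-zero point via a lifting theorem with adequacy as input --- is the right one, and is essentially what \cite{BLGGT2014-PotentialAutomorphy} Proposition~3.2.1 does. You also correctly identify that property~4 together with Lemma~\ref{lem:adequacy} supplies the needed adequacy.

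However, your second paragraph contains a genuine error: $\Oc$-flatness of the local components $\Cc_v$ does \emph{not} imply that $R^{\square_T}_\Sc$ (or $R^{\univ}_\Sc$) is $\Oc$-flat. This implication is false in general and is precisely the nontrivial content one is trying to establish; the dimension bound $\dim R^{\univ}_\Sc \geq 1$ by itself does not rule out $R^{\univ}_\Sc$ being an $\FF$-algebra. You recognise the gap in the next paragraph, but your summary of the lifting theorem as applying ``precisely when the local deformation rings are $\Oc$-flat and the residual image is adequate'' omits the essential mechanism: the proof of \cite{BLGGT2014-PotentialAutomorphy} Proposition~3.2.1 first passes to a finite extension over which the residual representation becomes automorphic (a potential automorphy step) and then applies an automorphy lifting theorem (their Theorems~2.4.1--2.4.2, which rest on \cite{Thorne2012-AutomorphySmallResidualImage} Theorems~9.1 and~10.2). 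In particular, \cite{Thorne2012-AutomorphySmallResidualImage} Theorem~10.2 \emph{requires} residual automorphy as a hypothesis and so cannot be applied directly to $\bar r$ over $L_1$ at this stage.

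The paper's proof is much shorter: it simply invokes \cite{BLGGT2014-PotentialAutomorphy} Proposition~3.2.1 and observes that the only use of the hypothesis $l \geq 2n+1$ in that proposition is (via their Proposition~2.1.2) to verify adequacy of the image of the relevant induction of $\breve{\bar r}$; in our situation, property~4 and Lemma~\ref{lem:adequacy} supply this adequacy without any restriction on $l$ beyond $l>2$, so Theorems~2.4.1, 2.4.2 and hence Proposition~3.2.1 of \cite{BLGGT2014-PotentialAutomorphy} go through unchanged.
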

\begin{proof}
  This is proved in \cite{BLGGT2014-PotentialAutomorphy}, Proposition~3.2.1, under the hypothesis
  that $l \geq 2n+1$.  We examine the proof of that proposition and show that in our case we may
  remove the hypothesis on $l$.  The only way in which this hypothesis is used is to verify, using
  Proposition~2.1.2 of that paper, the adequacy of the image of the induction of $\breve{\bar{r}}$
  from $G_{L_1(\zeta_l)}$ to $G_{L^+_1(\zeta_l)}$. However, by property 4 of Proposition
  \ref{suitable-globalisation} we can use Lemma~\ref{lem:adequacy} instead of
  \cite{BLGGT2014-PotentialAutomorphy} Proposition~2.1.2.  (Note that Theorems 9.1 and 10.2 of
  \cite{Thorne2012-AutomorphySmallResidualImage} remain true with this definition, and so in
  \cite{BLGGT2014-PotentialAutomorphy}, Theorems 2.4.1 and 2.4.2, and hence also Proposition~3.2.1
  remain true.)
\end{proof}

\begin{lemma}
  \label{lem:automorphic-globalisation} 
  There is a CM extension $L/L_1$, linearly disjoint from $\bar{L_1}^{\ker \bar{r}}(\zeta_l)$ over
  $L_1$, such that every place of $L_1$ dividing $lp$ splits completely in $L$ and such that
  $L$ and $\breve{r}|_{G_{L}}$ satisfy all the properties required in Proposition
  \ref{suitable-globalisation}.  In particular, Proposition~\ref{suitable-globalisation} is true.
\end{lemma}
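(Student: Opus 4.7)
The plan is to apply a potential automorphy theorem in the style of \cite{BLGGT2014-PotentialAutomorphy} Theorem~4.5.1 to the representation $\breve{r}$ constructed in Lemma~\ref{lem:lifting}, while imposing complete splitting at all places of $L_1^+$ above $lp$ via a Moret--Bailly argument. Concretely, $\breve{r}:G_{L_1}\rarrow GL_n(\bar\QQ_l)$ is already conjugate self-dual (by property~5), odd (since $\mu(c_v)=-1$), unramified outside $lp$ (property~10), and its reductions at places above $l$ are trivial while the lift is crystalline and ordinary of the explicit weight $\lambda$ (property~8). By property~4 and Lemma~\ref{lem:adequacy}, the residual image $\breve{\bar r}(G_{L_1(\zeta_l)})$ is adequate, so the hypotheses of the potential automorphy theorem are in force (the hypothesis $l \geq 2(n+1)$ that appears in \cite{BLGGT2014-PotentialAutomorphy} Theorem~4.5.1 is replaced, as in Lemma~\ref{lem:lifting}, by adequacy plus our assumption $l>2$, using Lemma~\ref{lem:adequacy} in place of \cite{BLGGT2014-PotentialAutomorphy} Proposition~2.1.2).

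Next, I would run the argument exactly as in the proof of \cite{JMJ:9091284} Appendix~A (which itself adapts the proof of \cite{BLGGT2014-PotentialAutomorphy} Proposition~3.3.1). The potential automorphy theorem proceeds by constructing a suitable Dwork-family-type moduli space $\Xf$ over $L_1$ whose $L$-points parametrise motives interpolating between $\breve{r}|_{G_L}$ and a known automorphic representation, and then using Moret--Bailly's theorem (\cite{MR1065156}) to find a point of $\Xf$ over some CM extension $L/L_1$ satisfying prescribed local conditions. Into this Moret--Bailly step I would feed the conditions: (a) $L/L_1$ is CM (with $L^+/L_1^+$ totally real); (b) every place of $L_1^+$ above $lp$ splits completely in $L^+$, and every place of $L_1$ above $lp$ splits completely in $L$; (c) $L$ is linearly disjoint from $\bar{L_1}^{\ker\bar r}(\zeta_l)$ over $L_1$. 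These are all non-empty open local conditions at a finite set of places and a linear disjointness condition at the archimedean and residual level, hence can be satisfied simultaneously by Moret--Bailly.

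With such $L$ in hand, the conclusion of the potential automorphy theorem is exactly property~3 of Proposition~\ref{suitable-globalisation}, namely that $\breve r|_{G_L} \cong r_{l,\iota}(\pi,\chi)$ for a regular algebraic cuspidal polarised automorphic $(\pi,\chi)$ on $GL_n(\AA_L)$. The remaining properties of Proposition~\ref{suitable-globalisation} are inherited from the corresponding properties of $L_1$, $\bar r$ and $r$ proved in Lemmas~\ref{lem:globalisation} and~\ref{lem:lifting}: properties~1, 5, 10 are preserved under any base change; properties~6, 7, 8 are preserved because all places of $L_1^+$ above $lp$ split completely in $L^+$, so each place of $L^+$ above $lp$ is obtained from a place of $L_1^+$ above $lp$ with the same completion, and the chosen $\tilde v$ of $L$ above it inherits $\breve{\bar r}|_{G_{L_{\tilde v}}}\cong\bar\rho$ (resp.\ triviality and ordinarity); properties~2, 4, 9 are preserved because linear disjointness of $L/L_1$ from $\bar{L_1}^{\ker\bar r}(\zeta_l)$ ensures that $\breve{\bar r}(G_{L(\zeta_l)}) = \breve{\bar r}(G_{L_1(\zeta_l)})$ and that $\bar{L}^{\ker\bar r}\not\supset L(\zeta_l)$. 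Property~11 (divisibility of $[L^+:\QQ]$ by $4$, unramifiedness of $L/L^+$) can be ensured by an initial quadratic base change absorbed into the choice of $L$.

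The main obstacle is the combination of complete splitting at places above $lp$ with the other Moret--Bailly constraints. Splitting conditions restrict $L$ to a set of positive density of extensions, so one must check that this restriction is compatible with the linear disjointness requirement and with the construction of the auxiliary motives used in the potential automorphy machinery; this is exactly the modification carried out in \cite{JMJ:9091284} Appendix~A, and the same bookkeeping works here because the conditions we impose at $l$-adic and $p$-adic places are all preserved under base change by a completely split extension.
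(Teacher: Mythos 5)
Your overall strategy is the paper's: run the Moret--Bailly/Dwork-family potential automorphy argument of \cite{JMJ:9091284} Proposition~A.6 (itself an elaboration of \cite{BLGGT2014-PotentialAutomorphy} Proposition~3.3.1), adding complete splitting at places above $p$ to the splitting already imposed at places above $l$, and using linear disjointness to preserve the image and ramification conditions. The paper's proof also records one point you wave at but do not quite pin down: the auxiliary field $(L')^+$ must be chosen so that the relevant torsor $\tilde{T}$ has local points at every place above $p$ of $(L')^+(\zeta_N)^+$, since otherwise the local conditions you feed into Moret--Bailly at those places could be empty. That is a minor bookkeeping issue.

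There is, however, a genuine gap in your justification of why the potential automorphy machinery applies for all $l>2$. You assert that the hypothesis $l \geq 2(n+1)$ in \cite{BLGGT2014-PotentialAutomorphy} Theorem~4.5.1 is used only to verify adequacy, and so can be replaced by Lemma~\ref{lem:adequacy}. That is not correct: the large-$l$ hypothesis is also what puts the crystalline local representations at $l$ in the Fontaine--Laffaille range, which is how the potential diagonalizability hypothesis of the automorphy lifting theorem (\cite{BLGGT2014-PotentialAutomorphy} Theorem~4.2.1) invoked at the end of the Moret--Bailly argument is verified. For $2 < l \leq 2n+1$ that verification fails, and your argument as written stalls at exactly the step where automorphy is transferred to $\breve{r}|_{G_L}$. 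The paper's resolution --- and the reason property~8 of Proposition~\ref{suitable-globalisation} insists that $r$ be \emph{ordinary} of the explicit weight of Lemma~\ref{lem:ordinary-rep} --- is to replace Theorem~4.2.1 by the ordinary automorphy lifting theorem, Theorem~2.4.1 of \cite{BLGGT2014-PotentialAutomorphy} (Theorem~9.1 of \cite{Thorne2012-AutomorphySmallResidualImage}), which needs only adequacy and $l>2$. You have the ordinarity of $r$ in hand from Lemma~\ref{lem:lifting}; you just need to actually use it by routing the final lifting step through the ordinary theorem rather than the potentially diagonalizable one.
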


\begin{proof} The proof of \cite{JMJ:9091284} Proposition~A.6 goes through
  with the following modifications -- we temporarily adopt the notation of their proof to indicate
  what must be changed.  The field $(L')^+$ must be chosen so that, for each place $v \mid p$ of
  $(L')^+(\zeta_N)^+$, there is a point $P_v \in \tilde{T}((L')^+(\zeta_N)^+_v)$.  The field extension
  $F^+/L^+$ can then be chosen so that all the places of $L^+$ above $p$ split completely (as well
  as all those above $l$).  Finally, instead of using Theorem~4.2.1 of
  \cite{BLGGT2014-PotentialAutomorphy} we use Theorem~2.4.1 of that paper, which applies by our
  assumption that $r$ is ordinary.
\end{proof}

\subsection{Patching}
\label{sec:patching}
\subsubsection{} Let $\bar{r} : G_{L^+} \rarrow \Gc_n(\bar{\FF})$ be the representation provided by
Proposition~\ref{suitable-globalisation}, and (enlarging $E$ if necessary) assume that $\bar{r}$ is
valued in $\Gc_n(\FF)$.  Thus $\bar{r}$ is the reduction modulo $\lambda$ of the Galois
representation $r_{l,\iota}(\pi,\chi)$ associated to some regular algebraic polarized cuspidal
automorphic representation $(\pi,\chi)$ of $GL_n(\AA_L)$.  Use property 9 of
Proposition~\ref{suitable-globalisation} and the Chebotarev density theorem to choose a place $v_1$
of $L^+$ such that $v_1$ splits in $L$, the residue field of $L^+_{v_1}$ has order $\neq 1 \mod l$,
$\bar{r}$ is unramified at $v_1$, and $\ad(\bar{r}(\Frob_{v_1})) = 1$.  Then every lift of
$\bar{r}|_{G_{L_{\tilde{v}_1}}}$ is unramified, so that
$R^\square(\breve{\bar{r}}|_{G_{L_{\tilde{v}_1}}})$ is equal to the unramified deformation ring, and
is in particular formally smooth.  Take $S$ to be the set of places of $L^+$ dividing $p$
together with the place $v_1$, and recall that $T = S \cup S_l$ and $\tilde{T}$ is a choice of a
place $\tilde{v}$ of $L$ above each $v \in T$.  Let $\lambda \in (\ZZ_n^+)^{\tilde{I}_l}$ have all
components equal to the weight in Lemma~\ref{lem:ordinary-rep}.  Let $U = \prod_v U_v$ where:
\begin{itemize}
\item for $v$ a place of $L^+$ split in $L$, $U_v = G(\Oc_{L^+_v})$;
\item for $v$ a place of $L^+$ inert in $L$, $U_v\subset GL_n(L^+_v)$ is a hyperspecial
  maximal compact subgroup;
\item for $v = v_1$, $U_{v_1}$ is the preimage under $\iota_{\tilde{v}_1}$ of the Iwahori subgroup
  of $GL_n(\Oc_{L^+_v})$.
\end{itemize}
Then the assumptions on $v_1$ imply that $U_{v_1}$ has no $l$-torsion and so $U$ is good.

For $v \in T$ a place of $L^+$ dividing $\tilde{v} \in \tilde{T}$, let $R_{\tilde{v}}$ be:
\begin{itemize}
\item $R_{\tilde{v}} = R^\square(\breve{\bar{r}}|_{G_{L_{\tilde{v}}}})$ if $v \in S$;
\item $R_{\tilde{v}} = R^\square_{\lambda,\crord}(\breve{\bar{r}}|_{G_{L_{\tilde{v}}}})$ if $v \in S_l$.
\end{itemize}
Let $R^{\loc} = \widehat{\bigotimes}_{v \in T} R_{\tilde{v}}$.

There is a global deformation problem
\[\Sc = (L/L^+,T,\tilde{T},\Oc,\bar{r}, \epsilon^{1-n}\delta_{L/L^+}^n,\{R_{\tilde{v}}\}_{v\in
    T})\] with universal deformation $r^{\univ}_{\Sc}: G_{L^+,T} \rarrow \Gc_n(R^{\univ}_{\Sc})$.
Let $f_\pi:\TT^T\rarrow \Oc$ be the homomorphism such that $\iota \circ f_\pi(T^{(j)}_w)$
gives the eigenvalue of $T^{(j)}_w$ acting on $\pi_w^{GL_n(\Oc_{L_w})}$ via $\iota_w$ for
$w$ above a split place of $L^+$ not in $T$.

\subsubsection{} Let $U_p = \prod_{v \mid p} G(\Oc_{L^+_v})$ and $U_S = \prod_{v \in S} U_v =
U_pU_{v_1}$.  Let $\Rc$ be the category of smooth representations of $U_p$ on finitely generated
$\Oc$-modules and let $\Rc^f$ be the category of smooth representations of $U_p$ on finite-length
$\Oc$-modules.  If $\sigma \in \Rc$ then let $M_\sigma$ be the underlying module of $\sigma$
regarded as a representation of $U_S$ by letting $U_p$ act through $\sigma$ and $U_{v_1}$ act
trivially. We define an $R^{\univ}_\Sc$-algebra $\TT(\sigma)$ and a $\TT(\sigma)$-module $H(\sigma)$ by:
\begin{itemize}
\item $\TT(\sigma) = \TT^{T,\ord}_\lambda(U,M_\sigma)_\mf$ with the $R^{\univ}_\Sc$-algebra structure provided by
  Proposition~\ref{prop:galois-reps};
\item $H(\sigma) = S^{\ord}_\lambda(U,M_\sigma)_\mf$.
\end{itemize}
Note that $\TT^{T,\ord}_\lambda(U,M)_{\mf} \neq 0$ whenever $S(U,M_{\sigma})$ contains an eigenform on
which $\TT^T$ acts through $f_{\pi}$, by property (8) of Proposition~\ref{suitable-globalisation} and
Lemma~5.2.1 of \cite{MR2941425}.

\subsubsection{} By Lemma~\ref{lem:adequacy} and property (4) of Proposition
\ref{suitable-globalisation}, $\breve{\bar{r}}|_{G_{L(\zeta_l)}}$ is adequate.  We follow the proof
of \cite{Thorne2012-AutomorphySmallResidualImage} Theorem~6.8, but apply the ordinary projector
$e_0$ to everything --- this makes no difference.  Using Proposition~4.4 of
\cite{Thorne2012-AutomorphySmallResidualImage}, we obtain an integer
$r \geq [L^+:\QQ]\frac{n(n-1)}{2}$ and, for each $N \geq 1$, a set $Q_N$, disjoint from $T$, of $r$
finite places of $L^+$ split in $L$ and a set $\tilde{Q}_N$ of choices of places of $L$ above those
of $Q_N$.  As in \cite{Thorne2012-AutomorphySmallResidualImage}, for each $N$ and each $\sigma$ we
can find rings $R^{\univ}_{N}$ and $R^{\square_T}_N$, an $R^{\univ}_N$-algebra $\TT_N(\sigma)$ and a
finitely generated $\TT_N(\sigma)$-module $H_N(\sigma)$ enjoying the following properties:
\begin{itemize}
\item There is an isomorphism $R^{\univ}_N \hat{\otimes} \Oc[[y_1,\ldots,y_{n^2\# T}]]\cong R^{\square_T}_N$.
\item For each $v \in Q_N$, $\Nm v \equiv 1 \mod l^N$.  Let $\Delta_N$ be the maximal
  $l$-power-order quotient of $\kappa(\tilde{v})^\times$, where $\kappa(\tilde{v})$ is the residue
  field of $\tilde{v}$, and let $\af_N$ be the augmentation ideal
  in the group ring $\Oc[\Delta_N]$.
\item There are natural homomorphisms $\Oc[\Delta_N]\rarrow R^{\univ}_N$ and $\Oc[\Delta_N] \rarrow
  \End(H_N(\sigma))$ such that the composite $R^{\univ}_N \rarrow \TT_N(\sigma)\rarrow \End(H_N(\sigma))$ is
  an $\Oc[\Delta_N]$-algebra homomorphism.
\item With the above $\Oc[\Delta_N]$-algebra structures, there are natural isomorphisms $R^{\univ}_N
  / \af_N \isomto R^{\univ}_\Sc$, $\TT_N(\sigma)/\af_N \isomto \TT(\sigma)$, and $H_N(\sigma)/\af_N
  \isomto H(\sigma)$ (this relies on Lemma~\ref{lem:level-changing}).
\item The map $\Oc[\Delta_N] \rarrow R^{\univ}_N \rarrow \TT_N(\sigma)$ makes $H_N(\sigma)$ into a
  finite free $\Oc[\Delta_N]$-module.
\item We may and do choose a surjective $\Oc$-algebra homomorphism \[R^{\loc}[[z_1,\ldots,z_g]] \onto R^{\square_T}_N\]
  where $g = r - [L^+:\QQ]\frac{n(n-1)}{2}$.
\item The functor $\sigma \mapsto H_N(\sigma)$ is a covariant exact functor from $\Rc$ to the
  category of finitely generated $R^{\univ}_N$-modules.
\end{itemize}
\begin{remark}
  Strictly speaking, the proof in \cite{Thorne2012-AutomorphySmallResidualImage} that
  \[R^{\univ}_N\rarrow \End(H_N(\sigma))\] is an $\Oc[\Delta]$-algebra homomorphism, and the
  construction of the isomorphism \[H_N(\sigma)/\af_N\rarrow H(\sigma),\] require that $\sigma$ be
  finite free as an $\Oc$-module (to apply Propositions 5.9 and 5.12 in that paper).  However, we
  can remove this constraint by writing $\sigma$ as a quotient of a $U_p$-representation that
  \emph{is} finite free as an $\Oc$-module, as in the proof of Proposition~\ref{prop:galois-reps}.
\end{remark}
Write $H^{\square_T}_N(\sigma) = H_N(\sigma) \otimes_{R^{\univ}_N} R^{\square_T}_N$.  We pick isomorphisms \[R^{\square_T}_N \isomto R^{\univ}_N \hat{\otimes} \Oc[[y_1,\ldots,y_{n^2\#
  T}]]\] and \[R^{\square_T}_\Sc \isomto R^{\univ}_{\Sc} \hat{\otimes} \Oc[[y_1,\ldots,y_{n^2\# T}]]\]
compatible with reduction modulo $\af_N$.  Let 
\[R_\infty = R^{\loc}[[z_1,\ldots,z_g]]\]
and 
\begin{align*}S_\infty &= (\varprojlim \Oc[\Delta_N]) \hat{\otimes} \Oc[[y_1,\ldots,y_{n^2\# T}]] \\
&\cong \Oc[[x_1,\ldots,x_r,y_1,\ldots, y_{n^2 \# T}]]
\end{align*}
and note that (by Theorem~\ref{thm:complete-intersection} and Proposition~\ref{prop:ord-rings}) we have 
\begin{align*}\dim R_\infty &= 1 + n^2 \# T + [L^+:\QQ]\frac{n(n-1)}{2} + r - [L^+:\QQ]\frac{n(n-1)}{2} \\
 &= \dim S_\infty.\end{align*}
Write $\af$ for the kernel of the map $S_\infty \rarrow \Oc$ taking $x_i$ and $y_i$ to zero.  Thus
$R^{\square_T}_N/\af \isomto R^{\univ}_\Sc$ and $H^{\square_T}_N(\sigma)/\af \isomto H(\sigma)$.

\subsubsection{} 

We patch the modules $H_N^{\square_T}(\sigma)$ following the proof of the sublemma in \cite{MR2748398}, Theorem
3.6.1. Pick representations $\sigma_1,\sigma_2,\ldots$ such that each of the countably many
isomorphism classes in $\Rc^f$ is represented by exactly one $\sigma_i$.  For $h \in \NN$, let
$\Rc^f_{\leq h}$ be the full subcategory of $\Rc^f$ whose objects are $\sigma_1,\ldots, \sigma_h$.

Choose a strictly increasing sequence $(h(N))_N$ of positive integers.  Let $\cf_N = \ker ( S_\infty \rarrow
\Oc[\Delta_N] \hat{\otimes} \Oc[[y_1,\ldots,y_{n^2\#T}]])$ and choose a sequence $\bfrak_1 \supset
\bfrak_2 \supset \ldots$ of open ideals of $S_\infty$ such that $\bfrak_N \supset \cf_N$ for all $N$
and $\bigcap_N \bfrak_N = (0)$.  Choose also open ideals $\df_1 \supset \df_2 \supset \ldots $ of
$R^{\univ}_\Sc$ with $\bfrak_N R^{\univ}_{\Sc} + \ker(R^{\univ}_\Sc \rarrow \TT(\sigma)) \supset
\df_N \supset \bfrak_N R^{\univ}_\Sc$ for all $\sigma \in \Rc^f_{\leq h(N)}$ and $\bigcap_N \df_N =
(0)$.  

Define a \textbf{patching datum of level $N$} to be:

\begin{itemize}
\item a surjective $\Oc$-algebra homomorphism \[\phi : R_\infty \onto R^{\univ}_\Sc/\df_N;\]
\item a covariant, exact functor $\Mc_N$ from $\Rc^f_{\leq h(N)}$ to the category of $R_\infty \hat{\otimes}
  S_\infty$-modules that are finite free over $S_\infty/\bfrak_N$;
\item for $\sigma \in \Rc^f_{\leq h(N)}$, functorial isomorphisms of $R_\infty$-modules
  \[\Mc_N(\sigma) / \af \isomto H(\sigma)/\bfrak_N\] (the right hand side being an $R_\infty$-module via $\phi$).
\end{itemize}

Since $S_\infty/\bfrak_N$, $R^{\univ}_\Sc/\df_N$, $H(\sigma)/\df_N$ are finite sets and the sets of
objects and morphisms in $\Rc^f_{\leq h(N)}$ are finite, there are only finitely many patching data
of level $N$.  Note that if $N' \geq N$ then from any patching datum of level $N'$ we can get one of
level $N$ by reducing modulo $\bfrak_N$ and $\df_N$ and restricting $\Mc_{N'}$ to $\Rc^f_{h(N)}$.

For each pair of integers $M \geq N \geq 1$ define a patching datum $D(M,N)$ of
level $N$ by taking: 
\begin{itemize}
\item $\phi: R_\infty \onto R^{\square_T}_N \onto R/\df_N$ where the first map is our chosen
  presentation of $R^{\square_T}_N$ over $R^{\loc}$ and the second is induced by $R^{\square_T}_N/\af
  \isomto R^{\univ}_\Sc$;
\item $\Mc_N(\sigma) = H_M^{\square_T}(\sigma)/\bfrak_N$, which is finite free over
  $S_\infty/\bfrak_N$ and is an $R_\infty$-module via $R_\infty \onto R^{\square_T}_M \onto
  \TT^{\square_T}_M$ (clearly $\Mc_N$ is a functor);
\item the isomorphism $\psi : \Mc_N/\af \isomto H(\sigma)/\bfrak_N$ coming from the natural isomorphism
  $H_M^{\square_T}(\sigma) / \af \isomto H(\sigma)/\bfrak_N$.
\end{itemize}
 
Since there are only finitely many isomorphism classes of patching datum of each level $N$, we may
choose an infinite sequence of pairs $(M_j,N_j)_{j \geq 1}$ with $M_j \geq N_j$, $M_{j+1} > M_j$ and
$N_{j+1} > N_j$ such that $D(M_{j+1},N_{j+1})$ reduces to $D(M_j,N_j)$ for each $j$.  We may
therefore define a functor $H_\infty$ from $\Rc^f$ to the category of $R_\infty \hat{\otimes}
S_\infty$-modules by the formula: \[ H_\infty(\sigma) = \varprojlim_{j}
H_{M_j}^{\square_T}(\sigma)/\bfrak_{N_j}\] (and extending to the whole of $\Rc^f$ by picking an
isomorphism from each object to one of the $\sigma_i$).  Note that the terms in the limit are
defined for $j$ sufficiently large.  Extend $H_\infty$ to $\Rc$ by setting $H_\infty(\varprojlim
\sigma_i) = \varprojlim H_\infty(\sigma_i)$.

\subsubsection{} We need to verify that $H_\infty$ has the properties needed for the proof of
Theorem~\ref{thm:BM-Gln}.  The functor $H_\infty$ is exact and covariant, and for all $\sigma$ we have
\[H_\infty(\sigma \otimes_\Oc \FF) = H_\infty(\sigma) \otimes \FF\] (these statements all follow from
the corresponding statements at finite level).   

\begin{lemma}\label{lem:support} 
  For each $\sigma$, the support $\supp_{R_\infty}(H_\infty(\sigma))$ is a union of irreducible
  components of $\Spec R_\infty$.
\end{lemma}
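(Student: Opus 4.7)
The plan is to show $H_\infty(\sigma)$ is a maximal Cohen--Macaulay module over $R_\infty$; since $R_\infty$ is itself Cohen--Macaulay (being $R^{\loc}[[z_1,\ldots,z_g]]$ with $R^{\loc}$ a completed tensor product of complete intersections by Theorem~\ref{thm:complete-intersection} and Proposition~\ref{prop:ord-rings}), this forces every associated prime of $H_\infty(\sigma)$ to be a minimal prime of $R_\infty$, and hence $\supp_{R_\infty}(H_\infty(\sigma))$ to be a union of irreducible components of $\Spec R_\infty$. The two key inputs are the dimension equality $\dim R_\infty = \dim S_\infty$ just computed and the $S_\infty$-freeness of $H_\infty(\sigma)$ coming from the patching construction.

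First I would establish that for $\sigma$ finite free over $\Oc$, $H_\infty(\sigma)$ is finite free as an $S_\infty$-module. At each finite patching level $H_N^{\square_T}(\sigma) = H_N(\sigma)\,\hat{\otimes}\,\Oc[[y_1,\ldots,y_{n^2\#T}]]$, and $H_N(\sigma)$ is finite free over $\Oc[\Delta_N]$ by the fifth bullet of the patching construction (applied to the $\Oc$-free $\sigma$), so $H_N^{\square_T}(\sigma)/\bfrak_N$ is finite free over $S_\infty/\bfrak_N$. Passing to the inverse limit in the definition of $H_\infty$ yields finite freeness over $S_\infty$. The case of general $\sigma \in \Rc$ is then reduced to the $\Oc$-free case by presenting a finite-length $\sigma' \in \Rc^f$ as a quotient of an $\Oc$-free object of $\Rc^f$ and using exactness of $H_\infty$; in particular $\depth_{S_\infty}(H_\infty(\sigma)) = \dim S_\infty$.

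Finally I would invoke the following standard fact from the patching literature (compare \cite{Kisin2009-ModuliFFGSandModularity} and \cite{Thorne2012-AutomorphySmallResidualImage}): for a finitely generated module $M$ over the complete local Noetherian ring $R_\infty\,\hat{\otimes}\, S_\infty$ which is finite as a module over both $R_\infty$ and $S_\infty$, and with $\dim R_\infty = \dim S_\infty$, we have $\depth_{R_\infty}(M) = \depth_{S_\infty}(M)$ (both coincide with the depth of $M$ over $R_\infty\,\hat{\otimes}\, S_\infty$, since a regular sequence in either maximal ideal remains regular on $M$). Applied to $M = H_\infty(\sigma)$ this gives $\depth_{R_\infty}(H_\infty(\sigma)) = \dim R_\infty$, so $H_\infty(\sigma)$ is maximal Cohen--Macaulay over $R_\infty$ and the lemma follows.

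The main obstacle is verifying finite generation of $H_\infty(\sigma)$ as an $R_\infty$-module, which is needed both to make sense of the depth comparison and to identify the support. This follows because each $H_{M_j}^{\square_T}(\sigma)/\bfrak_{N_j}$ is a finite $\Oc$-module (being a quotient of a finite module over the Artinian ring $R^{\square_T}_{M_j}/\bfrak_{N_j}$), and the finiteness is inherited by the inverse limit $H_\infty(\sigma)$ via a topological Nakayama argument over $R_\infty$.
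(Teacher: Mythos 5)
Your argument follows the same route as the paper's: establish that $H_\infty(\sigma)$ is finite free over $S_\infty$, use the equality $\dim S_\infty = \dim R_\infty$ to transfer the depth to $R_\infty$, and conclude that every associated prime of $H_\infty(\sigma)$ is minimal. The depth transfer itself is sound — the paper phrases it via the observation that the $S_\infty$-action on $H_\infty(\sigma)$ factors through a map $S_\infty \rarrow R_\infty$, giving the chain $\depth_{R_\infty} \geq \depth_{S_\infty} = \dim S_\infty = \dim R_\infty \geq \depth_{R_\infty}$; your version via the common depth over $R_\infty\,\hat{\otimes}\,S_\infty$ amounts to the same thing, and your attention to finite generation over $R_\infty$ (which the paper leaves implicit) is correct and worth making explicit.

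The gap is in the final step. You deduce that associated primes are minimal from the claim that $R_\infty$ is Cohen--Macaulay, justified by describing $R^{\loc}$ as a completed tensor product of complete intersections "by Theorem~\ref{thm:complete-intersection} and Proposition~\ref{prop:ord-rings}". But Proposition~\ref{prop:ord-rings} does \emph{not} assert that the ordinary crystalline deformation rings $R^\square_{\lambda,\crord}$ at the places above $l$ are complete intersections or Cohen--Macaulay — only that they are reduced, $\Oc$-flat and equidimensional — and no such statement is available here (Theorem~\ref{thm:complete-intersection} covers only the rings at places above $p$). Fortunately Cohen--Macaulayness is not needed: for any finite module $M$ over a Noetherian local ring $R$ and any associated prime $\pf$ of $M$ one has $\depth_R M \leq \dim R/\pf$, so $\depth_{R_\infty}H_\infty(\sigma) = \dim R_\infty$ forces $\dim R_\infty/\pf = \dim R_\infty$ for every associated prime, and \emph{equidimensionality} of $R_\infty$ (which does follow from Theorem~\ref{thm:complete-intersection}, Proposition~\ref{prop:ord-rings} and \cite{MR2827723} Lemma~3.3) then forces $\pf$ to be minimal; this is exactly what the paper's citation of \cite{Taylor2008-AutomorphyII} Lemma~2.3 supplies. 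A further caveat: your assertion that $\depth_{S_\infty}H_\infty(\sigma) = \dim S_\infty$ for \emph{general} $\sigma \in \Rc$ cannot be right as stated — if $\sigma$ is killed by $\lambda$ then so is $H_\infty(\sigma)$, which is then not free over $S_\infty$. Freeness over $S_\infty$ is available when $\sigma$ is $\Oc$-flat, which is the case in all applications of the lemma.
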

\begin{proof}
  We may factor the map $S_\infty \rarrow \End_{R_\infty}(H_\infty(\sigma))$ through a map $S_\infty
  \rarrow R_\infty$ (since we may do this at finite level by definition of the action of
  $S_\infty$).  So we have a map $S_\infty \rarrow R_\infty$ and a finitely generated
  $R_\infty$-module $H_\infty(\sigma)$ that is finite free over the regular local ring $S_\infty$.  Thus we have:
  \begin{align*}
    \depth_{R_\infty}(H_\infty(\sigma)) & \geq \depth_{S_\infty}(H_\infty(\sigma)) \\
    & = \dim S_\infty\\
    & = \dim R_\infty\\
    & \geq \depth_{R_\infty}(H_\infty(\sigma)).
  \end{align*}
  Therefore by \cite{Taylor2008-AutomorphyII}, Lemma~2.3, $\supp_{R_\infty}(H_\infty(\sigma))$ is a union of
  irreducible components of $\Spec R_\infty$.
\end{proof}

The argument of the next lemma goes back to \cite{MR1440309}:

\begin{lemma} \label{lem:diamond}
  Let $\qf$ be a prime ideal of $R_\infty$ such that $(R_\infty)_\qf$ is regular.  Then
  $H_\infty(\sigma)_\qf$ is finite free over $(R_\infty)_\qf$.
\end{lemma}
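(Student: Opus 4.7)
The plan is to apply the Auslander--Buchsbaum formula after reducing to the action of the regular ring $S_\infty$, using the freeness that was already exploited in Lemma~\ref{lem:support}. If $H_\infty(\sigma)_\qf = 0$ there is nothing to prove, so assume $\qf \in \supp_{R_\infty} H_\infty(\sigma)$. By Lemma~\ref{lem:support} this support is a union of irreducible components of $\Spec R_\infty$, and since $(R_\infty)_\qf$ is regular (hence a domain) the prime $\qf$ contains a unique minimal prime $\pf_0$ of $R_\infty$, which must lie in the support. Thus $\mathrm{ann}_{R_\infty}(H_\infty(\sigma)) \subset \pf_0 \subset \qf$, so $(R_\infty)_\qf = (\bar{R}_\infty)_\qf$ where $\bar{R}_\infty \defeq R_\infty/\mathrm{ann}_{R_\infty}(H_\infty(\sigma))$.

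As in the proof of Lemma~\ref{lem:support}, the action of $S_\infty$ on $H_\infty(\sigma)$ factors through a map $\phi : S_\infty \rarrow R_\infty$ and hence through $\bar{R}_\infty$; since $S_\infty$ acts faithfully on the free module $H_\infty(\sigma)$, the induced map $S_\infty \into \bar{R}_\infty$ is injective. Moreover $\bar{R}_\infty$ embeds into $\End_{S_\infty}(H_\infty(\sigma))$, which is finite over $S_\infty$, so $\bar{R}_\infty$ is finite over $S_\infty$. Since $S_\infty$ is a regular domain, going-down holds for the integral extension $S_\infty \into \bar{R}_\infty$, and combined with the equality $\dim R_\infty = \dim S_\infty$ established before the lemma this forces every minimal prime of $\bar{R}_\infty$ to contract to $(0) \subset S_\infty$ and $\bar{R}_\infty$ to be equidimensional of dimension $\dim S_\infty$. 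Then $\bar{R}_\infty$ is catenary (as a finite extension of the universally catenary ring $S_\infty$), so for $\pf \defeq \phi^{-1}(\qf)$ we obtain
\[ \dim (R_\infty)_\qf = \dim (\bar{R}_\infty)_\qf = \dim (S_\infty)_\pf \]
from the chain of equalities $\mathrm{ht}(\qf) + \dim \bar{R}_\infty/\qf = \dim \bar{R}_\infty = \dim S_\infty = \mathrm{ht}(\pf) + \dim S_\infty/\pf$ and $\dim \bar{R}_\infty/\qf = \dim S_\infty/\pf$ (integrality).

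Decomposing the free $(S_\infty)_\pf$-module $H_\infty(\sigma) \otimes_{S_\infty} (S_\infty)_\pf$ according to the product decomposition of the semi-local finite $(S_\infty)_\pf$-algebra $\bar{R}_\infty \otimes_{S_\infty} (S_\infty)_\pf$ into its local factors exhibits $H_\infty(\sigma)_\qf$ as a direct summand of a free $(S_\infty)_\pf$-module, hence itself finite free over $(S_\infty)_\pf$; in particular $\depth_{(S_\infty)_\pf} H_\infty(\sigma)_\qf = \dim (S_\infty)_\pf$. Any $H_\infty(\sigma)_\qf$-regular sequence in $\pf (S_\infty)_\pf$ maps to an $H_\infty(\sigma)_\qf$-regular sequence in $\qf (R_\infty)_\qf$, yielding
\[ \depth_{(R_\infty)_\qf} H_\infty(\sigma)_\qf \geq \dim (S_\infty)_\pf = \dim (R_\infty)_\qf, \]
so the depth equals the dimension. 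The Auslander--Buchsbaum formula applied in the regular local ring $(R_\infty)_\qf$ then forces the projective dimension of $H_\infty(\sigma)_\qf$ to be zero, and a finitely generated projective module over a local Noetherian ring is free. The main technical point is establishing the finiteness of $\bar{R}_\infty$ over $S_\infty$ and the resulting equality of dimensions; after that the argument is the standard Diamond-type Auslander--Buchsbaum computation.
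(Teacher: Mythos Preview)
Your approach is essentially the same as the paper's---the standard Diamond argument via Auslander--Buchsbaum---and you have filled in considerably more of the commutative algebra than the paper does (the paper simply asserts that $(R_\infty)_\qf$ is finite over $(S_\infty)_\pf$ and then invokes ``the argument of the previous lemma'').  Your reduction to $\bar{R}_\infty$, the finiteness of $\bar{R}_\infty$ over $S_\infty$, and the catenary argument giving $\dim (R_\infty)_\qf = \dim (S_\infty)_\pf$ are all correct and make the paper's sketch precise.

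There is one point that needs care.  When you write ``decomposing \ldots\ according to the product decomposition of the semi-local finite $(S_\infty)_\pf$-algebra $\bar{R}_\infty \otimes_{S_\infty} (S_\infty)_\pf$ into its local factors'', such a product decomposition need not exist: $(S_\infty)_\pf$ is a localization of a complete local ring but is not itself Henselian in general, so a finite algebra over it does not automatically split as a product of local rings.  Consequently you cannot directly conclude that $H_\infty(\sigma)_\qf$ is an $(S_\infty)_\pf$-direct summand of a free module.

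Fortunately this step is unnecessary.  The module $H_\infty(\sigma)_\qf$ is a further localization (at the multiplicative set $R_\infty \setminus \qf$) of the free $(S_\infty)_\pf$-module $H_\infty(\sigma)\otimes_{S_\infty}(S_\infty)_\pf$, and a regular sequence on a module remains regular after any localization in which the module stays nonzero.  Hence a regular system of parameters for $(S_\infty)_\pf$ is already an $H_\infty(\sigma)_\qf$-regular sequence of length $\dim(S_\infty)_\pf$ lying in $\qf(R_\infty)_\qf$, which is exactly what you feed into the depth inequality.  With this small adjustment your proof is complete.
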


\begin{proof}
  We may suppose that $\qf \in \supp_{R_\infty} H_\infty$.  Since $(R_\infty)_\qf$ is regular, it is
  a domain.  By the previous lemma, $(R_\infty)_\qf$ acts faithfully on $(H_\infty(\sigma))_\qf$.  Thus
  $(R_\infty)_\qf$ is finite over $(S_\infty)_{S_\infty \cap \qf}$.  The argument of the previous
  lemma now shows that 
  \[\depth_{(R_{\infty})_\qf} (H_\infty(\sigma)_\qf) =\depth (R_\infty)_\qf.\]

  The module $H_\infty(\sigma)_\qf$ has finite projective dimension over $(R_\infty)_\qf$ and the
  Auslander--Buchsbaum formula holds:
  \[ \depth_{(R_\infty)_\qf} (H_\infty(\sigma)_\qf) +
  \rom{pd}_{(R_\infty)_\qf}(H_\infty(\sigma)_\qf) = \depth(R_\infty)_\qf.\]
  Therefore $H_\infty(\sigma)_\qf$ is a finitely generated projective $(R_\infty)_\qf$-module as required.
\end{proof}

\subsubsection{}

Assume now that $\Oc$ is large enough that every irreducible component of $R^\square(\rhobar)$ is
geometrically integral.  Note that, by Lemma~\ref{lem:ordinary-rep} and the fact that
$R_{\tilde{v}_1}$ is formally smooth, $R_\infty$ is a completed tensor product of the ring
\[\widehat{\bigotimes_{v\mid p}}R_{\tilde{v}}\] with a geometrically
integral, $\Oc$-flat ring
\[A = \widehat{\bigotimes}_{v \in T,v\nmid p}R_{\tilde{v}}[[z_1,\ldots,z_g]]\] in
$\Cc_{\Oc}^\wedge$.  Then, by \cite{MR2827723} Lemma~3.3, giving a minimal prime of $R_\infty$ is
the same as giving a minimal prime of each
\[R_{\tilde{v}}=R^\square(\breve{\bar{r}}|_{G_{L_{v}}}) \equiv R^\square(\rhobar).\]
\begin{proposition}\label{prop:multiplicities}
  Let $\sigma \in \Rc$ be finite free as an $\Oc$-module and of the form
  $\otimes_{v \mid p} \sigma_v$ for representations $\sigma_v$ of $U_v\cong GL_n(\Oc_F)$.  For each
  place $v$ of $L^+_v$ above $p$ let $\pf$ be a minimal prime of $R_\infty$ and let $\pf_v$ be its
  pre-image in the copy of $R^\square(\rhobar)$ corresponding to $v$.  Each $\pf_v$ is a
  minimal prime of $R^\square(\rhobar,\tau_v)$ for a unique inertial type $\tau_v$.

  Then $H_\infty(\sigma)/\pf$ is generically free of rank
  \[ n!\prod_{v\mid p} m((\sigma_v\otimes\bar{E})^\vee, \tau_v)\] over $R_\infty/\pf$.
\end{proposition}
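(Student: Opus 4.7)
The plan is to compute the generic rank of $H_\infty(\sigma)/\pf$ by evaluating it at a well-chosen classical $\bar{E}$-point of $\Spec(R_\infty/\pf)[1/l]$ and matching the answer against Lemma~\ref{lemsec:base-change}. First, $R_\infty$ is reduced: the local factors $R_{\tilde{v}}$ are reduced by Theorem~\ref{thm:complete-intersection} (at $v\nmid l$) and Proposition~\ref{prop:ord-rings} (at $v\mid l$), and reducedness is preserved under the iterated completed tensor product and power series extensions defining $R_\infty$ (by Lemma~\ref{lem:product}, after enlarging $E$ so every minimal prime of each factor is geometrically integral). Hence $(R_\infty)_\pf$ is a field, trivially regular, so Lemma~\ref{lem:diamond} gives that $H_\infty(\sigma)_\pf$ is finite free over $(R_\infty)_\pf$. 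Its rank equals $\dim_{\bar{E}} H_\infty(\sigma)\otimes_{R_\infty}\kappa(x)$ for any $\bar{E}$-point $x$ lying in a dense open subset of $\Spec(R_\infty/\pf)[1/l]$ over which $H_\infty(\sigma)$ is locally free.

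I then want to select such an $x$ satisfying: (a) at each $v\mid p$, the localized representation $\rho_x|_{G_{L_{\tilde{v}}}}$ is non-degenerate of inertial type $\tau_v$ (so in particular generic, and lying on the unique component of $R^\square(\rhobar,\tau_v)$ passing through $\pf_v$, by Proposition~\ref{prop:fixed-type}); (b) $x$ lies on the prescribed crystalline ordinary component at each $l$-adic place and on the (formally smooth) unramified component at $v_1$; and (c) $x$ is \emph{classical}, i.e.\ it arises via the patching isomorphism $H_\infty(\sigma)/\af\cong H(\sigma)$ and the map $R^{\univ}_{\Sc}\to\TT(\sigma)$ from a regular algebraic polarized cuspidal $\Pi$ on $GL_n(\AA_L)$ that descends (by Lemma~\ref{lem:descent}) to an automorphic $\pi'$ on $G(\AA_{L^+})$. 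Proposition~\ref{prop:fixed-type}(3) gives Zariski density of (a), density of (b) is built into the construction of the local factors of $R_\infty$, and density of (c) in the support of $H_\infty(\sigma)$ follows because the minimal primes of $H(\sigma)$ (after inverting $l$) correspond exactly to automorphic eigensystems, and the formation of $H_\infty$ from $H$ preserves this. At such an $x$ the patching datum identifies $H_\infty(\sigma)\otimes_{R_\infty}\kappa(x)$ with $H(\sigma)\otimes_{\TT(\sigma),f_{\pi'}}\bar{E}$, and Lemma~\ref{lemsec:base-change} computes this as
\[
\dim\Hom_{U_S}\bigl((M_\sigma\otimes\bar{E})^\vee,\,\textstyle\bigotimes_{v\in S}\pi'_v\otimes\bar{E}\bigr).
\]
Since $U_S=U_{v_1}\times\prod_{v\mid p}GL_n(\Oc_F)$ and $U_{v_1}$ acts trivially on $M_\sigma$, this splits as
\[
\dim(\pi'_{v_1})^{U_{v_1}}\cdot\prod_{v\mid p}\dim\Hom_{GL_n(\Oc_F)}\bigl((\sigma_v\otimes\bar{E})^\vee,\pi'_v\bigr).
\]
At a generic classical $x$ the local representation $\pi'_{v_1}$ is an unramified principal series (the condition $\Nm v_1\not\equiv 1\bmod l$ forces the Iwahori-spherical generic component to be full), so $\dim(\pi'_{v_1})^{U_{v_1}}=n!$; and by condition (a), $\pi'_v$ is generic with $r_l(\pi'_v)|_{I_F}\cong\tau_v$ at each $v\mid p$, so each remaining factor is $m((\sigma_v\otimes\bar{E})^\vee,\tau_v)$ by Definition~\ref{def:multiplicities}. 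Multiplying yields the desired rank.

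The main obstacle will be Step (2): producing an $x$ jointly satisfying (a), (b), and (c). Each condition individually defines a Zariski-dense subset of $\Spec(R_\infty/\pf)[1/l]$, but their simultaneous realisation depends on the design of the patching functor — specifically, that the local deformation conditions packaged into $R^{\loc}$ match exactly the local conditions on the automorphic representations showing up in $H(\sigma)$. The generic smoothness of Proposition~\ref{prop:fixed-type}, the density of classical automorphic points (guaranteed by the non-vanishing input of Proposition~\ref{suitable-globalisation} together with Taylor--Wiles patching), and the base change of Lemma~\ref{lem:descent} combine to show this intersection is non-empty, and in fact dense, on each component indexed by a minimal prime $\pf$ in $\supp_{R_\infty}H_\infty(\sigma)$; primes outside this support give generic rank $0$ on both sides and the formula holds vacuously.
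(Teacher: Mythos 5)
There is a genuine gap at the heart of your Step (2), and it is precisely where the real content of the proposition lies. You assert that the classical (automorphic) points are dense in $\supp_{R_\infty}H_\infty(\sigma)$, "because the minimal primes of $H(\sigma)$ correspond to automorphic eigensystems and the formation of $H_\infty$ from $H$ preserves this." This is false on dimension grounds: every classical point lies over the vanishing locus of the augmentation ideal $\af\subset S_\infty$, since $H(\sigma)=H_\infty(\sigma)/\af$ is a finite $\Oc$-module, so the classical locus has dimension at most $1$ inside a component of $\Spec R_\infty$ of dimension $1+r+n^2\#T$. Density arguments cannot produce a classical point on a prescribed minimal prime $\pf$; indeed, the statement that \emph{every} component of $R_\infty$ carries a classical point is exactly the "all components are automorphic" assertion that the proposition encodes, and it requires an automorphy lifting theorem as input. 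The paper's proof supplies this by forming the auxiliary global deformation problem $\Sc'$ in which the local condition at each $v\mid p$ is cut down to the quotient $R_{\tilde v}/\pf_v$; the Galois-cohomological lower bound of Proposition~\ref{prop:global-dimension} gives $\dim R^{\univ}_{\Sc'}\geq 1$, Thorne's Theorem~10.2 gives that $R^{\univ}_{\Sc'}$ is a \emph{finite} $\Oc$-module, so it admits an $\Oc'$-point $x$, and Theorem~9.1 shows the corresponding Galois representation is automatically automorphic. Only then does one lift $x$ to a non-degenerate point $\tilde x$ of $\Spec R_\infty$ lying on $\pf$ alone, apply Lemma~\ref{lem:diamond} there, and compute the fibre via Lemmas~\ref{lemsec:base-change} and~\ref{lem:descent} exactly as you do in your Step (3). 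Your endgame (the $n!$ from Iwahori invariants at $v_1$, the factors $m((\sigma_v\otimes\bar E)^\vee,\tau_v)$ from genericity and local--global compatibility) matches the paper, but without the $\Sc'$ construction and the two automorphy theorems you have no point at which to evaluate.

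A secondary error: you dismiss minimal primes $\pf$ outside $\supp_{R_\infty}H_\infty(\sigma)$ by saying "the formula holds vacuously" there. It does not: if $\prod_{v\mid p}m((\sigma_v\otimes\bar E)^\vee,\tau_v)\neq 0$, the proposition forces $\pf$ to lie in the support, and proving this non-vanishing of $H_\infty(\sigma)/\pf$ is again exactly the automorphy-of-all-components statement. The paper's argument handles all minimal primes uniformly because the $\Sc'$ construction works for any $\pf$, producing a classical point on that component whenever the deformation problem is non-empty.
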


\begin{proof}
  Let $\Sc'$ be the deformation problem
  \[(L/L^+,T,\tilde{T},\Oc,\bar{r},\epsilon^{1-n}\delta_{L/L^+}^n,\{R'_{\tilde{v}}\}_{v\in
    T})\] where $R_{\tilde{v}}' = R_{\tilde{v}}$ unless $v | p$, in which case $R_{\tilde{v}}' =
  R_{\tilde{v}}/\pf_v$.  Then $R^{\univ}_{\Sc'}$ is a quotient of $R^{\univ}_{\Sc}$.
  
  By Proposition~\ref{prop:global-dimension}, \[\dim R^{\univ}_{\Sc'} \geq 1.\]
  
  By \cite{Thorne2012-AutomorphySmallResidualImage}, Theorem~10.2, $R^{\univ}_{\Sc'}$ is a finite
  $\Oc$-module; it therefore admits an $\Oc$-algebra homomorphism
  \[x:R^{\univ}_{\Sc'} \rarrow \Oc'\] for a finite extension $\Oc'/ \Oc$; enlarging $E$, we may
  assume that $\Oc' = \Oc$.  There is a corresponding representation
  $r' : G_{L^+} \rarrow \Gc_n(\Oc)$.  By \cite{Thorne2012-AutomorphySmallResidualImage},
  Theorem~9.1, $r'$ is the representation attached to some regular algebraic polarized cuspidal
  automorphic representation $(\pi',\delta_{L/L^+}^n)$ of $GL_n(\AA_{L})$ with
  $\pi' = \otimes_v \pi'_v$, such that $\pi'_v$ is unramified and $\iota$-ordinary (see
  definition~5.12 of \cite{MR2941425}) for $v \mid l$.  By Lemmas \ref{lemsec:base-change} and
  \ref{lem:descent} we see that the fibre of $H(\sigma)$ at $x$ (for any
  $\sigma = \bigotimes_{v \mid p} \sigma_v$) has dimension:
  \begin{align*}&\dim\Hom_{U_S}\left((\sigma\otimes \bar{E})^\vee,\bigotimes_{v \in
      S}\pi'_v\otimes_{\CC,\iota^{-1}}\bar{E}\right)  \\
    =\quad &\dim
    \pi_{\tilde{v}_1}^{U_{v_1}}\prod_{v \mid p}\dim
    \Hom_{U_v}\left((\sigma_v \otimes\bar{E})^\vee, \pi'_{\tilde{v}}\otimes_{\CC,\iota^{-1}}\bar{E}\right)\\
    =\quad &n!\prod_{v \mid p}m((\sigma_v \otimes \bar{E})^\vee, \tau_v).
    \end{align*}
    To see the last equality note that, for each $v \mid p$, $\pi'_{\tilde{v}}\otimes_{\CC,\iota^{-1}}\bar{E}$ is a
    local component of a unitary cuspidal automorphic representation of $GL_n(\AA_L)$, and so generic by
    \cite{MR0348047}, and by local--global compatibility (see, for instance, \cite{BLGGT2014-PotentialAutomorphy}
    Theorem~2.1.1) it has type $\tau_v$.  The factor of $n!$ is the contribution from the Iwahori invariants in the
    unramified principal series representation $\pi'_{\tilde{v}_1}$.
  
    Now choose an $\Oc$-point $\tilde{x}$ of $\Spec R^{\infty}$ above $x$.  As $\tilde{x}$ is (in
    the terminology of Proposition~\ref{prop:fixed-type}) a non-degenerate point of each factor
    $R^\square_v$ of $R_\infty$, we see that $\Spec R_\infty$ is formally smooth at $\tilde{x}$.  By
    Lemma~\ref{lem:diamond}, we see that $H_\infty(\sigma)_{\tilde{x}}$ is free over
    $(R_\infty)_{\tilde{x}}$.  To determine the rank, note that $H_\infty(\sigma)_{\tilde{x}}/\af =
    H(\sigma)_x$, and applying the above calculation we get the proposition.
\end{proof}

\begin{corollary} \label{cor:patch=cycle} Identify $\Zc(R_\infty)$ with $\bigotimes_{v\mid p} \Zc(R^\square(\rhobar))$ using
  Lemma~\ref{lem:product}.  If $\sigma \in \Rc$ is finite free as an $\Oc$-module, then
  \[Z(H_\infty(\sigma)) = n!\cyc^{\otimes d}(\sigma).\]
\end{corollary}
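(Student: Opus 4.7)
The plan is to reduce the identity directly to Proposition~\ref{prop:multiplicities} by unwinding how cycles interact with the completed tensor product structure of $R_\infty$.

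First, I will reduce to the case of pure tensors. Since $H_\infty$ is exact and $Z(\cdot)$ is additive on short exact sequences (\cite{JMJ:9091284} Lemma~2.2.7), the assignment $\sigma \mapsto Z(H_\infty(\sigma))$ is additive on $\Oc$-free objects of $\Rc$, while $\cyc^{\otimes d}$ factors through $R_E(U_p) \cong \bigotimes_{v\mid p} R_E(GL_n(\Oc_F))$, which is spanned by classes of pure tensors. It therefore suffices to check the identity when $\sigma = \bigotimes_{v\mid p}\sigma_v$ with each $\sigma_v$ finite free over $\Oc$; the general case follows once one expresses $[\sigma\otimes E]$ as an integer combination of classes of $\Oc$-free pure tensors.

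Next, as observed just before the proposition, $R_\infty \cong \widehat{\bigotimes}_{v\mid p} R^\square(\rhobar)\, \hat{\otimes}\, A$ with $A$ geometrically integral and $\Oc$-flat. Iterating Lemma~\ref{lem:product} identifies $\Zc(R_\infty)$ with $\bigotimes_{v\mid p}\Zc(R^\square(\rhobar))$ and puts minimal primes $\pf$ of $R_\infty$ in bijection with tuples $(\pf_v)_{v\mid p}$ of minimal primes of $R^\square(\rhobar)$. By Lemma~\ref{lem:support} the support of $H_\infty(\sigma)$ is a union of irreducible components, and since $R_\infty$ is equidimensional, $(R_\infty)_\pf$ is a field for each minimal prime $\pf$; consequently
\[
Z(H_\infty(\sigma)) \;=\; \sum_{\pf}\bigl(\mathrm{rk}_{R_\infty/\pf}(H_\infty(\sigma)/\pf)\bigr)\,[\pf].
\]

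Finally, I will invoke Proposition~\ref{prop:multiplicities}: the generic rank above equals $n!\prod_{v\mid p} m(\sigma_v^\vee, \tau_v)$, where $\tau_v$ is the unique inertial type such that $\pf_v$ is minimal in $R^\square(\rhobar, \tau_v)$. Regrouping the sum by the tuple $(\tau_v)$ and using that $\sum_{\pf_v}[\pf_v] = Z(R^\square(\rhobar, \tau_v))$ (with $\pf_v$ running over minimal primes of $R^\square(\rhobar,\tau_v)$), the identity
\[
Z(H_\infty(\sigma)) \;=\; n!\sum_{(\tau_v)_v}\prod_{v\mid p} m(\sigma_v^\vee,\tau_v)\,\bigotimes_{v\mid p} Z(R^\square(\rhobar,\tau_v)) \;=\; n!\,\cyc^{\otimes d}(\sigma)
\]
falls out of Definition~\ref{def:cyc}. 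The only non-bookkeeping ingredient is Proposition~\ref{prop:multiplicities} itself; the remaining subtlety is the product formula for cycles on completed tensor products, which is available only because $A$ (and, after enlarging $E$, each fixed-type quotient) is geometrically integral.
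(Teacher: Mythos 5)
Your argument is essentially the paper's own proof: reduce to pure tensors $\sigma=\bigotimes_{v\mid p}\sigma_v$ by additivity/exactness, read off the coefficient of each minimal prime $\pf$ of $R_\infty$ from Proposition~\ref{prop:multiplicities}, and regroup by inertial types to recover $n!\,\cyc^{\otimes d}(\sigma)$ via Definition~\ref{def:cyc}. One small correction: that $(R_\infty)_\pf$ is a field follows not from equidimensionality but from reducedness of $R_\infty$ (which holds because it is a completed tensor product of reduced, $\Oc$-flat rings with geometrically integral components); otherwise the argument is fine.
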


\begin{proof}
  It suffices to prove this for $\sigma$ of the form $\bigotimes_{v \mid p} \sigma_v$.  If $\pf$ is
  a minimal prime of $R_\infty$, corresponding to minimal primes $\pf_v$ of $R^\square(\rhobar)$ of
  inertial type $\tau_v$, then by Proposition~\ref{prop:multiplicities} the coefficient of $[\pf]$
  in $Z(H_\infty(\sigma))$ is
  \[n!\prod_{v \mid p}m((\sigma_v \otimes \bar{E})^\vee, \tau_v).\]
   As $m((\sigma_v \otimes \bar{E})^\vee, \tau_v)$ is the multiplicity of $\pf_v$ in
   $\cyc(\sigma_v)$, we obtain the required formula.
\end{proof}

We have therefore shown that $H_\infty$ has all the properties needed for the proof of Theorem
\ref{thm:BM-Gln}.  To be specific, in the notation of that proof we take $d$ equal to the number of
places $v$ of $L^+$ dividing $p$, $c = n!$, and $A = \widehat{\bigotimes}_{v \in T,v\nmid
  p}R_{\tilde{v}}[[z_1,\ldots,z_g]]$.

\begin{remark}
  The reasons we work with ordinary automorphic forms are the following:
  \begin{enumerate}
  \item We can ensure that the local deformation rings $R_{\tilde{v}}$ for $v \mid l$ are
    (geometrically) irreducible, which is necessary for the argument.  This could be difficult to
    arrange with low weight crystalline deformation rings if $l \leq n$, as then Fontaine--Lafaille
    theory would break down.
  \item In globalising $\rhobar$ and in arguing that every component of $R_\infty$ is automorphic we
    can appeal to the ordinary automorphy lifting theorem Theorem~2.4.1 of
    \cite{BLGGT2014-PotentialAutomorphy} (which is Theorem~9.1 of
    \cite{Thorne2012-AutomorphySmallResidualImage}), which only requires $l > 2$ (once the new
    definition of adequacy is used) rather than Theorem~4.2.1 of
    \cite{BLGGT2014-PotentialAutomorphy} which requires $l \geq 2n + 1$, as well as a potential
    diagonalizability assumption.
  \end{enumerate}
\end{remark}
 
\section{$K$-types.}
\label{sec:k-types}
We construct representations $\sigma(\tau)$ satisfying the conclusion of Theorem~\ref{thm:K-types},
and compute their reduction modulo $l$ and their multiplicities in generic representations of
$GL_n(F)$.  Such representations were already constructed in
\cite{SchneiderZink1999-KTypesTemperedComponentsGeneralLinear}, and our construction follows theirs
closely with minor modifications to make things work modulo $l$.  It seems likely that the two
constructions yield the same representations $\sigma(\tau)$ but we have not tried to prove it.  The
multiplicity formula, Corollary~\ref{cor:multiplicity-calculation}, could be shown by our methods to
hold with either construction.

We outline the contents of this section as an aid to the reader.  Bushnell--Kutzko theory (recalled
in sections~\ref{sec:simple-chars}--\ref{sec:covers}) provides various compact open subgroups
$J^1 \subset J$ inside $K = GL_n(\Oc_F)$ such that $J^1$ is pro-$p$ and $J/J^1$ is a finite general
linear group, and representations $\kappa$ of $J$ such that $\kappa |_{J^1}$ is irreducible.  Then
the $K$-types are constructed as $\Ind_J^K (\kappa \otimes \nu)$ for irreducible representations
$\eta$ of $J / J^1$, at least for those Bernstein components with only one representation in their
supercuspidal support (the general case requires `$G$-covers').  The key constructions are
Definition~\ref{defn:sigma-SZ} which constructs the representations and
Definition~\ref{def:SZ-datum} which relates them to inertial types. In section~\ref{sec:parabolic}
we apply the functor $\Hom_J(\kappa, \cdot)$ to reduce the calculation of multiplicities in
parabolic inductions for $GL_n(F)$ --- Theorem~\ref{thm:types-multiplicities} --- to a calculation
with finite general linear groups, which is worked out in
sections~\ref{sec:symmetric-groups}--\ref{sec:finite-gener-line}.  In
section~\ref{thm:type-reduction-gln} we show that the reduction mod $l$ of types is controlled by
the reduction mod $l$ of representations of finite general linear groups, which is essentially
tautologous given our construction and Theorem~\ref{thm:irreducibility}.

\subsection{Symmetric groups}
\label{sec:symmetric-groups}

If $P \in \Part$ with $\deg P = n$, for each $i \in \NN$ let $T_{P,i}$ be the subset 
\[\left\lbrace 1+\sum_{j=1}^{i-1}P(j), 2 + \sum_{j=1}^{i-1}P(j), \ldots, \sum_{j=1}^iP(j)\right\rbrace\]  of
$\{1,\ldots,n\}$.  Let $S_P$ be the subgroup of $S_n$ stabilising each $T_{P,i}$, so that
$S_P = \prod_{i}S_{P(i)}$.  Let \[\pi_P^\circ = \Ind_{S_P}^{S_n}(\sgn)\] where $\sgn$ is the sign
representation.
\begin{definition}
  Let $\sigma_P^\circ$ be the unique irreducible representation of $S_n$ that appears in
  $\pi_P^\circ$ and that appears in no $\pi_{P'}^\circ$ for $P'\succ P$ (see the proposition
  \cite{SchneiderZink1999-KTypesTemperedComponentsGeneralLinear}~\S3) .
\end{definition}
Every irreducible representation is of the form
$\sigma_P^\circ$ for a unique $P$.  Note that this is not the standard association of
representations of $S_n$ to partitions, but rather its twist by the sign representation.

\begin{definition}\label{def:kostka}
  The \textbf{Kostka number} $m(P,P')$ is the multiplicity with which $\sigma_P^\circ$ appears
  in $\pi_{P'}^\circ$.  
\end{definition}
We adopt the conventions that if $\deg P \neq \deg P'$ then $m(P,P') = 0$, while if $\deg P = \deg
P' = 0$ then $m(P,P')=1$.  Thus $m(P,P') > 0 $ if and only if $P \succeq P'$, and if $P = P'$ then
$m(P,P') = 1$.  This \emph{does} coincide with the standard definition of Kostka numbers.

\subsection{PSH-algebras}
\label{sec:psh-algebras}

For our calculations of multiplicities we will require the notion of a PSH-algebra, due to
Zelevinsky \cite{MR643482}; see also chapter 3 of \cite{1409.8356} and the proof of the Proposition in
\cite{SchneiderZink1999-KTypesTemperedComponentsGeneralLinear} section 4.  

\begin{definition}
  A \textbf{positive self-adjoint Hopf} (or \textbf{PSH-}) \textbf{algebra} is a graded connected Hopf
  algebra \[R = \bigoplus_{n \geq 0} R_n\] over $\ZZ$, with multiplication $m: R\otimes R \rarrow R$
  and comultiplication $\mu: R \rarrow R\otimes R$, together with a $\ZZ$-basis $\Sigma$ of
  homogeneous elements with the following property: let $\langle \cdot,\cdot\rangle$ be the
  $\ZZ$-bilinear form on $R$ making $\Sigma$ an orthonormal basis. Then for all
  $\sigma_1,\sigma_2,\sigma_3 \in \Sigma$ we have
  \[\langle m(\sigma_1\otimes\sigma_2),\sigma_3\rangle =
  \langle\sigma_1\otimes\sigma_2,\mu(\sigma_3)\rangle > 0.\]
\end{definition}

Suppose that $R$ is a PSH-algebra, with notation as in the definition.  An element $\sigma \in R$ is
\textbf{primitive} if $\mu(\sigma) = \sigma \otimes 1 + 1 \otimes \sigma$.  Say that $R$ is
\textbf{indecomposable} if there is a unique primitive element in $\Sigma$.  The basic structure theorem is then:

\begin{theorem} (\cite{MR643482} Theorem~2.2 and Theorem~3.1g) Let $R$ be a PSH-algebra and $\Sigma$ its distinguished basis.  For
  each primitive $\sigma \in \Sigma$ there is an indecomposable sub--PSH-algebra $R(\sigma)$ of $R$
  such that \[\bigotimes_\sigma R(\sigma) \isomto R\] is an isomorphism of PSH-algebras.\footnote{If there
    are infinitely many primitive elements of $\Sigma$, this
    should be interpreted as the direct limit of the tensor products over finite subsets of the
    primitive elements in $\Sigma$.}
  
  If $R$ and $R'$ are indecomposable PSH-algebras then, after rescaling the gradings so that each
  has a primitive element of degree one, there are precisely two isomorphisms of PSH-algebras
  between $R$ and $R'$.
\end{theorem}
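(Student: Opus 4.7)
The plan is to follow Zelevinsky's proof in \cite{MR643482}, which splits into two essentially independent parts: the tensor product decomposition into indecomposables, and the classification of indecomposable PSH-algebras. For the decomposition, for each primitive $\sigma \in \Sigma$ I would define $R(\sigma) \subset R$ to be the $\ZZ$-span of those basis elements $\rho \in \Sigma$ that appear with positive multiplicity in some iterated comultiplication of a power of $\sigma$. The key steps to verify are: (i) $R(\sigma)$ is stable under both $m$ and $\mu$, making it a sub-Hopf-algebra; (ii) with distinguished basis $\Sigma \cap R(\sigma)$ it is itself an indecomposable PSH-algebra with unique primitive $\sigma$; and (iii) for distinct primitives $\sigma \neq \sigma'$, the subspaces $R(\sigma)$ and $R(\sigma')$ are orthogonal under $\langle\cdot,\cdot\rangle$. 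Orthogonality is where positivity is essential: a nonzero pairing between $\rho \in R(\sigma)$ and $\rho' \in R(\sigma')$ would, via repeated application of self-adjointness of $m$ and $\mu$, propagate $\sigma'$ into the orbit reaching $\sigma$, contradicting the construction. With orthogonality in hand, an induction on total degree shows that every $\rho \in \Sigma$ factors uniquely as a product of basis elements drawn from the various $R(\sigma_i)$, yielding the tensor product isomorphism $\bigotimes_\sigma R(\sigma) \isomto R$.

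For the classification of indecomposables, the canonical example is $\Lambda = \bigoplus_n R_\CC(S_n)$, the ring of symmetric group characters with multiplication given by parabolic induction from $S_p \times S_q$ to $S_{p+q}$, comultiplication by restriction, and distinguished basis the irreducible characters (this is exactly the setup of section~\ref{sec:symmetric-groups}). One checks directly that $\Lambda$ is indecomposable with a unique primitive basis element in degree $1$, namely the trivial character of $S_1$. Given any indecomposable PSH-algebra $R$ with primitive basis element $\sigma_1$ of degree $1$, the plan is to produce an isomorphism $\Lambda \isomto R$ by identifying generators: the powers $\sigma_1^n$ play the role of the complete symmetric functions $h_n$, while appropriate irreducible constituents of $\sigma_1^n$ play the role of the elementary symmetric functions $e_n$. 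The PSH axioms determine this identification up to the involution $\omega : \Lambda \rarrow \Lambda$ swapping $e_n \leftrightarrow h_n$, producing precisely two isomorphisms.

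The main obstacle is the classification step: verifying that an arbitrary indecomposable PSH-algebra is isomorphic to $\Lambda$ requires carefully constructing the analogues of the symmetric function generators and checking that they satisfy the same structure constants as in $\Lambda$. This ultimately reduces to a combinatorial identity governing the decomposition of inductions and restrictions of characters of symmetric groups, which is delicate but self-contained. Once the isomorphism is established, the uniqueness up to the two-element group $\{1, \omega\}$ follows by computing $\Aut(\Lambda)$ as a PSH-algebra: any PSH-automorphism must permute the primitives in the distinguished basis, and indecomposability together with degree considerations pins the automorphism down up to the $\omega$-involution.
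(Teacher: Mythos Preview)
The paper does not give its own proof of this statement: it is quoted directly from Zelevinsky \cite{MR643482} (Theorems~2.2 and~3.1g) and used as a black box. Your sketch is a reasonable outline of Zelevinsky's argument and is therefore consistent with what the paper relies on; there is nothing further to compare.
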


We can obtain an indecomposable PSH-algebra $R^S$ from the representation theory of the symmetric
group as follows: let $R^S_n$ be the Grothendieck group of representations of $S_n$, and take
$\Sigma$ to be the subset of isomorphism classes of irreducible representations.  The multiplication
is given by induction: if $\sigma_1$ and $\sigma_2$ are irreducible representations of degrees
$S_{n_1}$ and $S_{n_2}$ then \[m(\sigma_1\otimes \sigma_2) = \Ind_{S_{n_1} \times S_{n_2}}^{S_{n_1 +
    n_2}} (\sigma_1 \otimes \sigma_2)\] regarded as an element of the Grothendieck group.  Similarly
the comultiplication is given by restriction: if $\sigma$ is a representation of $S_n$ then
\[\mu(\sigma) = \sum_{a + b = n}\Res^{S_n}_{S_a \times S_b}\sigma\]
where we have identified the Grothendieck group of representations of $S_a \times S_b$ with the
tensor product of those of $S_a$ and $S_b$.  That this (with the obvious unit and counit) is a Hopf
algebra is an exercise using Mackey's theorem (see \cite{1409.8356} Corollary~4.26), and the
self-adjointness property is a consequence of Frobenius reciprocity.  The unique primitive element
is the trivial representation of the trivial group. The non-identity isomorphism $R^S \rarrow R^S$
takes the trivial representation of any $S_n$ to the sign representation.

\subsection{Finite general linear groups}
\label{sec:finite-gener-line}

Let $k$ be a finite field, $n \geq 1$ be an integer, and $\bar{G} = GL_n(k)$.  For all unsupported
assertions in this subsection see
\cite{SchneiderZink1999-KTypesTemperedComponentsGeneralLinear}~\S4.
\begin{definition}
  Let $\bar{\Ic}_0$ be the union over all $d$ of the set of isomorphism classes of cuspidal
  representations of $GL_d(k)$.  
  Let $\bar{\Ic}$ be the set of functions $\bar{\Pc}:\bar{\Ic}_0 \rarrow \Part$ with finite support.
\end{definition}
The degree $\deg \bar{\Pc}$ of an element of $\bar{\Ic}$ is defined to be the sum \[\sum_{\sigma
  \in\bar{\Ic}_0}\deg(\bar{\Pc}(\sigma)) \dim \sigma.\] Every irreducible representation of
$\bar{G}$ has a cuspidal support, a function $\bar{\Sc} :\bar{\Ic_0} \rarrow \NN_{\geq 0}$ with
$\sum_{\sigma \in\bar{\Ic}_0}\bar{\Sc}(\sigma) \dim \sigma = n$.  For each such $\bar{\Sc}$, let
$\Omega_{\bar{\Sc}}$ be the full subcategory of $\Rep_{\bar{E}}(\bar{G})$ whose objects are
representations all of whose irreducible constituents have cuspidal support $\bar{\Sc}$.

If $\sigma$ is a cuspidal representation of $GL_d(k)$ and $t$ is a positive integer, then define the
generalised Steinberg representation $\St(\sigma, t)$ to be the unique non-degenerate irreducible
representation of $GL_{dt}(k)$ whose cuspidal support is $t$ copies of $\sigma$.  If $\bar{\Pc}\in \bar{\Ic}$
with $\deg \bar{\Pc} = n$, define a Levi subgroup $\bar{M}_{\bar{\Pc}}$ of $\bar{G}$ by
\[ \bar{M}_{\bar{\Pc}} = \prod_{\sigma \in \bar{\Ic}_0,i\in \NN}\bar{G}_{\bar{\Pc}(i)\dim \sigma}.\]
\begin{definition}
  Let $\St(\bar{\Pc})$ be the irreducible representation of $\bar{M}_{\bar{\Pc}}$ whose tensor
  factors are the $\St(\sigma,\bar{\Pc}(\sigma)(i))$ for each $(\sigma,i)$.
\end{definition}

Choose a parabolic subgroup $\bar{Q}$ with Levi factor $\bar{M}_{\bar{\Pc}}$ and let 
\[\pi_{\bar{\Pc}} = \Ind_{\bar{Q}}^{\bar{G}} \St(\bar{\Pc}).\]
\begin{definition}
  Let $\sigma_{\bar{\Pc}}$ be the unique irreducible representation contained in $\pi_{\bar{\Pc}}$
  that is not contained in $\pi_{\bar{\Pc}\,'}$ for any $\bar{\Pc}\,' \succ \bar{\Pc}$.
\end{definition}
\begin{proposition}
  Every irreducible representation of $\bar{G}$ is of the form $\sigma_{\bar{\Pc}}$ for a unique
  $\bar{\Pc}$. \qed
\end{proposition}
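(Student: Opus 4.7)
The plan is to derive this from Zelevinsky's structure theory of PSH-algebras (recalled in Section~\ref{sec:psh-algebras}) applied to finite general linear groups. Form the graded abelian group $R^G = \bigoplus_{m \geq 0} R^G_m$, where $R^G_m$ is the Grothendieck group of finite-dimensional $\bar{E}$-representations of $GL_m(k)$. Parabolic induction (multiplication) and Jacquet-type restriction along unipotent radicals (comultiplication) make $R^G$ into a PSH-algebra whose distinguished basis is the set of isomorphism classes of irreducibles; this is Zelevinsky's original example, and the PSH axioms follow from the Mackey decomposition and Frobenius reciprocity. Primitivity with respect to this basis is equivalent to cuspidality, so the primitive basis elements are precisely $\bar{\Ic}_0$. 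By the structure theorem, there is an isomorphism of PSH-algebras
\[ R^G \;\cong\; \bigotimes_{\rho \in \bar{\Ic}_0} R^G(\rho), \]
with each factor indecomposable and having unique primitive element $[\rho]$, sitting in degree $\dim \rho$.

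Next I would fix, for each $\rho$, one of the two isomorphisms $R^G(\rho) \cong R^S$ provided by Zelevinsky's theorem (after rescaling the grading on $R^G(\rho)$ so that $[\rho]$ has degree one). I take the isomorphism sending the generalised Steinberg $\St(\rho, t)$ to the sign representation of $S_t$; both are the irreducible corresponding to the one-part partition $(t)$, which is maximal in the dominance order, and so they correspond under the isomorphism normalised by $[\rho] \mapsto \sgn \in R(S_1)$. Under this identification, parabolic induction on the $GL$-side transports to induction from Young subgroups on the symmetric-group side, and consequently
\[ \pi_{\bar{\Pc}} \;=\; \Ind_{\bar{Q}}^{\bar{G}} \St(\bar{\Pc}) \quad\longleftrightarrow\quad \bigotimes_{\rho \in \bar{\Ic}_0} \pi_{\bar{\Pc}(\rho)}^\circ. \]

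The dominance order on $\bar{\Ic}$ is component-wise in $\rho$, so the condition defining $\sigma_{\bar{\Pc}}$ translates under the above isomorphism to the condition defining $\bigotimes_\rho \sigma_{\bar{\Pc}(\rho)}^\circ$; this simultaneously shows that $\sigma_{\bar{\Pc}}$ exists and identifies it explicitly. Since every irreducible representation of $S_n$ is of the form $\sigma_P^\circ$ for a unique partition $P$ of $n$ (the classical classification recalled in Section~\ref{sec:symmetric-groups}), and the degree $n$ part of $\bigotimes_\rho R^S$ has a distinguished basis indexed by the degree-$n$ elements of $\bar{\Ic}$, the tensor decomposition yields a bijection between the irreducible representations of $\bar{G}$ and the elements $\bar{\Pc} \in \bar{\Ic}$ of degree $n$, which is the proposition. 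The main technical obstacle is the correct identification of $\St(\rho, t)$ with $\sgn \in R(S_t)$ under the Zelevinsky isomorphism; this comes down to characterising both intrinsically as the irreducible in their graded piece corresponding to the maximal partition $(t)$, after which the rest of the argument is bookkeeping.
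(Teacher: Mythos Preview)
Your argument is correct and is essentially the standard Zelevinsky/Schneider--Zink approach. Note, however, that the paper does not actually give a proof of this proposition: it is stated with an immediate \qed, under the blanket instruction at the start of the subsection that unsupported assertions are to be found in \cite{SchneiderZink1999-KTypesTemperedComponentsGeneralLinear}~\S4. So there is no ``paper's own proof'' to compare against; you have supplied precisely the argument that the cited reference uses.

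It is worth remarking that the PSH-algebra machinery you invoke is exactly what the paper records immediately \emph{after} this proposition, as Proposition~\ref{prop:psh-Rgl}, and that proposition does not logically depend on the classification statement you are proving. So your argument is not circular within the paper, even though in the paper's linear order the PSH material comes second. Your identification of $\St(\rho,t)$ with $\sgn$ under the chosen isomorphism is also stated explicitly in Proposition~\ref{prop:psh-Rgl}, and your sketch of why it holds (both are the irreducibles attached to the maximal partition $(t)$, since $\pi^\circ_{(t)} = \sgn$ and $\pi_{\bar{\Pc}} = \St(\rho,t)$ when $\bar{\Pc}(\rho) = (t)$) is on the right track; the residual point is that one must pick the correct one of the two PSH-isomorphisms, which you correctly flag.
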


Let $R^{GL}$ be the PSH-algebra defined by taking the $d$th graded piece $R^{GL}_d$ to be the
Grothendieck group of representations of $GL_d(k)$, defining multiplication via parabolic induction,
comultiplication via Jacquet restriction, and taking $\Sigma$ to be the set of isomorphism classes
of irreducible representation (see \cite{1409.8356} \S4 for details).  The primitive elements in
$\Sigma$ are the cuspidal representations; for each cuspidal representation $\sigma$ of some
$GL_d(k)$ let $R(\sigma)$ be the PSH-subalgebra of $R^{GL}$ spanned by those elements of $\Sigma$
having cuspidal support some number of copies of $\sigma$.  Then we have (see the proof of the
Proposition in \cite{SchneiderZink1999-KTypesTemperedComponentsGeneralLinear} section 4):

\begin{proposition}\label{prop:psh-Rgl}
  The PSH-algebras $R(\sigma)$ are indecomposable and there is an isomorphism of PSH-algebras
  \[R^{GL} = \bigotimes_{\sigma \in \bar{\Ic}_0}R(\sigma).\] For each cuspidal representation
  $\sigma$ there is (after rescaling the gradings) a unique isomorphism of PSH-algebras $R(\sigma)
  \isomto R^S$ that takes $\St(\sigma,t)$ to the sign representation of $S_t$ for all $t$. \qed
\end{proposition}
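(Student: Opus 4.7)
The plan is to invoke the general PSH structure theorem (quoted just above) applied to $R^{GL}$. The first step is to verify that $R^{GL}$, equipped with normalized parabolic induction as multiplication and normalized Jacquet restriction as comultiplication, is a PSH-algebra with distinguished basis $\Sigma$ the set of (isomorphism classes of) irreducibles. Associativity and coassociativity follow from transitivity of these operations; the Hopf compatibility between $m$ and $\mu$ reduces via Mackey's theorem to a combinatorial identity over parabolic double cosets in $GL_n(k)$, handled by the Bruhat decomposition. Positivity is immediate because induction and restriction preserve genuine representations, and the self-adjointness identity $\langle m(\sigma_1\otimes\sigma_2),\sigma_3\rangle = \langle\sigma_1\otimes\sigma_2,\mu(\sigma_3)\rangle$ is Frobenius reciprocity for the adjoint pair of normalized parabolic induction and Jacquet restriction.

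Next I would identify the primitive elements. An irreducible $\sigma \in \Sigma$ is primitive exactly when $\mu(\sigma) = \sigma\otimes 1 + 1\otimes \sigma$, i.e.\ when its Jacquet restriction to every proper Levi subgroup of $GL_d(k)$ vanishes; this is the definition of cuspidality. Zelevinsky's structure theorem then yields $R^{GL} \cong \bigotimes_{\sigma \in \bar{\Ic}_0} R(\sigma)$ as PSH-algebras, where $R(\sigma)$ is the sub-PSH-algebra spanned by those irreducibles whose cuspidal support is some number of copies of $\sigma$. Each $R(\sigma)$ contains the single primitive element $\sigma$, and is therefore indecomposable.

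For the isomorphism $R(\sigma) \isomto R^S$, rescale the grading on $R(\sigma)$ so that $\sigma$ lies in degree one; the uniqueness clause in the structure theorem then supplies exactly two isomorphisms, differing by the PSH-automorphism of $R^S$ that swaps the trivial and sign representations. To select the one with $\St(\sigma, t) \mapsto \sgn_{S_t}$ for every $t$, I would use the intrinsic characterization of $\St(\sigma, t)$ in $R(\sigma)$ as the unique irreducible of degree $t$ that occurs in the full parabolic induction from $t$ copies of $\sigma$ but does not occur in any proper product $\St(\sigma, t_1) \times \cdots \times \St(\sigma, t_s)$ with $s > 1$ --- equivalently, the unique generic/non-degenerate irreducible of degree $t$ in $R(\sigma)$. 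The parallel characterization of $\sgn_{S_t} = \sigma^\circ_{(t)}$ (appearing in $\pi^\circ_{(t)} = \sgn$ and in no $\pi^\circ_{P'}$ for $P' \succ (t)$, noting $\pi^\circ_{(t)}$ is already irreducible) then forces the desired matching.

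The main obstacle is this final step: verifying that Steinberg representations correspond to \emph{sign} rather than trivial representations under one of the two available isomorphisms. This amounts to checking that the two ``dominance-extremal'' characterizations just described are compatible under \emph{some} PSH-isomorphism, after which the structure theorem guarantees there is exactly one such isomorphism. The remainder of the argument is then a formal application of Zelevinsky's theorem.
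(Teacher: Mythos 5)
Your overall route --- check the PSH axioms for $R^{GL}$ via Mackey theory and Frobenius reciprocity, identify the primitive elements of $\Sigma$ with the cuspidal representations, invoke Zelevinsky's structure theorem for the tensor decomposition and the indecomposability of each $R(\sigma)$, and then pin down which of the two isomorphisms $R(\sigma)\isomto R^S$ is wanted --- is exactly the argument the paper is relying on (it gives no proof of its own, deferring to Schneider--Zink \S4, which in turn is Zelevinsky's theory applied to $GL_n(k)$). So the approach is the intended one.

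The step you yourself flag as the main obstacle is, however, not correctly handled as written. The characterization you offer for $\St(\sigma,t)$ --- the unique irreducible of degree $t$ occurring in the induction of $t$ copies of $\sigma$ but in no proper product $\St(\sigma,t_1)\times\cdots\times\St(\sigma,t_s)$ with $s>1$ --- is false: \emph{every} irreducible with cuspidal support $t\cdot\sigma$, including $\St(\sigma,t)$ itself, occurs in $\sigma^{\times t}=\St(\sigma,1)\times\cdots\times\St(\sigma,1)$. Your fallback ("the unique generic irreducible") is the correct \emph{definition}, but genericity is not a notion internal to the PSH structure, so it cannot by itself select one of the two abstract isomorphisms. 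What is actually needed is an intrinsic characterization that transfers along PSH isomorphisms. The standard one: in an indecomposable PSH-algebra with primitive element $\rho$ placed in degree $1$, the two irreducibles of degree $t$ that Zelevinsky's theorem distinguishes (the images of $\mathbbm{1}_{S_t}$ and $\sgn_{S_t}$ under the two isomorphisms) are exactly the irreducible constituents of $\rho^t$ of multiplicity one --- in $R^S$ the multiplicity of $\pi$ in $\rho^t$ (the regular representation) is $\dim\pi$, so these are $\mathbbm{1}_{S_t}$ and $\sgn_{S_t}$. Hence you must check (i) that $\St(\sigma,t)$ occurs in $\sigma^{\times t}$ with multiplicity one, which is the uniqueness of the generic (Whittaker) constituent for $GL_{dt}(k)$; and (ii) that the family $\{\St(\sigma,t)\}_t$ does not jump between the two distinguished families as $t$ varies, which follows because $\St(\sigma,t)$ occurs in $\St(\sigma,t-1)\times\sigma$ (the generic constituent of that induction has the right cuspidal support), whereas $\langle \mathbbm{1}_{S_t},\Ind_{S_{t-1}\times S_1}^{S_t}(\sgn\otimes\mathbbm{1})\rangle=0$ for $t\geq 3$. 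With (i) and (ii) in place, exactly one of the two isomorphisms sends $\St(\sigma,t)$ to $\sgn_{S_t}$ for all $t$ simultaneously, and the rest of your argument goes through.
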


\begin{corollary}\label{cor:finite-gl-mults}
  If $\bar{\Pc},\bar{\Pc}\,' \in \bar{\Ic}$ both have degree $n$, then the multiplicity
  \[m(\bar{\Pc},\bar{\Pc}\,')\defeq \dim \Hom_{\bar{G}}(\sigma_{\bar{\Pc}},\pi_{\bar{\Pc}\,'})\]
is equal to the product of Kostka numbers
\[\prod_{\sigma \in \bar{\Ic}_0}m(\bar{\Pc}(\sigma),\bar{\Pc}\,'(\sigma)).\]\end{corollary}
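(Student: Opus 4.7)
The plan is to interpret both sides of the desired equality inside the PSH-algebra $R^{GL}$ and use the decomposition of Proposition~\ref{prop:psh-Rgl} to reduce to an analogous statement for the symmetric-group PSH-algebra $R^S$. First I observe that, since parabolic induction is the PSH multiplication on $R^{GL}$, the representation $\pi_{\bar{\Pc}\,'}$ is, as an element of $R^{GL}$, simply the product
\[
\pi_{\bar{\Pc}\,'} \;=\; \prod_{\sigma \in \bar{\Ic}_0,\, i \in \NN} \St(\sigma,\bar{\Pc}\,'(\sigma)(i)).
\]
Since each factor $\St(\sigma, t)$ lies in the sub-PSH-algebra $R(\sigma)$, and these subalgebras tensor to give $R^{GL}$, this product decomposes as an element of $\bigotimes_\sigma R(\sigma)$ into a pure tensor, one factor per $\sigma$. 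Applying the PSH-isomorphism $R(\sigma)\isomto R^S$ that sends $\St(\sigma,t)$ to the sign representation of $S_t$, the $\sigma$-component becomes exactly $\Ind_{S_{\bar{\Pc}\,'(\sigma)}}^{S_n}(\sgn) = \pi^\circ_{\bar{\Pc}\,'(\sigma)}$.

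Next I need the corresponding identification for $\sigma_{\bar{\Pc}}$: namely that, under $R^{GL} \cong \bigotimes_\sigma R^S$, it corresponds to the pure tensor $\bigotimes_\sigma \sigma^\circ_{\bar{\Pc}(\sigma)}$. This is the main technical step. The strategy is to characterize both elements by the same extremality property under the dominance order. By definition $\sigma_{\bar{\Pc}}$ is the unique irreducible constituent of $\pi_{\bar{\Pc}}$ that appears in no $\pi_{\bar{\Pc}\,''}$ with $\bar{\Pc}\,'' \succ \bar{\Pc}$; by our preceding paragraph this translates, under the tensor decomposition, to being the unique irreducible (i.e., pure tensor of irreducibles) appearing in $\bigotimes_\sigma \pi^\circ_{\bar{\Pc}(\sigma)}$ that does not appear in $\bigotimes_\sigma \pi^\circ_{\bar{\Pc}\,''(\sigma)}$ for any $\bar{\Pc}\,''\succ \bar{\Pc}$. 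Since the partial order on $\bar{\Ic}$ is defined componentwise, and the tensor products of irreducibles of $S_n$ form an orthonormal basis indexed componentwise, one checks that the only pure tensor with this extremality property is $\bigotimes_\sigma \sigma^\circ_{\bar{\Pc}(\sigma)}$, matching the analogous defining property of $\sigma^\circ_P$ in each factor. This step is the one I expect to require the most care, since one has to be sure that ``extremal in a product'' really does factor as ``extremal in each factor''; it follows from the fact that the dominance order is the product order on $\bar{\Ic}$ together with the orthonormality of the basis $\{\sigma_{\bar\Pc}\}$.

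Once both identifications are in place, the Hom-space is computed by the inner product on $R^{GL}$: since $\Sigma$ is an orthonormal basis with respect to $\langle\cdot,\cdot\rangle$, we have
\[
\dim \Hom_{\bar{G}}(\sigma_{\bar{\Pc}},\pi_{\bar{\Pc}\,'}) \;=\; \langle \sigma_{\bar{\Pc}}, \pi_{\bar{\Pc}\,'}\rangle_{R^{GL}}.
\]
Under the isomorphism $R^{GL}\cong\bigotimes_\sigma R^S$ the form decomposes as a tensor product of forms, so
\[
\langle \sigma_{\bar{\Pc}}, \pi_{\bar{\Pc}\,'}\rangle \;=\; \prod_{\sigma \in \bar{\Ic}_0} \langle \sigma^\circ_{\bar{\Pc}(\sigma)}, \pi^\circ_{\bar{\Pc}\,'(\sigma)}\rangle_{R^S}.
\]
Each factor on the right is, by definition (Definition~\ref{def:kostka}), the Kostka number $m(\bar{\Pc}(\sigma),\bar{\Pc}\,'(\sigma))$, giving the claim. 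The degree hypothesis $\deg \bar{\Pc} = \deg \bar{\Pc}\,' = n$ ensures that all relevant pieces live in the same graded component, so nothing vanishes trivially and the product makes sense.
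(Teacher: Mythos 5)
Your proposal is correct and follows essentially the same route as the paper: interpret both elements in the PSH-algebra $R^{GL}$, use the tensor decomposition of Proposition~\ref{prop:psh-Rgl} and the isomorphisms $R(\sigma)\isomto R^S$, and read off the multiplicity from the inner product. The only difference is one of exposition --- where the paper says ``it is easy to see'' that $\sigma_{\bar{\Pc}}$ and $\pi_{\bar{\Pc}\,'}$ correspond to $\sigma^\circ_P$ and $\pi^\circ_{P'}$, you spell out the dominance-order extremality argument identifying $\sigma_{\bar{\Pc}}$ with the pure tensor $\bigotimes_\sigma\sigma^\circ_{\bar{\Pc}(\sigma)}$, which is exactly the intended justification.
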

\begin{proof} First, observe that the bilinear form on $R^{GL}$ is given (on homogeneous elements of
  the same degree $n$ in the $\NN$-span of $\Sigma$) by $\dim \Hom_{\bar{G}}(-,-)$. Thus we can
  read off $m(\bar{\Pc},\bar{\Pc}\,')$ from the PSH-algebra structure on $R^{GL}$.  By
  Proposition~\ref{prop:psh-Rgl}, we can reduce to the case where $\bar{\Pc}$ and $\bar{\Pc}\,'$ are
  both supported on the same cuspidal representation $\sigma$; let $P$ and $P'$ be
  $\bar{\Pc}(\sigma)$ and $\bar{\Pc}\,'(\sigma)$ respectively.  Then it is easy to see that, under
  the isomorphism $R(\sigma) \isomto R^S$ of Proposition~\ref{prop:psh-Rgl}, $\sigma_{\bar{\Pc}}$ is
  taken to $\sigma^\circ_P$ and $\pi_{\bar{\Pc}\,'}$ is taken to $\pi_{P'}^\circ$.  The formula
  follows.
\end{proof}

\subsection{Simple characters}
\label{sec:simple-chars}
We recall a little of the theory of Bushnell and Kutzko (for which see
\cite{BushnellKutzko1993-AdmissibleDualGLN}, \cite{MR1643417},
\cite{BushnellKutzko1999-SemisimpleTypesGLN}). Let $C$ be an algebraically closed field of
characteristic distinct from $p$ (for the case when $C$ has positive characteristic we refer to the
works of Vign\'{e}ras \cite{Vigneras1996-RepresentationsLModulaires},
\cite{Vigneras1998-InducedRRepresentationsPAdicReductiveGroups} and M\'{i}nguez, S\'{e}cherre, and Stevens
\cite{MR3273486}, \cite{secherrestevens15}; we will not require much from the positive
characteristic theory).

Let $V$ be a vector space over $F$, let $G =
\Aut_F(V)$, and let $A = \End_F(V)$.  An \textbf{$\Oc_F$-lattice chain} in $V$ is a sequence $\Lc =
(\Lambda_i)_{i \in \ZZ}$ of $\Oc_F$-lattices in $V$ such that $\Lambda_i \supset \Lambda_{i+1}$ for
all $i\in \ZZ$, and such that there exists an integer $e \geq 1$ (the \textbf{period} of $\Lambda$)
with $\Lambda_{i+e} = \pf_F\Lambda_i$ for all $i \in\ZZ$.  The hereditary $\Oc_F$-orders in $A$ are
those orders $\Af$ that arise as the stabiliser of some $\Oc_F$-lattice chain (which is uniquely
determined up to shift by the order).  The order $\Af$ is maximal if and only if it stabilises a
lattice chain of period $e = 1$.  A hereditary order $\Af \subset A$ has a unique two-sided maximal
ideal $\Pf$; if $\Af$ stabilises $\Lambda$ then $\Pf$ is the set $\{x \in \Af :
x\Lambda_i \subset\Lambda_{i+1} \text{ for all } i \in \ZZ\}$.  We write $U(\Af)$ for the group of
units in $\Af$ and $U^1(\Af) = 1 + \Pf$.  

In \cite{BushnellKutzko1993-AdmissibleDualGLN} \S1.5, the notions of stratum, pure stratum, and
simple stratum in $A$ are defined.  We will only require simple strata in $A$ of the form $[\Af,m,0,\beta]$; this
means that 
\begin{itemize}
\item $\Af$ is a hereditary $\Oc_F$-order in $A$;
\item $m > 0$ is an integer;
\item $\beta \in \Pf^{-m} \setminus \Pf^{1-m}$ is such that $E = F[\beta]$ is a
  field\footnote{The coefficient field $E$ used in the rest of this paper does not appear in this section.} and $E^\times$ is contained in the
  normaliser of $U(\Af)$;
\item $k_0(\beta, \Af(E)) < 0$ where $k_0(\beta, \Af(E))$ is the integer defined in
  \cite{BushnellKutzko1993-AdmissibleDualGLN}~\S1.4.
\end{itemize}
If $[\Af,m,0,\beta]$ is a simple stratum then we may regard $V$ as an $E$-vector space and write $B
= \End_E(V)$.  Any lattice chain defining $\Af$ is then an $\Oc_E$-lattice chain and $\Bf \defeq \Af
\cap B \subset B$ is its stabiliser; we define the groups $U(\Bf)$ and $U^1(\Bf)$ as for $\Af$.  To
a simple stratum $[\Af,m,0,\beta]$ we may associate, as in
\cite{BushnellKutzko1993-AdmissibleDualGLN}~\S3.1, compact open subgroups $J = J(\beta,\Af)$, $J^1 =
J^1(\beta,\Af)$ and $H = H^1(\beta,\Af)$ of $U(\Af)$ such that
\begin{itemize}
\item $J^1$ is a normal pro-$p$ subgroup of $J$;
\item $H^1$ is a normal subgroup of $J^1$;
\item $U(\Bf) \subset J$ and $U^1(\Bf) \subset J^1$, and the induced map 
\[U(\Bf)/U^1(\Bf) \rarrow J/J^1\]
is an isomorphism.
\end{itemize} There is a set $\Cc(\Af,0,\beta)$ of \textbf{simple characters} of $H^1(\beta, \Af)$
(see \cite{BushnellKutzko1993-AdmissibleDualGLN} \S3.2).  If $\theta \in \Cc(\Af,0,\beta)$ is a
simple character, then there is a unique irreducible representation $\eta$ of $J^1(\beta, \Af)$
whose restriction to $H^1(\beta, \Af)$ contains $\theta$, and in fact this restriction is a multiple
of $\theta$.  There is then a distinguished class (the ``$\beta$-extensions'') of extensions
$\kappa$ of $\eta$ to $J(\beta,\Af)$ (see \cite{BushnellKutzko1993-AdmissibleDualGLN}~\S5.2 for $\ch C
= 0$; \cite{Vigneras1996-RepresentationsLModulaires}~\S4.18 for the general case).

\subsection{Types}\label{sec:types}
Suppose that $\ch C = 0$ and $\Omega$ is a Bernstein component of $\Rep_{C}(G)$.
A \textbf{type} for $\Omega$ is a pair $(J,\lambda)$ where $J \subset G$ is a compact open subgroup
and $\lambda$ is an irreducible representation of $J$ with the property that $\Omega$ is equivalent
to the category of smooth $C$-representations of $G$ generated by their $\lambda$-isotypic vectors.

Recall that, for $H$ a unimodular locally profinite group, $K \subset H$ a compact open subgroup,
and $\rho$ a smooth $C$-representation of $K$, then the Hecke algebra $\Hc(H,K,\rho)$ is defined to
be the $C$-algebra \[\End_{C[H]}(\cInd_K^H(\rho)).\]  If $(J,\lambda)$ is a type for $\Omega$,
then $\Hom_J(\lambda,-)$ is an equivalence of categories between $\Omega$ and the category
$\Hc(G,J,\lambda)\operatorname{-Mod}$ of left $\Hc(G,J,\lambda)$-modules (see \cite{MR1643417}).

It is the main result of \cite{BushnellKutzko1999-SemisimpleTypesGLN} that every Bernstein component of
$\Rep_C(G)$ has a type, and there is an explicit construction of these types.  

Suppose that $\Omega$ is a supercuspidal Bernstein component of $\Rep_C(G)$ (that is, every
irreducible object of $\Omega$ is supercuspidal).  Then, by
\cite{BushnellKutzko1993-AdmissibleDualGLN}~\S6 and Theorem~8.4.1, we may construct a type
$(J,\lambda)$ for $\Omega$ such that: $J = J(\beta,\Af)$ for a simple stratum $[\Af,m,0,\beta]$ in
$A$ in which $\Bf$ is a maximal $\Oc_E$-order, and $\lambda$ is of the form $\kappa \otimes \nu$
where $\kappa$ is a $\beta$-extension of an irreducible representation $\eta$ containing a simple
character $\theta\in\Cc(\Af,0,\beta)$ and $\nu$ is a \emph{cuspidal} representation of $J/J^1 \cong
GL_{n/[E:F]}(k_E)$.  The integer $m$ is unique and the pair $(J,\lambda)$ and order $\Af$ are unique
up to conjugation in $G$.  A type $(J,\lambda)$ arising in this way is called a \emph{maximal} type.

\subsection{}\label{sec:kappas} Recall the notions of ps-character and endo-equivalence from
\cite{BushnellKutzko1999-SemisimpleTypesGLN} \S4.  In the situation of the previous paragraph, the
character $\theta$ determines a \textbf{ps-character} $(\Theta,0,\beta)$ attached to the simple pair
$(0,\beta)$ --- this is a function $\Theta$ on the set of simple strata $[\Af,m,0,\beta]$ taking
such a stratum to an element $\Theta(\Af) \in \Cc(\Af,0,\beta)$.  By
\cite{BushnellKutzko1999-SemisimpleTypesGLN}~\S4.5, the endo-class of this ps-character is
determined by $\Omega$.  For each endo-class of ps-character we fix a representative
$(\Theta,0,\beta)$.  We may and do assume that $\theta$ and $\beta$ in the previous paragraph come
from this chosen representative of the endo-class associated to $\Omega$.

We will need to impose a certain compatibility on our choices 
of $\beta$-extensions.  Suppose that $(\Theta,0,\beta)$ is a ps-character attached to the simple
pair $(0,\beta)$ and write $E = F[\beta]$.  Suppose that $E$ is embedded in $A = \End_F(V)$ so that
$V$ is an $E$-vector space, and let $V_1,\ldots,V_t$ be finite-dimensional $E$-vector spaces such
that $V = \bigoplus_{i=1}^t V_i$.  Let $M$ be the corresponding Levi subgroup of $G$, let $Q$ be the
parabolic subgroup with Levi $M$ that stabilises the flag (of $F$-vector spaces) $0 \subset V_1
\subset V_1 \oplus V_2 \subset \ldots \subset V$, and let $U$ be the unipotent radical of $Q$.  For
each $i$ let $A_i = \End_F(V_i)$ and $B_i = \End_E(V_i)$ and let $B = \End_E(V)$.  Suppose that, for
each $i$, there is an $\Oc_E$-lattice $\Lambda_i \subset V_i$ whose stabiliser is $\Bf_i$, a maximal
hereditary $\Oc_E$-order in $B_i$.  Let $\Af_i$ be the corresponding hereditary $\Oc_F$-order in
$A_i$, with associated groups $J_i \supset J^1_i \supset H^1_i$.  Let $\theta_i = \Theta(\Af_i)$ and
let $\eta_i$ be the unique irreducible representation of $J^1_i$ containing $\theta_i$.  Let $\Lf$
be the $\Oc_E$-lattice chain in $V$ whose elements are the lattices
\[\pf_E^{a}\Lambda_1 \oplus \ldots \oplus \pf_E^{a}\Lambda_b \oplus \pf_E^{a+1}\Lambda_{b+1} \oplus
\ldots \oplus \pf_E^{a+1}\Lambda_{t}\] for $a \in\ZZ$ and $1 \leq b \leq t$ (cf
\cite{BushnellKutzko1999-SemisimpleTypesGLN} \S7).  Let $\tilde{\Bf}$ (resp. $\tilde{\Af}$) be the
$\Oc_E$-order (resp. $\Oc_F$-order) associated to $\Lf$ and let $\Bf$ (resp. $\Af$) be the
stabiliser in $B$ (resp. $A$) of a single lattice in $\Lf$.  Let $\tilde{J}\supset \tilde{J}^1
\supset \tilde{H}^1$ be the groups associated to $\tilde{\Af}$, let $\tilde{\theta} =
\Theta(\tilde{\Af})$, and let $\tilde{\eta}$ be the irreducible representation of $\tilde{J}^1$ containing
$\tilde{\theta}$.  Similarly define $J \supset J^1 \supset H^1$, $\theta$ and
$\eta$ to be the objects associated to $\Af$ (and $\Theta$).  By
\cite{BushnellKutzko1993-AdmissibleDualGLN} Theorem~5.2.3, the choice of a $\beta$-extension
$\kappa$ of $\eta$ determines a $\beta$-extension $\tilde{\kappa}$ of $\tilde{\eta}$ such
that \[\Ind_{\tilde{J}}^{U(\tilde{\Af})}(\tilde{\kappa}) \cong
\Ind_{U(\tilde{\Bf})J^1}^{U(\tilde{\Af})}(\kappa|_{U(\tilde{\Bf})J^1}).\] If $\tilde{\kappa}_U$ is the representation
of $\tilde{J} \cap M = \prod_{i=1}^t J_i$ on the $\tilde{J} \cap U$-invariants of $\tilde{\kappa}$, then $\tilde{\kappa}_U = \kappa_1
\otimes \ldots \otimes \kappa_t$ for $\beta$-extensions $\kappa_i$ of each $\eta_i$.  When
$\kappa_1, \ldots, \kappa_t$ arise from a single $\kappa$ in this way, we say that they are
\textbf{compatible}.

\subsection{Covers}\label{sec:covers}
Types for a general Bernstein component of $G$ are constructed using the formalism of covers.
Suppose that $M \subset G$ is a Levi subgroup, that $J\subset G$
is a compact open subgroup, and that $\rho$ is an irreducible smooth representation of $J$.  Write
$J_M = J \cap M$ and suppose that $\rho_M = \rho|_{J_M}$ is irreducible.  The notion of $(J,\rho)$
being a \textbf{$G$-cover} of $(J_M,\rho_M)$ is defined in \cite{MR1643417}~Definition 8.1.

By \cite{MR1643417} Theorem~7.2, if $(J,\rho)$
is a $G$-cover of $(J_M,\rho_M)$, then for each parabolic subgroup $Q$ of $G$ with Levi factor $M$,
there is an injective Hecke algebra homomorphism
\[j_Q : \Hc(M,J_M,\rho_M) \rarrow \Hc(G,J,\rho).\]
Moreover, if every element of $G$ intertwining $\rho$ lies in $M$, then $j_Q$ is an isomorphism, by
Theorem~7.2 and the remark following Corollary~7.7 of \cite{MR1643417}.

If $[M,\pi]$ is an inertial equivalence class of supercuspidal pair corresponding to a Bernstein
component $\Omega$ of $\Rep_C(G)$, then let $(J_M,\lambda_M)$ be a maximal type for the
supercuspidal Bernstein component $\Omega_M$ of $\Rep_C(M)$ containing $\pi$.  By the results of
\cite{BushnellKutzko1999-SemisimpleTypesGLN}, there is a $G$-cover $(J,\lambda)$ of
$(J_M,\lambda_M)$.  The pair $(J,\lambda)$ is then a type for $\Omega$.  For every parabolic
subgroup $Q \subset G$ with Levi subgroup $M$, the diagram
\begin{equation}\begin{CD}
  \label{eq:lambda-covers}
  \Omega_M @>{\;\Ind_{Q}^G(-)\;}>> \Omega \\
  @V{\Hom_{J_M}(\lambda_M,-)}VV  @VV{\Hom_{J}(\lambda,-)}V \\
  \Hc(M,J_M,\lambda_M)\operatorname{-Mod} @>{j_Q}>> \Hc(G,J,\lambda)\operatorname{-Mod}
\end{CD}\end{equation}
commutes (by \cite{MR1643417} Corollary~8.4).

\subsection{SZ-data}
\label{sec:sz-data}
We return to the case of arbitrary $C$ with characteristic distinct from $p$.
\begin{definition}
  An SZ-datum over $C$ is a set \[ \{(E_i,\beta_i,V_i, \Bf_i, \lambda_i)\}_{i=1}^{r}\] where $r$ is a
  positive integer and, for each $i = 1, \ldots, r$, we have:
  \begin{itemize}
  \item $E_i/F$ is a finite extension generated by an element $\beta_i \in E_i$;
  \item $V_i$ is an $E_i$-vector space of finite dimension $N_i$;
  \item $\Bf_i \subset \End_{E_i}(V_i)$ is a maximal hereditary $\Oc_{E_i}$-order and $\Af_i$ is the
    associated $\Oc_F$-order in $A_i \defeq \End_F(V_i)$;
  \item if $m_i = - v_{E_i}(\beta_i)$, then $[\Af_i,m_i,0,\beta_i]$ is a simple stratum and
    $\lambda_i$ is a $C$-representation of $J_i = J(\beta_i,\Af_i)$ of the form $\kappa_i \otimes
    \nu_i$.  Here $\kappa_i$ is a $\beta_i$-extension of the representation $\eta_i$ of
    $J^1_i = J^1(\beta_i,\Af_i)$ containing some simple character $\theta_i \in
    \Cc(\Af_i,0,\beta_i)$ of $H^1_i = H^1(\beta_i,\Af_i)$, and
    $\nu_i$ is an irreducible representation of $U(\Bf_i)/U^1(\Bf_i)\cong GL_{N_i}(k_{E_i})$
    over $C$;
  \item no two of the $\theta_i$ are endo-equivalent.
  \end{itemize}
\end{definition}
Suppose that $\Sf = \{(E_i,\beta_i,V_i, \Bf_i, \lambda_i)\}_{i=1}^{r}$ is an SZ-datum, and adopt all
of the above notation (including the implied choices of $\beta_i$-extensions $\kappa_i$).
\begin{proposition}
  The representations $\lambda_i$ are irreducible.
\end{proposition}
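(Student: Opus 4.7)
The plan is to prove each $\lambda_i$ irreducible separately by a standard Clifford-theoretic argument, using that $\kappa_i|_{J^1_i}$ is irreducible and $\nu_i$ is trivial on $J^1_i$. Fix an index $i$ and write $J = J_i$, $J^1 = J^1_i$, $\eta = \eta_i$, $\kappa = \kappa_i$, $\nu = \nu_i$ for brevity. By construction, $\kappa$ extends $\eta$ to $J$, so $\kappa|_{J^1}$ is irreducible; and $\nu$ is inflated from the finite quotient $J/J^1 \cong GL_{N_i}(k_{E_i})$, so $J^1$ acts trivially on $\nu$. Consequently $(\kappa\otimes\nu)|_{J^1} \cong \eta^{\oplus \dim \nu}$ is $\eta$-isotypic.

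First I would compute
\[
\Hom_{J^1}(\eta,\kappa\otimes\nu) \;\cong\; \End_{J^1}(\eta)\otimes_C \nu \;\cong\; \nu,
\]
where the last isomorphism uses Schur's lemma (valid since $C$ is algebraically closed and $\eta$ is absolutely irreducible). Moreover this isomorphism is $J/J^1$-equivariant for the conjugation action of $J$ on $\Hom_{J^1}(\eta,\kappa\otimes\nu)$, where $g \in J$ acts on $f$ by $(g\cdot f)(v) = g\,f(g^{-1}v)$, and the inherited action on $\nu$ agrees with the original one (this is a routine check using that $\kappa$ extends $\eta$ to all of $J$).

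Next I would observe that the natural $J$-equivariant evaluation map
\[
\eta \otimes_C \Hom_{J^1}(\eta,\kappa\otimes\nu) \longrightarrow \kappa\otimes\nu,\qquad v\otimes f\mapsto f(v),
\]
is an isomorphism, because its source and target have the same dimension and it is nonzero on each $\eta$-isotypic piece. Via this isomorphism, $J$-subrepresentations of $\kappa\otimes\nu$ correspond bijectively to $J/J^1$-subrepresentations of $\Hom_{J^1}(\eta,\kappa\otimes\nu)\cong\nu$. Since $\nu$ is irreducible as a $J/J^1$-representation by hypothesis, $\kappa\otimes\nu = \lambda_i$ is irreducible as a $J_i$-representation.

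I do not expect any real obstacle: the essential inputs are the irreducibility of $\eta_i$ (which is built into the definition of a simple character and its associated $J^1$-representation, and holds equally well in the modular setting by the work of Vign\'eras and M\'inguez--S\'echerre--Stevens referenced in section~\ref{sec:simple-chars}) and Schur's lemma over the algebraically closed field $C$. The other data in the SZ-datum (the endo-equivalence classes of the $\theta_i$, and the simple stratum conditions) play no role in this particular statement; they will matter only when we later combine the $\lambda_i$ into a single representation of the Levi $\prod_i GL_{N_i[E_i:F]}(F)$ and take a $G$-cover.
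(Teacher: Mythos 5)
Your proof is correct, and it establishes the same underlying fact as the paper's, but by a genuinely self-contained route. The paper argues by contradiction: it restricts to $H^1_i$ rather than $J^1_i$, notes that any irreducible subrepresentation of $\lambda_i$ contains the simple character $\theta_i$, and then invokes \cite{Vigneras1996-RepresentationsLModulaires} 4.22 twice --- once (the ``Lemme'') to conclude that such a subrepresentation must itself be of the form $\kappa_i\otimes\nu_i'$, and once (the ``Entrelacement'') for the identity $\Hom_{J_i}(\kappa_i\otimes\nu_i',\kappa_i\otimes\nu_i)=\Hom_{J_i/J^1_i}(\nu_i',\nu_i)$, from which $\nu_i'=\nu_i$ by irreducibility of $\nu_i$. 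You instead work one level up, with the irreducible representation $\eta_i$ of the pro-$p$ normal subgroup $J^1_i$, and prove the relevant Clifford-theoretic statements directly: Schur's lemma over the algebraically closed field $C$ gives $\Hom_{J^1_i}(\eta_i,\kappa_i\otimes\nu_i)\cong\nu_i$ as $J_i/J^1_i$-representations (the $J_i$-action on the $\Hom$ space being well defined precisely because $\kappa_i$ extends $\eta_i$), the evaluation map identifies $\kappa_i\otimes\nu_i$ with $\eta_i\otimes\nu_i$, and $J_i$-subrepresentations then biject with $J_i/J^1_i$-subrepresentations of $\nu_i$. What your version buys is independence from the cited results on simple characters (you never need that $\eta_i$ is the unique irreducible representation of $J^1_i$ lying over $\theta_i$, only that $\kappa_i|_{J^1_i}$ is irreducible); what the paper's version buys is brevity and consistency with how the same Vign\'eras intertwining identity is used elsewhere in the argument. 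Both are valid in arbitrary characteristic $\ell\neq p$, as required.
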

\begin{proof}
  Suppose that some $\lambda_i$ is reducible.  Since $(\kappa_i \tensor \nu_i)|_{H^1_i}$ is a
  multiple of $\theta_i$, we must have that any irreducible subrepresentation $\rho$ of
  $\lambda_i=\kappa_i\tensor \nu_i$ contains $\theta_i$ when restricted to $H^1_i$.  Therefore, by
  \cite{Vigneras1996-RepresentationsLModulaires}, 4.22 Lemme, $\rho$ must also be of the form
  $\kappa_i \tensor \nu_i'$ for an irreducible representation $\nu_i'$ of $J_i/J^1_i$.  But now
  by \cite{Vigneras1996-RepresentationsLModulaires}, 4.22 ``Entrelacement'', we have
   \[ \Hom_{J_i}(\kappa_i\otimes\nu_i', \kappa_i\otimes\nu_i) = \Hom_{J_i/J^1_i}(\nu_i',\nu_i)\]
   and so we must have $\nu_i' = \nu_i$, as $\nu_i$ is irreducible.  Therefore $\rho =
   \kappa_i\tensor \nu_i = \lambda_i$ as required.
\end{proof}

Let $V = \bigoplus_{i=1}^r V_i$ (an $F$-vector space), $A = \End_F(V)$ and $G = \Aut_F(V)$.  The
Levi subgroup $M = \prod_{i=1}^r \Aut_F(A_i) \subset G$ has compact open subgroups $J^1_M\normal
J_M$, where $J^1_M = \prod_{i=1}^rJ^1_i$ and similarly for $J$.  Let $\eta_M =
\bigotimes_{i=1}^r\eta_i$ (a representation of $J^1_M$) and similarly define the representations
$\kappa_M$ and $\lambda_M$ of $J_M$.  Then $\eta_M$ and $\kappa_M$ are clearly irreducible, and
$\lambda_M$ is irreducible by the above proposition.

Since no two of the $\theta_i$ are endo-equivalent, the constructions of
\cite{BushnellKutzko1999-SemisimpleTypesGLN} \S8 (see also \cite{MR3273486} \S\S2.9-10) yield
compact open subgroups $J$ and $J^1$ of $G$ and representations $\eta$ of $J^1$, $\kappa$ of $J$ and
$\lambda$ of $J$ such that $(J^1,\eta)$ (resp. $(J,\kappa)$, resp. $(J,\lambda)$) is a $G$-cover of
$(J^1_M,\eta_M)$ (resp. $(J_M,\kappa)$, resp. $(J_M,\lambda_M)$), and $J/J^1 = J_M/J^1_M$ with
$\lambda = \kappa \otimes (\bigotimes_{i=1}^r \nu_i)$ under this identification.  
\begin{remark}
  If $\ch C \neq 0$, then to see that these are $G$-covers we must modify the proof of \cite{BushnellKutzko1999-SemisimpleTypesGLN}
  Corollary 6.6 as explained in the proof of \cite{MR3273486} Proposition~2.28.  However, here we only
  need the case $\ch C = 0$.
\end{remark}
Let $K$ be a maximal compact subgroup of $G$ such that $J_M \subset K\cap M$ (and so $K \cap
\prod_{i=1}^rB_i = \prod_{i=1}^rU(\Bf_i)$).
\begin{proposition}\label{prop:eta-intertwining}
  Every element of $K$ that intertwines $\eta$ lies in $J$. 
\end{proposition}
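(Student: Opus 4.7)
The plan is to deduce the statement from the standard intertwining theorem for (semisimple) simple characters due to Bushnell--Kutzko, together with the geometric compatibility of $K$ with the decomposition $V = \bigoplus V_i$.

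First I would reduce the intertwining of $\eta$ to that of the semisimple simple character. By construction, $\eta|_{H^1}$ is a multiple of the semisimple character $\theta$ of $H^1$ built out of the pairwise non-endo-equivalent simple characters $\theta_i\in\Cc(\Af_i,0,\beta_i)$ (extended to the full space via the $G$-cover construction of section~\ref{sec:kappas}). Since $\eta$ is the unique irreducible representation of $J^1$ containing $\theta$ and $J^1$ normalises $\theta$, one has $I_G(\eta)=I_G(\theta)$; this is a standard step (c.f.\ \cite{BushnellKutzko1993-AdmissibleDualGLN}~3.3.2 in the simple case and \cite{BushnellKutzko1999-SemisimpleTypesGLN}~\S5 in the semisimple case).

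Second, I would apply the key intertwining result from \cite{BushnellKutzko1999-SemisimpleTypesGLN} (see Theorem~4.6 and the analysis of \S\S5--6 there): because the simple characters $\theta_i$ belong to pairwise distinct endo-classes, one has
\[I_G(\theta)\;=\;J^1\cdot\Bigl(\prod_{i=1}^r B_i^\times\Bigr)\cdot J^1.\]
This is the heart of the matter: inequivalence of endo-classes is precisely what forces any intertwiner to respect the block decomposition $V=\bigoplus V_i$ modulo the pro-$p$ subgroup $J^1$.

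Finally, I would combine this with the compatibility of $K$. By hypothesis $K\cap\prod_i B_i=\prod_i U(\Bf_i)$, and from the construction in section~\ref{sec:kappas} the group $J^1$ sits inside $U(\tilde\Af)$, a hereditary order adapted to the decomposition; after verifying that our choice of $K$ (the stabiliser of a single lattice in the underlying chain) contains $J^1$, one proceeds as follows. Given $g\in K\cap I_G(\eta)$, write $g=j_1 b j_2$ with $j_1,j_2\in J^1$ and $b\in M\cap\prod_i B_i^\times$. Then $b=j_1^{-1}g j_2^{-1}\in K\cap M\cap\prod_i B_i^\times=\prod_i U(\Bf_i)$. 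Since each $\Bf_i$ is maximal we have $U(\Bf_i)\subset J_i$, whence $b\in J_M\subset J$; since also $j_1,j_2\in J^1\subset J$, we conclude $g\in J$.

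The main obstacle is the middle step: verifying that the semisimple intertwining formula $J^1\cdot(\prod B_i^\times)\cdot J^1$ applies in our setting. This is the place where the non-endo-equivalence hypothesis in the definition of an SZ-datum is used in an essential way, and its proof in \cite{BushnellKutzko1999-SemisimpleTypesGLN} requires the full machinery of matching and tame lifts. The remaining steps — identifying $I_G(\eta)$ with $I_G(\theta)$, and checking that $J^1\subset K$ using our standing assumption $J_M\subset K$ and the $G$-cover construction — are essentially formal given the setup.
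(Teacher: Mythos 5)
Your proof is correct and is essentially the paper's argument: the paper simply cites the intertwining formula $I_G(\eta)=J\bigl(\prod_{i=1}^r B_i^\times\bigr)J$ (quoting M\'{\i}nguez--S\'{e}cherre--Stevens, Proposition~2.31 of \cite{MR3273486}, rather than re-deriving it from $I_G(\theta)$ via \cite{BushnellKutzko1999-SemisimpleTypesGLN}) and then intersects with $K$ exactly as you do, using $K\cap\prod_i B_i^\times=\prod_i U(\Bf_i)\subset J$. Your expanded version of the final step, and the observation that $J^1(\prod_i B_i^\times)J^1=J(\prod_i B_i^\times)J$ here, are both fine.
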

\begin{proof}
  By \cite{MR3273486} Proposition~2.31, the $G$-intertwining of $\eta$ is $J(\prod_{i=1}^r
  B_i^\times)J$, and so the $K$-intertwining of $\eta$ is \[J\left(\prod_{i=1}^rB_i^\times\right)J \cap K = J.\qedhere\]
\end{proof}

\begin{definition} \label{defn:sigma-SZ}
  Let $\Sf$ be an SZ-datum over $C$ and let $J,K$ and $\lambda$ be as above.  Then:
\[\sigma(\Sf) = \Ind_J^K(\lambda).\]
\end{definition}
\begin{theorem}\label{thm:irreducibility} The representation $\sigma(\Sf)$ is irreducible.
\end{theorem}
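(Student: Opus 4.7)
The plan is to apply Mackey's irreducibility criterion to $\Ind_J^K(\lambda)$: the induced representation is irreducible if and only if $\lambda$ is irreducible and, for every $g \in K \setminus J$, the restrictions of $\lambda$ and of the conjugate $\lambda^g$ to $J \cap gJg^{-1}$ are disjoint, i.e.\ every element of $K$ intertwining $\lambda$ already lies in $J$. Irreducibility of $\lambda$ is the content of the Proposition preceding the theorem (applied in the covering situation: since $\lambda = \kappa \otimes (\bigotimes_i \nu_i)$ on $J/J^1 = J_M/J_M^1$, the same argument as for the $\lambda_i$ works, or one reduces to the $M$-level by the covering property).

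The heart of the proof is therefore to identify the $K$-intertwining of $\lambda$ with $J$. First I would observe that $\nu_i$ is inflated from $J/J^1$, so $\bigotimes_i \nu_i$ is trivial on $J^1$; since $\kappa|_{J^1} = \eta$, this gives
\[
\lambda|_{J^1} \;\cong\; \eta^{\oplus m}, \qquad m = \dim\bigl(\textstyle\bigotimes_i \nu_i\bigr).
\]
Now suppose $g \in K$ intertwines $\lambda$, i.e.\ there exists a nonzero $J \cap gJg^{-1}$-equivariant map $\lambda \to \lambda^g$. Restricting to $J^1 \cap gJ^1g^{-1}$ (which is contained in $J \cap gJg^{-1}$) and using the displayed identification above together with the analogous one for $\lambda^g$, one obtains a nonzero $J^1 \cap gJ^1g^{-1}$-intertwiner between $\eta$ and $\eta^g$. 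Thus $g$ intertwines $\eta$ in the sense of Proposition~\ref{prop:eta-intertwining}, which forces $g \in J$.

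Combining these two points, no $g \in K \setminus J$ intertwines $\lambda$, so Mackey's criterion delivers the irreducibility of $\sigma(\Sf) = \Ind_J^K(\lambda)$. The only real obstacle is the reduction from intertwining of $\lambda$ to intertwining of $\eta$, which is straightforward once one notices that $\bigotimes_i \nu_i$ is trivial on $J^1$; after that, everything is immediate from Proposition~\ref{prop:eta-intertwining}. (Note that we have not needed to use the structure of $\nu_i$ beyond it being inflated from $J/J^1$, so the same proof would work for any irreducible $\nu_i$.)
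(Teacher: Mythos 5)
Your identification of the $K$-intertwining of $\lambda$ is correct and is indeed the key input: restricting an intertwiner to $J^1\cap gJ^1g^{-1}$ and using $\lambda|_{J^1}\cong m\cdot\eta$ reduces everything to Proposition~\ref{prop:eta-intertwining}. The gap is in your very first step. The ``disjointness implies irreducibility'' direction of Mackey's criterion is a semisimplicity statement: what the intertwining computation gives in all characteristics is only $\End_K(\Ind_J^K\lambda)=C$, and this forces irreducibility only when $\Ind_J^K\lambda$ is semisimple (e.g.\ when $\mathrm{char}\,C=0$). The theorem, however, is stated for any algebraically closed $C$ of characteristic distinct from $p$, and it is applied with $C=\bar{\FF}$ of characteristic $l$ in Theorem~\ref{thm:type-reduction-gln}. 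There $\Ind_J^K\lambda$ factors through a finite quotient of $K$ whose order is divisible by $l$, semisimplicity fails, and a module with scalar endomorphism algebra (for instance a non-split extension of two non-isomorphic irreducibles) need not be irreducible. So your argument proves the theorem only in characteristic zero.

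The paper closes this gap by exploiting the one piece of semisimplicity that survives: $J^1$ is pro-$p$ and $p\neq\mathrm{char}\,C$, so restriction to $J^1$ is a semisimple operation. One first shows, via Mackey's decomposition and Proposition~\ref{prop:eta-intertwining}, that $\eta$ occurs in $\Res_{J^1}\Ind_J^K\lambda$ with multiplicity exactly $\dim\nu$. If $0\subsetneq W\subsetneq \Ind_J^K\lambda$ is a submodule with quotient $W'$, Frobenius reciprocity in its two forms produces a nonzero $J$-map $W\to\lambda$ and a nonzero $J$-map $\lambda\to W'$; since $\lambda$ is irreducible with $\lambda|_{J^1}=\dim(\nu)\cdot\eta$, each of $W$ and $W'$ must contain $\eta$ with multiplicity at least $\dim\nu$ after restriction to $J^1$, so $\eta$ occurs at least $2\dim\nu$ times in $\Res_{J^1}\Ind_J^K\lambda$, a contradiction. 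Your intertwining argument supplies the first of these two steps; the second is what is missing.
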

\begin{proof}
  We first show that \[\dim\Hom_{J^1}(\eta, \Ind_{J}^K\lambda) = \dim(\nu).\] By Proposition
  \ref{prop:eta-intertwining}, for $g \not \in J$ we have $\Hom_{J^1 \cap J^g}(\eta, \lambda^g) =
  0$.  Therefore by Mackey's formula,
  \begin{align*}
     \dim\Hom_{J^1}(\eta, \Ind_{J}^K\lambda) & = \dim\Hom_{J^1}(\eta, \lambda)\\
     &= \dim(\nu)
   \end{align*}
   Now suppose that $\Ind_{J}^K\lambda$ is reducible, with
   \[0 \varsubsetneq W \varsubsetneq \Ind_{J}^K\lambda\] a $K$-submodule and $W'$ the quotient.  We may
   write $\Res^{K}_{J^1}\Ind_{J}^K\lambda = W \oplus W'$, since
   $J^1$ is pro-$p$.  Now, by Frobenius reciprocity we have that \[\dim \Hom_{J}(W,\lambda) \geq 1.\] Since $\lambda$ is irreducible and $\lambda|_{J^1} =
   \dim(\nu)\cdot \eta$ this shows that \[\dim\Hom_{J^1}(\eta, W) = \dim\Hom_{J^1}(W, \eta) \geq
   \dim(\nu).\] But the same argument applies to $W'$, so that
   \[\dim\Hom_{J^1}(\eta, \Ind_{J}^K(\lambda))\geq 2 \dim \nu > \dim \nu,\]
   a contradiction!
\end{proof}

\subsection{$K$-types}\label{sec:k-types-2} Now take $C=\bar{E}$.  Let $\Pc \in \Ic$, let $\scs(\Pc) = \Sc :\Ic_0
\rarrow \ZZ_{\geq 0}$ be as in section~\ref{sec:inertial-types}, and let $\Omega = \Omega_\Sc$ be the
associated Bernstein component of $\Rep_C(G)$.  Let $n = \sum_{\tau_0 \in \bar{\Ic}_0} \dim \tau_0
\Sc(\tau_0)$ and let $G = \Aut_F(V)$ for an $n$-dimensional $F$-vector space $V$. Let $(M^0,\pi)$ be
a supercuspidal pair in the inertial equivalence class associated to $\Omega$.  Write $M^0 =
\prod_{i=1}^t M^0_i$ with each $M^0_i$ the stabiliser of some $n_i$-dimensional subspace $V^0_i$ of
$V$, write $\pi = \bigotimes_{i=1}^t \pi_i$, and let $\Omega_i$ be the supercuspidal Bernstein
component of $\Rep_C(M^0_i)$ containing $\pi_i$.  For each $\Omega_i$ there is an associated
endo-class of ps-character, for which we have chosen a representative $\Theta^0_i =
(\Theta^0_i,0,\beta^0_i)$.  Construct a Levi subgroup $M = \prod_{i=1}^rM_i$ with $M^0 \subset M
\subset G$ by requiring that $M_j^0$ and $M_k^0$ are both contained in some $M_i$ if and only if
$\Theta_j^0 = \Theta_k^0$; in this case we write \[(\Theta_i,0,\beta_i) = (\Theta^0_j,0,\beta^0_j) =
(\Theta^0_k,0,\beta^0_k)\] for the common value.  Let $V = \bigoplus_{i=1}^rV_i =
\bigoplus_{i=1}^tV_i^0$ be the decompositions of $V$ corresponding to $M$ and $M^0$ respectively, so
that the second is strictly finer than the first.

Suppose first that $r = 1$ (the \textbf{homogeneous} case).  Then write $(\Theta,0,\beta)$ for the
common value of $(\Theta_i^0,0,\beta_i^0)$, and $E = F[\beta]$.  For $0 \leq i \leq t$, there is a
maximal simple type $(J_i^0,\lambda_i^0)$ for $\Omega_i$ such that $J_i^0 = J(\beta,\Af_i^0)$ for a
simple stratum $[\Af_i^0,m,0,\beta]$, and $\lambda_i^0$ contains $\theta_i^0 \defeq
\Theta(\Af_i^0,0,\beta)$.  We are in the situation of \cref{sec:kappas}, and adopt the notation
there (adorning it with a superscript `0' where appropriate).  In particular we have
compact open subgroups $J^1 \subset J$ of $G$ and a representation $\eta$ of $J^1$ containing the
simple character $\Theta(\Af,0,\beta)$, where $\Af$ is a hereditary $\Oc_F$-order in $A$ and $\Af
\cap B = \Bf$ is a maximal hereditary $\Oc_E$-order.  We choose \emph{compatible} $\beta$-extensions
$\kappa_i^0$ of $\eta_i^0$ coming from a $\beta$-extension $\kappa$ of $\eta$, and decompose each
$\lambda_i^0$ as $\kappa_i^0 \otimes \nu_i^0$, where $\nu_i^0$ is a cuspidal representation of
$J_i^0/J_i^{1,0} = U(\Bf_i^0)/U^1(\Bf_i^0)$.  Choosing an $\Oc_E$-basis of each $\Bf^0_i$, we
identify $U(\Bf_i^0)/U^1(\Bf_i^0)$ with $GL_{n^0_i/[E:F]}(k_E)$ for an integer $n_i^0$ and
$J/J^1=U(\Bf)/U^1(\Bf)$ with $GL_{n/[E:F]}(k_{E})$.  So we may view each $\nu^0_i$ as an element of
$\bar{\Ic}_0$, and define an element $\bar{\Pc} \in\bar{\Ic}$ by $\bar{\Pc}(\nu^0_i) =
\Pc(\tau_i)$, where $\tau_i \in \Ic_0$ corresponds to $\Omega_i$.  Then write $\nu =
\pi_{\bar{\Pc}}$, a representation of $GL_{n/[E:F]}(k_E)$, and regard it as a representation of $J/J^1$.

\begin{definition} \label{def:SZ-datum}
  In this homogeneous case, we define an SZ-datum $\Sf_\Pc$ by
\[\Sf_{\Pc} = \{(E,\beta,V,\Bf,\kappa\otimes \nu)\}.\]
In the general case, for $1 \leq i \leq r$ let 
\[\{(E_i,\beta_i,V_i,\Bf_i,\kappa_i\otimes \nu_i)\}\]
be the SZ-datum for $M_i$ given by the construction in the homogeneous case, and set 
\[\Sf_{\Pc} = \{(E_i,\beta_i,V_i,\Bf_i,\kappa_i \otimes \nu_i)\}_{i=1}^r.\] 
If $\tau = \tau_\Pc$, we write $\sigma(\tau) =  \sigma(\Sf_\Pc)$.
\end{definition}

\subsection{}\label{sec:parabolic} We show that the representations $\sigma(\tau)$ satisfy the conclusion of
Theorem~\ref{thm:K-types}.  Continue with the notation of section~\ref{sec:k-types-2}, and suppose
that $r = 1$.  Let $M^2 \supset M^0$ be a Levi subgroup of a parabolic subgroup $Q^2 \subset
G$, let $\Bf^2= \Bf \cap M^2$, and let $J^{1,2} \subset J^2$, $\eta^2$ and $\kappa^2$ be the
subgroups of $M^2$ and their representations obtained from $\Theta$.  We require that $\kappa^2$ is
compatible with $\kappa$.

Write $\bar{G} = J/J^1 = U(\Bf)/U^1(\Bf)$, $\bar{M}\,^2 = (M^2 \cap U(\Bf))J^1/J^1$, and $\bar{Q}\,^2
= (Q^2\cap U(\Bf))J^1/J^1$.  Then $\bar{Q}\,^2$ is a parabolic subgroup of $\bar{G}$ with Levi $\bar{M}\,^2$.
\begin{proposition} \label{prop:homog-ind}
  The following diagram commutes: 
  \begin{equation}\begin{CD}
  \label{eq:kappa-max}
\Rep_{C}(M^2) @>{\;\Ind_{Q^2}^G(-)\;}>> \Rep_C(G) \\
@V{\Hom_{J^{1,2}}(\kappa^2,-)}VV  @VV{\Hom_{J^1}(\kappa,-)}V \\
\Rep_{C}(\bar{M}\,^2) @>>{\;\Ind_{\bar{Q}\,^2}^{\bar{G}}(-)\;}> \Rep_C(\bar{G}).
\end{CD}\end{equation}
\end{proposition}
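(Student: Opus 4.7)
The plan is to construct a natural isomorphism
\[
\alpha_\pi : \Hom_{J^1}(\kappa, \Ind_{Q^2}^G \pi) \xrightarrow{\sim} \Ind_{\bar{Q}^2}^{\bar{G}}\bigl(\Hom_{J^{1,2}}(\kappa^2, \pi)\bigr)
\]
functorial in $\pi \in \Rep_C(M^2)$, via the Iwahori decomposition of $J$ relative to $Q^2$. The essential geometric input is that $J$ and $J^1$ admit Iwahori decompositions with respect to $Q^2 = M^2 U$ (with opposite unipotent radical $U^-$):
\[
J = (J \cap U^-) \cdot J^2 \cdot (J \cap U), \qquad J^1 = (J^1 \cap U^-) \cdot J^{1,2} \cdot (J^1 \cap U),
\]
and these factorisations descend modulo $J^1$ to the Levi decomposition $\bar{G} = \bar{U}^- \bar{M}^2 \bar{U}$ relative to $\bar{Q}^2$. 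This follows from the filtration-theoretic properties of the hereditary orders defining $J$ and $J^2$ (see \cite{BushnellKutzko1993-AdmissibleDualGLN}, \S 7.1). The essential representation-theoretic input is that $\eta$ is trivial on $J^1 \cap U^\pm$ and restricts to $\eta^2$ on $J^{1,2}$ (see \cite{BushnellKutzko1993-AdmissibleDualGLN}, \S 7.2), and that the compatibility hypothesis on $\kappa^2$ gives precisely $\kappa|_{J^2} \cong \kappa^2$.

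With these in hand, I would define $\alpha_\pi(\phi)$ by $\bar{g} \mapsto [v \mapsto \phi(v)(g)]$ for any lift $g \in J$ of $\bar{g} \in \bar{G}$. Well-definedness --- independence of the choice of lift, landing in $\Hom_{J^{1,2}}(\kappa^2, \pi)$, and satisfaction of the $\bar{Q}^2$-transformation rule defining the target --- is a direct consequence of the structural inputs above and the $Q^2$-transformation rule of elements of $\Ind_{Q^2}^G \pi$. To construct the inverse, given $f$ on the right-hand side, I would define $\phi(v) \in \Ind_{Q^2}^G \pi$ to be the function with $\phi(v)(q j) = \delta_{Q^2}(q)^{1/2}\, q \cdot f(\bar{j})(v)$ for $q \in Q^2$ and $j \in J$, and extend to all of $G$ using the Iwasawa decomposition $G = Q^2 \cdot K$ for a maximal compact $K \supset J$. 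Consistency on $Q^2 \cap J$ reduces, via the Iwahori decomposition, to the identity $J^2 (J \cap U) \subset Q^2 \cap J$ combined with the $\bar{Q}^2$-transformation property of $f$ and the triviality of $\eta$ on $J^1 \cap U$. Functoriality in $\pi$ is immediate from the construction.

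The main obstacle is careful bookkeeping: the modulus character $\delta_{Q^2}$ of normalised parabolic induction must be tracked throughout, and one must verify that the natural $\bar{G}$-action on $\Hom_{J^1}(\kappa, -)$ --- induced by $J$-conjugation of $\kappa$ --- agrees with the $\bar{G}$-action inherited via $\Ind_{\bar{Q}^2}^{\bar{G}}$. Conceptually, this proposition is a $\kappa$-refinement of the Hecke algebra compatibility~\eqref{eq:lambda-covers} arising from the $G$-cover formalism of \cite{MR1643417}: it could alternatively be derived by observing that $(J,\kappa)$ enjoys the essential Iwahori-factorisation properties of a $G$-cover of $(J^2,\kappa^2)$ with respect to $Q^2$, so that its compatibility with parabolic induction becomes an instance of that formalism.
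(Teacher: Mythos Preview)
There is a genuine gap: your structural inputs are appropriate for the wrong hereditary order. Here $J = J(\beta,\Af)$ is attached to the order $\Af$ with $\Bf = \Af\cap B$ a \emph{maximal} $\Oc_E$-order, so that $J/J^1 \cong U(\Bf)/U^1(\Bf)\cong GL_{n/[E:F]}(k_E) = \bar{G}$. For such $J$ your Iwahori factorisation fails outright: reducing $J = (J\cap U^-)\cdot J^2\cdot(J\cap U)$ modulo $J^1$ would give $\bar{G} = \bar{U}^-\bar{M}\,^2\bar{U}$, but the right-hand side is only the big Bruhat cell. The Iwahori-decomposition properties you cite from \cite{BushnellKutzko1993-AdmissibleDualGLN}~\S7 --- factorisation of the group, triviality of the Heisenberg representation on the unipotent parts, identification of the Levi restriction --- are established for $\tilde{J} = J(\beta,\tilde{\Af})$ attached to the \emph{non}-maximal order $\tilde{\Af}$ of section~\ref{sec:kappas}, where $\tilde{J}/\tilde{J}^1 \cong \bar{M}\,^2$ rather than $\bar{G}$. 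Likewise, ``$\kappa^2$ compatible with $\kappa$'' does not mean $\kappa|_{J^2}\cong\kappa^2$: as section~\ref{sec:kappas} spells out, compatibility relates $\kappa$ to $\kappa^2$ only through the intermediate $\tilde{\kappa}$, via the induction identity there together with the passage to $\tilde{J}\cap U$-invariants $\tilde{\kappa}_U = \kappa^2$.

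The proof the paper defers to (\cite{SchneiderZink1999-KTypesTemperedComponentsGeneralLinear}~Proposition~7, and \cite{secherrestevens15}~Proposition~5.6) proceeds through this intermediate order: one first runs an argument of the shape you sketch, but with $(\tilde{J},\tilde{J}^1,\tilde{\kappa})$ in place of $(J,J^1,\kappa)$, to identify $\Hom_{\tilde{J}^1}(\tilde{\kappa},\Ind_{Q^2}^G\pi)$ with $\Hom_{J^{1,2}}(\kappa^2,\pi)$ as $\bar{M}\,^2$-modules; then the compatibility between $\kappa$ and $\tilde{\kappa}$ is used to transport this to $\Hom_{J^1}(\kappa,-)$, and the functor $\Ind_{\bar{Q}\,^2}^{\bar{G}}$ emerges from this second step.
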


\begin{proof}
  This may be proved in the same way as
  \cite{SchneiderZink1999-KTypesTemperedComponentsGeneralLinear} Proposition~7; we omit the details.
  See also \cite{secherrestevens15} Proposition~5.6.
\end{proof}

\begin{corollary}\label{cor:ds-ind}
  Suppose that $M^2$ is as above and further suppose that $(M^2,\pi^2)$ is a discrete pair in the
  inertial equivalence class associated to some $\Pc'\in \Ic$; let $\bar{\Pc}\,' : \bar{\Ic}_0
  \rarrow \Part$ correspond to $\Pc'$.  Then
\[\Hom_{J^1}(\kappa,\Ind_{Q^2}^G(\pi^2)) = \pi_{\bar{\Pc}\,'}\]
as representations of $\bar{G}$.
\end{corollary}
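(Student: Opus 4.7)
The plan is to combine Proposition~\ref{prop:homog-ind} with the Bernstein--Zelevinsky realisation of $L(\Delta)$ as a unique irreducible quotient, and to exploit that the functor $\Hom_{J^1}(\kappa,-)$ is exact on smooth representations because $J^1$ is pro-$p$ and $\ch C\neq p$. Proposition~\ref{prop:homog-ind} reduces the corollary to showing that $\Hom_{J^{1,2}}(\kappa^2,\pi^2)\cong \St(\bar{\Pc}\,')$ as a representation of $\bar{M}^2$, since then applying $\Ind_{\bar{Q}^2}^{\bar{G}}$ recovers $\pi_{\bar{\Pc}\,'}$. Because $(M^2,\pi^2)$ is a discrete pair, we have $M^2=\prod_i M^2_i$ and $\pi^2 = \bigotimes_i L(\Delta_i)$ with $\Delta_i = \Delta(\pi_i,k_i)$ for a supercuspidal $\pi_i$ of $GL_{d_i}(F)$; the data $\kappa^2$ and $J^{1,2}$ factor correspondingly, reducing the problem to showing factor-by-factor that $\Hom_{J^{1,2}_i}(\kappa^2_i, L(\Delta_i))\cong \St(\nu_i,k_i)$, where $\nu_i$ is the cuspidal representation of $GL_{d_i/[E:F]}(k_E)$ attached to $\pi_i$ by Bushnell--Kutzko (and precisely the one appearing in $\bar{\Pc}\,'$ via Definition~\ref{def:SZ-datum}).

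For a single factor, I would use that $L(\Delta_i)$ is the unique irreducible quotient of the parabolic induction
\[\xi_i = \pi_i \times (\pi_i\otimes |\det|) \times \cdots \times (\pi_i\otimes |\det|^{k_i-1})\]
from a Levi $M^{0,i}_i\cong GL_{d_i}(F)^{k_i}$ inside $M^2_i$. Applying the analogue of Proposition~\ref{prop:homog-ind} inside $M^2_i$ (with $M^{0,i}_i$ in place of $M^2$; the proof is identical, given compatible $\beta$-extensions as in section~\ref{sec:kappas}) and using that each unramified twist $|\det|^j$ is trivial on the compact subgroup $J^{1,0}_i\subset GL_{d_i}(\Oc_F)$, every factor of the resulting tensor product equals $\Hom_{J^{1,0}_i}(\kappa^0_i,\pi_i)$. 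The Bushnell--Kutzko classification of supercuspidal Bernstein blocks via maximal simple types identifies this space with $\nu_i$, so
\[\Hom_{J^{1,2}_i}(\kappa^2_i,\xi_i) = \Ind_{\bar{Q}^{0,i}_i}^{\bar{M}^2_i}\bigl(\nu_i^{\boxtimes k_i}\bigr).\]

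To conclude, the exact functor $\Hom_{J^{1,2}_i}(\kappa^2_i,-)$, restricted to the Bernstein block containing $L(\Delta_i)$, corresponds (via Bushnell--Kutzko--Schneider--Zink theory) to the equivalence with a Hecke module category on $\bar{M}^2_i$, so unique irreducible quotients are preserved; hence $L(\Delta_i)$ maps to the unique irreducible quotient of $\Ind_{\bar{Q}^{0,i}_i}^{\bar{M}^2_i}(\nu_i^{\boxtimes k_i})$, which is by definition $\St(\nu_i,k_i)$. Tensoring over $i$ gives $\Hom_{J^{1,2}}(\kappa^2,\pi^2)\cong \bigotimes_i \St(\nu_i,k_i) = \St(\bar{\Pc}\,')$, once one identifies $\bar{M}^2$ with $\bar{M}_{\bar{\Pc}\,'}$ under the construction of $\bar{\Pc}\,'$ from $\Pc'$, and the claim follows by applying $\Ind_{\bar{Q}^2}^{\bar{G}}$. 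The main obstacle is the categorical step of preserving unique irreducible quotients through $\Hom_{J^{1,2}_i}(\kappa^2_i,-)$: exactness alone is formal, but matching irreducibles between $M^2_i$ and $\bar{M}^2_i$ requires the Bushnell--Kutzko theory of covers. The remaining work is bookkeeping: tracking how the cuspidal $\nu_i$ extracted from $\pi_i$ aligns with the indexing of $\bar{\Pc}\,'$ and how the parabolics $\bar{Q}^2$ and $\bar{Q}^{0,i}_i$ line up under the relevant identifications.
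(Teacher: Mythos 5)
Your opening move coincides with the paper's: apply Proposition~\ref{prop:homog-ind} to reduce the corollary to the identity $\Hom_{J^{1,2}}(\kappa^2,\pi^2)\cong\St(\bar{\Pc}\,')$ of representations of $\bar{M}\,^2$. The paper then simply quotes Schneider--Zink, Proposition~5.6, for that identity, whereas you try to prove it, and this is where your argument has a genuine gap. Your computation of $\Hom_{J^{1,2}_i}(\kappa^2_i,\xi_i)=\Ind_{\bar{Q}^{0,i}_i}^{\bar{M}^2_i}(\nu_i^{\boxtimes k_i})$ is fine (it is again Proposition~\ref{prop:homog-ind} plus the fact that a supercuspidal containing a maximal simple type $\kappa_i^0\otimes\nu_i$ yields $\Hom_{J^{1,0}_i}(\kappa^0_i,\pi_i)=\nu_i$, and unramified twists die on the compact subgroup). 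But the step ``unique irreducible quotients are preserved'' is not justified and, as stated, not even meaningful: the target category $\Rep_C(\bar{M}^2_i)$ is semisimple, so $\Ind_{\bar{Q}^{0,i}_i}^{\bar{M}^2_i}(\nu_i^{\boxtimes k_i})$ has no unique irreducible quotient --- every constituent is a quotient. The functor $\Hom_{J^{1,2}_i}(\kappa^2_i,-)$ is the composite of the Bushnell--Kutzko equivalence $\Hom_{J^{1,2}_i}(\eta^2_i,-)$ onto modules over the (affine) Hecke algebra $\Hc(M^2_i,J^{1,2}_i,\eta^2_i)$ with restriction to the finite subalgebra $\Hc(J^2_i,J^{1,2}_i,\eta^2_i)\cong C[\bar{M}^2_i]$; only the first step is an equivalence, and restriction to a subalgebra does not track distinguished quotients. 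Exactness only tells you that $\Hom_{J^{1,2}_i}(\kappa^2_i,L(\Delta_i))$ is a direct summand of $\Ind_{\bar{Q}^{0,i}_i}^{\bar{M}^2_i}(\nu_i^{\boxtimes k_i})$; identifying \emph{which} summand (namely the generalised Steinberg $\St(\nu_i,k_i)$, and nothing else) is precisely the content of Schneider--Zink's Proposition~5.6 and requires a real argument --- e.g.\ an explicit comparison of Hecke-algebra module structures, or an induction on the dominance order through the other constituents $L(M'',\pi'')$ with $\Pc''\succ\Pc'$.

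A small additional slip: you say $\St(\nu_i,k_i)$ is ``by definition'' the unique irreducible quotient of the finite-group parabolic induction; in this paper it is defined as the unique non-degenerate constituent with the given cuspidal support, which again underlines that the identification you need is a statement about which constituent survives, not a formal consequence of a quotient property. If you replace the final paragraph of your argument with a citation of Schneider--Zink Proposition~5.6 (as the paper does), the proof is complete and agrees with the paper's.
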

\begin{proof}
  By Proposition~\ref{prop:homog-ind}, 
\[\Hom_{J^1}(\kappa,\Ind_{Q^2}^G(\pi^2)) =
\Ind_{\bar{Q}\,^2}^{\bar{G}}(\Hom_{J^{1,2}}(\kappa^2,\pi^2)).\]
By \cite{SchneiderZink1999-KTypesTemperedComponentsGeneralLinear} Proposition~5.6,
\[\Hom_{J^{1,2}}(\kappa^2,\pi^2) = \St(\bar{\Pc}\,').\]
Therefore: 
\begin{align*}\Ind_{\bar{Q}\,^2}^{\bar{G}}(\Hom_{J^{1,2}}(\kappa^2,\pi^2))
  &=\Ind_{\bar{Q}\,^2}^{\bar{G}}(\St(\bar{\Pc}\,')) \\
  &=\pi_{\bar{\Pc}\,'}.\qedhere
\end{align*}
\end{proof}

Now suppose that $r > 1$, so that $M \subset G$ is a proper Levi subgroup.  Let $J_M = \prod_{i=1}^r
J_i$, $J_M^1 = \prod_{i=1}^r J_i^1$, and $\eta_M = \bigotimes_{i=1}^r \eta$.  Then as in
section~\ref{sec:sz-data} there is a $G$-cover $(J^1,\eta)$ of $(J^1_M,\eta_M)$.  We have a
canonical isomorphism $J_M/J^1_M = J/J^1$ induced by the inclusion $J_M \into J$.  For each
parabolic subgroup $Q$ of $G$ with Levi $M$, there is an isomorphism
\[j_Q : \Hc(M,J^1_M,\eta_M) \isomto \Hc(G,J^1,\eta)\]  
such that the diagram
\begin{equation}\begin{CD}
  \label{eq:eta-covers}
  \Rep_{C}(M) @>{\;\Ind_{Q}^G(-)\;}>> \Rep_C(G) \\
  @V{\Hom_{J^{1}_M}(\eta_M,-)}VV  @VV{\Hom_{J^1}(\eta,-)}V \\
  j_Q:\Hc(M,J_M,\eta_M)\operatorname{-Mod} @>{\sim}>> \Hc(G,J,\eta)\operatorname{-Mod}
\end{CD}\end{equation}
commutes, by the discussion of section~\ref{sec:covers} and the intertwining bound of
\cite{MR3273486} Proposition~2.31.  Then, writing $K_M = K \cap M$, $j_Q$ induces an isomorphism
\[\Hc(K_M,J^1_M,\eta_M) \isomto \Hc(K,J^1,\eta).\]
But, by Proposition~\ref{prop:eta-intertwining}, we have 
\[\Hc(K_M,J^1_M,\eta_M) = \Hc(J_M,J^1_M,\eta_M)\]
and choosing $\kappa_M$ identifies this with $C[J_M/J^1_M]$.  Similarly, choosing $\kappa$
identifies $\Hc(K,J^1,\eta)$ with $C[J/J^1]$.  As $j_Q$ is support-preserving, if we choose $\kappa$
such that $\kappa|_{J_M} = \kappa_M$ then the isomorphism $j_Q$ agrees with the identification
$C[J_M/J^1_M] = C[J/J^1]$. Therefore, when $\nu$ is a
representation of $J/J^1 = J_M/J^1_M$, the isomorphism $j_Q$ takes $\Ind_{J_M}^{K_M}(\kappa_M \otimes
\nu)$ to $\Ind_J^K(\kappa\otimes \nu)$.  So we have shown that, for every smooth representation
$\pi_M$ of $M$, we have:
\begin{equation}\label{eq:inhom-ind}
  \Hom_K(\Ind_J^G(\kappa \otimes \nu),\Ind_Q^G(\pi_M)) = \Hom_{K_M}(\Ind_{J_M}^{K_M}(\kappa_M \otimes \nu_M),\pi_M).
\end{equation}

\begin{theorem} \label{thm:types-multiplicities} Let $\Pc'\in \Ic$ with $\deg \Pc' = n$, let
  $(M',\pi')$ be any discrete pair in the inertial equivalence class associated to $\Pc'$, and let
  $Q' \subset G$ be any parabolic subgroup with Levi subgroup $M'$.  Then
  \[\dim \Hom_K(\sigma(\Sf_{\Pc}),\Ind_{Q'}^G(\pi')) = \prod_{\tau_0 \in
    \Ic_0}m(\Pc(\tau_0),\Pc'(\tau_0)).\]
\end{theorem}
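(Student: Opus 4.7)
The strategy is to reduce the multiplicity calculation on $G$ to a product of multiplicity calculations for finite general linear groups via the $G$-cover formalism, and then to invoke the Kostka-number formula of Corollary~\ref{cor:finite-gl-mults}. The argument will pass through three steps: (i) a convenient compatible choice of $(M',\pi')$ and $Q'$; (ii) an application of the cover diagram \eqref{eq:inhom-ind} to factor the Hom-space as a product over the ``endo-class blocks'' indexed by $i = 1,\ldots, r$; (iii) computation in each homogeneous block via Frobenius reciprocity, Corollary~\ref{cor:ds-ind}, and Corollary~\ref{cor:finite-gl-mults}.

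For step (i), every discrete pair in the inertial class associated to $\Pc'$ has $\pi'$ a tensor product of essentially discrete series $\pi'_k = L(\Delta(\rho_k, s_k))$; the cuspidal support of each $\pi'_k$ is concentrated on a single unramified-twist orbit of supercuspidals and hence on a single endo-class. Thus, after $G$-conjugation (which does not affect the Hom space), I may assume $M' = \prod_{i=1}^r M'_i$ with $M'_i \subset M_i$ and $\pi' = \bigotimes_{i=1}^r \pi'_i$, with $\pi'_i$ in the inertial class corresponding to the restriction $\Pc'_i$ of $\Pc'$ to the $i$-th endo-class. Choose a parabolic $Q \subset G$ with Levi $M$ and a parabolic $Q'_M \subset M$ with Levi $M'$, and let $Q' \subset Q$ be the parabolic with $Q' \cap M = Q'_M$; then $\Ind_{Q'}^G \pi' = \Ind_Q^G(\Ind_{Q'_M}^M \pi')$ by transitivity of parabolic induction. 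Independence of the final answer from the various choices will follow a posteriori from the formula itself.

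For step (ii), I apply equation~\eqref{eq:inhom-ind} with $\pi_M = \Ind_{Q'_M}^M \pi'$ (using the standard Frobenius identification $\Hom_K(\Ind_J^K \lambda, V|_K) = \Hom_G(\cInd_J^G \lambda, V)$ to pass between $\sigma(\Sf_\Pc) = \Ind_J^K(\kappa\otimes\nu)$ and the formulation of \eqref{eq:inhom-ind}) to obtain
\[\Hom_K(\sigma(\Sf_\Pc), \Ind_{Q'}^G \pi') = \Hom_{K_M}\bigl(\Ind_{J_M}^{K_M}(\kappa_M \otimes \nu),\, \pi_M\bigr).\]
Since $K_M = \prod K_i$, $J_M = \prod J_i$, $\kappa_M = \bigotimes \kappa_i$, and the decomposition $\nu = \bigotimes \nu_i$ is forced by the block structure $J/J^1 = J_M/J^1_M = \prod_{i=1}^r J_i/J^1_i$ and the fact that cuspidal supports in different $R(\sigma)$-blocks of $R^{GL}$ are orthogonal, the RHS factors as $\prod_{i=1}^r \Hom_{K_i}(\Ind_{J_i}^{K_i}(\kappa_i \otimes \nu_i),\, \Ind_{Q'_i}^{M_i} \pi'_i)$. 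Similarly $\pi_M = \bigotimes \Ind_{Q'_i}^{M_i} \pi'_i$.

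For step (iii), in each factor I compute by Frobenius and smoothness of the action on $\nu_i$ (which is trivial on $J_i^1$):
\[\Hom_{J_i}(\kappa_i \otimes \nu_i, \Ind_{Q'_i}^{M_i} \pi'_i) = \Hom_{\bar G_i}\bigl(\nu_i,\, \Hom_{J_i^1}(\kappa_i, \Ind_{Q'_i}^{M_i} \pi'_i)\bigr).\]
Corollary~\ref{cor:ds-ind} (applied to the homogeneous situation inside $M_i$, using that we chose the $\kappa_i$ compatibly in the sense of section~\ref{sec:kappas}) identifies the inner Hom as $\pi_{\bar\Pc'_i}$. Since $\nu_i = \sigma_{\bar\Pc_i}$ by the construction of Definition~\ref{def:SZ-datum}, Corollary~\ref{cor:finite-gl-mults} gives $\Hom_{\bar G_i}(\sigma_{\bar\Pc_i}, \pi_{\bar\Pc'_i}) = \prod_{\tau_0 \in \Ic_0^{(i)}} m(\Pc(\tau_0), \Pc'(\tau_0))$, where $\Ic_0^{(i)}$ is the set of inertial types corresponding to the $i$-th endo-class. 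Multiplying over $i$ yields the theorem. The main obstacle is the careful bookkeeping required to verify that the compatible choice of $\beta_i$-extensions made in section~\ref{sec:kappas} is precisely what makes Corollary~\ref{cor:ds-ind} applicable inside each $M_i$, and that the factorization $\nu = \bigotimes \nu_i$ respects the isomorphism of Hecke algebras in \eqref{eq:inhom-ind}; the independence from the choice of $(M', \pi')$ and $Q'$ is a comparatively minor issue resolved by the computation itself.
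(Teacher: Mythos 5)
Your proposal is correct and follows essentially the same route as the paper's proof: reduce via the cover isomorphism (equation~\eqref{eq:inhom-ind}, i.e.\ diagram~\eqref{eq:eta-covers}) to the homogeneous case block-by-block, then conclude with Corollary~\ref{cor:ds-ind} and Corollary~\ref{cor:finite-gl-mults}. The one case you should dispose of explicitly before step (i) is $\scs(\Pc) \neq \scs(\Pc')$, where the containment $M'_i \subset M_i$ cannot be arranged: there both sides vanish --- the left because $\sigma(\Sf_\Pc)$ contains a type for the Bernstein component determined by $\scs(\Pc)$ while $\Ind_{Q'}^G(\pi')$ lies in a different component, and the right because $m(P,P') = 0$ whenever $\deg P \neq \deg P'$.
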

\begin{proof}
  We can assume that $\scs(\Pc') = \Sf = \scs(\Pc)$; otherwise both sides are zero --- the left hand
  side because $\sigma(\Sf_\Pc)$ contains a type for the Bernstein component $\Omega$ corresponding
  to $\Sc$. Therefore we can assume that $M_0 \subset M' \subset M$. Using the commutative diagram
  \eqref{eq:eta-covers}, we may reduce to the case in which $\Pc$ and $\Pc'$ are homogeneous.  But
  now the result follows from Corollary~\ref{cor:ds-ind} and Corollary~\ref{cor:finite-gl-mults}.
\end{proof}
\begin{corollary}\label{cor:construction-proof} Let $\Pc' \in \Ic$ and let $(M',\pi')$ be a discrete
  pair in the inertial equivalence class associated to $\Pc'$.  Let $\pi =L(M',\pi')$ be the irreducible
  admissible representation defined in section~\ref{sec:bernst-zelev-class}, so that $r_l(\pi)|_{I_F}
  \cong \tau_{\Pc'}$. Then:
  \begin{enumerate}
  \item if $\pi|_K$ contains $\sigma(\Sf_\Pc)$, then $\Pc' \preceq \Pc$;
  \item if $\Pc' = \Pc$, then $\pi|_K$ contains $\sigma(\Sf_\Pc)$ with multiplicity one;
  \item if $\Pc' \preceq \Pc$ and $\pi$ is \emph{generic}, then $\pi|_K$ contains
    $\sigma(\Sf_{\Pc})$ with multiplicity one.
  \end{enumerate}
\end{corollary}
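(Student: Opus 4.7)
My plan is to deduce the corollary directly from Theorem~\ref{thm:types-multiplicities}, which computes
\[\dim\Hom_K(\sigma(\Sf_\Pc),\Ind_{Q'}^G(\pi')) = \prod_{\tau_0 \in \Ic_0}m(\Pc(\tau_0),\Pc'(\tau_0)),\]
together with the standard facts about Kostka numbers ($m(P,P') > 0$ iff $\deg P = \deg P'$ and $P \succeq P'$, and $m(P,P) = 1$) and the observation that restriction of smooth $\bar{E}$-representations to the compact open subgroup $K$ is an exact functor landing in the semisimple category.  In particular, the multiplicity of $\sigma(\Sf_\Pc)$ in $\pi|_K$ is bounded above by the total Kostka product, since $\pi = L(M',\pi')$ is one irreducible constituent of $\Ind_{Q'}^G(\pi')$.

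For part (1), if $\sigma(\Sf_\Pc)$ appears in $\pi|_K$ then it appears in $\Ind_{Q'}^G(\pi')|_K$, so the Kostka product is positive; this forces $\Pc(\tau_0) \succeq \Pc'(\tau_0)$ for every $\tau_0 \in \Ic_0$, which is the condition $\Pc' \preceq \Pc$.

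For part (2), when $\Pc' = \Pc$ the Kostka product equals one, so $\sigma(\Sf_\Pc)$ has total multiplicity exactly one among the irreducible constituents of $\Ind_{Q'}^G(\pi')$.  I need to show that this multiplicity is achieved by the unique quotient $\pi = L(M',\pi')$, rather than by some other irreducible constituent $L(M'',\pi'')$.  By the Bernstein--Zelevinsky theory recalled in section~\ref{sec:bernst-zelev-class}, every other irreducible constituent is obtained by iteratively replacing a linked pair of segments $\Delta_i,\Delta_j$ in $\pi'$ by the pair $(\Delta_i\cup\Delta_j,\Delta_i\cap\Delta_j)$; since $|\Delta_i\cup\Delta_j| > \max(|\Delta_i|,|\Delta_j|)$, each such move strictly increases the partial sums of the relevant partition $\Pc(\tau_0)$, so the inertial class $\Pc''$ of any other constituent strictly dominates $\Pc = \Pc'$.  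Applying part (1) to each such $L(M'',\pi'')$ rules out $\sigma(\Sf_\Pc)$ appearing in its $K$-restriction (as $\Pc'' \preceq \Pc$ would contradict $\Pc'' \succ \Pc$), and hence the single copy of $\sigma(\Sf_\Pc)$ in $\Ind_{Q'}^G(\pi')|_K$ must live in $\pi|_K$.

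For part (3), the genericity hypothesis implies that no two of the segments in the Bernstein--Zelevinsky data of $\pi$ are linked, so $\Ind_{Q'}^G(\pi') = \pi$ is already irreducible (\cite{MR584084} Theorem~9.7).  Then Theorem~\ref{thm:types-multiplicities} computes the multiplicity of $\sigma(\Sf_\Pc)$ in $\pi|_K$ as $\prod_{\tau_0}m(\Pc(\tau_0),\Pc'(\tau_0))$, which is a positive integer precisely because $\Pc' \preceq \Pc$; in particular $\pi|_K$ contains $\sigma(\Sf_\Pc)$.  The main obstacle is the Bernstein--Zelevinsky bookkeeping underpinning part (2): one must keep careful track of how the $\Pc$-invariant of a multiset of segments transforms under a union/intersection move on a linked pair, and verify that every irreducible constituent of $\Ind_{Q'}^G(\pi')$ other than $L(M',\pi')$ itself corresponds to a partition that strictly $\succ$-dominates $\Pc'$.
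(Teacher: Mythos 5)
Your argument is correct and follows essentially the same route as the paper: deduce parts (1) and (3) directly from Theorem~\ref{thm:types-multiplicities} and positivity of Kostka numbers, and for part (2) locate the single copy of $\sigma(\Sf_\Pc)$ in the unique constituent $L(M',\pi')$ by showing every other constituent has inertial invariant $\Pc''\succ\Pc$. The only difference is that the paper obtains that last fact by citing the Lemma in \S2 of \cite{SchneiderZink1999-KTypesTemperedComponentsGeneralLinear}, whereas you sketch its proof via Zelevinsky's union/intersection moves on linked segments; your sketch is sound (each elementary operation strictly raises the dominance order of the partition attached to the relevant $\tau_0$), and note also that the ``multiplicity one'' in part (3) of the statement should really just read ``contains'', consistent with what both your argument and the paper's actually prove.
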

\begin{proof} This is proved in the same way as
  \cite{SchneiderZink1999-KTypesTemperedComponentsGeneralLinear} Proposition~5.10. By
  Theorem~\ref{thm:types-multiplicities}, if $Q' \subset G$ is a parabolic subgroup with Levi $M'$,
  then $\sigma(\Sf_\Pc)$ is contained in $\Ind_{Q'}^G(\pi')$ if and only if $\Pc' \preceq \Pc$.
  Therefore if $L(M',\pi')$ contains $\sigma(\Sf_\Pc)$, then $\Pc' \preceq \Pc$, proving part 1.  If
  $L(M',\pi')$ is generic, then it is equal to $\Ind_{Q'}^G(\pi')$ for any $Q'$, proving part 3.
  Finally, suppose $\Pc' = \Pc$.  By Theorem~\ref{thm:types-multiplicities}, $\sigma(\Sf_\Pc)$ occurs in $\Ind_{Q'}^G(\pi')$
  with multiplicity one; in other words, exactly one constituent of $\Ind_{Q'}^G(\pi')$ contains
  $\sigma(\Sf_\Pc)$, and it does so with multiplicity one.  But every constituent of
  $\Ind_{Q'}^G(\pi')$ other than $L(M',\pi')$ is equal to $L(M'',\pi'')$ for some discrete pair
  $(M'',\pi'')$ in the inertial equivalence class associated to $\Pc''$ for some $\Pc'' \succ \Pc$
  (see \cite{SchneiderZink1999-KTypesTemperedComponentsGeneralLinear}~\S2 Lemma), and so by part 1
  does not contain $\sigma(\Pc)$.  Hence $\sigma(\Pc)$ is contained in $L(M',\pi')$ with
  multiplicity one, as required.
\end{proof}
\begin{corollary}\label{cor:multiplicity-calculation}Let $\Pc$ and $\Pc'$ be elements of $\Ic$.  Then
 \[m(\sigma(\tau_{\Pc}),\tau_{\Pc'}) =  \prod_{\tau_0 \in \Ic_0}m(\Pc(\tau_0),\Pc'(\tau_0)).\]
\end{corollary}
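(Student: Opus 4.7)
The plan is to reduce this to Theorem~\ref{thm:types-multiplicities}, which has already done all the substantive work. By Definition~\ref{def:multiplicities}, I need to compute $\dim \Hom_{K}(\sigma(\tau_\Pc), \pi|_K)$ where $K = GL_n(\Oc_F)$ and $\pi$ is any generic irreducible admissible representation of $GL_n(F)$ with $r_l(\pi)|_{I_F} \cong \tau_{\Pc'}$.

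First, I would invoke the Bernstein--Zelevinsky classification recalled in section~\ref{sec:bernst-zelev-class}: such a $\pi$ is of the form $L(M',\pi')$ for a discrete pair $(M',\pi')$ in the inertial equivalence class associated to $\Pc'$ (via the correspondence between $\Pc'$ and discrete pairs described at the end of section~\ref{sec:bernst-zelev-class}, using equation~\eqref{eq:loclangBZ}). Since $\pi$ is generic, the parabolic induction $\Ind_{Q'}^G(\pi')$ from any parabolic $Q'$ with Levi $M'$ is already irreducible and equal to $L(M',\pi') = \pi$.

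Then I would simply apply Theorem~\ref{thm:types-multiplicities} to obtain
\[
\dim \Hom_K(\sigma(\Sf_\Pc), \pi|_K) \;=\; \dim \Hom_K(\sigma(\Sf_\Pc), \Ind_{Q'}^G(\pi')) \;=\; \prod_{\tau_0 \in \Ic_0} m(\Pc(\tau_0), \Pc'(\tau_0)),
\]
and by definition $\sigma(\tau_\Pc) = \sigma(\Sf_\Pc)$, which yields the claimed formula.

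There is essentially no obstacle at this stage: the real work is contained in Theorem~\ref{thm:types-multiplicities}, whose proof combines the parabolic induction diagram~\eqref{eq:eta-covers} for covers, Corollary~\ref{cor:ds-ind} (the homogeneous calculation of $\Hom_{J^1}(\kappa,\Ind \pi^2)$ as an induced representation of a finite general linear group), and the Kostka-number identity of Corollary~\ref{cor:finite-gl-mults} obtained from the PSH-algebra structure on $R^{GL}$. The only thing to verify in the present corollary is that generic irreducible admissible representations of $GL_n(F)$ with given inertial type really do arise as full (as opposed to Langlands-quotient) parabolic inductions from discrete pairs of the appropriate inertial class, which is the content of the Bernstein--Zelevinsky characterisation of generic representations cited in section~\ref{sec:bernst-zelev-class}.
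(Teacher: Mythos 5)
Your proposal is correct and coincides with the paper's own argument: the corollary is deduced directly from Theorem~\ref{thm:types-multiplicities} together with the fact that a generic irreducible admissible representation of $GL_n(F)$ is the \emph{irreducible} parabolic induction of a discrete series representation of a Levi subgroup, in the inertial class attached to $\Pc'$ via equation~\eqref{eq:loclangBZ}. Nothing is missing.
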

\begin{proof}
  This follows from Theorem~\ref{thm:types-multiplicities} together with that fact that any \emph{generic} irreducible admissible
  representation $\pi$ of $GL_n(F)$ is the \emph{irreducible} induction of a discrete series representation
  of a Levi subgroup.
\end{proof}

\subsection{Reduction modulo $l$.}

Let $\Sf = \{(E_i,\beta_i, V_i,\Bf_i,\lambda_i)\}_{i=1}^r$ be an SZ-datum over $\bar{E}$.  Decompose
each $\lambda_i$ as $\kappa_i \otimes \nu_i$ for irreducible representations $\nu_i$ of $J_i/J^1_i$.
Suppose that 
\[\bar{\nu_i}^{ss} = \bigoplus_{j \in S_i} \mu_{ij}\nu_{ij}\]
where $S_i$ is some finite indexing set, $\nu_{ij}$ are distinct irreducible representations of
$J_i/J^1_i$ over $\bar{\FF}$ and $\mu_{ij} \in \NN$.  Note that each $\bar{\eta_i}$, and hence
$\bar{\kappa_i}$, is irreducible.  For $\bm{j} = (j_1,\ldots,j_r) \in S_1 \times \ldots \times S_r$,
define an SZ-datum $\Sf_{\bm{j}}$ over $\bar{\FF}$ by
\[\Sf_{\bm{j}} = \{(E_i,\beta_i,V_i,\Bf_i,\bar{\kappa_i} \otimes \nu_{ij_i}\}_{i=1}^r\]
and an integer $\mu_{\bm{j}} = \prod_{i=1}^r \mu_{ij_i}$.  Then we have:

\begin{theorem}\label{thm:type-reduction-gln} 
  The semisimplified mod $l$ reduction of $\sigma(\Sf)$ is 
\[\bigoplus_{\bm{j}\in S_1\times \ldots\times S_r} \mu_{\bm{j}}\sigma(\Sf_{\bm{j}}).\]
\end{theorem}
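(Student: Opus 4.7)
By construction (Definition~\ref{defn:sigma-SZ}), $\sigma(\Sf) = \Ind_J^K(\lambda)$ where $\lambda = \kappa \otimes \nu$, with $\kappa$ the $\beta$-extension of the irreducible representation $\eta$ of $J^1$ produced from the $\kappa_i$ via the $G$-cover construction of section~\ref{sec:sz-data}, and $\nu = \bigotimes_i \nu_i$ regarded as a representation of $J/J^1 = \prod_i J_i/J^1_i$. The plan is to show that reduction modulo $l$ commutes in an essentially tautologous way with the passage $\Sf \mapsto \sigma(\Sf)$, so that the claimed formula drops out from the decomposition of $\bar{\lambda}^{ss}$.

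First I would pick a $J$-stable $\Oc$-lattice in (a model of) $\lambda$ and observe that, since $J^1$ is pro-$p$ and $l \neq p$, the reduction $\bar{\eta}$ is an irreducible $\bar{\FF}$-representation of $J^1$ containing the (tautologically reduced) simple character $\theta$; in particular $\bar{\kappa}$ is irreducible on $J^1$, hence on $J$. Consequently, after semisimplifying,
\[
\bar{\lambda}^{ss} \;=\; \bar{\kappa} \otimes \bar{\nu}^{ss} \;=\; \bigoplus_{\bm{j}\in S_1\times\cdots\times S_r} \mu_{\bm{j}}\bigl(\bar{\kappa} \otimes \nu_{\bm{j}}\bigr),
\]
where $\nu_{\bm{j}} = \bigotimes_i \nu_{ij_i}$. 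Applying the exact functor $\Ind_J^K$ and using that it commutes with reduction modulo $l$, we obtain
\[
\bigl(\red\,\sigma(\Sf)\bigr)^{ss} \;=\; \bigoplus_{\bm{j}} \mu_{\bm{j}}\,\Ind_J^K\bigl(\bar{\kappa} \otimes \nu_{\bm{j}}\bigr).
\]

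Next I would identify each summand $\Ind_J^K(\bar{\kappa}\otimes\nu_{\bm{j}})$ with $\sigma(\Sf_{\bm{j}})$. This requires the compatibility statement that the $\bar{\FF}$-representation of $J$ coming from the $G$-cover/glueing procedure applied to the tuple of compatible $\beta$-extensions $(\bar{\kappa}_i)$ attached to $\Sf_{\bm{j}}$ is exactly $\bar{\kappa}$. The glued $\kappa$ is characterised, given the $\kappa_i$, by the intertwining property of Proposition~\ref{prop:eta-intertwining} (together with the intertwining bound \cite{MR3273486} Proposition~2.31), which is insensitive to the coefficient field; hence $\bar{\kappa}$ satisfies the analogous defining property and must agree with the $\bar{\FF}$-$G$-cover of $\bigotimes_i \bar{\kappa}_i$. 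Granted this, $\Ind_J^K(\bar{\kappa}\otimes\nu_{\bm{j}}) = \sigma(\Sf_{\bm{j}})$ by Definition~\ref{defn:sigma-SZ} and is irreducible by Theorem~\ref{thm:irreducibility}, whose proof only uses that $J^1$ is pro-$p$ together with the Mackey/intertwining input, both available over $\bar{\FF}$.

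The main obstacle is precisely this last compatibility: one must check that the $\beta$-extension glueing procedure of \cite{BushnellKutzko1999-SemisimpleTypesGLN}, originally set up over $\CC$, commutes with reduction modulo $l$. Once this is pinned down (via the intertwining characterisation above, which is the natural mod~$l$ analogue and aligns with the treatment of simple types in characteristic~$l$ in \cite{MR3273486}, \cite{secherrestevens15}), the rest is bookkeeping; the homogeneous case $r=1$ collapses to the observation that $\bar{\kappa}$ is irreducible and that tensoring with $\nu$ followed by $\Ind_J^K$ is exact.
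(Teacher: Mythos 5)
Your proposal is correct and is essentially the paper's argument written out in full: the paper's own proof is the one-line observation that the result follows from Theorem~\ref{thm:irreducibility}, i.e.\ exactly your chain (irreducibility of $\bar{\kappa}$ since $J^1$ is pro-$p$, semisimplification of $\bar{\nu}$, exactness of $\Ind_J^K$, and irreducibility of each $\Ind_J^K(\bar{\kappa}\otimes\nu_{\bm{j}})=\sigma(\Sf_{\bm{j}})$ over $\bar{\FF}$). The compatibility of the $G$-cover glueing with reduction mod $l$ that you flag is indeed left implicit in the paper (note the datum $\Sf_{\bm{j}}$ is \emph{defined} using the reduced extensions $\bar{\kappa_i}$), and your intertwining-based justification of it is a reasonable way to pin it down.
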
 

\begin{proof}
  This follows immediately from Theorem \ref{thm:irreducibility}. 
\end{proof}

\section{Towards a local proof}
\label{sec:towards-local-proof}

In this section, we give a proof of Conjecture~\ref{conj:BM} in the case that $q \equiv 1 \mod l$
and $l > n$ (we say that $l$ is \textbf{quasi-banal}), $\rhobar|_{\tilde{P}_F}$ is trivial, and
$R_E(GL_n(\Oc_F))$ and $R_{\FF}(GL_n(\Oc_F))$ are replaced by subgroups generated by certain
representations inflated from $GL_n(k_F)$.  The strategy of proof is to first show that it suffices
to prove Conjecture~\ref{conj:BM} for a single $\rhobar$ on each irreducible component of
$\Mc(n,q)_\FF$ such that $\rhobar$ is on no other irreducible components.  But for good choices of
$\rhobar$, we may explicitly determine the rings $R^\square(\rhobar,\tau)$ for all inertial types
$\tau$.  As we also have a very good understanding of the mod $l$ representation theory of
$GL_n(k_F)$ under our assumptions on $l$, Conjecture~\ref{conj:BM} reduces to a combinatorial
identity, which we verify.

\subsection{Reduction to finite type}
\label{sec:reduct-finite-type}

Let $\Xf$ be the affine scheme $\Mc(n,q)_\Oc$ from section~\ref{sec:deformation-rings}.  We suppose
that $\Oc$ contains all of the $(q^{n!}-1)$th roots of unity, so that every irreducible component of
$\Xf_\FF$ or $\Xf_E$ is geometrically irreducible.  Once we have fixed generators $\sigma$ and
$\phi$ for $G_F/P_F$ as usual, then there is a natural bijection between $\Xf(\bar{\FF})$ and the
set of continuous homomorphisms $\bar{\rho}:G_F \rarrow GL_n(\bar{\FF})$ with kernel containing
$P_F$.  If $x$ is a closed point of $\Xf$ corresponding to such a homomorphism $\bar{\rho}_x$, and
we suppose that the residue field of $\Xf$ at $x$ is $\FF$, then there is a natural isomorphism
\[ \Oc_{\Xf, x}^\wedge = R^\square(\rhobar_x).\]
From the map \[i:\Spec R^\square(\rhobar_x) \rarrow \Xf\] we get a pullback 
\[i^* : \Zc(\Xf) \rarrow \Zc(R^\square(\rhobar_x))\]
as in section~\ref{sec:cycles}.  Similarly, writing $\bar{\Xf} = \Xf
\times _{\Spec \Oc} \Spec \FF$, we have a map 
\[i^* : \Zc(\bar{\Xf}) \rarrow \Zc(R^\square(\rhobar_x)\otimes_{\Oc}\FF),\]
and the diagram 
\begin{equation}
  \label{eq:ft-local-compat}
\begin{CD}
\Zc(\Xf) @>{i^*}>> \Zc(R^\square(\bar{\rho}_x)) \\
@V{\red}VV  @V{\red}VV \\
\Zc(\bar{\Xf}) @>{i^*}>> \Zc(\Rbarbox(\rhobar_x))
\end{CD}
\end{equation}
commutes, by Lemma~\ref{lem:flat-pullback}.

There is a unique map
\[\cyc_{\ft}:R_E(GL_n(\Oc_F)) \rarrow \Zc(\Xf)\]
such that for each $x \in \bar{\Xf}(\FF)$ the map $\cyc : R_E(GL_n(\Oc_F)) \rarrow \Zc(R^\square(\bar{\rho}_x))$, which to avoid ambiguity we
will call $\cyc_x$, is equal to the composition $i^* \circ \cyc_{\ft}$.

Let $\rom{BM}_\ft$ be the statement that there exists a map $\bar{\cyc}_\ft$ (necessarily unique)
making the diagram
\begin{equation}
  \label{eq:BM-ft}
\begin{CD}
R_E(GL_n(\Oc_F)) @>{\cyc_\ft}>> \Zc(\Xf) \\
@V{\red}VV  @V{\red}VV \\
R_{\FF}(GL_n(\Oc_F)) @>{\bar{\cyc}_\ft}>> \Zc(\bar{\Xf}).
\end{CD}
\end{equation}
commute, and for $x \in \Xf(\FF)$ let $\rom{BM}_x$ be the statement that Conjecture~\ref{conj:BM} holds for $\rhobar =
\rhobar_x$.  Then we have:
\begin{proposition}
  \label{prop:ft-reduction}
  \begin{enumerate}
  \item If $\rom{BM}_\ft$ is true, so is $\rom{BM}_x$ for all $x \in \Xf(\FF)$.
  \item Suppose that $S\subset \Xf(\FF)$ has the property that, for every irreducible component $\Zf$ of
    $\bar{\Xf}$, there is an $x \in S$ such that $x$ lies on $\Zf$ and on no other irreducible
    component of $\bar{\Xf}$.  If $\rom{BM}_x$ is true for all $x \in S$, then $\rom{BM}_{\ft}$ is true.
  \end{enumerate}
    \end{proposition}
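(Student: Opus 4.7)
The proposition has two parts that I would treat separately. Part~(1) is a formal consequence of the compatibility diagram (\ref{eq:ft-local-compat}): assuming $\bar{\cyc}_\ft$ exists, I define $\bar{\cyc}_x = i_x^* \circ \bar{\cyc}_\ft$, where $i_x : \Spec R^\square(\rhobar_x) \rarrow \Xf$ is the completion map at $x$. By the very definition of $\cyc_\ft$ we have $\cyc_x = i_x^* \circ \cyc_\ft$, so the commutativity of the $\rom{BM}_x$ diagram follows at once from (\ref{eq:ft-local-compat}).

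For part~(2), the map $\red : R_E(GL_n(\Oc_F)) \rarrow R_\FF(GL_n(\Oc_F))$ is surjective, so the existence (and uniqueness) of $\bar{\cyc}_\ft$ is equivalent to the inclusion $\ker(\red) \subset \ker(\red \circ \cyc_\ft)$. I would take $\theta \in \ker(\red)$, set $\beta = \red(\cyc_\ft(\theta)) \in \Zc(\bar{\Xf})$, and aim to prove $\beta = 0$. Since $\bar{\Xf}$ is of finite type and equidimensional (of dimension $n^2$), we may write $\beta = \sum_\Zf n_\Zf [\Zf]$ as a finite sum over the irreducible components of $\bar{\Xf}$. For each $\Zf$, the hypothesis on $S$ furnishes some $x \in S$ lying on $\Zf$ and on no other irreducible component.

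Two observations then combine. First, by (\ref{eq:ft-local-compat}), the factorization $\cyc_x = i_x^* \circ \cyc_\ft$, and the assumption $\rom{BM}_x$,
\[ i_x^*(\beta) = i_x^*(\red(\cyc_\ft(\theta))) = \red(\cyc_x(\theta)) = 0. \]
Second, $i_x^*([\Zf']) = 0$ for every $\Zf' \neq \Zf$ since $x \notin \Zf'$. Combining these, $n_\Zf \cdot i_x^*([\Zf]) = 0$ in the free abelian group $\Zc(R^\square(\rhobar_x) \otimes \FF)$.

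The crux is therefore to verify that $i_x^*([\Zf])$ is nonzero whenever $x \in \Zf$. Unwinding the definition of flat pullback of cycles, $i_x^*([\Zf]) = Z^{n^2}(\Oc_{\Zf, x}^\wedge)$, which is a nonnegative integer combination of those irreducible components of $\Spec R^\square(\rhobar_x) \otimes \FF$ corresponding to branches of $\Zf$ at $x$, each appearing with coefficient equal to the length of $\Oc_{\Zf, x}^\wedge$ localized at the associated minimal prime, a strictly positive integer. Since $x \in \Zf$ there is at least one such branch, so $i_x^*([\Zf]) \neq 0$. This forces $n_\Zf = 0$, and letting $\Zf$ vary yields $\beta = 0$; this non-vanishing step is the only non-formal ingredient, everything else being a diagram chase.
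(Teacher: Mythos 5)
Your proof is correct and follows essentially the same route as the paper: part (1) is the same formal composition with $i_x^*$, and for part (2) the paper simply observes that the hypothesis on $S$ makes $i^* : \Zc(\bar{\Xf}) \rarrow \prod_{x\in S}\Zc(\Rbarbox(\rhobar_x))$ injective, which is exactly the statement your component-by-component argument (vanishing of $i_x^*[\Zf']$ for $\Zf'\neq\Zf$ and non-vanishing of $i_x^*[\Zf]$) verifies. You have merely supplied the details of the injectivity claim that the paper leaves as an observation.
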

\begin{proof}
  For the first part, given the existence of a map $\bar{\cyc}_\ft$ we define
  $\bar{\cyc}_x$ to be the composition of $\bar{\cyc}_{\ft}$ with $i^*$.  Then $\rom{BM}_x$ follows
  from the commutativity of diagrams~\eqref{eq:ft-local-compat} and~\eqref{eq:BM-ft}.  
  
  For the second part we simply need to observe that, under the given assumptions on $S$, the map 
  \[i^* : \Zc(\bar{\Xf}) \rarrow \prod_{x \in S} \Zc(\Rbarbox(\rhobar_x))\] is injective.  
\end{proof}

Let $(\bar{M},\rhobar,
(\bar{e}_i)_i)$ be a representation $(\bar{M},\rhobar)$ of $G_F$ with a basis $(\bar{e}_i)_i$. Let
$\bar{M} = \bar{M}_1 \oplus\ldots\oplus \bar{M}_r$ for the decomposition of $\bar{M}$ into generalised
eigenspaces for $\rhobar(\phi)$, with $\bar{M}_i$ having generalised eigenvalue $\alpha_i \in \FF$
and dimension $n_i$.  
\begin{definition}\label{def:standard}  Say that $(\bar{M}, \rhobar, (\bar{e}_i)_i)$ is
  \textbf{standard} if each $\bar{e}_i$ lies in some $\bar{M}_j$.
  
  Let $A$ be an object of $\Cc_\Oc$ and let $(M,\rho, (e_i)_i)$ be a lift of $(\bar{M},\rhobar,
  (\bar{e}_i)_i)$. Say that $(M,\rho,(e_i)_i)$ is \textbf{standard} if we may write $M = M_1 \oplus
  \ldots \oplus M_r$ with each $M_i$ being a $\rho(\phi)$-stable lift of $\bar{M}_i$ and, whenever
  $\bar{e}_i \in \bar{M}_j$ for some $i,j$, we have $e_i \in M_j$.  

  The property of being standard only depends on the equivalence class of $(M,\rho,(e_i)_i)$, and so
  we can talk of homomorphisms $\rho : G_F \rarrow GL_n(A)$ being standard.
  
  Let $R^{\std}(\rhobar)$ be the maximal quotient of $R^\square(\rhobar)$ on which $\rho^\square$ is standard.
\end{definition}
Thus we are requiring that $\rhobar(\phi)$ is block diagonal with each block having a single generalised
eigenvalue and different blocks having different eigenvalues, and that $\rho(\phi)$ is block
diagonal with blocks lifting those of $\rhobar(\phi)$.  It is clear that, given $(\bar{M},\bar{\rho})$, we
may choose a basis $\bar{e}_i$ such that $(\bar{M},\bar{\rho}, (\bar{e}_i))$ is standard. 
\begin{lemma}\label{lem:diagonalization2}
  Let $(\bar{M},\rhobar, (\bar{e}_i)_i)$ be standard.  Then there is an injective
  morphism \[R^{\std}(\rhobar) \rarrow R^\square(\rhobar)\] in $\Cc^\wedge_\Oc$ making
  $R^\square(\rhobar)$ formally smooth over $R^{\std}(\rhobar)$.
  \end{lemma}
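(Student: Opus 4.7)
The plan is to construct $s: R^{\std}(\rhobar) \to R^\square(\rhobar)$ as a section of the canonical surjection $R^\square(\rhobar) \twoheadrightarrow R^{\std}(\rhobar)$, by exhibiting a natural ``standardization'' retraction at the level of deformation functors. Given any lift $(M,\rho,(e_i))$ of $(\bar M, \rhobar, (\bar e_i))$ over $A \in \Cc_\Oc$, the characteristic polynomial of $\rho(\phi) \in GL_n(A)$ factors uniquely as $\prod_j f_j(X)$ with $f_j$ reducing modulo $\mf_A$ to $(X-\alpha_j)^{n_j}$, by Hensel's lemma applied to the pairwise coprime factors of the reduction. The associated idempotents yield a $\rho(\phi)$-stable decomposition $M = \bigoplus_j M_j$ lifting $\bar M = \bigoplus_j \bar M_j$, functorial in $A$. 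Writing $\pi_j: M \to M_j$ for the projections, set $f_i = \pi_{j_0(i)}(e_i)$, where $j_0(i)$ is the unique index with $\bar e_i \in \bar M_{j_0(i)}$. Then $\bar f_i = \bar e_i$, so $(f_i)$ is an $A$-basis of $M$ by Nakayama, and $(M,\rho,(f_i))$ is standard by construction.

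This defines a natural transformation $\Phi: D^\square(\rhobar) \to D^{\std}(\rhobar)$, where $D^{\std}$ is the subfunctor of standard framed lifts, pro-represented by $R^{\std}(\rhobar)$ by definition. Its restriction to $D^{\std}$ is the identity, since for a standard lift one has $e_i \in M_{j_0(i)}$ and so $\pi_{j_0(i)}(e_i) = e_i$. Applying Yoneda, $\Phi$ corresponds to a morphism $s : R^{\std}(\rhobar) \to R^\square(\rhobar)$; the identity $\Phi \circ \iota = \mathrm{id}_{D^{\std}}$ (where $\iota$ is the inclusion) translates to $\pi \circ s = \mathrm{id}_{R^{\std}(\rhobar)}$, so $s$ splits $\pi$ and is in particular injective.

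For formal smoothness, I analyse the fiber of $\Phi$ above a fixed standard lift $(M,\rho,(f_i)) \in D^{\std}(A)$. The preimage in $D^\square(A)$ consists of bases $(e_i)$ with $\bar e_i = \bar f_i$ and $\pi_{j_0(i)}(e_i) = f_i$, equivalently $e_i - f_i \in \bigoplus_{j \neq j_0(i)} \mf_A M_j$. Each $e_i$ therefore varies freely over an affine space of rank $n - n_{j_0(i)}$ over $A$, and since exactly $n_j$ of the basis indices $i$ satisfy $j_0(i) = j$, the total number of free parameters is
\[\sum_i (n - n_{j_0(i)}) \;=\; \sum_j n_j(n - n_j) \;=\; n^2 - \sum_j n_j^2.\]
Thus $D^\square \cong D^{\std} \times \hat{\mathbb{A}}^{\,n^2 - \sum_j n_j^2}$ as functors on $\Cc_\Oc$, and by representability this yields an isomorphism $R^\square(\rhobar) \cong R^{\std}(\rhobar)[[X_1,\ldots,X_{n^2 - \sum_j n_j^2}]]$, which exhibits the required formal smoothness. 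The one point to verify carefully is that the Hensel eigenspace decomposition $M = \bigoplus_j M_j$ is genuinely functorial in $A$, but this is immediate from the uniqueness of the lifted idempotents.
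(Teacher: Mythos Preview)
Your proof is correct and follows essentially the same route as the paper's: Hensel-lift the factorization of the characteristic polynomial of $\rho(\phi)$ to obtain a functorial $\rho(\phi)$-stable decomposition $M=\bigoplus_j M_j$, then project each $e_i$ onto the correct summand to produce a standard basis, yielding a section of $R^\square(\rhobar)\twoheadrightarrow R^{\std}(\rhobar)$. Your treatment is in fact more detailed than the paper's, which simply asserts that the resulting map is ``easily seen to be injective and formally smooth''; you explicitly analyse the fibre and compute the relative dimension $n^2-\sum_j n_j^2$.
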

\begin{proof}
  Adopt the notation of Definition~\ref{def:standard}.  Let $\bar{P}_i(X) = (X - \alpha_i)^{n_i}$ for $i=
  1,\ldots, r$, so that the characteristic polynomial of $\rhobar(\phi)$ is \[\bar{P}(X) =
  \prod_{i=1}^r \bar{P}_i(X).\]
 
  If $(M, \rho, (e_{i})_i)$ is a lift of $(\bar{M},\rhobar,(\bar{e}_{i})_i)$ to $A \in \Cc_\Oc$, we
  will functorially produce a new basis $(f_i)_i$ such that $(M,\rho,(f_i)_i)$ is standard.
  Let $P(X)$ be the characteristic polynomial of $\rho(\phi)$.  By Hensel's lemma, there is a
  factorisation
  \[P(X) = \prod_{i=1}^r P_i(X)\] with $P_i(X) \in A[X]$ such that the image of $P_i(X)$ in $\FF[X]$
  is $\bar{P}_i(X)$ for each $i$.  Let \[Q_i(X) = \frac{P(X)}{P_i(X)} = \prod_{j \neq i} P_j(X)\]for
  $1 \leq i \leq r$.  Writing $M_i = Q_i(\rho(\phi))M$, we have \[M = \bigoplus_{i=1}^r M_i.\] 
  Then the isomorphism $M \otimes \FF \isomto \bar{M}$ takes $M_i$ to $\ker(\bar{P}_i)= \bar{M}_i$
  and each $M_i$ is a $\rho(\phi)$-stable submodule of $M$.

  Now, each $e_i$ may be written uniquely as $e_i^{(1)} + \ldots + e_i^{(r)}$ with $e_i^{(j)}
  \in M_j$ for each $j$; we take $f_i = e_i^{(i)}$.  Then $(M,\rho, (f_i)_i)$ is a standard lift of
  $(\bar{M},\rhobar,(\bar{e}_{i})_i)$.
  
  We have therefore defined a map $R^{\std}(\rhobar) \rarrow R^\square(\rhobar)$ which is easily seen to
  be injective and formally smooth.
\end{proof}

\subsection{Representation theory}
\label{sec:rep-theory}

From now until the end of section~\ref{sec:towards-local-proof}, we suppose that $l$ is quasi-banal
--- that is, that $l > n$ and $q \equiv 1$ mod $l$.  Let $a = v_l(q-1)$ and let $\mu_{l^a}$ be the
group of $l^a$th roots of unity in $\Oc$.  Let $T\subset B \subset GL_n(k_F)$ be the standard
maximal torus and Borel subgroup, let $U$ be the unipotent radical of $B$, and let $B_1$ be the
maximal subgroup of $B$ of order coprime to $l$, so that $B/B_1 \cong (\ZZ/l^a\ZZ)^n$.  Let
$R^1_E(GL_n(k_F))\subset R_E(GL_n(k_F))$ and $R^1_{\FF}(GL_n(k_F)) \subset R_{\FF}(GL_n(k_F))$ be the
subgroups generated by those irreducible representations having a $B_1$-fixed vector.

Recall the notation $\bar{\Ic}$, $\bar{\Ic}_0$, $\pi_{\bar{\Pc}}$, $\sigma_{\bar{\Pc}}$ from
section~\ref{sec:finite-gener-line}.  If $\chi$ is a character of $k_F^\times$ with values in
$\mu_{l^a}$, then $\chi$ is an element of $\bar{\Ic}_0$ of degree one.  Let $\bar{\Ic}_1$ be the set
of functions $\bar{\Ic}_0 \rarrow \Part$ supported on the set of $\chi$ of this form.  If $P$ is a
partition of $n$ then define $\sigma^1_P$ to be the representation $\sigma_{\bar{\Pc}}$ of
$GL_n(k_F)$ where $\bar{\Pc} : \bar{\Ic}_0 \rarrow \Part$ takes the trivial representation to $P$ and
everything else to zero.  In other words, $\sigma^1_P$ is the unipotent representation
associated to the partition $P$.  If $\mathbbm{1}$ is the trivial representation of $GL_1(k_F)$ then, under
the isomorphism of PSH-algebras $R(\mathbbm{1}) \isomto R^S$ of Proposition~\ref{prop:psh-Rgl},
$\sigma^1_P$ corresponds to $\sigma^{\circ}_P$.

\begin{lemma}\label{lem:quasiunipotent-reduction}
  \begin{enumerate}
  \item Every irreducible representation of $GL_n(k_F)$ having a $B_1$-fixed vector is of the form
    $\sigma_{\bar{\Pc}}$ for some $\bar{\Pc} \in \bar{\Ic}_1$.
  \item If $P$ is a partition of $n$, then $\red(\sigma^1_{P})$ is irreducible.
  \item If $\bar{\Pc} \in \bar{\Ic}_1$ sends each $\chi$ to a partition $P_\chi$ of degree $n_\chi$, then
    let $P$ be the partition of $n$ whose parts are the $n_\chi$, let $\bar{M}$ be the corresponding
    standard Levi subgroup of $\bar{G} =GL_n(k_F)$, and let $\bar{Q}$ be a parabolic subgroup with Levi
    $\bar{M}$.  Then
\[\red(\sigma_{\bar{\Pc}}) = \red(\Ind_{\bar{Q}}^{\bar{G}}(\bigotimes_\chi \sigma^1_{P_\chi})).\]
  \end{enumerate}
\end{lemma}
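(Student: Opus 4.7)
To prove part (1), I would apply Frobenius reciprocity: an irreducible representation $\pi$ of $GL_n(k_F)$ has a $B_1$-fixed vector if and only if it is a constituent of $\Ind_{B_1}^{GL_n(k_F)}\mathbbm{1}$. Decomposing $\Ind_{B_1}^B\mathbbm{1}$ according to the characters of $B/B_1 \cong (\ZZ/l^a\ZZ)^n$, which lift to $n$-tuples $(\chi_1,\ldots,\chi_n)$ of characters $k_F^\times \rarrow \mu_{l^a}$, we see that $\pi$ must be a constituent of some $\Ind_B^{GL_n(k_F)}(\chi_1\otimes\cdots\otimes\chi_n)$. Since the cuspidal support of any constituent of this induction consists of the $\chi_i$, viewed as elements of $\bar{\Ic}_0$, we conclude $\pi \cong \sigma_{\bar{\Pc}}$ for some $\bar{\Pc}\in\bar{\Ic}_1$; the converse is immediate because any such $\sigma_{\bar{\Pc}}$ occurs in a principal series of this form.

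For part (2), the strategy is to transport irreducibility from the symmetric group side. Under the PSH-algebra isomorphism $R(\mathbbm{1})\isomto R^S$ of Proposition~\ref{prop:psh-Rgl}, the representation $\sigma^1_P$ corresponds to $\sigma^\circ_P$. Since $l > n$ implies $l\nmid n!$, the algebra $\FF[S_n]$ is semisimple, and every irreducible $E$-representation of $S_n$ reduces modulo $l$ to an irreducible $\FF$-representation. To transfer this to $GL_n(k_F)$, I would use the equivalence given by taking Iwahori-fixed vectors: on the unipotent block this identifies $\Rep_\Oc(GL_n(k_F))$ with modules over the Iwahori--Hecke algebra $\Hc_n(q)_\Oc$, and the hypothesis $q\equiv 1\pmod{l}$ gives $\Hc_n(q)\otimes\FF\cong\FF[S_n]$, which is semisimple. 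Consequently an $\Oc$-lattice in $\sigma^1_P$ coming from a Specht module for $\sigma^\circ_P$ reduces irreducibly. Equivalently, this is a consequence of Dipper--James theory, stating that unipotent decomposition numbers are trivial in the quasi-banal regime.

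For part (3), I would exploit that every character $\chi$ of $k_F^\times$ of $l$-power order reduces modulo $l$ to the trivial character. Using the factorisation of the PSH algebra $R^{GL} = \bigotimes_\chi R(\chi)$, one has $\sigma_{\bar{\Pc}} \cong \Ind_{\bar{Q}}^{\bar{G}}(\bigotimes_\chi \sigma_{\bar{\Pc}_\chi})$, where $\bar{\Pc}_\chi\in \bar\Ic$ sends $\chi$ to $P_\chi$ and all other elements of $\bar{\Ic}_0$ to zero; this induction is irreducible because the factors lie in distinct Bernstein components. Since twisting by $\chi\circ\det$ defines a PSH-algebra map $R(\mathbbm{1})\isomto R(\chi)$ that carries the primitive generator to the primitive generator, it must coincide with the canonical isomorphism, so $\sigma_{\bar{\Pc}_\chi} \cong \sigma^1_{P_\chi}\otimes(\chi\circ\det)$. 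Therefore
\[
\sigma_{\bar{\Pc}} \cong \Ind_{\bar{Q}}^{\bar{G}}\!\left(\left(\bigotimes_\chi \sigma^1_{P_\chi}\right)\otimes \eta\right),
\]
with $\eta = \bigotimes_\chi (\chi\circ\det_{n_\chi})$ an $\Oc^\times$-valued character of $\bar{M}$ of $l$-power order. Since $l$-power roots of unity in $\Oc^\times$ reduce to $1$ in $\FF^\times$, any $\Oc$-lattice $L$ in $\bigotimes_\chi \sigma^1_{P_\chi}$ satisfies $\overline{L\otimes \eta} = \bar{L}$, and applying the exact functor $\Ind_{\bar{Q}}^{\bar{G}}$ yields the claim.

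The main obstacle is part (2): parts (1) and (3) are essentially formal consequences of Frobenius reciprocity and the PSH-algebra formalism, but part (2) depends on the nontrivial input that the unipotent decomposition matrix of $GL_n(k_F)$ is trivial in the quasi-banal case, which has to be imported either via Iwahori--Hecke algebra semisimplicity or via Dipper--James theory.
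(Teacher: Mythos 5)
Your proposal is correct and follows essentially the same route as the paper: part~1 is the paper's argument (a $B_1$-fixed vector forces the cuspidal support onto characters valued in $\mu_{l^a}$), part~3 is the paper's observation that the twist by the $l$-power-order characters $\chi\circ\det$ dies upon reduction, and for part~2 the paper simply cites section~3 of \cite{MR1031453} for exactly the input you name (triviality of the unipotent decomposition numbers in the quasi-banal case). The one caveat is your intermediate assertion that Iwahori-fixed vectors identify the unipotent block of $\Rep_\Oc(GL_n(k_F))$ with $\Hc_n(q)_\Oc$-modules --- integrally and mod $l$ that block is governed by the $q$-Schur algebra rather than the Hecke algebra, so the functor is not an equivalence as stated; but since you fall back on the Dipper--James result, which is precisely what the paper invokes, the conclusion stands.
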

\begin{proof}
  If $\sigma$ is an irreducible representation of $GL_n(k_F)$ having a $B_1$-fixed vector, then it
  has non-trivial $U$-invariants, on some subrepresentation of which $T = B/U$ acts as $\chi =
  \bigotimes_{i=1}^n \chi_i$ with $\chi_i$ having values in $\mu_{l^a}$.  So $\sigma$ is a
  subquotient of $\Ind_B^G \chi$ and is therefore of the required form, proving part 1.
  
  Part 2 follows from the discussion in section 3 of \cite{MR1031453}.

  Part 3 is immediate from the definition of $\sigma_{\bar{\Pc}}$ and the observation that if $\chi$
  takes values in $\mu_{l^a}$ then its mod $\lambda$ reduction is trivial.
\end{proof}

The representation $\Ind_{\bar{Q}}^{\bar{G}}(\bigotimes \sigma^1_{P_\chi})$ appearing in part~3 of
the lemma decomposes as a direct sum of irreducible representations of the form $\sigma_{P'}$ for
partitions $P'$ of $n$.  More specifically, from the isomorphism of PSH-algebras $R(\mathbbm{1})
\isomto R^S$ of Proposition~\ref{prop:psh-Rgl} we obtain:
\begin{equation}\label{eq:linear-to-symmetric}\dim \Hom_{\bar{G}}(\sigma_{P'},\Ind_{\bar{Q}}^{\bar{G}}(\bigotimes_\chi
    \sigma^1_{P_\chi})) = \dim \Hom_{S_n}(\sigma^\circ_{P'}, \Ind_{S_P}^{S_n}(\bigotimes_\chi \sigma_{P_{\chi}}^\circ)).\end{equation}
Here $S_P$, $\sigma_{P'}^\circ$ and $\sigma_{P_\chi}^\circ$ are as in
section~\ref{sec:symmetric-groups}.

We will need to compute the Mackey decomposition \[\Res^{S_n}_{S_P} \Ind_{S_Q}^{S_n} \sgn\] for
pairs of partitions $P$ and $Q$ of degree $n$, and for this we introduce some notation:

\begin{definition}
  Let $P,Q \in \Part$ of degree $n$.  A \textbf{$(P,Q)$-bipartition} is a matrix $A = (a(i,j))_{i,j}$ of
  non-negative integers (with $i, j \in \NN$) such that:
  \begin{itemize}
    \item all but finitely many $a(i,j)$ are zero;
  \item for each $i$, the sum $\sum_{j} a(i,j)$ of the entries of the $i$th row is $P(i)$;
  \item for each $j$, the sum $\sum_{i} a(i,j)$ of the entries of the $j$th column is $Q(j)$.
  \end{itemize}
  The $i$th row of $A$ determines a partition $P_i$ of $P(i)$.  We define the \textbf{weight} of $A$
  to be the sequence of partitions $(P_1,P_2,\ldots)$.
  
  If $(P_i)_i$ is a finite sequence of partitions and $P$ is the partition formed by their
  degrees, then define $\Bip\left((P_i)_i,Q\right)$ to be the number of $(P,Q)$-bipartitions of weight~$(P_i)_i$.
\end{definition}

If $P$ is a partition of $n$, then let $T_{P,i}$ be the set \[\left\lbrace1 + \sum_{j=1}^{i-1} P(j), 2 +
\sum_{j=1}^{i-1} P(j), \ldots, \sum_{j=1}^i P(j)\right\rbrace\] (with the convention that this is empty if the
first term is greater than the last), so that $\{1,\ldots,n\}$ is the disjoint union of the
$T_{P,i}$; write $T_P$ for the sequence $(T_{P,i})_i$.  In the left action of $S_n$ on the set of
partitions of $\{1,\ldots,n\}$ into disjoint subsets, $S_P$ is the stabiliser of $T_P$.

\begin{lemma} Let $P$ and $Q$ be partitions of $n$.  There is a bijection between the double coset
  set $S_P\backslash S_n/S_Q$ and the set of all $(P,Q)$-bipartitions.
\end{lemma}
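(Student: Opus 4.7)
The plan is to give an explicit bijection. Given $g \in S_n$, I will assign to it the matrix $A_g = (a(i,j))_{i,j}$ with entries
\[ a(i,j) = |T_{P,i} \cap g(T_{Q,j})|. \]
First I would check that $A_g$ is a $(P,Q)$-bipartition: the $i$th row sums to $|T_{P,i}| = P(i)$ because the sets $g(T_{Q,j})$ partition $\{1,\ldots,n\}$, and symmetrically the $j$th column sums to $|g(T_{Q,j})| = Q(j)$.

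Next, I would show that $g \mapsto A_g$ factors through $S_P\backslash S_n / S_Q$. If $g' = p g q$ with $p \in S_P$ and $q \in S_Q$, then $g'(T_{Q,j}) = p g(T_{Q,j})$ since $q$ stabilises each $T_{Q,j}$, and
\[ |T_{P,i} \cap p g(T_{Q,j})| = |p^{-1}(T_{P,i}) \cap g(T_{Q,j})| = |T_{P,i} \cap g(T_{Q,j})| \]
since $p$ stabilises $T_{P,i}$. Thus $A_{g'} = A_g$.

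For injectivity of the induced map on double cosets, suppose $A_g = A_{g'}$. Then for each $i$ the two partitions of $T_{P,i}$ into the pieces $T_{P,i} \cap g(T_{Q,j})$ and $T_{P,i} \cap g'(T_{Q,j})$ have matching block sizes, so I can choose a bijection $p_i$ of $T_{P,i}$ carrying one to the other. Assembling the $p_i$ yields $p \in S_P$ with $p g(T_{Q,j}) = g'(T_{Q,j})$ for every $j$, so $(g')^{-1} p g$ lies in $S_Q$ and hence $g' \in S_P g S_Q$.

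For surjectivity, given a $(P,Q)$-bipartition $(a(i,j))$ I would partition each $T_{P,i}$ into subsets $A_{i,j}$ of size $a(i,j)$ (using arbitrary choices), observe that $\bigsqcup_i A_{i,j}$ has cardinality $Q(j) = |T_{Q,j}|$, and define $g \in S_n$ to be any bijection sending $T_{Q,j}$ to $\bigsqcup_i A_{i,j}$ for each $j$. A direct check gives $T_{P,i} \cap g(T_{Q,j}) = A_{i,j}$, so $A_g$ is the prescribed bipartition. No step is really an obstacle here; the only mild subtlety is keeping track of which actions one pre- or post-composes with, but once the bookkeeping is fixed the four checks (well-definedness of $A_g$, well-definedness on double cosets, injectivity, surjectivity) are all formal.
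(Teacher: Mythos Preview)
Your argument is correct and is exactly the construction the paper uses: the bijection sends $S_P g S_Q$ to the matrix $A_g$ with $A_g(i,j) = |T_{P,i} \cap g(T_{Q,j})|$. The paper simply states this is standard and omits the verifications you have carefully supplied.
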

\begin{proof}
  This is standard; let us just recall the construction.  If $g \in S_n$, define a matrix $A_g = (A_g(i,j))$ by
  \[A_g(i,j) = \#(T_{P,i} \cap gT_{Q,j}).\] Then $A_g$ is a $(P,Q)$-bipartition that only depends on
  the double coset $S_PgS_Q$, and the map $S_PgS_Q \mapsto A_g$ gives the required bijection.
\end{proof}

\begin{proposition} \label{prop:mackey} Let $P$ and $Q$ be partitions of $n$.  Then we have:
\[\Res_{S_P}^{S_n}(\pi_Q^\circ) \cong \bigoplus_{(P_i)_i} \left(\Bip\left((P_i)_i,Q\right)\cdot \bigotimes_{i} \pi_{P_i}^\circ\right)\] where the sum runs over all
sequences of partitions $(P_1,P_2,\ldots)$ with $\deg P_i = P(i)$ and, for an integer $a$ and
representation $\rho$, $a \cdot \rho$ denotes the direct sum of $a$ copies of $\rho$.
\end{proposition}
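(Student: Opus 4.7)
The plan is to apply Mackey's formula for the restriction of an induced representation, parse the resulting combinatorics using the bijection between double cosets and bipartitions from the preceding lemma, and identify each double-coset summand with a tensor product of the $\pi^\circ_{P_i}$.

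Concretely, first I would invoke Mackey's theorem to write
\[
\Res^{S_n}_{S_P}\Ind_{S_Q}^{S_n}(\sgn) \cong \bigoplus_{g\in S_P\backslash S_n/S_Q} \Ind_{S_P\cap gS_Qg^{-1}}^{S_P}\bigl(\sgn^g\bigr).
\]
Since $\sgn$ is a one-dimensional character of $S_n$, its conjugate restricted to any subgroup of $S_n$ agrees with $\sgn$ itself, so the inner representation is just $\Ind_{S_P\cap gS_Qg^{-1}}^{S_P}(\sgn)$. Next, using the preceding lemma, I would replace the indexing set $S_P\backslash S_n/S_Q$ by the set of $(P,Q)$-bipartitions, so that each representative $g$ corresponds to the bipartition $A_g$ with $A_g(i,j)=\#(T_{P,i}\cap gT_{Q,j})$.

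The key observation is then that the subgroup $S_P\cap gS_Qg^{-1}$ is precisely the stabiliser in $S_n$ of the common refinement of the set partitions $T_P$ and $gT_Q$, whose blocks are the non-empty sets $T_{P,i}\cap gT_{Q,j}$ of cardinality $A_g(i,j)$. Thus
\[
S_P\cap gS_Qg^{-1} \;\cong\; \prod_{i,j} S_{A_g(i,j)} \;\subset\; \prod_i S_{P(i)} \;=\; S_P,
\]
and the inclusion into $S_P$ respects the product structure indexed by $i$. Therefore the induction factors as a tensor product over $i$:
\[
\Ind_{S_P\cap gS_Qg^{-1}}^{S_P}(\sgn) \;\cong\; \bigotimes_i \Ind_{\prod_j S_{A_g(i,j)}}^{S_{P(i)}}(\sgn) \;=\; \bigotimes_i \pi^\circ_{P_i},
\]
where $P_i$ is the partition of $P(i)$ given by the multiset of entries in the $i$th row of $A_g$, i.e.\ the weight of $A_g$ at index $i$.

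Finally, I would group the resulting summands by weight $(P_i)_i$: the number of $(P,Q)$-bipartitions of a given weight is $\Bip((P_i)_i,Q)$ by definition, yielding the claimed formula. The steps are all routine once Mackey is set up; the only point that requires care is the bookkeeping identifying $S_P\cap gS_Qg^{-1}$ as a product of symmetric groups indexed by the cells of the matrix $A_g$ and factoring the induction compatibly with the decomposition $S_P=\prod_i S_{P(i)}$, but this is essentially formal. I do not expect any genuine obstacle.
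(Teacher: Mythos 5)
Your proposal is correct and follows essentially the same route as the paper: Mackey's theorem, the identification of $S_P\backslash S_n/S_Q$ with $(P,Q)$-bipartitions from the preceding lemma, and the observation that $S_P\cap gS_Qg^{-1}$ is conjugate in $S_P$ to $\prod_i S_{P_i}$, so the induction factors as $\bigotimes_i \pi^\circ_{P_i}$. The only cosmetic point is that the row entries $A_g(i,j)$ need not be in decreasing order, so one conjugates within $S_{P(i)}$ to reach the standard copy of $S_{P_i}$; this does not affect the induced representation, and you implicitly account for it.
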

\begin{proof}
  By definition, $\pi_Q^\circ = \Ind_{S_Q}^{S_n}(\sgn)$ and $\pi_{P_i}^\circ =
  \Ind_{S_{P_i}}^{S_{P(i)}} (\sgn)$ for each $i$.  The formula follows from Mackey's theorem upon
  observing that, if $S_P gS_Q$ is the double coset corresponding to a $(P,Q)$-bipartition of weight
  $(P_i)_i$, then $S_P \cap S_Q^g$ is conjugate (in $S_P$) to the subgroup $\prod_{i} S_{P_i}\subset
  \prod_{i}S_{P(i)} = S_P$.
\end{proof}

\subsection{Deformation rings at distinguished points}
\label{sec:geom-at-dist}

Let $(\bar{M},\bar{\rho})$ be a representation of $G_F$ over $\FF$ such that $\tilde{P}_F$ acts
trivially, and that $(\bar{e}_i)_i$ is a basis for $\bar{M}$.
\begin{definition}
  Say that $(\bar{M},\bar{\rho},(\bar{e}_i)_i)$ is \textbf{distinguished} if:
  \begin{itemize}
  \item it is standard, with generalized eigenspace decomposition $\bar{M} =
    \bar{M}_1 \oplus \ldots \oplus \bar{M}_r$ for $\rhobar(\phi)$ (we thus adopt the notation of
    Definition~\ref{def:standard}); 
    \item for each $i$, $\bar{M}_i$ is stable under $\bar{\rho}(\sigma)$;
    \item for each $i$, the minimal polynomial of $\rhobar(\sigma)$ acting on $\bar{M}_i$ is $(X-1)^{n_i}$.
  \end{itemize}
\end{definition}

\begin{lemma}\label{lem:distinguished-diagonal} Suppose that $(\bar{M},\bar{\rho},(\bar{e}_i)_i)$ is
  distinguished and that $(M,\rho,(e_i)_i)$ is a standard lift to some $A \in \Cc_\Oc$.  Let $M = M_1
  \oplus \ldots \oplus M_r$ be the decomposition of Definition~\ref{def:standard}.  Then
  $\rho(\sigma)$ preserves each $M_i$.
\end{lemma}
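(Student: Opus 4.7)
The plan is to set $\Phi = \rho(\phi)$ and $\Sigma = \rho(\sigma)$, so that by the relation~\eqref{eqn:presentation} we have $\Phi \Sigma = \Sigma \Phi^q$. Writing $M = M_1 \oplus \ldots \oplus M_r$ as in the proof of Lemma~\ref{lem:diagonalization2}, the map $\Phi$ is block diagonal with blocks $\Phi_i = \Phi|_{M_i}$ satisfying $P_i(\Phi_i) = 0$, where $P_i \in A[X]$ lifts $(X-\alpha_i)^{n_i}$. For any $i,j$, let $\Sigma_{ij} : M_j \to M_i$ be the $(i,j)$-block of $\Sigma$ (i.e., projection onto $M_i$ of $\Sigma$ restricted to $M_j$). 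The goal is to show $\Sigma_{ij} = 0$ whenever $i \neq j$, which implies $\Sigma M_j \subset M_j$ for every $j$.

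From $\Phi \Sigma = \Sigma \Phi^q$, comparing $(i,j)$ blocks yields $\Phi_i \Sigma_{ij} = \Sigma_{ij} \Phi_j^q$, and consequently $R(\Phi_i)\Sigma_{ij} = \Sigma_{ij}R(\Phi_j^q)$ for every $R \in A[X]$. Taking $R = P_i$ and using $P_i(\Phi_i) = 0$, I obtain
\[
\Sigma_{ij} \cdot P_i(\Phi_j^q) = 0.
\]
The key observation I would then make is that, in the distinguished situation, each eigenvalue $\alpha_i \in \FF$ of $\bar{\Phi}$ satisfies $\alpha_i^q = \alpha_i$: indeed, on the distinguished block $\bar{M}_i$ the operators $\bar{\Phi}_i$ and $\bar{\Phi}_i^q$ are conjugate via the invertible operator $\bar{\Sigma}|_{\bar{M}_i}$, so they have the same (unique) eigenvalue, giving $\alpha_i = \alpha_i^q$.

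Hence for $i \neq j$, the reduction of $P_i(\Phi_j^q)$ modulo $\mf_A$ equals $(\bar{\Phi}_j^q - \alpha_i)^{n_i}$ on $\bar{M}_j$. Since $\bar{\Phi}_j$ has $(X-\alpha_j)^{n_j}$ as its minimal polynomial on $\bar{M}_j$, so does $\bar{\Phi}_j^q$ (with eigenvalue $\alpha_j^q = \alpha_j$), and thus $\bar{\Phi}_j^q - \alpha_i$ has the single eigenvalue $\alpha_j - \alpha_i \neq 0$, which is a unit in $\FF$. Therefore $P_i(\Phi_j^q)$ induces an isomorphism of $M_j \otimes_A \FF$, and since $M_j$ is finitely generated over the local ring $A$, Nakayama's lemma implies $P_i(\Phi_j^q)$ is an isomorphism of $M_j$. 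Combined with $\Sigma_{ij} \cdot P_i(\Phi_j^q) = 0$ this forces $\Sigma_{ij} = 0$, completing the proof.

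There is no real obstacle here: the argument is essentially a block-matrix calculation, and the only nontrivial input is the observation that $\alpha_i \in \FF_q$, which is precisely what the distinguished hypothesis delivers (and where it is essential that the minimal polynomial of $\bar{\rho}(\sigma)$ on each $\bar{M}_i$ is $(X-1)^{n_i}$, so that $\bar{\Sigma}$ genuinely stabilises each $\bar{M}_i$ and hence conjugates $\bar{\Phi}_i$ to $\bar{\Phi}_i^q$).
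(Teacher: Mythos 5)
Your argument starts from the wrong commutation relation, and this is a genuine gap rather than a cosmetic slip. The presentation~\eqref{eqn:presentation} of the tame quotient is $\phi\sigma\phi^{-1}=\sigma^q$, so with $\Phi=\rho(\phi)$ and $\Sigma=\rho(\sigma)$ the identity is $\Phi\Sigma=\Sigma^q\Phi$, not $\Phi\Sigma=\Sigma\Phi^q$. Comparing $(i,j)$-blocks in the correct relation gives $\Phi_i\Sigma_{ij}=(\Sigma^q)_{ij}\Phi_j$, and $(\Sigma^q)_{ij}$ is a complicated expression in \emph{all} the blocks of $\Sigma$, not $\Sigma_{ij}$ conjugated by polynomials in the $\Phi$'s. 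Relating $(\Sigma^q)_{ij}$ to $\Sigma_{ij}$ is exactly the difficulty the lemma must overcome, and it is where the quasi-banal hypothesis enters: writing $\Sigma^q=(1+(\Sigma-1))^q$ and expanding, one uses $q-1\in\mf_A$, $\binom{q}{s}\in\mf_A$ for $2\le s\le n$, and $(\Sigma-1)^n\equiv 0\bmod\mf_A$ to get $(\Sigma^q)_{ij}\equiv\Sigma_{ij}\bmod\mf_A I$, where $I$ is the ideal generated by the off-diagonal entries of $\Sigma$. Then $0=P_i(\Phi_i)\Sigma_{ij}\equiv\Sigma_{ij}P_i(\Phi_j)\bmod\mf_A I$, the invertibility of $P_i(\Phi_j)$ for $i\neq j$ gives $I=\mf_A I$, and Nakayama gives $I=0$. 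Your closing moves (apply $P_i$, invert, Nakayama) are the right ingredients, but they are applied to an identity that does not hold, so the argument as written does not go through.

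A downstream symptom of the reversed relation is your claim that $\alpha_i^q=\alpha_i$, which is false in general: the Frobenius eigenvalues are unconstrained elements of $\FF^\times$. For instance $n=2$, $\Sigma=1$, $\Phi=\mathrm{diag}(\alpha_1,\alpha_2)$ with $\alpha_1\neq\alpha_2$ arbitrary is distinguished (each $n_i=1$), and nothing forces $\alpha_i^q=\alpha_i$. What the correct relation gives on the special fibre is that the eigenvalues of $\bar{\Sigma}$ --- not of $\bar{\Phi}$ --- are permuted by the $q$-power map; in the distinguished case these are all equal to $1$, which tells you nothing about the $\alpha_i$. The correct proof never needs $\alpha_i^q=\alpha_i$: it only needs that $P_i$ and $P_j$ are coprime modulo $\mf_A$ for $i\neq j$, i.e.\ that $\alpha_i\neq\alpha_j$.
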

\begin{proof} Let $\Sigma = \rho(\sigma) \in \End(M)$ and let $\Phi = \rho(\phi) \in \End(M)$.  Let
  $\Phi_i$ be the image of $\Phi$ in $\End(M_i)$; then by assumption $\Phi = \bigoplus_{i=1}^r
  \Phi_i$.  Let $\Sigma_{ij}$ be the image of $\Sigma$ in $\Hom(M_i,M_j)$; we must show that
  $\Sigma_{ij} = 0$ for $i \neq j$.

  Let $I$ be the ideal of $A$ generated by the matrix entries of $\Sigma_{ij}$ (with respect to the
  basis $(e_i)$) for $i \neq j$.  We will
  show that $I = \mf_A I$ and hence, by Nakayama, that $I = 0$, as required.  Write 
  \begin{align*}
    \Sigma^q &= (1 + (\Sigma - 1))^q \\
    &= 1 + q(\Sigma - 1) + \sum_{s \geq 2}\binom{q}{s} (\Sigma - 1)^s.
  \end{align*}
  As $q - 1 \in \mf_A$, $\binom{q}{s} \in \mf_A$ for $2 \leq s \leq n$ (using that $l$ is
  quasi-banal), and $(\Sigma - 1)^n \equiv 0 \bmod \mf_A$, we see that 
\[ (\Sigma^q)_{ij} \equiv \Sigma_{ij} \mod \mf_A I\]
for $i \neq j$.  From the equation $\Phi \Sigma = \Sigma^q \Phi$ we deduce:
\begin{align*} \Phi_i \Sigma_{ij} &= (\Sigma^q)_{ij}\Phi_j\\
&\equiv \Sigma_{ij} \Phi_j \mod \mf_A I.
\end{align*}
If $P_i$ is the characteristic polynomial of $\Phi_i$, then $P_i(\Phi_j)$ is invertible for $i \neq
j$ (as the reductions mod $\mf_A$ of $P_i$ and $P_j$ are coprime).  But we have
\[0  = P_i(\Phi_i)\Sigma_{ij} \equiv \Sigma_{ij} P_i(\Phi_j) \mod \mf_A I\]
and so $\Sigma_{ij} \equiv 0 \bmod \mf_A I$ as claimed.
\end{proof}

If $\chi$ is a representation of $k_F^\times$ with image in $\mu_{l^a}$, then we regard $\chi$ as an
element of $\Ic_0$ via the canonical surjection $I_F \onto k_F^\times$.  Let $\Ic_1 \subset \Ic$ be
the set of $\Pc : \Ic \rarrow \Part$ supported on such $\chi$; note that $\Ic_1$ can be identified
with the set $\bar{\Ic}_1$ from the last section.  For convenience, we pick an enumeration $\chi_1,
\ldots, \chi_{l^a}$ of the characters $k_F^\times \rarrow \mu_{l^a}$; thus an element of $\Ic_1$ can
be regarded as a sequence $(P_1,\ldots, P_{l^a})$ of partitions.

To compute $R^\square(\rhobar,\tau_\Pc)$ for a distinguished $\rhobar$ and for $\Pc \in \Ic_1$, first note that Lemma
\ref{lem:diagonalization2} allows us to reduce to the case in which there is a single $\bar{M}_i$.
We then have:

\begin{proposition}\label{prop:maximal-unipotent}
  Suppose that $\rhobar(\sigma)$ has minimal polynomial $(X-1)^n$. Let $\Pc \in \Ic$.  If $\Pc
  \not \in \Ic_1$ then $R^\square(\rhobar,\tau_\Pc) = 0$.  If $\Pc \in \Ic_1$ corresponds to a sequence
  $(P_1,\ldots, P_{l^a})$ of partitions, then: 
  \begin{itemize}
  \item $R^\square(\rhobar,\tau_\Pc) = 0$ if any $P_i$ has more
  than one part (i.e. if $P_i(2)>0$ for some $i$);
\item $R^\square(\rhobar,\tau_{\Pc})$ is formally smooth of relative dimension $n^2$ over $\Oc$ if
  each $P_i$ has only one part.
\end{itemize}
  The special fibre $R^\square(\rhobar)\otimes \FF$ has a single minimal prime.
\end{proposition}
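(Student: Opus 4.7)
The plan is to analyse the local structure of $\Mc(n,q)_\Oc$ at the closed point $x=(\rhobar(\sigma),\rhobar(\phi))$ directly, using that $R^\square(\rhobar,\tau_\Pc)$ is the completion at $x$ of the component $\Mc(n,q,\Sigma\sim C)_\Oc$ for the unique conjugacy class $C$ of type $\tau_\Pc$. The hypothesis that the minimal polynomial of $\rhobar(\sigma)$ equals $(X-1)^n$, combined with the assumption that $\rhobar$ is distinguished, forces the generalised eigenspace decomposition for $\rhobar(\phi)$ to be trivial (i.e.\ $r=1$), so $\rhobar(\sigma)$ is a single regular unipotent Jordan block of size $n$ and $\rhobar(\phi)$ lies in its centraliser.

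For the vanishing statements, suppose $\rho$ lifts $\rhobar$ with type $\tau_\Pc$. The eigenvalues of $\rho(\sigma)$, indexed by the characters $\chi_j\in\Ic_0$ in the support of $\Pc$, must reduce to $1$ modulo $\lambda$ and hence have $l$-power order. A basic inertial type of dimension $d>1$ whose $\sigma$-eigenvalues all have $l$-power order corresponds under Green's parameterisation to a character of $k_{F,d}^\times$ whose order divides $l^a$ (since the $l$-part of $|k_{F,d}^\times|=q^d-1$ equals $l^a$ when $l>n\geq d$); such a character is fixed by $\Gal(k_{F,d}/k_F)$ and so fails the admissibility condition for a cuspidal parameter. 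Hence only one-dimensional $\chi_j$ with values in $\mu_{l^a}$ contribute, forcing $\Pc\in\Ic_1$. Writing $\Pc=(P_1,\ldots,P_{l^a})$, the minimal polynomial of $\rho(\sigma)$ is $\prod_j(X-\chi_j)^{m_j}$ with $m_j$ the largest part of $P_j$; its reduction modulo $\lambda$ equals $(X-1)^{\sum_j m_j}$. The minimal polynomial of $\rhobar(\sigma)$, namely $(X-1)^n$, must divide this reduction, so $n\leq\sum_j m_j\leq\sum_j\deg P_j=n$; equality forces each $P_j$ to have at most one part.

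Assume now each $P_j$ has at most one part, say $P_j=(d_j)$. Then $C$ consists of semisimple matrices with eigenvalues $\chi_j$ of multiplicity $d_j$; since $\chi_j^{l^a}=1$ and $q\equiv 1\pmod{l^a}$, one has $\chi_j^q=\chi_j$, hence $\Sigma^q=\Sigma$ throughout $\Mc(n,q,\Sigma\sim C)_\Oc$, and the defining relation becomes the commutator equation $\Phi\Sigma=\Sigma\Phi$. I would then verify: (a) the subscheme of $GL_{n,\Oc}$ defined by the coefficients of the characteristic polynomial $\prod_j(X-\chi_j)^{d_j}$ is smooth of relative dimension $n^2-n$ at every closed point that is non-derogatory (minimal polynomial equals characteristic polynomial), and (b) the scheme-theoretic centraliser of the universal matrix over this base is a commutative subgroup scheme of $GL_{n,\Oc}$ that is smooth of relative dimension $n$. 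Since $\rhobar(\sigma)$ is non-derogatory, both (a) and (b) apply at $x$; fibering (b) over (a) exhibits $\Mc(n,q,\Sigma\sim C)_\Oc$ as smooth of relative dimension $n^2$ over $\Oc$ at $x$, so $R^\square(\rhobar,\tau_\Pc)$ is formally smooth over $\Oc$ of the required dimension.

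Finally, by Proposition~\ref{prop:cpts} the irreducible components of $\Mc(n,q)_\FF$ correspond to $q$-stable conjugacy classes $C$ in $GL_n(\bar\FF)$, and $x$ lies on the component for $C$ iff $\rhobar(\sigma)\in\overline{C}$. The semisimple part of $\rhobar(\sigma)$ is trivial, so $C$ must be unipotent; and since the Jordan type $(n)$ is maximal in the dominance order, $\overline{C}$ contains $\rhobar(\sigma)$ only when $C$ is the regular unipotent orbit. Thus a unique component passes through $x$, giving a single minimal prime of $R^\square(\rhobar)\otimes\FF$. The main obstacle is the smoothness verification in the previous paragraph: establishing (a) and (b) uniformly over $\Oc$ requires the non-derogatory property to propagate smoothly from the open semisimple stratum in the generic fibre, where $C$ lives, to the regular unipotent orbit in the special fibre, where the conjugacy structure degenerates.
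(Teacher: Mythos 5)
Your first two paragraphs are sound: the reduction to characters valued in $\mu_{l^a}$ (hence $\Pc\in\Ic_1$), and the inequality $n\le\sum_j m_j\le\sum_j\deg P_j=n$ forcing each $P_j$ to consist of a single part, are correct and in fact spell out details the paper's proof leaves terse.

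The third paragraph, however, contains a genuine error. When $P_j$ is the single part $(d_j)$, the type $\tau_\Pc$ is built from $\Sp(\rho_{\chi_j},d_j)$, so $\tau_\Pc(\sigma)$ has, for each $j$, a \emph{single Jordan block} of size $d_j$ with eigenvalue $\chi_j(\sigma)$: the class $C$ is the regular (non-derogatory) class with characteristic polynomial $\prod_j(X-\chi_j(\sigma))^{d_j}$, and it is \emph{not} semisimple unless every $d_j=1$ (the semisimple unipotent type is the opposite extreme $P_j=(1,1,\ldots,1)$; compare $\tau_0$ versus $\tau_1$ in Example~\ref{eg:types-gl2}). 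Consequently $\Sigma^q\ne\Sigma$ on the generic fibre of the component: for $\Sigma=\zeta(1+N)$ with $N\ne 0$ nilpotent, $\Sigma^q=\zeta(1+qN+\cdots)\ne\Sigma$ in characteristic zero, even though $\Sigma^q$ is conjugate to $\Sigma$. Your reduction of the defining relation to the commutator equation $\Phi\Sigma=\Sigma\Phi$ therefore fails, and step (b) computes the wrong scheme: what fibres over the fixed-characteristic-polynomial locus is the transporter $\{\Phi:\Phi\Sigma\Phi^{-1}=\Sigma^q\}$, not the centraliser. The strategy is repairable --- the transporter is a torsor under the centraliser, nonempty because $\Sigma$ and $\Sigma^q$ are both non-derogatory with the same characteristic polynomial and hence conjugate over any field --- but the repair is not in your write-up, and your closing worry about "the open semisimple stratum in the generic fibre" stems from the same misidentification of $C$. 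The paper instead argues by hand: it puts $\Sigma$ functorially into lower bidiagonal form using regularity of $\rhobar(\sigma)-1$ and shows that $\Phi\Sigma=\Sigma^q\Phi$ determines $\Phi$ uniquely from the single vector $\Phi(f_1)$. Two smaller points: the proposition does not assume $\rhobar$ is distinguished (harmless, since the minimal-polynomial hypothesis already forces $\rhobar(\phi)$ to have a single generalised eigenvalue); and in your last paragraph, uniqueness of the irreducible component of $\bar{\Xf}$ through $x$ does not by itself give a unique minimal prime of the \emph{completed} local ring (analytic branches) --- the paper sidesteps this by observing that the special fibre of each nonzero $R^\square(\rhobar,\tau_\Pc)$ is the same formally smooth quotient of $\Rbarbox(\rhobar)$, cut out by requiring the characteristic polynomial of $\Sigma$ to be $(X-1)^n$.
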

\begin{proof}
  If $\rho : G_F \rarrow GL_n(\Oc')$ has reduction isomorphic to $\rhobar$, with $\Oc'$ the ring of
  integers in a finite extension $E'/E$, then the minimal polynomial $f(X)$ of $\rho(\sigma)$ is
  congruent to $(X-1)^n$ modulo the maximal ideal of $\Oc'$.  Moreover, its roots are $(q^d-1)$th
  roots of unity for some $d \leq n$.  As $l$ is quasi-banal, it follows that the roots are $l^a$th
  roots of unity.  We deduce that $R^\square(\rhobar,\tau_{\Pc}) = 0$ if $\Pc \not \in \Ic_1$.

  Suppose that $\Pc \in \Ic_1$ corresponds to $(P_1, \ldots, \Pc_{l^a})$.  If some $P_i$ has more
  than one part, then the minimal polynomial of $\rho(\sigma)$ in a lift of $\rhobar$ of type
  $\tau_\Pc$ would have degree $< n$.  Therefore there are no such lifts and
  $R^\square(\rhobar,\tau_{\Pc}) = 0$ in this case.

  Suppose now that each $P_i$ has exactly one part, so that $P_i(1) = n_i$.  Let $R$ be the quotient of
  $R^\square(\rhobar)$ obtained by demanding that the characteristic polynomial of $\rho(\sigma)$
  is \[f_\Pc(X) = \prod_{i}(X - \chi_i(\sigma))^{n_i}.\] Then $R^\square(\rhobar,\tau_\Pc)$ is the
  maximal reduced, $l$-torsion free quotient of $R$ and in fact we will show that $R$ is formally
  smooth over $\Oc$, so that $R = R^\square(\rhobar,\tau_\Pc)$.
  
  Let $(\zeta_1,\ldots,\zeta_n)$ be a tuple of elements of $\Oc^\times$ in which each
  $\chi_i(\sigma)$ appears precisely $n_i$ times, so that $f_\Pc(X) = \prod_{i=1}^n(X-\zeta_i)$.
  
  Choose a basis $\bar{e}_1,\ldots,\bar{e}_n$ for $\bar{M}$ with respect to which the action of $G_F$ is
  given by $\rhobar$.  Conjugating $\rhobar$ if necessary, we may assume
  that \[\bar{e_i} = (\rhobar(\sigma) - 1)^{i-1}\bar{e}_1\] for $1 \leq i \leq n$.
  
  Let $(M,\rho,(e_i)_i)$ be a lift of $(\bar{M},\rhobar,(\bar{e}_i)_i)$ to some $A \in \Cc_\Oc$ and
  suppose that the characteristic polynomial of $\rho(\sigma)$ is $f_\Pc(X)$ (regarded as an element
  of $A[X]$ via the structure map $\Oc \rarrow A$).  Write $\Sigma = \rho(\sigma) \in \End(M)$.
  Define $f_1,\ldots,f_n \in M$ by
  \begin{align*}
    f_1 &= e_1 \\
    f_2 &= (\Sigma - \zeta_1)e_1\\
    f_3 &= (\Sigma - \zeta_1)(\Sigma - \zeta_2)e_1 \\
    \vdots \\
    f_n &= (\Sigma - \zeta_1)(\Sigma - \zeta_2)\ldots(\Sigma - \zeta_{n-1})e_1.
  \end{align*}
  Then $f_1, \ldots, f_n$ is a basis of $M$ in which the matrix of $\Sigma$ is:
  \[ \begin{pmatrix}
\zeta_1 &  0 & 0 & 0 & \hdots \\
1 & \zeta_2 & 0 & 0 & \hdots \\
0 & 1 & \zeta_3 & 0 & \hdots \\
\vdots && \ddots & \ddots & \vdots \\
0 & 0 & \hdots & 1 & \zeta_n
\end{pmatrix}.\]
Let $S$ be the maximal quotient of $R$ on which $\Sigma$ has this form.  Since the formation of the $f_i$
from the $e_i$ is functorial, we have a morphism $S \rarrow R$ in $\Cc^\wedge_\Oc$ that is easily
seen to be formally smooth.  To see that $S$ is formally smooth over $\Oc$, I claim that for every
$m \in M$ there is a unique $\Phi \in \End(M)$ such that $\Phi(f_1) = m$ and $\Phi \Sigma = \Sigma^q
\Phi$.  Indeed, for each $i$ we must have
\begin{align*} \Phi(f_i) &= \Phi(\Sigma - \zeta_1)\ldots(\Sigma - \zeta_{i-1})f_1 \\
&= (\Sigma^q - \zeta_1)\ldots(\Sigma^q - \zeta_{i-1})\Phi(f_1) \\
&= (\Sigma^q - \zeta_1)\ldots(\Sigma^q - \zeta_{i-1})m,
  \end{align*}
  and the endomorphism $\Phi$ defined by this formula works.  Therefore lifting $\rhobar(\phi)$ to
  an automorphism $\rho(\phi)$ of $M$ such that $\rho(\phi)\Sigma \rho(\phi)^{-1} = \Sigma^q$ is the
  same as giving a single element of $M$ lifting $\rhobar(f_1)$, and we see that $S$ is formally
  smooth of dimension $n$ over $\Oc$.  Thus $R$, and hence also $R^\square(\rhobar,\tau_\Pc)$, is
  formally smooth over $\Oc$ as required.
  
  For the statement about the special fibre, simply note that $R \otimes \FF$, as a quotient of
  $R^\square(\rhobar)\otimes \FF$, is independent of the choice of $\Pc$.
\end{proof}

\begin{corollary}\label{cor:distinguished-multiplicity} Suppose that $\rhobar$ is distinguished and
  that the generalised eigenspaces of
  $\rhobar(\phi)$ have dimensions $n_1 \geq n_2 \geq \ldots \geq n_r $.  Let $Q$ be the partition of $n$ with
  $Q(i) = n_i$ for $1 \leq i \leq r$.  Let $\Pc \in \Ic$.  If $\Pc \not \in \Ic_1$ then $R^\square(\rhobar,\tau_\Pc) = 0$.
  Otherwise, suppose that $\Pc$ corresponds to the sequence of partitions $(P_i)_i$, and suppose
  (without loss of generality) that $\deg P_1 \geq \deg P_2 \geq \ldots$.  Let $P$ be the partition
  of $n$ with $P(i) = \deg P_i$.  Then $R^\square(\rhobar)\otimes \FF$ has a unique minimal prime
  $\pf$ and
  \[Z(R^\square(\rhobar,\tau_\Pc)\otimes \FF) = \Bip\left((P_i)_i,Q\right)\cdot [\pf].\]
\end{corollary}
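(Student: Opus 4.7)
The plan is to reduce the computation to the block-wise situation of Proposition~\ref{prop:maximal-unipotent} and then perform a combinatorial count. Because $\rhobar$ is distinguished and hence in particular standard, Lemma~\ref{lem:diagonalization2} produces a formally smooth injection $R^{\std}(\rhobar)\into R^\square(\rhobar)$, so it suffices to analyse $R^{\std}(\rhobar)$. By Lemma~\ref{lem:distinguished-diagonal}, any standard lift $\rho$ of $\rhobar$ has $\rho(\sigma)$ preserving each generalized $\phi$-eigenspace $M_i$; thus a standard lift is exactly a tuple of framed lifts $(\rho_i)_{i=1}^r$ of $\rhobar_i \defeq \rhobar|_{\bar{M}_i}$, giving a canonical isomorphism
\[R^{\std}(\rhobar) \cong \widehat{\bigotimes}_{i=1}^r R^\square(\rhobar_i),\]
under which the type of a lift $\rho$ is the direct sum of the types $\tau_{\Pc_i}$ of the $\rho_i$.

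Next, Proposition~\ref{prop:maximal-unipotent} applies to each $\rhobar_i$: the ring $R^\square(\rhobar_i, \tau_{\Pc_i})$ vanishes unless $\Pc_i \in \Ic_1$ consists of single-part partitions $(m^i_j)_j$ with $\sum_j m^i_j = n_i$, in which case it is formally smooth of relative dimension $n_i^2$ over $\Oc$, and moreover $R^\square(\rhobar_i) \otimes \FF$ has a unique minimal prime. Applying Lemma~\ref{lem:product}, I deduce that $R^{\std}(\rhobar) \otimes \FF$, and hence $R^\square(\rhobar) \otimes \FF$, has a unique minimal prime $\pf$, which is the second assertion of the corollary. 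Moreover, the irreducible components of $\Spec R^\square(\rhobar, \tau_\Pc)$ correspond bijectively to tuples of non-negative integers $(m^i_j)_{i,j}$ subject to $\sum_j m^i_j = n_i$ for each $i$ and, for each character $\chi_j$, with the multiset $\{m^i_j : 1 \leq i \leq r\}$ (zeros removed) equal to the partition $P_j$; if no such tuple exists---in particular if $\Pc \notin \Ic_1$---then $R^\square(\rhobar, \tau_\Pc) = 0$.

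Finally, the tuples $(m^i_j)$ just described are precisely the matrices counted by $\Bip((P_j)_j, Q)$ (take rows indexed by characters $\chi_j$ with prescribed row-multisets $P_j$, and columns indexed by blocks $i$ with prescribed column sums $n_i = Q(i)$). Each such component is a completed tensor product of formally smooth $\Oc$-algebras, so it is itself formally smooth of relative dimension $n^2$, and therefore its special fibre is reduced and irreducible of dimension $n^2$; since $R^\square(\rhobar) \otimes \FF$ has only the single minimal prime $\pf$, this special fibre must coincide with $\pf$ and contribute multiplicity one to the cycle $Z(R^\square(\rhobar, \tau_\Pc) \otimes \FF)$. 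Summing over components then yields the formula.

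The main technical hurdle is justifying the isomorphism $R^{\std}(\rhobar) \cong \widehat{\bigotimes}_i R^\square(\rhobar_i)$, which crucially uses Lemma~\ref{lem:distinguished-diagonal} to ensure that the $\sigma$-action of every standard lift also respects the block decomposition (not merely the $\phi$-action); once this is in place the remaining combinatorial identification of components with $(P,Q)$-bipartitions and the multiplicity-one statement follow routinely from formal smoothness and the structure of completed tensor products.
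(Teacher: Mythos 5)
Your argument is correct and follows essentially the same route as the paper: both reduce via Lemma~\ref{lem:diagonalization2} and Lemma~\ref{lem:distinguished-diagonal} to the completed tensor product $\widehat{\bigotimes}_i R^\square(\rhobar_i)$ of the block deformation rings, apply Proposition~\ref{prop:maximal-unipotent} blockwise, and identify the resulting components with $(P,Q)$-bipartitions of weight $(P_i)_i$, each formally smooth with the same reduced special fibre $[\pf]$. Your write-up merely makes more explicit the step the paper leaves implicit, namely that a standard lift of a distinguished $\rhobar$ is exactly a tuple of lifts of the $\rhobar_i$ because $\rho(\sigma)$ also respects the block decomposition.
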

\begin{proof} We combine Lemmas \ref{lem:diagonalization2}, \ref{lem:distinguished-diagonal} and
  Proposition~\ref{prop:maximal-unipotent}.  Let $\bar{M}_j$ ($1 \leq j \leq r$) be the generalised
  eigenspaces of $\rhobar(\phi)$ on $\bar{M}$ and let $\bar{\rho}_j$ be the representation of $G_F$
  on $\bar{M}_j$ for each $j$.  Then we have that $R^\square(\rhobar)$ is formally smooth over
  $\widehat{\bigotimes}_j R^\square(\rhobar_j)$, by Lemma~\ref{lem:diagonalization2}.  That
  $R^\square(\tau_\Pc)$ is zero if $\Pc \not \in \Ic_1$ is now clear.

  If $\Pc \in \Ic_1$ corresponds to the sequence $(P_i)_i$, then the irreducible components of
  $\Spec R^\square(\rhobar,\tau_{\Pc})$ are all formally smooth with the same special fibre.  The
  number of such irreducible components is the number of sequences $(\Pc_1, \Pc_2,\ldots, \Pc_r)$
  where:
  \begin{itemize}
  \item for $j = 1, \ldots, r$, $\Pc_j \in \Ic_1$ has degree $n_j$;
  \item each $\Pc_j(\chi_i)$ consists of a single part (that is, $\Pc_j(\chi_i)(1) = d_{ij}$ for
    some non-negative integer $d_{ij}$, and $\Pc_j(\chi_i)(2) = 0$);
  \item for each $i$, the sequence $(d_{i1}, d_{i2}, \ldots, d_{ir})$ is a
    reordering of $P_i(1),\ldots, P_i(r)$.
  \end{itemize}
  Indeed, such a sequence gives rise to the irreducible component
  \[\Spec\widehat{\bigotimes_j} R^\square(\rhobar_j, \tau_{\Pc_j})\]
  of 
\[\Spec\widehat{\bigotimes_j} R^\square(\rhobar_j),\]
and hence of $\Spec R^\square(\rhobar)$, that has type $\tau_\Pc$, and all irreducible components
have this form.

But now $(d_{ij})_{i,j}$ is a $(P,Q)$-bipartition of type $(P_i)_i$ and we see that the number of
irreducible components of $R^\square(\rhobar,\tau_\Pc)$ is the number of $(P,Q)$-bipartitions of
type $(P_i)_i$.  Since all the irreducible components are formally smooth with the same special
fibre, we get the claimed formula.
\end{proof}

Let $\bar{\Xf}_1$ be the closed subscheme of $\bar{\Xf}$ on which $\Sigma$ is unipotent.  Let $\Xf_1$
be the connected component of $\Xf$ containing $\bar{\Xf}$.

\begin{lemma} \label{lem:enough-distinguished} Every irreducible component of $\bar{\Xf}_1$ contains
  a point $x\in \bar{\Xf}(\FF)$ such that $\rhobar_x$ is distinguished (possibly after enlarging
  $\FF$).  If $x \in \bar{\Xf}(\FF)$ is such that $\rhobar_x$ is distinguished, then $x$ lies on a
  unique irreducible component of $\bar{\Xf}$.
\end{lemma}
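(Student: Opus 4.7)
The plan has three parts. First, I will identify the irreducible components of $\bar\Xf_1$. Since $l > n$ and $l \mid q-1$, the binomial coefficients $\binom{q}{k}$ for $2 \le k \le n-1$ all vanish in $\FF$, while $\binom{q}{1} = q \equiv 1$; hence for any unipotent $\Sigma$ (so $(\Sigma-1)^n = 0$) we have $\Sigma^q = \Sigma$ in $\FF$. The defining equation $\Phi\Sigma\Phi^{-1} = \Sigma^q$ thus collapses to $[\Sigma, \Phi] = 0$ on $\bar\Xf_1$, and the $q$-th power map acts trivially on unipotent conjugacy classes. Combined with the observation that closures of non-unipotent conjugacy classes contain no unipotent matrices (characteristic polynomial being preserved), Proposition~\ref{prop:cpts} identifies the irreducible components of $\bar\Xf_1$ with partitions $\lambda$ of $n$ via unipotent Jordan types; I denote these components by $\Cc_\lambda$.

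For the first assertion, for each partition $\lambda = (\lambda_1, \ldots, \lambda_r)$ of $n$ I will construct an explicit distinguished point of $\Cc_\lambda(\FF)$: fix a decomposition $\bar M = \bigoplus_{i=1}^r \bar M_i$ with $\dim \bar M_i = \lambda_i$, take $\Sigma$ to be a single unipotent Jordan block on each $\bar M_i$, and take $\Phi$ to act by distinct scalars $\alpha_i \in \FF^\times$ on each $\bar M_i$ (enlarging $\FF$ if $r$ exceeds $|\FF^\times|$). Then $\Sigma$ and $\Phi$ commute so $x = (\Sigma,\Phi) \in \bar\Xf_1(\FF)$, the representation $\rhobar_x$ is distinguished by construction with generalised eigenspaces $\bar M_i$, and since $\Sigma$ has Jordan type $\lambda$ the point $x$ lies on $\Cc_\lambda$.

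For the second assertion I will show that $R^\square(\rhobar)\otimes_\Oc \FF$, the completed local ring of $\bar\Xf$ at a distinguished $x$, has a unique minimal prime; this is equivalent to $x$ lying on a unique component. By Lemma~\ref{lem:diagonalization2}, $R^\square(\rhobar)$ is formally smooth over $R^{\std}(\rhobar)$, so minimal primes of their special fibres correspond. Using Lemma~\ref{lem:distinguished-diagonal}, a standard lift of a distinguished $\rhobar$ decomposes as independent framed lifts of the blocks $\rhobar_i$ acting on $\bar M_i$, yielding a canonical identification $R^{\std}(\rhobar) \cong \widehat{\bigotimes}_i R^\square(\rhobar_i)$. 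Each $\rhobar_i$ satisfies the hypothesis of Proposition~\ref{prop:maximal-unipotent}, so $\bar R^\square(\rhobar_i) \defeq R^\square(\rhobar_i)\otimes \FF$ has a unique minimal prime whose reduced quotient coincides with $R^\square(\rhobar_i, \tau)\otimes\FF$ for any suitable $\tau$ provided by that proposition; this quotient is formally smooth over $\FF$ with residue field $\FF$, hence a power series ring, hence geometrically integral. Lemma~\ref{lem:product} then shows that $\widehat{\bigotimes}_i \bar R^\square(\rhobar_i)$ has a unique minimal prime, and the same conclusion transfers to $R^\square(\rhobar)\otimes \FF$ via the formal smoothness above.

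The main technical point is the identification of the reduced quotient of $\bar R^\square(\rhobar_i)$ with the (formally smooth) special fibre $R^\square(\rhobar_i, \tau)\otimes\FF$, which justifies both the geometric integrality needed for Lemma~\ref{lem:product} and the ensuing irreducibility. This requires only a brief unpacking of the proof of Proposition~\ref{prop:maximal-unipotent}, where the $\tau$-independence of that special fibre is already observed.
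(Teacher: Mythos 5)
Your proof is correct and follows essentially the same route as the paper: the quasi-banal computation $\Sigma^q=\Sigma$ reducing the equation to commutativity on the unipotent locus, an explicit commuting pair (Jordan blocks for $\Sigma$, distinct scalars for $\Phi$) giving a distinguished point on each component, and the second assertion via Lemmas~\ref{lem:diagonalization2} and~\ref{lem:distinguished-diagonal} together with the last part of Proposition~\ref{prop:maximal-unipotent}. Your write-up is merely more explicit than the paper's (which compresses the second assertion into one sentence and carries out the tensor-decomposition bookkeeping in Corollary~\ref{cor:distinguished-multiplicity} instead); no changes needed.
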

\begin{proof}
  The irreducible components of $\bar{\Xf}$ are precisely the closures of the
  preimages under $\pi_\Sigma$ of conjugacy classes of $\Sigma$ in $GL_n(\bar{\FF})$.\footnote{See
    the proof of Theorem~\ref{thm:complete-intersection}.}  If $\Sigma$ is
  unipotent, then (using that $l$ is quasi-banal and so $\binom{q}{i} = 0 \mod l$ for $2 \leq i \leq
  n$):
\begin{align*}\Sigma^q &= (1 + (\Sigma - 1))^q \\
  &= 1 + q (\Sigma - 1) + \sum_{i = 2}^n \binom{q}{i}(\Sigma - 1)^i \\
  &= 1 + (\Sigma - 1) = \Sigma.
\end{align*}
Thus (for unipotent $\Sigma$) the equation $\Phi \Sigma \Phi^{-1} = \Sigma^q$ is equivalent to
$\Phi$ commuting with $\Sigma$.  But then for each unipotent $\Sigma \in GL_n(\bar{\FF})$ it is
straightforward (using Jordan normal form) to choose a $\Phi\in GL_n(\bar{\FF})$ commuting with
$\Sigma$ such that the representation $\rhobar_x$ attached to the point $x = (\Phi, \Sigma)$ of
$\bar{\Xf}$ is distinguished; possibly enlarging $\FF$, we can assume that $\Phi \in GL_n(\FF)$.

The second assertion follows from the last part of Proposition~\ref{prop:maximal-unipotent}.
\end{proof}

\subsection{Comparison of multiplicities}
\label{sec:comp-mult}

Continue to assume that $l$ is quasi-banal.  Recall that (given a choice of generator $\sigma$ of
tame inertia) we have defined $\cyc_{\ft} : R^1_E(GL_n(k_F)) \rarrow \Zc(\Xf)$.
\begin{theorem}\label{thm:quasibanal-BM}
  There is a unique map $\bar{\cyc}_{\ft} : R^1_{\FF}(GL_n(k_F)) \rarrow \Zc(\bar{\Xf})$ such that the diagram 
\begin{equation}
\begin{CD}
R^1_E(GL_n(k_F)) @>{\cyc_\ft}>> \Zc(\Xf) \\
@V{\red}VV  @V{\red}VV \\
R^1_{\FF}(GL_n(k_F)) @>{\bar{\cyc}_\ft}>> \Zc(\bar{\Xf})
\end{CD}
\end{equation}
commutes.
\end{theorem}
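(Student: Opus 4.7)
The plan is to combine the finite-type reduction of Proposition~\ref{prop:ft-reduction} with the explicit computations at distinguished points. Uniqueness of $\bar{\cyc}_\ft$ follows at once from surjectivity of $\red$ on representations, so the content is existence, i.e.\ the inclusion $\ker(\red) \subseteq \ker(\red \circ \cyc_\ft)$.

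First I would observe that the image of $\cyc_\ft|_{R^1_E}$ is supported on cycles whose mod-$\lambda$ reductions lie in $\bar{\Xf}_1$: only types $\tau_{\Pc'}$ with $\Pc' \in \Ic_1$ contribute in Definition~\ref{def:cyc}, and Proposition~\ref{prop:maximal-unipotent} then forces $\Sigma$ unipotent mod $\lambda$. By Lemma~\ref{lem:enough-distinguished} I pick a set $S$ of closed points of $\bar{\Xf}$, one distinguished $\rhobar_x$ on each irreducible component of $\bar{\Xf}_1$ and on no other. Applying Proposition~\ref{prop:ft-reduction}(2) to the subgroup of $\Zc(\bar{\Xf})$ spanned by the components of $\bar{\Xf}_1$, the problem reduces to the statement that for every $x\in S$, the composition $\red\circ\cyc_x : R^1_E(GL_n(k_F)) \rarrow \Zc(\Rbarbox(\rhobar_x))$ factors through $\red: R^1_E \onto R^1_\FF$.

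Fix such a distinguished $\rhobar_x$ with $Q$ the partition recording the dimensions of the generalized eigenspaces of $\rhobar_x(\phi)$. By Proposition~\ref{prop:maximal-unipotent} the target $\Zc(\Rbarbox(\rhobar_x)) = \ZZ\cdot[\pf]$ is cyclic. Combining Corollary~\ref{cor:distinguished-multiplicity} for the cycle reduction with Proposition~\ref{prop:type-mults} for the multiplicities (applied via the identification, valid in the tame homogeneous setting of Section~\ref{sec:k-types-2}, of $\sigma(\tau_\Pc)$ with the inflation of $\sigma_{\bar{\Pc}}$ from $GL_n(k_F)$, and absorbing the contragredient via re-indexing $\chi\mapsto\chi^{-1}$), the coefficient of $[\pf]$ in $\red(\cyc_x(\sigma_{\bar{\Pc}}))$ works out to
\[N(\bar{\Pc},Q) \;=\; \sum_{\Pc'\in\Ic_1}\;\prod_\chi m(P_\chi,P'_\chi)\cdot\Bip((P'_\chi)_\chi,Q),\]
where $\bar{\Pc}(\chi)=P_\chi$ and $\Pc'(\chi)=P'_\chi$.

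The crux is to show $N(\bar{\Pc},Q)$ depends on $\bar{\Pc}$ only through $\red(\sigma_{\bar{\Pc}})$. The Mackey decomposition of Proposition~\ref{prop:mackey} together with Frobenius reciprocity should identify this number as a pairing in the PSH algebra $R^S$:
\[N(\bar{\Pc},Q) \;=\; \dim\Hom_{S_n}\!\left(\Ind_{S_P}^{S_n}\bigotimes_\chi\sigma^\circ_{P_\chi},\;\pi^\circ_Q\right) \;=\; \Big\langle\prod_\chi\sigma^\circ_{P_\chi},\;\pi^\circ_Q\Big\rangle_{R^S},\]
where $P$ is the partition of $n$ whose parts are the $\deg P_\chi$. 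Since $R^S$ is commutative, the left factor depends on $(P_\chi)_\chi$ only through the multi-set of non-zero $P_\chi$; by Lemma~\ref{lem:quasiunipotent-reduction}(3) together with the isomorphism $R(\mathbbm{1})\isomto R^S$, $\red(\sigma_{\bar{\Pc}})$ depends on $\bar{\Pc}$ only through the same multi-set, so $N(\bar{\Pc},Q)$ factors through $\red(\sigma_{\bar{\Pc}})$ as required. The main obstacle I expect is matching $\sigma(\tau_\Pc)$ with the inflation of $\sigma_{\bar{\Pc}}$ in the required generality and the bookkeeping around contragredients; both should come out of the tame SZ construction with compatible $\beta$-extensions.
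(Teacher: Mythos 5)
Your proposal is correct and follows essentially the same route as the paper: reduce via Proposition~\ref{prop:ft-reduction} and Lemma~\ref{lem:enough-distinguished} to distinguished points, compute the reduced cycles there with Corollary~\ref{cor:distinguished-multiplicity}, express the multiplicities as Kostka numbers, and verify the resulting identity by Frobenius reciprocity and the Mackey decomposition of Proposition~\ref{prop:mackey} in the symmetric-group PSH algebra (your pairing $\langle\prod_\chi\sigma^\circ_{P_\chi},\pi^\circ_Q\rangle$ is exactly the middle term of the paper's final chain of equalities). The only presentational differences are that you phrase the last step as a linear functional on $R^S_n$ factoring through $\red$ rather than defining $\bar{\cyc}$ on the basis $\red(\sigma^1_{P})$, and that you spell out why Lemma~\ref{lem:enough-distinguished}'s restriction to $\bar{\Xf}_1$ suffices, a point the paper leaves implicit.
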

\begin{proof}As explained in section~\ref{sec:reduct-finite-type}, this implies a similar statement
  with $\Xf$ replaced by $\Spec R^\square(\rhobar)$ for any continuous $\rhobar: G_F \rarrow
  GL_n(k_F)$ such that $\rhobar|_{\tilde{P}_F}$ is trivial.  Moreover, by Proposition~\ref{prop:ft-reduction} and Lemma~\ref{lem:enough-distinguished}, it suffices to prove that, for
  $\rhobar$ distinguished, there is a map $\bar{\cyc} : R^1_{\FF}(GL_n(k_F)) \rarrow
  \Zc(\Rbarbox(\rhobar))$ such that
\begin{equation}\label{eq:BM-diag}
\begin{CD}
R^1_E(GL_n(k_F)) @>{\cyc}>> \Zc(R^\square(\rhobar)) \\
@V{\red}VV  @V{\red}VV \\
R^1_{\FF}(GL_n(k_F)) @>{\bar{\cyc}}>> \Zc(\Rbarbox(\rhobar))
\end{CD}
\end{equation}
commutes.  (Although we work with the whole $R_E(GL_n(\Oc_F))$ in section~\ref{sec:reduct-finite-type}, the arguments apply just as well with $R^1_E(GL_n(k_F))$, using that
$\red:R^1_E(GL_n(k_F)) \rarrow R^1_{\FF}(GL_n(k_F))$ is surjective in the quasi-banal case.) 

So suppose that $\rhobar$ is distinguished, and that the generalized eigenspaces of $\rhobar(\phi)$
have dimensions $n_1 \geq n_2 \geq \ldots \geq n_r$, giving a partition $Q = (n_1, n_2,\ldots, n_r)$
of $n$.  First we make the definition of the cycle map explicit. If $\bar{\Pc} \in \bar{\Ic}_1$, then
we have (identifying an element $\Pc' \in \Ic_1$ with an element $\bar{\Pc}\,'$ of $\bar{\Ic}_1$):
\[\cyc : \sigma_{\bar{\Pc}} \mapsto \sum_{\Pc' \in \Ic_1} \dim
\Hom(\sigma_{\bar{\Pc}},\pi_{\bar{\Pc}\,'})Z(R^\square(\rhobar, \tau_{\Pc'}^\vee)).\] Note that
$R^\square(\rhobar,\tau_{\Pc'}) = 0$ for $\Pc' \not \in \Ic_1$, and that we have (for convenience)
rearranged the position of the dual occurring in Definition~\ref{def:cyc}.  If $\pf$ is the unique
minimal prime of $R^\square(\rhobar)\otimes \FF$, then we find (by
Corollary~\ref{cor:distinguished-multiplicity}) that
\[\red \circ \cyc : \sigma_{\bar{\Pc}} \mapsto [\pf]\cdot\sum_{\Pc'}\dim
\Hom(\sigma_{\bar{\Pc}},\pi_{\bar{\Pc}\,'})\Bip\left((\Pc'(\chi_i))_i,Q\right).\] Now, if $\bar{\Pc}$ takes $\chi_1$
(the trivial representation) to the partition $P$ of $n$, then we see that
\[\red \circ \cyc : \sigma_{\bar{\Pc}} \mapsto [\pf] \cdot \dim\Hom(\sigma_P^\circ, \pi_Q^\circ)\]
and so we must have 
\[\bar{\cyc}(\red(\sigma_{\bar{\Pc}})) = [\pf] \cdot \dim\Hom(\sigma_P^\circ, \pi_Q^\circ).\]
By Lemma~\ref{lem:quasiunipotent-reduction}, the $\red(\sigma_{\bar{\Pc}})$ for $\Pc$ supported on
the trivial representation are all irreducible, and are a basis for $R^1_{\FF}(GL_n(k_F))$; there is
therefore a unique map $\bar{\cyc}$ defined by the above equation for such $\Pc$, and we must show that it makes
diagram~\eqref{eq:BM-diag} commute.  Using Lemma~\ref{lem:quasiunipotent-reduction} and
the subsequent equation~\eqref{eq:linear-to-symmetric}, we see that it suffices to show the
following.  If $\Pc \in \bar{\Ic}_1$ has $\Pc(\chi_i) = P_i$, and $P$ is the partition corresponding
to $(\deg P_1, \deg P_2, \ldots)$, then
\[ \sum_{P'}\dim \Hom_{S_n}(\sigma_{P'}^\circ, \pi_Q^\circ)\dim \Hom_{S_n}\left(\sigma_{P'}^\circ,\Ind_{S_P}^{S_n}
\left(\bigotimes\sigma_{P_i}^\circ\right)\right)\] is equal to \[\sum_{\Pc'}\dim
\Hom(\sigma_{\bar{\Pc}},\pi_{\bar{\Pc}\,'})\Bip\left((\Pc'(\chi_i))_i,Q\right),\] where the first sum is over
partitions $P'$ of $n$ and the second is over $\Pc' \in \Ic_1$.  Indeed, the first displayed
equation is the value of $\bar{\cyc}(\red(\sigma_{\bar{\Pc}}))$, and the second is the value of
$\red(\cyc(\sigma_{\bar{\Pc}}))$.

But
\begin{align*}
  &\sum_{P'}\dim \Hom(\sigma_{P'}^\circ, \pi_Q^\circ)\dim \Hom\left(\sigma_{P'}^\circ,\Ind_{S_P}^{S_n}
\left(\bigotimes\sigma_{P_i}^\circ\right)\right)\\
  &= \dim \Hom\left(\pi_Q^\circ, \Ind_{S_P}^{S_n}\left(\bigotimes\sigma_{P_i}^\circ\right)\right) \\
  &=  \sum_{(P'_i)_i}\Bip\left((P'_i)_i,Q\right)\dim \Hom_{S_P}\left(\bigotimes \pi_{P'_i}^\circ, \bigotimes \sigma_{P_i}^\circ\right)\\
  &= \sum_{\Pc'}\Bip\left((\Pc'(\chi_i))_i,Q\right) \dim \Hom(\sigma_{\bar{\Pc}}, \pi_{\bar{\Pc}\,'})
\end{align*}
as required.  The sum on the third line is over sequences of partitions $(P'_i)_i$ with $\deg P'_i = \deg P_i$, and to
go from the second to the third line we have used Proposition~\ref{prop:mackey}.  The sum on the
fourth line is over $\Pc' \in \Ic_1$ and to go from the third to the fourth line we have used
Corollary~\ref{cor:finite-gl-mults}.
\end{proof}

\bibliography{../references.bib}{}

\newcommand{\etalchar}[1]{$^{#1}$}
\providecommand{\bysame}{\leavevmode\hbox to3em{\hrulefill}\thinspace}
\providecommand{\MR}{\relax\ifhmode\unskip\space\fi MR }
\providecommand{\MRhref}[2]{%
  \href{http://www.ams.org/mathscinet-getitem?mr=#1}{#2}
}
\providecommand{\href}[2]{#2}
\begin{thebibliography}{BLGGT14}

\bibitem[BC09]{MR2656025}
Jo{\"e}l Bella{\"{\i}}che and Ga{\"e}tan Chenevier, \emph{Families of {G}alois
  representations and {S}elmer groups}, Ast\'erisque (2009), no.~324, xii+314.

\bibitem[BK93]{BushnellKutzko1993-AdmissibleDualGLN}
Colin~J. Bushnell and Philip~C. Kutzko, \emph{The admissible dual of {${\rm
  GL}(N)$} via compact open subgroups}, Annals of Mathematics Studies, vol.
  129, Princeton University Press, Princeton, NJ, 1993.

\bibitem[BK98]{MR1643417}
\bysame, \emph{Smooth representations of reductive {$p$}-adic groups: structure
  theory via types}, Proc. London Math. Soc. (3) \textbf{77} (1998), no.~3,
  582--634.

\bibitem[BK99]{BushnellKutzko1999-SemisimpleTypesGLN}
\bysame, \emph{Semisimple types in {${\rm GL}_n$}}, Compositio Math.
  \textbf{119} (1999), no.~1, 53--97.

\bibitem[BLGG11]{MR2748398}
Thomas Barnet-Lamb, Toby Gee, and David Geraghty, \emph{The {S}ato-{T}ate
  conjecture for {H}ilbert modular forms}, J. Amer. Math. Soc. \textbf{24}
  (2011), no.~2, 411--469.

\bibitem[BLGGT14]{BLGGT2014-PotentialAutomorphy}
Thomas Barnet-Lamb, Toby Gee, David Geraghty, and Richard Taylor,
  \emph{Potential automorphy and change of weight}, Ann. of Math. (2)
  \textbf{179} (2014), no.~2, 501--609.

\bibitem[BLGHT11]{MR2827723}
Tom Barnet-Lamb, David Geraghty, Michael Harris, and Richard Taylor, \emph{A
  family of {C}alabi-{Y}au varieties and potential automorphy {II}}, Publ. Res.
  Inst. Math. Sci. \textbf{47} (2011), no.~1, 29--98. \MR{2827723}

\bibitem[BM02]{breuil2002}
Christophe Breuil and Ariane M{\'e}zard, \emph{Multiplicit\'es modulaires et
  repr\'esentations de {${\rm GL}_2({\bf Z}_p)$} et de {${\rm
  Gal}(\overline{\bf Q}_p/{\bf Q}_p)$} en {$l=p$}}, Duke Math. J. \textbf{115}
  (2002), no.~2, 205--310, With an appendix by Guy Henniart.

\bibitem[Cal12]{Calegari2012-EvenFontaineMazurII}
Frank Calegari, \emph{Even {G}alois representations and the {F}ontaine--{M}azur
  conjecture. {II}}, J. Amer. Math. Soc. \textbf{25} (2012), no.~2, 533--554.

\bibitem[CEG{\etalchar{+}}13]{1310.0831}
Ana Caraiani, Matthew Emerton, Toby Gee, David Geraghty, Vytautas Paskunas, and
  Sug~Woo Shin, \emph{Patching and the p-adic local {L}anglands
  correspondence}, 2013.

\bibitem[Cho09]{Choi2009-LocalDeformationRings}
Suh~Hyun Choi, \emph{Local deformation lifting spaces of mod l {G}alois
  representations}, Ph.D. thesis, 2009, Thesis (Ph.D.)--Harvard University.

\bibitem[CHT08]{ClozelHarrisTaylor2008-Automorphy}
Laurent Clozel, Michael Harris, and Richard Taylor, \emph{Automorphy for some
  $l$-adic lifts of automorphic mod $l$ {{G}alois} representations}, Publ.
  Math. Inst. Hautes Études Sci. (2008), no.~108, 1--181, With Appendix A,
  summarizing unpublished work of Russ Mann, and Appendix B by Marie-France
  Vignéras.

\bibitem[Dia97]{MR1440309}
Fred Diamond, \emph{The {T}aylor-{W}iles construction and multiplicity one},
  Invent. Math. \textbf{128} (1997), no.~2, 379--391.

\bibitem[EG14]{JMJ:9091284}
Matthew Emerton and Toby Gee, \emph{A geometric perspective on the
  {Breuil}--{M{\'e}zard} conjecture}, Journal of the Institute of Mathematics
  of Jussieu \textbf{13} (2014), 183--223.

\bibitem[EH11]{1104.0321}
Matthew Emerton and David Helm, \emph{The local {L}anglands correspondence for
  ${GL}_n$ in families}, 2011, preprint available at
  \url{http://arxiv.org/abs/1104.0321}.

\bibitem[Ger10]{MR2941425}
David~James Geraghty, \emph{Modularity lifting theorems for ordinary {G}alois
  representations}, ProQuest LLC, Ann Arbor, MI, 2010, Thesis (Ph.D.)–Harvard
  University.

\bibitem[GHT14]{1405.0043}
Robert Guralnick, Florian Herzig, and Pham~Huu Tiep, \emph{Adequate subgroups
  and indecomposable modules}, 2014, preprint available at
  \url{http://arxiv.org/abs/1405.0043}.

\bibitem[GK14]{GeeKisin2013-BreuilMezardBarsottiTate}
Toby Gee and Mark Kisin, \emph{The {B}reuil-{M}\'ezard conjecture for
  potentially {B}arsotti-{T}ate representations}, Forum Math. Pi \textbf{2}
  (2014), e1, 56.

\bibitem[GR14]{1409.8356}
Darij Grinberg and Victor Reiner, \emph{Hopf algebras in combinatorics}, 2014,
  available at \url{http://arxiv.org/abs/1409.8356}.

\bibitem[Hai99]{MR1731818}
Mark Haiman, \emph{Macdonald polynomials and geometry}, New perspectives in
  algebraic combinatorics ({B}erkeley, {CA}, 1996--97), Math. Sci. Res. Inst.
  Publ., vol.~38, Cambridge Univ. Press, Cambridge, 1999, pp.~207--254.

\bibitem[Hen01]{MR1838079}
Guy Henniart, \emph{Sur la conjecture de {L}anglands locale pour {${\rm
  GL}_n$}}, J. Th\'eor. Nombres Bordeaux \textbf{13} (2001), no.~1, 167--187,
  21st Journ{\'e}es Arithm{\'e}tiques (Rome, 2001).

\bibitem[Hen02]{2002}
\bysame, \emph{Sur l'unicité des types pour $\rm {GL}_2$}, Duke Mathematical
  Journal \textbf{2} (2002), no.~115, appendix to Multiplicit{\'e}s modulaires
  et repr{\'e}sentations de ${GL_2}(\mathbb{Z}_p)$ et de
  $\mathrm{Gal}(\overline{{\mathbb{Q}}}_p/\mathbb{Q}_p)$ en $\ell=p$.

\bibitem[HM16]{1610.03277}
David Helm and Gilbert Moss, \emph{Converse theorems and the local langlands
  correspondence in families}, 2016.

\bibitem[HT01]{MR1876802}
Michael Harris and Richard Taylor, \emph{The geometry and cohomology of some
  simple {S}himura varieties}, Annals of Mathematics Studies, vol. 151,
  Princeton University Press, Princeton, NJ, 2001, With an appendix by Vladimir
  G. Berkovich.

\bibitem[Jam90]{MR1031453}
Gordon James, \emph{The decomposition matrices of {${\rm GL}_n(q)$} for {$n\le
  10$}}, Proc. London Math. Soc. (3) \textbf{60} (1990), no.~2, 225--265.

\bibitem[Kis09a]{Kisin2009-FontaineMazur}
Mark Kisin, \emph{The {Fontaine}--{Mazur} conjecture for $\mathbf{{GL}}_2$}, J.
  Amer. Math. Soc. \textbf{22} (2009), no.~3, 641--690.

\bibitem[Kis09b]{Kisin2009-ModuliFFGSandModularity}
\bysame, \emph{Moduli of finite flat group schemes, and modularity}, Ann. of
  Math. (2) \textbf{170} (2009), no.~3, 1085--1180.

\bibitem[Lab11]{MR2856380}
J.-P. Labesse, \emph{Changement de base {CM} et s\'eries discr\`etes}, On the
  stabilization of the trace formula, Stab. Trace Formula Shimura Var. Arith.
  Appl., vol.~1, Int. Press, Somerville, MA, 2011, pp.~429--470.

\bibitem[MS14]{MR3273486}
Alberto M{\'{\i}}nguez and Vincent S{\'e}cherre, \emph{Types modulo {$\ell$}
  pour les formes int\'erieures de {${\rm GL}_n$} sur un corps local non
  archim\'edien}, Proc. Lond. Math. Soc. (3) \textbf{109} (2014), no.~4,
  823--891.

\bibitem[Pas05]{MR2180458}
Vytautas Paskunas, \emph{Unicity of types for supercuspidal representations of
  {${\rm GL}_N$}}, Proc. London Math. Soc. (3) \textbf{91} (2005), no.~3,
  623--654. \MR{2180458}

\bibitem[Pa{\v{s}}15]{1209.5205}
Vytautas Pa{\v{s}}k{\=u}nas, \emph{On the {B}reuil-{M}\'ezard conjecture},
  2015, pp.~297--359. \MR{3306557}

\bibitem[Rod82]{MR689531}
Fran{\c{c}}ois Rodier, \emph{Repr\'esentations de {${\rm GL}(n,\,k)$} o\`u
  {$k$} est un corps {$p$}-adique}, Bourbaki {S}eminar, {V}ol. 1981/1982,
  Ast\'erisque, vol.~92, Soc. Math. France, Paris, 1982, pp.~201--218.

\bibitem[Ser77]{MR0450380}
Jean-Pierre Serre, \emph{Linear representations of finite groups},
  Springer-Verlag, New York-Heidelberg, 1977, Translated from the second French
  edition by Leonard L. Scott, Graduate Texts in Mathematics, Vol. 42.

\bibitem[Sha74]{MR0348047}
J.~A. Shalika, \emph{The multiplicity one theorem for {${\rm GL}_{n}$}}, Ann.
  of Math. (2) \textbf{100} (1974), 171--193. \MR{0348047}

\bibitem[Sho16]{MR3554238}
Jack Shotton, \emph{Local deformation rings for {${\rm GL}_2$} and a
  {B}reuil-{M}\'ezard conjecture when {$\ell\ne p$}}, Algebra Number Theory
  \textbf{10} (2016), no.~7, 1437--1475. \MR{3554238}

\bibitem[SS14]{secherrestevens15}
Vincent S{\'e}cherre and Shaun Stevens, \emph{Block decomposition of the
  category of {$\ell$}-modular smooth representations of {${\rm GL}(n,F)$} and
  its inner forms}, 2014, preprint available at
  \url{http://arxiv.org/abs/1402.5349}.

\bibitem[SZ99]{SchneiderZink1999-KTypesTemperedComponentsGeneralLinear}
P.~Schneider and E.-W. Zink, \emph{${K}$-types for the tempered components of a
  $p$-adic general linear group}, J. Reine Angew. Math. \textbf{517} (1999),
  161--208, With an appendix by Schneider and U. Stuhler.

\bibitem[Tat79]{MR546607}
J.~Tate, \emph{Number theoretic background}, Automorphic forms, representations
  and {$L$}-functions ({P}roc. {S}ympos. {P}ure {M}ath., {O}regon {S}tate
  {U}niv., {C}orvallis, {O}re., 1977), {P}art 2, Proc. Sympos. Pure Math.,
  XXXIII, Amer. Math. Soc., Providence, R.I., 1979, pp.~3--26.

\bibitem[Tay08]{Taylor2008-AutomorphyII}
Richard Taylor, \emph{Automorphy for some $l$-adic lifts of automorphic mod $l$
  {G}alois representations. {II}}, Publ. Math. Inst. Hautes Études Sci.
  (2008), no.~108, 183--239.

\bibitem[Tho12]{Thorne2012-AutomorphySmallResidualImage}
Jack Thorne, \emph{On the automorphy of $l$-adic {G}alois representations with
  small residual image}, J. Inst. Math. Jussieu \textbf{11} (2012), no.~4,
  855--920, With an appendix by Robert Guralnick, Florian Herzig, Richard
  Taylor and Thorne.

\bibitem[Tho15]{Thorne2015-2adiclifting}
\bysame, \emph{A $2$-adic automorphy lifting theorem for unitary groups over
  {CM} fields}, 2015, available at
  \url{http://math.harvard.edu/~thorne/p_equals_2.pdf}.

\bibitem[Vig96]{Vigneras1996-RepresentationsLModulaires}
Marie-France Vignéras, \emph{Représentations $l$-modulaires d’un groupe
  réductif $p$-adique avec $l\ne p$}, Progress in Mathematics, vol. 137,
  Birkhäuser Boston Inc., Boston, MA, 1996.

\bibitem[Vig98]{Vigneras1998-InducedRRepresentationsPAdicReductiveGroups}
\bysame, \emph{Induced ${R}$-representations of $p$-adic reductive groups},
  Selecta Math. (N.S.) \textbf{4} (1998), no.~4, 549--623.

\bibitem[Zel80]{MR584084}
Andrey~V. Zelevinsky, \emph{Induced representations of reductive
  {${\mathfrak{p}}$}-adic groups. {II}. {O}n irreducible representations of
  {${\rm GL}(n)$}}, Ann. Sci. \'Ecole Norm. Sup. (4) \textbf{13} (1980), no.~2,
  165--210.

\bibitem[Zel81]{MR643482}
\bysame, \emph{Representations of finite classical groups}, Lecture Notes in
  Mathematics, vol. 869, Springer-Verlag, Berlin-New York, 1981, A Hopf algebra
  approach.

\end{thebibliography}
\bibliographystyle{amsalpha}
\end{document}